\documentclass[a4paper]{article}

\usepackage[dvipsnames]{xcolor}
\usepackage[colorlinks=true,linkcolor=Maroon,citecolor=YellowOrange!85!black,urlcolor=black]{hyperref} 
\usepackage{tikz}
\usepackage[utf8]{inputenc}
\usepackage[T1]{fontenc}
\usepackage{lmodern}
\usepackage{newunicodechar}
\newunicodechar{Φ}{\ensuremath{\Phi}}
\newunicodechar{≥}{\ensuremath{\ge}}

\usepackage[backend=bibtex,style=alphabetic,firstinits=true,isbn=false,doi=false,url=false,eprint=false]{biblatex}
\addbibresource{KraaijBib.bib}
\addbibresource{SchauerBib.bib}

\definecolor{lime}{HTML}{A6CE39}
\newcommand{\orcidicon}{%
	\begin{tikzpicture}
	\draw[lime, fill=lime] (0,0) 
		circle [radius=0.16] 
		node[white] {{\fontfamily{qag}\selectfont \tiny iD}};
	\end{tikzpicture}
}

\usepackage{amssymb,amsmath,amsthm}

\usepackage{todonotes}
\usepackage{verbatim} 
\usepackage{enumerate}
\usepackage{mathtools}
\usepackage[normalem]{ulem}

\usepackage{bbm}

\usepackage{lineno}

\setlength{\parindent}{0pt}
\setlength{\parskip}{2pt}

\makeatletter
\def\lstAZ{A, B, C, D, E, F, G, H, I, J, K, L, M, N, O, P, Q, R, S, T, U, V, W, X, Y, Z}

\newcommand{\MkScr}[1]{\expandafter\def\csname s#1\endcsname{\mathscr{#1}}}
\newcommand{\MkFrak}[1]{\expandafter\def\csname f#1\endcsname{\mathfrak{#1}}}
\newcommand{\MkCal}[1]{\expandafter\def\csname c#1\endcsname{\mathcal{#1}}}
\newcommand{\MkBB}[1]{\expandafter\def\csname b#1\endcsname{\mathbb{#1}}}
\@for\i:=\lstAZ\do{%
	\expandafter\MkScr \i     }    
\@for\i:=\lstAZ\do{%
	\expandafter\MkCal \i     }    
\@for\i:=\lstAZ\do{%
	\expandafter\MkFrak \i     }    
\@for\i:=\lstAZ\do{%
	\expandafter\MkBB \i     }    
\makeatother

\newcommand{\PR}{\mathbb{P}}

\newcommand{\bONE}{\mathbf{1}}

\newcommand{\dd}{ \mathrm{d}}
\newcommand{\e}{{\mathrm{e}}}

\DeclareMathOperator*{\grad}{grad}

\renewcommand{\epsilon}{\varepsilon}
\renewcommand{\phi}{\varphi}
\renewcommand{\tilde}{\widetilde}

\newcommand{\vn}[1]{\left| \! \left| #1\right| \! \right|} 
\newcommand{\tn}[1]{\left| \! \left| \! \left|#1\right| \! \right| \! \right|}

\newcommand{\ip}[2]{\langle #1,#2\rangle}

\newcommand{\st}{\text{st}}

\numberwithin{equation}{section}

\newtheorem{theorem}{Theorem}[section]
\newtheorem{lemma}[theorem]{Lemma}
\newtheorem{proposition}[theorem]{Proposition}
\newtheorem{corollary}[theorem]{Corollary}

\theoremstyle{definition}
\newtheorem{definition}[theorem]{Definition}

\newtheorem{remark}[theorem]{Remark}

\newtheorem{assumption}[theorem]{Assumption}
\newtheorem{example}[theorem]{Example}
\newtheorem{condition}[theorem]{Condition}

\newcommand\Item[1][]{%
  \ifx\relax#1\relax  \item \else \item[#1] \fi
  \abovedisplayskip=0pt\abovedisplayshortskip=0pt~\vspace*{-\baselineskip}}

\definecolor{DarkGoldenrod}{rgb}{0.78, 0.45, 0.04}



\newcounter{subass}

\makeatletter
\def\blx@bblfile@biber{%
  \blx@secinit
  \begingroup
  \blx@bblstart
%
%
\begingroup
\makeatletter
\@ifundefined{ver@biblatex.sty}
  {\@latex@error
     {Missing 'biblatex' package}
     {The bibliography requires the 'biblatex' package.}
      \aftergroup\endinput}
  {}
\endgroup

\sortlist[entry]{anyt/global/}
  \entry{AB78}{book}{}
    \name{author}{2}{}{%
      {{hash=ACD}{%
         family={Aliprantis},
         family_i={A\bibinitperiod},
         given={Charalambos\bibnamedelima D.},
         given_i={C\bibinitperiod\bibinitdelim D\bibinitperiod},
      }}%
      {{hash=BO}{%
         family={Burkinshaw},
         family_i={B\bibinitperiod},
         given={Owen},
         given_i={O\bibinitperiod},
      }}%
    }
    \list{publisher}{1}{%
      {Academic Press [Harcourt Brace Jovanovich, Publishers], New
  York-London}%
    }
    \strng{namehash}{ACDBO1}
    \strng{fullhash}{ACDBO1}
    \field{labelnamesource}{author}
    \field{labeltitlesource}{title}
    \field{labelalpha}{AB78}
    \field{sortinit}{A}
    \field{sortinithash}{A}
    \field{title}{Locally Solid {R}iesz Spaces}
    \field{year}{1978}
  \endentry

  \entry{Ar87}{article}{}
    \name{author}{1}{}{%
      {{hash=AW}{%
         family={Arendt},
         family_i={A\bibinitperiod},
         given={Wolfgang},
         given_i={W\bibinitperiod},
      }}%
    }
    \strng{namehash}{AW1}
    \strng{fullhash}{AW1}
    \field{labelnamesource}{author}
    \field{labeltitlesource}{title}
    \field{labelalpha}{Are87}
    \field{sortinit}{A}
    \field{sortinithash}{A}
    \verb{doi}
    \verb 10.1112/plms/s3-54.2.321
    \endverb
    \verb{eprint}
    \verb http://plms.oxfordjournals.org/content/s3-54/2/321.full.pdf+html
    \endverb
    \field{number}{2}
    \field{pages}{321\bibrangedash 349}
    \field{title}{Resolvent Positive Operators}
    \field{volume}{s3-54}
    \field{journaltitle}{Proceedings of the London Mathematical Society}
    \field{year}{1987}
  \endentry

  \entry{BaBlMu2008}{article}{}
    \name{author}{3}{}{%
      {{hash=BN}{%
         family={B{\"a}uerle},
         family_i={B\bibinitperiod},
         given={Nicole},
         given_i={N\bibinitperiod},
      }}%
      {{hash=BA}{%
         family={Blatter},
         family_i={B\bibinitperiod},
         given={Anja},
         given_i={A\bibinitperiod},
      }}%
      {{hash=MA}{%
         family={M{\"u}ller},
         family_i={M\bibinitperiod},
         given={Alfred},
         given_i={A\bibinitperiod},
      }}%
    }
    \strng{namehash}{BNBAMA1}
    \strng{fullhash}{BNBAMA1}
    \field{labelnamesource}{author}
    \field{labeltitlesource}{title}
    \field{labelalpha}{BBM08}
    \field{sortinit}{B}
    \field{sortinithash}{B}
    \verb{doi}
    \verb 10.1007/s00186-007-0185-6
    \endverb
    \field{issn}{1432-5217}
    \field{number}{1}
    \field{pages}{161\bibrangedash 186}
    \field{title}{Dependence properties and comparison results for L{\'e}vy
  processes}
    \verb{url}
    \verb http://dx.doi.org/10.1007/s00186-007-0185-6
    \endverb
    \field{volume}{67}
    \field{journaltitle}{Mathematical Methods of Operations Research}
    \field{year}{2008}
  \endentry

  \entry{Bo07}{book}{}
    \name{author}{1}{}{%
      {{hash=BVI}{%
         family={Bogachev},
         family_i={B\bibinitperiod},
         given={Vladimir\bibnamedelima I.},
         given_i={V\bibinitperiod\bibinitdelim I\bibinitperiod},
      }}%
    }
    \list{publisher}{1}{%
      {Springer-Verlag}%
    }
    \strng{namehash}{BVI1}
    \strng{fullhash}{BVI1}
    \field{labelnamesource}{author}
    \field{labeltitlesource}{title}
    \field{labelalpha}{Bog07}
    \field{sortinit}{B}
    \field{sortinithash}{B}
    \field{title}{Measure Theory}
    \field{year}{2007}
  \endentry

  \entry{BeRu2006}{article}{}
    \name{author}{2}{}{%
      {{hash=BJ}{%
         family={Bergenthum},
         family_i={B\bibinitperiod},
         given={Jan},
         given_i={J\bibinitperiod},
      }}%
      {{hash=RL}{%
         family={R{\"u}schendorf},
         family_i={R\bibinitperiod},
         given={Ludger},
         given_i={L\bibinitperiod},
      }}%
    }
    \strng{namehash}{BJRL1}
    \strng{fullhash}{BJRL1}
    \field{labelnamesource}{author}
    \field{labeltitlesource}{title}
    \field{labelalpha}{BR06}
    \field{sortinit}{B}
    \field{sortinithash}{B}
    \field{abstract}{%
    In this paper we generalize the recent comparison results of El Karoui
  et al. (Math Finance 8:93--126, 1998), Bellamy and Jeanblanc (Finance Stoch
  4:209--222, 2000) and Gushchin and Mordecki (Proc Steklov Inst Math
  237:73--113, 2002) to d-dimensional exponential semimartingales. Our main
  result gives sufficient conditions for the comparison of European options
  with respect to martingale pricing measures. The comparison is with respect
  to convex and also with respect to directionally convex functions. Sufficient
  conditions for these orderings are formulated in terms of the predictable
  characteristics of the stochastic logarithm of the stock price processes. As
  examples we discuss the comparison of exponential semimartingales to
  multivariate diffusion processes, to stochastic volatility models, to
  L{\'e}vy processes, and to diffusions with jumps. We obtain extensions of
  several recent results on nontrivial price intervals. A crucial property in
  this approach is the propagation of convexity property. We develop a new
  approach to establish this property for several further examples of
  univariate and multivariate processes.%
    }
    \verb{doi}
    \verb 10.1007/s00780-006-0001-9
    \endverb
    \field{issn}{1432-1122}
    \field{number}{2}
    \field{pages}{222}
    \field{title}{Comparison of Option Prices in Semimartingale Models}
    \verb{url}
    \verb http://dx.doi.org/10.1007/s00780-006-0001-9
    \endverb
    \field{volume}{10}
    \field{journaltitle}{Finance and Stochastics}
    \field{year}{2006}
  \endentry

  \entry{BeRu2007}{article}{}
    \name{author}{2}{}{%
      {{hash=BJ}{%
         family={Bergenthum},
         family_i={B\bibinitperiod},
         given={Jan},
         given_i={J\bibinitperiod},
      }}%
      {{hash=RL}{%
         family={Rüschendorf},
         family_i={R\bibinitperiod},
         given={Ludger},
         given_i={L\bibinitperiod},
      }}%
    }
    \list{publisher}{1}{%
      {The Institute of Mathematical Statistics}%
    }
    \strng{namehash}{BJRL2}
    \strng{fullhash}{BJRL2}
    \field{labelnamesource}{author}
    \field{labeltitlesource}{title}
    \field{labelalpha}{BR07}
    \field{sortinit}{B}
    \field{sortinithash}{B}
    \verb{doi}
    \verb 10.1214/009117906000000386
    \endverb
    \field{number}{1}
    \field{pages}{228\bibrangedash 254}
    \field{title}{Comparison of semimartingales and Lévy processes}
    \verb{url}
    \verb http://dx.doi.org/10.1214/009117906000000386
    \endverb
    \field{volume}{35}
    \field{journaltitle}{Ann. Probab.}
    \field{month}{01}
    \field{year}{2007}
  \endentry

  \entry{BoSa2002}{book}{}
    \name{author}{2}{}{%
      {{hash=BAN}{%
         family={Borodin},
         family_i={B\bibinitperiod},
         given={Andrei\bibnamedelima N.},
         given_i={A\bibinitperiod\bibinitdelim N\bibinitperiod},
      }}%
      {{hash=SP}{%
         family={Salminen},
         family_i={S\bibinitperiod},
         given={Paavo},
         given_i={P\bibinitperiod},
      }}%
    }
    \list{publisher}{1}{%
      {Birkh\"auser Verlag, Basel}%
    }
    \strng{namehash}{BANSP1}
    \strng{fullhash}{BANSP1}
    \field{labelnamesource}{author}
    \field{labeltitlesource}{title}
    \field{labelalpha}{BS02}
    \field{sortinit}{B}
    \field{sortinithash}{B}
    \field{edition}{Second}
    \field{isbn}{3-7643-6705-9}
    \field{pages}{xvi+672}
    \field{series}{Probability and its Applications}
    \field{title}{Handbook of {B}rownian motion---facts and formulae}
    \verb{url}
    \verb https://doi.org/10.1007/978-3-0348-8163-0
    \endverb
    \field{year}{2002}
  \endentry

  \entry{Bu58}{article}{}
    \name{author}{1}{}{%
      {{hash=BRC}{%
         family={Buck},
         family_i={B\bibinitperiod},
         given={R.\bibnamedelima Creighton},
         given_i={R\bibinitperiod\bibinitdelim C\bibinitperiod},
      }}%
    }
    \list{publisher}{1}{%
      {University of Michigan, Department of Mathematics}%
    }
    \strng{namehash}{BRC1}
    \strng{fullhash}{BRC1}
    \field{labelnamesource}{author}
    \field{labeltitlesource}{title}
    \field{labelalpha}{Buc58}
    \field{sortinit}{B}
    \field{sortinithash}{B}
    \verb{doi}
    \verb 10.1307/mmj/1028998054
    \endverb
    \field{number}{2}
    \field{pages}{95\bibrangedash 104}
    \field{title}{Bounded continuous functions on a locally compact space.}
    \field{volume}{5}
    \field{journaltitle}{Michigan Math. J.}
    \field{year}{1958}
  \endentry

  \entry{Ca11}{book}{}
    \name{author}{1}{}{%
      {{hash=vCJA}{%
         prefix={van},
         prefix_i={v\bibinitperiod},
         family={Casteren},
         family_i={C\bibinitperiod},
         given={Jan\bibnamedelima A.},
         given_i={J\bibinitperiod\bibinitdelim A\bibinitperiod},
      }}%
    }
    \list{publisher}{1}{%
      {World Scientific Publishing Co. Pte. Ltd., Hackensack, NJ}%
    }
    \strng{namehash}{CJAv1}
    \strng{fullhash}{CJAv1}
    \field{labelnamesource}{author}
    \field{labeltitlesource}{title}
    \field{labelalpha}{Cas11}
    \field{sortinit}{C}
    \field{sortinithash}{C}
    \field{isbn}{978-981-4322-18-8; 981-4322-18-0}
    \field{pages}{xviii+805}
    \field{series}{Series on Concrete and Applicable Mathematics}
    \field{title}{Markov processes, {F}eller semigroups and evolution
  equations}
    \field{volume}{12}
    \field{year}{2011}
  \endentry

  \entry{CoFlGr96}{article}{}
    \name{author}{3}{}{%
      {{hash=CJT}{%
         family={Cox},
         family_i={C\bibinitperiod},
         given={J.\bibnamedelima Theodore},
         given_i={J\bibinitperiod\bibinitdelim T\bibinitperiod},
      }}%
      {{hash=FK}{%
         family={Fleischmann},
         family_i={F\bibinitperiod},
         given={Klaus},
         given_i={K\bibinitperiod},
      }}%
      {{hash=GA}{%
         family={Greven},
         family_i={G\bibinitperiod},
         given={Andreas},
         given_i={A\bibinitperiod},
      }}%
    }
    \strng{namehash}{CJTFKGA1}
    \strng{fullhash}{CJTFKGA1}
    \field{labelnamesource}{author}
    \field{labeltitlesource}{title}
    \field{labelalpha}{CFG96}
    \field{sortinit}{C}
    \field{sortinithash}{C}
    \verb{doi}
    \verb 10.1007/BF01191911
    \endverb
    \field{issn}{1432-2064}
    \field{number}{4}
    \field{pages}{513\bibrangedash 528}
    \field{title}{Comparison of interacting diffusions and an application to
  their ergodic theory}
    \field{volume}{105}
    \field{journaltitle}{Probability Theory and Related Fields}
    \field{year}{1996}
  \endentry

  \entry{CoKu15}{article}{}
    \name{author}{2}{}{%
      {{hash=CC}{%
         family={Costantini},
         family_i={C\bibinitperiod},
         given={Cristina},
         given_i={C\bibinitperiod},
      }}%
      {{hash=KT}{%
         family={Kurtz},
         family_i={K\bibinitperiod},
         given={Thomas},
         given_i={T\bibinitperiod},
      }}%
    }
    \strng{namehash}{CCKT1}
    \strng{fullhash}{CCKT1}
    \field{labelnamesource}{author}
    \field{labeltitlesource}{title}
    \field{labelalpha}{CK15}
    \field{sortinit}{C}
    \field{sortinithash}{C}
    \verb{doi}
    \verb 10.1214/EJP.v20-3624
    \endverb
    \field{issn}{1083-6489}
    \field{pages}{no. 67, 1\bibrangedash 27}
    \field{title}{Viscosity methods giving uniqueness for martingale problems}
    \field{volume}{20}
    \field{journaltitle}{Electron. J. Probab.}
    \field{year}{2015}
  \endentry

  \entry{ChWa93}{article}{}
    \name{author}{2}{}{%
      {{hash=CMF}{%
         family={Chen},
         family_i={C\bibinitperiod},
         given={Mu-Fa},
         given_i={M\bibinitperiod-F\bibinitperiod},
      }}%
      {{hash=WFY}{%
         family={Wang},
         family_i={W\bibinitperiod},
         given={Feng-Yu},
         given_i={F\bibinitperiod-Y\bibinitperiod},
      }}%
    }
    \strng{namehash}{CMFWFY1}
    \strng{fullhash}{CMFWFY1}
    \field{labelnamesource}{author}
    \field{labeltitlesource}{title}
    \field{labelalpha}{CW93}
    \field{sortinit}{C}
    \field{sortinithash}{C}
    \field{number}{95}
    \field{pages}{421\bibrangedash 428}
    \field{title}{On order-preservation and positive correlations for
  multidimensional diffusion processes}
    \field{journaltitle}{Probab.~Theory Relat.~Fields}
    \field{year}{1993}
  \endentry

  \entry{Da68b}{article}{}
    \name{author}{1}{}{%
      {{hash=DDJ}{%
         family={Daley},
         family_i={D\bibinitperiod},
         given={D.\bibnamedelima J.},
         given_i={D\bibinitperiod\bibinitdelim J\bibinitperiod},
      }}%
    }
    \strng{namehash}{DDJ1}
    \strng{fullhash}{DDJ1}
    \field{labelnamesource}{author}
    \field{labeltitlesource}{title}
    \field{labelalpha}{Dal68}
    \field{sortinit}{D}
    \field{sortinithash}{D}
    \verb{doi}
    \verb 10.1007/BF00531852
    \endverb
    \field{issn}{1432-2064}
    \field{number}{4}
    \field{pages}{305\bibrangedash 317}
    \field{title}{Stochastically monotone Markov Chains}
    \verb{url}
    \verb http://dx.doi.org/10.1007/BF00531852
    \endverb
    \field{volume}{10}
    \field{journaltitle}{Zeitschrift f{\"u}r Wahrscheinlichkeitstheorie und
  Verwandte Gebiete}
    \field{year}{1968}
  \endentry

  \entry{DaSz2006}{article}{}
    \name{author}{2}{}{%
      {{hash=DH}{%
         family={Daduna},
         family_i={D\bibinitperiod},
         given={Hans},
         given_i={H\bibinitperiod},
      }}%
      {{hash=SR}{%
         family={Szekli},
         family_i={S\bibinitperiod},
         given={Ryszard},
         given_i={R\bibinitperiod},
      }}%
    }
    \strng{namehash}{DHSR1}
    \strng{fullhash}{DHSR1}
    \field{labelnamesource}{author}
    \field{labeltitlesource}{title}
    \field{labelalpha}{DS06}
    \field{sortinit}{D}
    \field{sortinithash}{D}
    \verb{doi}
    \verb 10.1239/jap/1158784947
    \endverb
    \field{issn}{0021-9002}
    \field{number}{3}
    \field{pages}{793\bibrangedash 814}
    \field{title}{Dependence ordering for {M}arkov processes on partially
  ordered spaces}
    \verb{url}
    \verb http://dx.doi.org/10.1239/jap/1158784947
    \endverb
    \field{volume}{43}
    \field{journaltitle}{J. Appl. Probab.}
    \field{year}{2006}
  \endentry

  \entry{EK86}{book}{}
    \name{author}{2}{}{%
      {{hash=ESN}{%
         family={Ethier},
         family_i={E\bibinitperiod},
         given={Stewart\bibnamedelima N.},
         given_i={S\bibinitperiod\bibinitdelim N\bibinitperiod},
      }}%
      {{hash=KTG}{%
         family={Kurtz},
         family_i={K\bibinitperiod},
         given={Thomas\bibnamedelima G.},
         given_i={T\bibinitperiod\bibinitdelim G\bibinitperiod},
      }}%
    }
    \list{publisher}{1}{%
      {Wiley}%
    }
    \strng{namehash}{ESNKTG1}
    \strng{fullhash}{ESNKTG1}
    \field{labelnamesource}{author}
    \field{labeltitlesource}{title}
    \field{labelalpha}{EK86}
    \field{sortinit}{E}
    \field{sortinithash}{E}
    \field{title}{Markov processes: Characterization and Convergence}
    \field{year}{1986}
  \endentry

  \entry{EN00}{book}{}
    \name{author}{2}{}{%
      {{hash=EKJ}{%
         family={Engel},
         family_i={E\bibinitperiod},
         given={Klaus-Jochen},
         given_i={K\bibinitperiod-J\bibinitperiod},
      }}%
      {{hash=NR}{%
         family={Nagel},
         family_i={N\bibinitperiod},
         given={Rainer},
         given_i={R\bibinitperiod},
      }}%
    }
    \list{publisher}{1}{%
      {Springer-Verlag}%
    }
    \strng{namehash}{EKJNR1}
    \strng{fullhash}{EKJNR1}
    \field{labelnamesource}{author}
    \field{labeltitlesource}{title}
    \field{labelalpha}{EN00}
    \field{sortinit}{E}
    \field{sortinithash}{E}
    \field{title}{One-Parameter Semigroups for Linear Evolution Equations}
    \field{year}{2000}
  \endentry

  \entry{En89}{book}{}
    \name{author}{1}{}{%
      {{hash=ER}{%
         family={Engelking},
         family_i={E\bibinitperiod},
         given={Ryszard},
         given_i={R\bibinitperiod},
      }}%
    }
    \list{publisher}{1}{%
      {Heldermann Verlag, Berlin}%
    }
    \strng{namehash}{ER1}
    \strng{fullhash}{ER1}
    \field{labelnamesource}{author}
    \field{labeltitlesource}{title}
    \field{labelalpha}{Eng89}
    \field{sortinit}{E}
    \field{sortinithash}{E}
    \field{edition}{Second}
    \field{title}{General topology}
    \field{year}{1989}
  \endentry

  \entry{Ev10}{book}{}
    \name{author}{1}{}{%
      {{hash=ELC}{%
         family={Evans},
         family_i={E\bibinitperiod},
         given={Lawrence\bibnamedelima C.},
         given_i={L\bibinitperiod\bibinitdelim C\bibinitperiod},
      }}%
    }
    \list{publisher}{1}{%
      {American Mathematical Society, Providence, RI}%
    }
    \strng{namehash}{ELC1}
    \strng{fullhash}{ELC1}
    \field{labelnamesource}{author}
    \field{labeltitlesource}{title}
    \field{labelalpha}{Eva10}
    \field{sortinit}{E}
    \field{sortinithash}{E}
    \verb{doi}
    \verb 10.1090/gsm/019
    \endverb
    \field{edition}{Second}
    \field{isbn}{978-0-8218-4974-3}
    \field{pages}{xxii+749}
    \field{series}{Graduate Studies in Mathematics}
    \field{title}{Partial differential equations}
    \field{volume}{19}
    \field{year}{2010}
  \endentry

  \entry{Gr79}{book}{}
    \name{author}{1}{}{%
      {{hash=GD}{%
         family={Griffeath},
         family_i={G\bibinitperiod},
         given={David},
         given_i={D\bibinitperiod},
      }}%
    }
    \list{publisher}{1}{%
      {Springer, Berlin}%
    }
    \strng{namehash}{GD1}
    \strng{fullhash}{GD1}
    \field{labelnamesource}{author}
    \field{labeltitlesource}{title}
    \field{labelalpha}{Gri79}
    \field{sortinit}{G}
    \field{sortinithash}{G}
    \field{isbn}{3-540-09508-X}
    \field{pages}{iv+108}
    \field{series}{Lecture Notes in Mathematics}
    \field{title}{Additive and cancellative interacting particle systems}
    \field{volume}{724}
    \field{year}{1979}
  \endentry

  \entry{Ha76}{article}{}
    \name{author}{1}{}{%
      {{hash=HRG}{%
         family={Haydon},
         family_i={H\bibinitperiod},
         given={R.\bibnamedelima G.},
         given_i={R\bibinitperiod\bibinitdelim G\bibinitperiod},
      }}%
    }
    \strng{namehash}{HRG1}
    \strng{fullhash}{HRG1}
    \field{labelnamesource}{author}
    \field{labeltitlesource}{title}
    \field{labelalpha}{Hay76}
    \field{sortinit}{H}
    \field{sortinithash}{H}
    \field{number}{2}
    \field{pages}{273\bibrangedash 278}
    \field{title}{On the {S}tone-{W}eierstrass theorem for the strict and
  superstrict topologies}
    \field{volume}{59}
    \field{journaltitle}{Proc. Amer. Math. Soc.}
    \field{year}{1976}
  \endentry

  \entry{HePi91}{article}{}
    \name{author}{2}{}{%
      {{hash=HI}{%
         family={Herbst},
         family_i={H\bibinitperiod},
         given={Ira},
         given_i={I\bibinitperiod},
      }}%
      {{hash=PL}{%
         family={Pitt},
         family_i={P\bibinitperiod},
         given={Loren},
         given_i={L\bibinitperiod},
      }}%
    }
    \strng{namehash}{HIPL1}
    \strng{fullhash}{HIPL1}
    \field{labelnamesource}{author}
    \field{labeltitlesource}{title}
    \field{labelalpha}{HP91}
    \field{sortinit}{H}
    \field{sortinithash}{H}
    \verb{doi}
    \verb 10.1007/BF01312211
    \endverb
    \field{issn}{0178-8051}
    \field{number}{3}
    \field{pages}{275\bibrangedash 312}
    \field{title}{Diffusion equation techniques in stochastic monotonicity and
  positive correlations}
    \field{volume}{87}
    \field{journaltitle}{Probab. Theory Related Fields}
    \field{year}{1991}
  \endentry

  \entry{Ka1962}{article}{}
    \name{author}{1}{}{%
      {{hash=KGI}{%
         family={Kalmykov},
         family_i={K\bibinitperiod},
         given={G.\bibnamedelima I.},
         given_i={G\bibinitperiod\bibinitdelim I\bibinitperiod},
      }}%
    }
    \strng{namehash}{KGI1}
    \strng{fullhash}{KGI1}
    \field{labelnamesource}{author}
    \field{labeltitlesource}{title}
    \field{labelalpha}{Kal62}
    \field{sortinit}{K}
    \field{sortinithash}{K}
    \verb{doi}
    \verb https://doi.org/10.1137/1107043
    \endverb
    \field{issn}{0040-585X (print), 1095-7219 (electronic)}
    \field{number}{4}
    \field{pages}{456\bibrangedash 459}
    \field{title}{On the Partial Ordering of One-Dimensional {Markov}
  Processes}
    \field{volume}{7}
    \field{journaltitle}{Theory Probab. Appl.}
    \field{year}{1962}
    \warn{\item Invalid format of field 'month'}
  \endentry

  \entry{KeKe77}{article}{}
    \name{author}{2}{}{%
      {{hash=KJ}{%
         family={Keilson},
         family_i={K\bibinitperiod},
         given={Julian},
         given_i={J\bibinitperiod},
      }}%
      {{hash=KA}{%
         family={Kester},
         family_i={K\bibinitperiod},
         given={Adri},
         given_i={A\bibinitperiod},
      }}%
    }
    \strng{namehash}{KJKA1}
    \strng{fullhash}{KJKA1}
    \field{labelnamesource}{author}
    \field{labeltitlesource}{title}
    \field{labelalpha}{KK77}
    \field{sortinit}{K}
    \field{sortinithash}{K}
    \field{issn}{0304-4149}
    \field{number}{3}
    \field{pages}{231\bibrangedash 241}
    \field{title}{Monotone matrices and monotone {M}arkov processes}
    \verb{url}
    \verb https://www.sciencedirect.com/science/article/pii/0304414977900333
    \endverb
    \field{volume}{5}
    \field{journaltitle}{Stochastic Processes Appl.}
    \field{year}{1977}
  \endentry

  \entry{Kr16}{article}{}
    \name{author}{1}{}{%
      {{hash=KRC}{%
         family={Kraaij},
         family_i={K\bibinitperiod},
         given={Richard\bibnamedelima C.},
         given_i={R\bibinitperiod\bibinitdelim C\bibinitperiod},
      }}%
    }
    \strng{namehash}{KRC1}
    \strng{fullhash}{KRC1}
    \field{labelnamesource}{author}
    \field{labeltitlesource}{title}
    \field{labelalpha}{Kra16}
    \field{sortinit}{K}
    \field{sortinithash}{K}
    \verb{doi}
    \verb 10.1007/s00233-015-9689-1
    \endverb
    \field{issn}{1432-2137}
    \field{number}{1}
    \field{pages}{158\bibrangedash 185}
    \field{title}{Strongly continuous and locally equi-continuous semigroups on
  locally convex spaces}
    \field{volume}{92}
    \field{journaltitle}{Semigroup Forum}
    \field{year}{2016}
  \endentry

  \entry{Kr96}{book}{}
    \name{author}{1}{}{%
      {{hash=KN}{%
         family={Krylov},
         family_i={K\bibinitperiod},
         given={N.V.},
         given_i={N\bibinitperiod},
      }}%
    }
    \list{publisher}{1}{%
      {American Mathematical Society}%
    }
    \strng{namehash}{KN1}
    \strng{fullhash}{KN1}
    \field{labelnamesource}{author}
    \field{labeltitlesource}{title}
    \field{labelalpha}{Kry96}
    \field{sortinit}{K}
    \field{sortinithash}{K}
    \field{isbn}{0-8218-0569-X}
    \field{title}{Lectures on Elliptic and Parabolic equations in H{\"o}lder
  spaces}
    \field{year}{1996}
  \endentry

  \entry{Ku09}{article}{}
    \name{author}{1}{}{%
      {{hash=KM}{%
         family={Kunze},
         family_i={K\bibinitperiod},
         given={Markus},
         given_i={M\bibinitperiod},
      }}%
    }
    \strng{namehash}{KM1}
    \strng{fullhash}{KM1}
    \field{labelnamesource}{author}
    \field{labeltitlesource}{title}
    \field{labelalpha}{Kun09}
    \field{sortinit}{K}
    \field{sortinithash}{K}
    \field{number}{3}
    \field{pages}{540\bibrangedash 560}
    \field{title}{Continuity and equicontinuity of semigroups on norming dual
  pairs}
    \field{volume}{79}
    \field{journaltitle}{Semigroup Forum}
    \field{year}{2009}
  \endentry

  \entry{Ko69}{book}{}
    \name{author}{1}{}{%
      {{hash=KG}{%
         family={K{\"o}the},
         family_i={K\bibinitperiod},
         given={Gottfried},
         given_i={G\bibinitperiod},
      }}%
    }
    \list{publisher}{1}{%
      {Springer-Verlag}%
    }
    \strng{namehash}{KG1}
    \strng{fullhash}{KG1}
    \field{labelnamesource}{author}
    \field{labeltitlesource}{title}
    \field{labelalpha}{K{\"o}t69}
    \field{sortinit}{K}
    \field{sortinithash}{K}
    \field{title}{Topological Vector Spaces I}
    \field{year}{1969}
  \endentry

  \entry{Ko79}{book}{}
    \name{author}{1}{}{%
      {{hash=KG}{%
         family={K{\"o}the},
         family_i={K\bibinitperiod},
         given={Gottfried},
         given_i={G\bibinitperiod},
      }}%
    }
    \list{publisher}{1}{%
      {Springer-Verlag}%
    }
    \strng{namehash}{KG1}
    \strng{fullhash}{KG1}
    \field{labelnamesource}{author}
    \field{labeltitlesource}{title}
    \field{labelalpha}{K{\"o}t79}
    \field{sortinit}{K}
    \field{sortinithash}{K}
    \field{title}{Topological Vector Spaces II}
    \field{year}{1979}
  \endentry

  \entry{Li85}{book}{}
    \name{author}{1}{}{%
      {{hash=LTM}{%
         family={Liggett},
         family_i={L\bibinitperiod},
         given={Thomas\bibnamedelima M.},
         given_i={T\bibinitperiod\bibinitdelim M\bibinitperiod},
      }}%
    }
    \list{publisher}{1}{%
      {Springer-Verlag}%
    }
    \strng{namehash}{LTM1}
    \strng{fullhash}{LTM1}
    \field{labelnamesource}{author}
    \field{labeltitlesource}{title}
    \field{labelalpha}{Lig85}
    \field{sortinit}{L}
    \field{sortinithash}{L}
    \field{title}{Interacting Particle Systems}
    \field{year}{1985}
  \endentry

  \entry{Ma87}{article}{}
    \name{author}{1}{}{%
      {{hash=MWA}{%
         family={Massey},
         family_i={M\bibinitperiod},
         given={William\bibnamedelima A.},
         given_i={W\bibinitperiod\bibinitdelim A\bibinitperiod},
      }}%
    }
    \strng{namehash}{MWA1}
    \strng{fullhash}{MWA1}
    \field{labelnamesource}{author}
    \field{labeltitlesource}{title}
    \field{labelalpha}{Mas87}
    \field{sortinit}{M}
    \field{sortinithash}{M}
    \verb{doi}
    \verb 10.1287/moor.12.2.350
    \endverb
    \field{issn}{0364-765X}
    \field{number}{2}
    \field{pages}{350\bibrangedash 367}
    \field{title}{Stochastic orderings for {M}arkov processes on partially
  ordered spaces}
    \verb{url}
    \verb http://dx.doi.org/10.1287/moor.12.2.350
    \endverb
    \field{volume}{12}
    \field{journaltitle}{Math. Oper. Res.}
    \field{year}{1987}
  \endentry

  \entry{MuSt02}{book}{}
    \name{author}{2}{}{%
      {{hash=MA}{%
         family={M{\"u}ller},
         family_i={M\bibinitperiod},
         given={Alfred},
         given_i={A\bibinitperiod},
      }}%
      {{hash=SD}{%
         family={Stoyan},
         family_i={S\bibinitperiod},
         given={Dietrich},
         given_i={D\bibinitperiod},
      }}%
    }
    \list{publisher}{1}{%
      {Wiley}%
    }
    \strng{namehash}{MASD1}
    \strng{fullhash}{MASD1}
    \field{labelnamesource}{author}
    \field{labeltitlesource}{title}
    \field{labelalpha}{MS02}
    \field{sortinit}{M}
    \field{sortinithash}{M}
    \field{title}{Comparison methods for stochastic models and risks}
    \field{volume}{389}
    \field{year}{2002}
  \endentry

  \entry{RuScWo2015}{article}{}
    \name{author}{3}{}{%
      {{hash=RL}{%
         family={R{\"u}schendorf},
         family_i={R\bibinitperiod},
         given={Ludger},
         given_i={L\bibinitperiod},
      }}%
      {{hash=SA}{%
         family={Schnurr},
         family_i={S\bibinitperiod},
         given={Alexander},
         given_i={A\bibinitperiod},
      }}%
      {{hash=WV}{%
         family={Wolf},
         family_i={W\bibinitperiod},
         given={Viktor},
         given_i={V\bibinitperiod},
      }}%
    }
    \list{publisher}{1}{%
      {Cambridge University Press}%
    }
    \strng{namehash}{RLSAWV1}
    \strng{fullhash}{RLSAWV1}
    \field{labelnamesource}{author}
    \field{labeltitlesource}{title}
    \field{labelalpha}{RSW16}
    \field{sortinit}{R}
    \field{sortinithash}{R}
    \field{number}{4}
    \field{pages}{1015\bibrangedash 1044}
    \field{title}{Comparison of time-inhomogeneous Markov processes}
    \field{volume}{48}
    \field{journaltitle}{Advances in Applied Probability}
    \field{year}{2016}
  \endentry

  \entry{RuWo2011}{article}{}
    \name{author}{2}{}{%
      {{hash=RL}{%
         family={R{\"u}schendorf},
         family_i={R\bibinitperiod},
         given={Ludger},
         given_i={L\bibinitperiod},
      }}%
      {{hash=WV}{%
         family={Wolf},
         family_i={W\bibinitperiod},
         given={Viktor},
         given_i={V\bibinitperiod},
      }}%
    }
    \strng{namehash}{RLWV1}
    \strng{fullhash}{RLWV1}
    \field{labelnamesource}{author}
    \field{labeltitlesource}{title}
    \field{labelalpha}{RW11}
    \field{sortinit}{R}
    \field{sortinithash}{R}
    \verb{doi}
    \verb 10.1524/stnd.2011.1068
    \endverb
    \field{issn}{0721-2631}
    \field{number}{2}
    \field{pages}{151\bibrangedash 168}
    \field{title}{Comparison of {M}arkov processes via infinitesimal
  generators}
    \verb{url}
    \verb http://dx.doi.org/10.1524/stnd.2011.1068
    \endverb
    \field{volume}{28}
    \field{journaltitle}{Statist. Decisions}
    \field{year}{2011}
  \endentry

  \entry{Se72}{article}{}
    \name{author}{1}{}{%
      {{hash=SFD}{%
         family={Sentilles},
         family_i={S\bibinitperiod},
         given={F.\bibnamedelima Dennis},
         given_i={F\bibinitperiod\bibinitdelim D\bibinitperiod},
      }}%
    }
    \strng{namehash}{SFD1}
    \strng{fullhash}{SFD1}
    \field{labelnamesource}{author}
    \field{labeltitlesource}{title}
    \field{labelalpha}{Sen72}
    \field{sortinit}{S}
    \field{sortinithash}{S}
    \field{pages}{311\bibrangedash 336}
    \field{title}{Bounded continuous functions on a completely regular space}
    \field{volume}{168}
    \field{journaltitle}{Trans. Amer. Math. Soc.}
    \field{year}{1972}
  \endentry

  \entry{So15}{inproceedings}{}
    \name{author}{1}{}{%
      {{hash=SA}{%
         family={Sootla},
         family_i={S\bibinitperiod},
         given={Aivar},
         given_i={A\bibinitperiod},
      }}%
    }
    \strng{namehash}{SA1}
    \strng{fullhash}{SA1}
    \field{labelnamesource}{author}
    \field{labeltitlesource}{title}
    \field{labelalpha}{Soo15}
    \field{sortinit}{S}
    \field{sortinithash}{S}
    \field{booktitle}{American Control Conference (ACC), 2015}
    \field{pages}{3144\bibrangedash 3149}
    \field{title}{On monotonicity and propagation of order properties}
    \field{year}{2015}
  \endentry

  \entry{Su72}{article}{}
    \name{author}{1}{}{%
      {{hash=SWH}{%
         family={Summers},
         family_i={S\bibinitperiod},
         given={W.\bibnamedelima H.},
         given_i={W\bibinitperiod\bibinitdelim H\bibinitperiod},
      }}%
    }
    \strng{namehash}{SWH1}
    \strng{fullhash}{SWH1}
    \field{labelnamesource}{author}
    \field{labeltitlesource}{title}
    \field{labelalpha}{Sum72}
    \field{sortinit}{S}
    \field{sortinithash}{S}
    \field{issn}{0002-9939}
    \field{pages}{507\bibrangedash 514}
    \field{title}{Separability in the strict and substrict topologies}
    \field{volume}{35}
    \field{journaltitle}{Proc. Amer. Math. Soc.}
    \field{year}{1972}
  \endentry

  \entry{SV79}{book}{}
    \name{author}{2}{}{%
      {{hash=SDW}{%
         family={Stroock},
         family_i={S\bibinitperiod},
         given={Daniel\bibnamedelima W.},
         given_i={D\bibinitperiod\bibinitdelim W\bibinitperiod},
      }}%
      {{hash=VSRS}{%
         family={Varadhan},
         family_i={V\bibinitperiod},
         given={S.\bibnamedelima R.\bibnamedelima Srinivasa},
         given_i={S\bibinitperiod\bibinitdelim R\bibinitperiod\bibinitdelim
  S\bibinitperiod},
      }}%
    }
    \list{publisher}{1}{%
      {Springer-Verlag, Berlin-New York}%
    }
    \strng{namehash}{SDWVSRS1}
    \strng{fullhash}{SDWVSRS1}
    \field{labelnamesource}{author}
    \field{labeltitlesource}{title}
    \field{labelalpha}{SV79}
    \field{sortinit}{S}
    \field{sortinithash}{S}
    \field{isbn}{3-540-90353-4}
    \field{pages}{xii+338}
    \field{series}{Grundlehren der Mathematischen Wissenschaften [Fundamental
  Principles of Mathematical Sciences]}
    \field{title}{Multidimensional diffusion processes}
    \field{volume}{233}
    \field{year}{1979}
  \endentry

  \entry{We68}{article}{}
    \name{author}{1}{}{%
      {{hash=WJH}{%
         family={Webb},
         family_i={W\bibinitperiod},
         given={J.\bibnamedelima H.},
         given_i={J\bibinitperiod\bibinitdelim H\bibinitperiod},
      }}%
    }
    \strng{namehash}{WJH1}
    \strng{fullhash}{WJH1}
    \field{labelnamesource}{author}
    \field{labeltitlesource}{title}
    \field{labelalpha}{Web68}
    \field{sortinit}{W}
    \field{sortinithash}{W}
    \field{issue}{02}
    \field{pages}{341\bibrangedash 364}
    \field{title}{Sequential convergence in locally convex spaces}
    \field{volume}{64}
    \field{journaltitle}{Mathematical Proceedings of the Cambridge
  Philosophical Society}
    \field{month}{04}
    \field{year}{1968}
  \endentry

  \entry{Wi81}{article}{}
    \name{author}{1}{}{%
      {{hash=WA}{%
         family={Wilansky},
         family_i={W\bibinitperiod},
         given={Albert},
         given_i={A\bibinitperiod},
      }}%
    }
    \list{publisher}{1}{%
      {Hindawi Publishing Corporation, New York, NY}%
    }
    \strng{namehash}{WA1}
    \strng{fullhash}{WA1}
    \field{labelnamesource}{author}
    \field{labeltitlesource}{title}
    \field{labelalpha}{Wil81}
    \field{sortinit}{W}
    \field{sortinithash}{W}
    \verb{doi}
    \verb 10.1155/S0161171281000021
    \endverb
    \field{pages}{39\bibrangedash 53}
    \field{title}{Mazur spaces}
    \field{volume}{4}
    \field{journaltitle}{{Int. J. Math. Math. Sci.}}
    \field{year}{1981}
  \endentry

  \entry{Wi61}{article}{}
    \name{author}{1}{}{%
      {{hash=WA}{%
         family={Wiweger},
         family_i={W\bibinitperiod},
         given={A.},
         given_i={A\bibinitperiod},
      }}%
    }
    \strng{namehash}{WA2}
    \strng{fullhash}{WA2}
    \field{labelnamesource}{author}
    \field{labeltitlesource}{title}
    \field{labelalpha}{Wiw61}
    \field{sortinit}{W}
    \field{sortinithash}{W}
    \field{pages}{47\bibrangedash 68}
    \field{title}{Linear spaces with mixed topology}
    \field{volume}{20}
    \field{journaltitle}{Studia Math.}
    \field{year}{1961}
  \endentry

  \entry{Cr16}{article}{}
    \name{author}{1}{}{%
      {{hash=CD}{%
         family={{Criens}},
         family_i={C\bibinitperiod},
         given={D.},
         given_i={D\bibinitperiod},
      }}%
    }
    \keyw{Mathematics - Probability, 60G51, 60G44}
    \strng{namehash}{CD1}
    \strng{fullhash}{CD1}
    \field{labelnamesource}{author}
    \field{labeltitlesource}{title}
    \field{labelalpha}{{Cri}17}
    \field{sortinit}{{C}}
    \field{sortinithash}{{C}}
    \verb{eprint}
    \verb 1606.04993
    \endverb
    \field{title}{{Monotone and Convex Stochastic Orders for Processes with
  Independent Increments}}
    \field{journaltitle}{ArXiv e-prints}
    \field{eprinttype}{arXiv}
    \field{eprintclass}{math.PR}
    \field{year}{2017}
  \endentry
\endsortlist
\endinput
  \blx@bblend
  \endgroup
  \csnumgdef{blx@labelnumber@\the\c@refsection}{0}}
\makeatother

\begin{document}


\title{A generator approach to stochastic monotonicity and propagation of order}

\author{
\renewcommand{\thefootnote}{\arabic{footnote}}
Richard C. Kraaij  and Moritz Schauer
}

\footnotetext[1]{
Fakultät für Mathematik, Ruhr-University of Bochum, Postfach 102148, 
44721 Bochum, Germany, E-mail: \texttt{richard.kraaij@rub.de},
{\orcidicon\!\url{http://orcid.org/0000-0001-9152-9943}}.
}

\footnotetext[2]{
Mathematical Institute Leiden University, Niels Bohrweg 1, 2333 CA Leiden, The Netherlands, 
E-mail: \texttt{m.r.schauer@math.leidenuniv.nl},\\
{\orcidicon\url{http://orcid.org/0000-0003-3310-7915}}.
}

\maketitle

\begin{abstract}
%

We study stochastic monotonicity and propagation of order for Markov processes with respect to stochastic integral orders characterized by cones of functions satisfying $\Phi f \geq 0$ for some linear operator $\Phi$.

We introduce a new functional analytic technique based on the generator $A$ of a Markov process and its resolvent. We show that the existence of an operator $B$ with positive resolvent such that $\Phi A - B \Phi$ is a positive operator for a large enough class of functions implies stochastic monotonicity. This establishes a technique for proving stochastic monotonicity and propagation of order that can be applied in a wide range of settings including various orders for diffusion processes with or without boundary conditions and orders for discrete interacting particle systems.

\null 
{\textbf{Mathematics Subject Classifications (2010):} 60J35 (primary); 06A06 (secondary)} 

{\textbf{Key words:} Stochastic ordering, Markov processes, Infinitesimal generators, comparison theorems, Feller boundary conditions}
\end{abstract}




\section{Introduction}

A class of functions $\cF_+$ defines an \textit{integral stochastic order}, i.e.~a partial order $\preccurlyeq$ on the space of random variables, by defining $\eta \preccurlyeq \xi$ if 
\[
\forall \, f \in \cF_{+} \colon \qquad  \bE f(\eta) \le \bE f(\xi).
\]
Such orders can characterize different aspects of random variables depending on the class of test functions $\cF_+$. An example of a stochastic order comparing a notion of ``size'' is the usual stochastic order generated by the cone of increasing functions. In contrast, an example for a stochastic order comparing a notion of ``risk'' is the supermodular stochastic order generated by the cone of supermodular functions.

In the theory of Markov processes we are interested in the \textit{propagation of order}: does $\zeta(0) \preccurlyeq \xi(0)$ imply that $\zeta(t) \preccurlyeq \xi(t)$ for later times $t$, in other words are such orderings are preserved in time?
\begin{sloppypar}
To answer this question, we first study the related property of \textit{stochastic monotonicity}; whether the semigroup $\{S(t)\}_{t \geq 0}$ of a single stochastic process ${S(t) f(x) } = {\bE[f(\eta(t)) \, | \, \eta(0) = x]}$ maps $\cF_+$ into itself. In particular, one can show that we have propagation of order for $\zeta$ and $\xi$ if one can find a stochastically monotone process $\eta$ that is in between $\zeta$ and $\xi$ in some appropriate sense. We give a discussion on the literature on the related notions of stochastic monotonicity and propagation of order below.
\end{sloppypar}

To study stochastic monotonicity, we introduce a new functional analytic technique based on an operator $\Phi$ that characterizes the order, the generator $A$ of $\{S(t)\}_{t \geq 0}$ and its resolvent $R(\lambda,A) = (\bONE - \lambda A)^{-1}$. We assume that the elements of the class $\cF_+$ can be characterized as the closure of a smaller class $\cF_{+,0}$ of elements which get mapped into the positive cone of a partially ordered space $F$ by an operator $\Phi \colon  \cD(\Phi) \subseteq C_b(\cX) \rightarrow F$.
The existence of an operator $B$ on $F$ with positive resolvents $R(\lambda,B)$ such that $\Phi A - B \Phi$ maps $\cF_{+,0}$ into the positive cone, leads by formal calculations to 
\begin{equation} \label{eqn:commutation_intro}
\Phi S(t) f \geq \e^{Bt} \Phi f,
\end{equation}
implying that $S(t) \cF_+ \subseteq \cF_+$. In general, \eqref{eqn:commutation_intro} is too much to ask for as $\Phi$ is possibly non-continuous, but the inclusion result is true and only depends on properties of the resolvents and not on those of the semigroups.

	Apart from that the method is based on manipulations based on the resolvent, an object which is generally better behaved than the semigroup, our approach introduces two novel ideas to the study of stochastic monotonicity and propagation of order via infinitesimal operators:
	\begin{enumerate}[(a)]
		\item A proper treatment of the domains of the operators $A$ and $B$,
		\item The use of the strict topology $\beta$ on the space of bounded continuous functions.
	\end{enumerate}
	The treatment of the fine details of the domains of $A$ and $B$ is a key step to widening the scope of applications of the generator based approach. This is illustrated for example by the class of one-dimensional diffusion processes with boundaries. Various processes come with the same type of generator $A$, only differing in the details of the boundary conditions: killing, reflecting, absorbing or combinations of these options. We will indeed see in Section \ref{section:one_d_diffusion} that for one-dimensional diffusion processes, the boundary conditions are of crucial importance when establishing stochastic monotonicity and propagation of order.
	
	\smallskip
	
	Regarding (b): when considering Markov processes on a Polish space $\cX$, it is well known that Markov transition semigroups are usually not strongly continuous with respect to the supremum norm on $C_b(\cX)$, an issue that already turns up for one-dimensional Brownian motion. Instead of working around this issue by various approximation procedures, we will adopt a more natural point of view. We will work with the \textit{strict} topology $\beta$ on $C_b(\cX)$, which we consider to be the natural generalization to the setting of non-compact Polish spaces $\cX$ of the norm topology from the case when $\cX$ is compact.
	
	We treat the strict topology in Appendix \ref{appendix:locally_solid_space}, where we show that a Markov process corresponding to a well-posed martingale problem has a \textit{strongly continuous and locally equi-continuous (SCLE)} transition semigroup, therefore solving the issues raised in \cite[p. 17]{RuScWo2015} for using the strict topology in this setting.
	
	This change allows us to give a global perspective on the generator $A$ which was not possible for semigroups on $C_0(\cX)$, when $\cX$ is only locally compact. For example, we can include a non-trivial class of increasing functions in the domain of a diffusion operator. This is of interest to avoid the situation of proving ${S(t) \cF_+ \subseteq \cF_+}$ while ${\cD(A) \cap \cF_+ = \emptyset}$. To conclude, the strict topology takes care of steps that classically would be treated via local approximation procedures. 

\smallskip

As a consequence of this new approach, our methods are applicable in a wide variety of settings, including those where boundary conditions play a  crucial role. The scope is illustrated by the wide range of Markov processes and orders for which we can prove stochastic monotonicity and propagation of order:  one-dimensional diffusion processes with boundary conditions, multi-dimensional diffusion processes, and discrete interacting particle systems. In particular in the first setting, the setting where boundary conditions play a crucial role, we are able to obtain entirely new results. Also in the other settings, we are able to improve upon known results.

\bigskip

A short review of related literature follows.
\smallskip

\cite{Da68b} introduced the concept of stochastic monotonicity for Markov chains on linearly ordered spaces and obtained first results, for example a comparison theorem. \cite{Da68b} also give an overview over previous (implicit) uses of the idea. In the same direction go \cite{KeKe77}, considering also continuous time Markov chains.

Stochastic monotonicity was generalized to countable partially ordered spaces and stochastic orders  other than the natural one implied by the partial order on the state space by \cite{Ma87}.

The general technique to obtain comparison results for Markov processes based on the stochastic monotonicity of an auxiliary process was recognized by \cite{Da68b}. Even earlier, \cite{Ka1962} obtained similar results. Relatedly \cite{Ma87} obtained comparison results from considerations on the infinitesimal generators.

Stochastic monotonicity results for specific processes related to this work are criteria for attractive particle systems in \cite{Gr79} and criteria for diffusion processes in \cite{HePi91}, both for the usual stochastic order. \cite{ChWa93}, building up on \cite{HePi91}, find sufficient and necessary conditions for diffusion processes to preserve orderedness with respect the usual stochastic order in time.
 
Regarding other stochastic orders, we mention the following results. A stochastic monotonicity result for supermodular functions  for interacting scalar diffusion processes was obtained by \cite{CoFlGr96}. 
\cite{BeRu2006} and \cite{BeRu2007} prove propagation of convexity properties for some semi-martingale models and preservation of various order cones for spatially homogeneous processes and give an extensive characterization of comparison results for semi-martingales which can be derived from stochastic monotonicity properties.
\cite{So15} characterizes monotonicity properties of deterministic ordinary differential equations with respect to different orders.

\cite{DaSz2006} consider dependence orders such as the concordance order and the supermodular order for stochastically monotone Markov processes with partially ordered Polish state spaces using the corresponding infinitesimal generators for continuous-time processes. Lévy processes in connection with the supermodular and concordance order were considered in \cite{BaBlMu2008}.
\cite{Cr16} have results for processes with independent increments based on explicit construction of couplings. 
 
Focusing on comparison results in the situation where stochastic monotonicity of one of the processes was already established, there is  \cite{RuWo2011}  who derive general comparison results for integral orders based on an integral representation for strongly continuous semigroups. In \cite{RuScWo2015} they expand their approach to time-inhomogeneous strongly continuous evolution groups in a similar setting. A corresponding general approach to stochastic monotonicity is lacking.

\bigskip

Our results can be contrasted with the work \cite{HePi91}. There the authors use a similar decomposition of $\grad A f = (B + D) \grad f$ specifically for the generators of multivariate diffusion processes to obtain results with respect to the usual stochastic order which is characterized by functions with positive gradient ($\operatorname{grad}$). By the use of Trotter's theorem \[
\operatorname{grad} S(t) f = \operatorname{strong\;limit}\limits_{n\to \infty} T(t/n) \e^{(t/n) D} \operatorname{grad} f,\]
where $\{T(t)\}_{t \ge 0}$ is the product of strongly continuous positive semigroups on spaces bounded uniformly continuous scalar functions with generator $B$ and $\e^{t D}$ is a positive semigroup generated by a bounded operator on the product space. From this they conclude that $\{S(t)\}$ preserves increasing functions. We work to some degree in reverse order to establish monotonicity directly and derive a statement about the evolution of the gradient of $S_t f$ as consequence (if that is defined), and our result is not specific to diffusion processes and the usual stochastic order.

We can weaken the smoothness assumptions on the diffusion processes and can handle different stochastic orders. We do not need explicit constructions of $T_t$ or $\e^{tD}$ and can weaken the boundedness condition on $\e^{tD}$. Compared to the approximation based arguments in \cite{BeRu2007} on integral orders for semi-martingales our conditions are local (there are for example stochastic monotone smooth diffusions not covered by their arguments). Thus our method is well suited for the case of diffusion processes and can be seen as a generalization of the result of \cite{HePi91}. 
On the other hand, our method is by no means restricted to such processes, which we illustrate by an application of our method to reprove a classical result for discrete interacting particle systems.

Besides being able to handle stochastic monotonicity, our result gives a generator based characterization of propagation of stochastic orderings in a different manner than  \cite{RuWo2011}.

\bigskip
The paper is organised as follows.
The following Section \ref{section:main_results} presents our contribution to the theory of stochastic orderings for Markov processes. First, we present our main result on stochastic monotonicity with respect to an integral order of general Markov processes which give rise to SCLE semigroups. Then we show that preservation of order for Markov processes is equivalent to an abstract monotonicity property of their product semigroups for which our main theorem gives workable sufficient conditions.
Subsequently, we show that also the informal inequality \eqref{eqn:commutation_intro} can be cast as inclusion result of the form $\tilde S(t) \cG_+ \subseteq \cG_+$ on an extended space within the framework of our main theorem. 
In section \ref{section:one_d_diffusion}, we apply our results to give criteria for stochastic monotonicity of real valued diffusion processes under various boundary conditions.
In section \ref{section:Diffusion_processes}, we derive stochastic monotonicity of diffusion processes for a family of integral order using first and second order partial differential operators. In section \ref{section:Interacting_particle_systems}, we show that  our results   imply  a classical stochastic monotonicity result for particle systems. 
The proofs of the results of section \ref{section:main_results} are given in section \ref{section:proofs_main_results}.

In Appendix \ref{appendix:locally_solid_space}, we gather results on locally solid spaces, in Appendix \ref{section:semigroups_on_lcs} we gather some results on SCLE semigroups on locally convex spaces and in Appendix \ref{section:martingale_problem} we define the strict topology and consider the martingale problem for SCLE semigroups for the strict topology.

\section{Main results}\label{section:main_results}

\subsection{Preliminaries}

By $\cX, \cY$ we denote Polish spaces, their elements will be denoted in general by $x \in \cX$ and $y \in \cY$. The Skorokhod space of right-continuous trajectories $\gamma\colon\bR^+ \rightarrow \cX$  with left limits is denoted by $D_\cX(\bR^+)$. Equipped with the Skorokhod topology, cf.~\cite{EK86}, the Skorokhod space is Polish. Locally convex spaces will be denoted by $E,F$, or by  $(E,\tau_E), (F,\tau_F)$
together their topology. Random variables on $\cX,\cY$ will be denoted by Greek letters, $\eta$, $\xi$, et cetera. $\bONE$ is the identity operator.

$C_b(\cX)$ and $C_c(\cX)$ are the sets of bounded functions and of compactly supported functions on $\cX$. The space of functions on a locally compact space $\cX$ vanishing at infinity is denoted by $C_0(\cX)$.  We denote by $M_b(\cX)$ the space of bounded Borel measurable functions on $\cX$.

Next, we consider function spaces on $\bR^d$. For multi-indices $\alpha \in \bN_0^d$, we write $\vn{\alpha}_1 = \sum_{i =1}^d \alpha_i$ and $\vn{\alpha}_\infty = \max_{1 \leq i \leq d} \alpha_i$. We use the multi-index notation for partial derivatives $\partial_{\alpha }=\partial_{1}^{\alpha _{1}}\partial_{2}^{\alpha_{2}}\ldots \partial_{n}^{\alpha _{n}}$, where $\partial_i^{\alpha_i} :=\partial^{\alpha_i} / \partial x_i^{\alpha_i}$. 
The empty product or zeroth power is understood as identity and the null element of $ \bN_0^d$ is denoted by $0$.

We denote by $|z|$ the Euclidian norm of $z \in \bR^d$. For $\delta \in (0,1]$ and $u \in C_b(\bR^d)$, we write
\begin{equation*}
\vn{u}_\delta := \sup_{\substack{x,y \in \bR \\ x \neq y}} \frac{|u(x) - u(y)|}{|x-y|^\delta}
\end{equation*}
for the H\"{o}lder norm of order $\delta$. For $k \in \bN$, and $k$ times continuously differentiable functions $u$, we write
\begin{equation*}
\vn{u}_k := \max_{\alpha \colon \vn{\alpha}_\infty \leq k} \vn{\partial_{\alpha} u}, \qquad \vn{u}_{k,\delta} \mathrel{:}= \vn{u}_k + \max_{\alpha \colon \vn{\alpha}_\infty \leq k} \vn{\partial_{\alpha} u}_\delta.
\end{equation*}
$C^k_b(\bR^d)$ and $C^\infty_b(\bR^d)$ are the spaces of functions for which derivatives up to order $k$, respectively all derivatives, are continuous and bounded. $C^{k,\delta}_b(\bR^d)$ is the space of those functions in $C^k_b(\bR^d)$ whose $k$-th derivatives have finite H\"{o}lder norm of order $\delta$. We write $C^\infty_c(\bR^d) :=  C_c(\bR^d) \cap C^\infty_b(\bR^d)$.

Our main example for a locally convex space will be $(E,\tau) = (C_b(\cX),\beta)$, where $\beta$ is the locally convex \textit{strict} topology, see Appendix~\ref{section:strict_topology} on page~\pageref{section:strict_topology}. This topology, when considering sequences, satisfies $f_n \stackrel{\beta}{\rightarrow} f$ if and only if $\sup_n \vn{f_n} < \infty$ and $f_n \rightarrow f$ uniformly on compact sets. As a topology, however, it satisfies in addition many desirable properties like Stone-Weierstrass, Arzel\`{a}-Ascoli, Dini's theorem, a Riesz-representation et cetera.
\smallskip

In our paper, we will connect two basic concepts: Feller processes and integral stochastic orders. We first introduce the latter.
\begin{definition}[Integral stochastic order]
Let $\cG_+ \subseteq C_b(\cX)$ and let $\eta,\xi$ be random variables on $\cX$. We say that $\eta \preccurlyeq \xi$ if for all $f \in \cG_+$ we have
\begin{equation*}
\bE f(\eta) \leq \bE f(\xi).
\end{equation*}
We call $\cG_+$ a generating cone for the integral order.
\end{definition}

Note that $\cG_+$ and the strict closure $\overline{\cG_+}$ induce the same integral order. Also the convex hull of a generating cone generates the same order. For more discussion on various cones generating the same order, see e.g.~Chapter 2 in \cite{MuSt02}.

\smallskip
For our discussion of orders in relation to Feller processes and their generators, we specifically consider orders that are generated by an operator. 
	
Thus, let $E,F$ be locally convex spaces. We will say that $T$ is an operator from $E$ to $F$ if $T \subseteq E\times F$. We write 
\begin{equation*}
\cD(T) := \left\{f \, \middle| \, \exists g\colon  (f,g) \in T  \right\}, \qquad \cR(T) := \left\{g \, \middle| \, \exists f\colon (f,g) \in T  \right\}
\end{equation*}
for the domain and range of $T$. As our operators $T$ are in general single valued, we write $Tf := g$ if $(f,g) \in T$. If we say that $T$ is linear, it is understood that the domain is a linear space on which $T$ acts linearly.

Next consider a locally convex space $(E,\tau_E)$ and a vector space $F$ that is equipped a partial order $\leq$ and positive cone $F_+ := \left\{g \in F \, | \, g \geq 0 \right\}$. Let $\Phi \subseteq E \times F$ be a linear operator.

\begin{definition} \label{definition:positive_cone_in_terms_of_Delta}
	Set $\cF_{+,0} := \left\{f \in E \cap \cD(\Phi) \, \middle| \, \Phi f \geq 0 \right\}$ and set $\cF_+ = \overline{\cF_{+,0}}$ (the $\tau_E$-closure). We say that $f$ is `positive' (non-negative) if $f \in \cF_+$. If $(E,\tau) = (C_b(\cX),\beta)$, we say that $\Phi$ generates the stochastic order corresponding to $\cF_+$.
\end{definition}

\begin{remark}
	If $E$ itself is also equipped with a natural order $\leq$, we will denote the cone generated by this order by $E_+$. It is then good to note that $E_+$ and $\cF_+$, the cone induced by $\Phi$ and $F_+$ are not necessarily the same. This happens for example if $E = C_b(\bR) = F$ and $f \leq g$ if $f(x) \leq g(x)$ for all $x \in \bR$. If $\Phi \colon C^1_b(\bR) \rightarrow C_b(\bR)$ is given by $\Phi f(x) = f'(x)$, then $E_+$ consists of the positive functions, whereas $\cF_+ \subseteq E$ consists of the non-decreasing functions.
\end{remark}

Next, we turn to the definition of Feller processes. To do so, we first introduce a suitable notion of an operator semigroup on a locally convex space.

\begin{definition}
	A family of continuous operators $\{S(t)\}_{t \geq 0}$ on a locally convex space $E$, i.e.\ $S(t)\colon E \rightarrow E$ is called a strongly continuous locally equi-continuous (SCLE) semigroup if
	\begin{enumerate}[(a)]
		\item $S(t)S(r) = S(t+r)$,
		\item the family is strongly continuous: the map $t \mapsto S(t)x$ is continuous for each $x \in E$,
		\item the family is locally equi-continuous: for each $T > 0$, the family $\{S(t)\}_{t \leq T}$ is equi-continuous.
	\end{enumerate}
\end{definition}

In Appendix~\ref{section:semigroups_on_lcs} on page~\pageref{section:semigroups_on_lcs} we also define dissipativity, the range condition and extensions of operators and introduce further related concepts.

We can now introduce our notion of a Feller process.

\begin{definition}
	A process $\eta$ on the Skorokhod space $D_\cX(\bR^+)$ is a Markov process with respect to its natural filtration $\cF_t := {\sigma(\eta(s) \, | \, s \leq t)}$ if
	\begin{equation*}
	\bE[\eta(t) \, | \, \cF_s] = \bE[\eta(t) \, | \, \eta(s)].
	\end{equation*}
	
	We say that $\eta$ is a Feller process if it is Markov and if the semigroup of conditional expectations
	\begin{equation*}
	S(t)f(x) := \bE[f(\eta(t)) \, | \, \eta(0) = x]
	\end{equation*}
	is a SCLE semigroup for the strict topology on $C_b(\cX)$.
\end{definition}

Feller processes are usually defined on locally compact spaces, where the defining property is that the transition semigroup is strongly continuous for the supremum norm on the space of functions that vanish at infinity. In Appendix \ref{section:semigroup_on_locally_compact_space}, we show that the both definitions coincide in this particular setting showing that the above definition is a reasonable extension to the setting of Polish spaces. In addition, in Section \ref{section:martingale_problem_SCLE}, we show that solutions to well-posed martingale problems yield Feller processes, further extending the credibility of the definition above.

\smallskip

To use functional analytic techniques, we need assumptions on our locally convex spaces $E$ and $F$  and on the orders defined on them.  We introduce terminology that connects orders and topology, and additionally introduce a class of locally convex spaces that share some of the properties of Banach spaces. 

The strict topology $\beta$ on $C_b(\cX)$ for a Polish space $\cX$ and the point-wise order $\leq$ satisfy all conditions that we will introduce below.

\begin{definition}
	We say that $(F,\leq)$ is a Riesz space if the suprema and infima of any finite collection of elements in $F$ exists.
	
	\smallskip

    A subset $A \subseteq F$ is \textit{solid} if $|f| \leq |g|$ and $g \in A$ implies that $f \in A$.  $(F,\tau_F,\leq)$ is a \emph{locally convex-solid space} if $(F,\leq)$ is a Riesz space and if $(F,\tau_F)$ is a topological vector space which has a basis of closed, convex and solid neighbourhoods of $0$. 
	
\end{definition}

See Appendix \ref{appendix:locally_solid_space} for a slightly extended discussion on Riesz spaces and locally  convex-solid spaces, or see Section 6 in \cite{AB78}. The next definition and its use for semigroup theory is discussed in Appendix \ref{section:semigroups_on_lcs}.

\begin{definition} \label{def:condB_condC_intro}
A locally convex space $(E,\tau_E)$ also equipped with a norm $\vn{\cdot}_E$, denoted by $(E,\tau_E,\vn{\cdot}_E)$ satisfies \textit{Condition C} if 
\begin{enumerate}[(a)]
\item $\tau_E$ is weaker than the norm topology.
\item Both topologies have the same bounded sets.
\item[(c)] $(E,\tau_E)$ is sequentially complete.
\item[(d)] $\cN$ is countably convex: for any sequence $p_n$ of $\tau_E$ continuous semi-norms satisfying $p_n(\cdot) \leq \vn{\cdot}_E$ and $\alpha_n \geq 0$ such that $\sum_n \alpha_n = 1$, we have that $p(\cdot) := \sum_n \alpha_n p_n(\cdot)$ is $\tau_E$ continuous.
\end{enumerate}

\end{definition}

It should be noted that any Banach space satisfies Condition $C$.

\smallskip

Our main results naturally fall into two classes, we start with the propagation of monotonicity, which will be followed by results on the propagation of order in Section~\ref{section:preservation_of_order}.

\subsection{Stochastic monotonicity} \label{section:propagation_of_monotonicity}

Our main result establishes conditions that suffice to show that a space of functions $\cF_+$ is preserved under the evolution of a semigroup $\{S(t)\}_{t \geq 0}$. We refer to this property as \emph{monotonicity} of $\{S(t)\}_{t \geq 0}$ (with respect to $\cF_+$) or \emph{stochastic monotonicity} when referring to a semigroup of expectation operators. It implies that the corresponding Markov process $X$ is a $\cF_+$-monotone process in the sense of \cite{Ma87}.

The setting is summarized in the following assumption. 

\begin{assumption} \label{assumption:main_assumption}
Let $(E,\tau_E)$ and $(F,\tau_F)$ be locally convex spaces, which are additionally equipped with norms $\vn{\cdot}_E, \vn{\cdot}_F$ so that $(E,\tau_E,\vn{\cdot}_E),(F,\tau_F,\vn{\cdot}_F),$ satisfy Condition C. Additionally, suppose that $F$ is equipped with a partial order $\leq$. The following is given:
\begin{enumerate}[(a)]
\item A locally convex-solid space $(F,\tau,\leq)$.
\item A linear semigroup $\{S(t)\}_{t \geq 0}$, SCLE for $\tau_E$ with linear generator $A  \subseteq E \times E$. The resolvents $R(\lambda,A)$ are well-defined as $R(\lambda,A) := (\bONE - \lambda A)^{-1}$ for all $\lambda >0$.
\item A linear operator $\Phi \subseteq E \times F$ and the cones $\cF_{+,0} \subseteq E$ and $\cF_+ \subseteq E$ generated by $\Phi$.
\item A linear operator $B \subseteq F \times F$ such that for each $\lambda > 0$ the resolvent $R(\lambda,B) := (\bONE- \lambda B)^{-1}$ is well-defined as a positive continuous linear operator. 
\end{enumerate}
\end{assumption}

\begin{theorem} \label{theorem:main_theorem}
Let Assumption \ref{assumption:main_assumption} be satisfied. In addition to $A$ and $B$, suppose there is a third positive and $\tau_F$ continuous linear operator $C  \colon F \rightarrow F$. Let $\cD$ be a set such that $\cD \subseteq \cD(B \Phi) \cap \cD(C \Phi) \cap \cD(\Phi A)$ and suppose 
\begin{enumerate}[(a)]
\item for all $f \in \cD$ we have $\Phi Af = B \Phi f + C \Phi f$,
\item $\cD \cap \cF_{+,0}$ is $\tau_E$ dense in $\cF_{+,0}$,
\item for sufficiently small $\lambda > 0$, we have $R(\lambda,A)\cD \subseteq \cD$.
\end{enumerate}
Then for all $t \geq 0$, we have: if $g \in \cF_+$, then $S(t)g \in \cF_+$.
\end{theorem}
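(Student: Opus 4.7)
The plan is to reduce the claim about the semigroup $\{S(t)\}_{t \geq 0}$ to one about its resolvents $R(\lambda, A)$, and then to derive, via the commutation identity (a), a Neumann-type representation for $\Phi R(\lambda, A) f$ consisting of positive terms. The first step invokes the exponential formula for SCLE semigroups,
\[
S(t) f = \lim_{n \to \infty} R(t/n, A)^n f \quad \text{in } \tau_E,
\]
provided by the SCLE semigroup theory in the appendix. Combined with the facts that $\cF_+$ is $\tau_E$-closed (as a $\tau_E$-closure) and stable under any positive operator mapping it into itself, this reduces the theorem to the assertion that $R(\lambda, A) \cF_+ \subseteq \cF_+$ for every sufficiently small $\lambda > 0$, where ``sufficiently small'' is dictated by (c).

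For such $\lambda$, I would use the $\tau_E$-continuity of $R(\lambda, A)$, the definition $\cF_+ = \overline{\cF_{+,0}}^{\tau_E}$, and the density in (b) to reduce further to showing $\Phi R(\lambda, A) f \geq 0$ for each $f \in \cD \cap \cF_{+,0}$. Fix such $f$ and put $g := R(\lambda, A) f$; then $g \in \cD$ by (c), so (a) can be applied to $g$. Applying $\Phi$ to $(\bONE - \lambda A) g = f$ and invoking (a) gives $(\bONE - \lambda B) \Phi g = \Phi f + \lambda C \Phi g$, which, after applying the positive operator $R(\lambda, B)$, yields the fixed-point identity
\[
\Phi g = R(\lambda, B) \Phi f + \lambda R(\lambda, B) C \, \Phi g.
\]
Setting $K := \lambda R(\lambda, B) C$ and iterating $N$ times produces
\[
\Phi g = \sum_{k=0}^{N-1} K^k R(\lambda, B) \Phi f + K^N \Phi g,
\]
in which every summand $K^k R(\lambda, B) \Phi f$ lies in $F_+$ by positivity of $R(\lambda, B)$ and $C$ together with $\Phi f \geq 0$.

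The theorem then follows once the remainder $K^N \Phi g$ is shown to converge to $0$ in $\tau_F$ as $N \to \infty$: $\Phi g$ will be a $\tau_F$-limit of positive elements, and since $F_+$ is $\tau_F$-closed by the local convex-solidity of $F$, this forces $\Phi g \geq 0$, so that $g \in \cF_{+,0} \subseteq \cF_+$. The principal obstacle is precisely this convergence. My plan is to invoke Condition C: parts (a)-(b) imply that $\tau_F$-continuous linear operators are norm-continuous on $F$, yielding finite operator norms for $R(\lambda, B)$ and $C$. A resolvent-identity argument then controls $\lambda \|R(\lambda, B)\|_{\mathrm{op}}$ as $\lambda \to 0^+$, giving $\|K\|_{\mathrm{op}} < 1$ for $\lambda$ small and hence norm-convergence of the Neumann series; sequential completeness and the countable convexity in Condition C(c)-(d) then transfer this to $\tau_F$. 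The delicate point is that we only know $B$ is resolvent-positive with $\tau_F$-continuous resolvents, without $B$ necessarily generating a semigroup, so the required quantitative estimate has to be extracted from the resolvent data alone, making Condition C the essential hinge of the argument.
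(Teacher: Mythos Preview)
Your approach is essentially the paper's: it too reduces to the resolvent (via Yosida approximation $S(t)g=\lim_n e^{-tn}\sum_k\frac{(tn)^k}{k!}R(n^{-1},A)^k g$ rather than the Euler product, but with the same effect), and for the key step it factors $(\bONE-\lambda B-\lambda C)=(\bONE-\lambda C R(\lambda,B))(\bONE-\lambda B)$ and inverts the first factor by the Neumann series $U_\lambda=\sum_k(\lambda C R(\lambda,B))^k$, which is exactly your fixed-point iteration with $K=\lambda R(\lambda,B)C$ after pulling one $R(\lambda,B)$ through. The ``delicate point'' you flag is handled in the paper (Lemma~\ref{lemma:existence_Ulambda}) just as you propose---Condition~C turns $\tau_F$-continuity of $R(\lambda,B)$ and $C$ into finite operator norms and one takes $\lambda$ small enough that $\lambda\vn{C}_F\vn{R(\lambda,B)}_F<1$---so your plan matches the paper both in structure and in level of detail.
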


\begin{remark} \label{remark:remarks_main_theorem}
Condition (a) is the main condition of interest, whereas (b) and (c) are of technical nature. In Section \ref{section:proofs_main_results} we will prove a slightly more general result that allows for non-continuous $C$ and variations in the conditions (b) and (c). It is sometimes convenient to rephrase condition (a) in terms of the graphs of the operators:
\begin{enumerate}
\item[(a')] For all $(f,g) \in A$ with $f \in \cD$, we have $(\Phi f, \Phi g) \in B + C$.
\end{enumerate}
\end{remark}

\begin{remark}
We will introduce a method that can be used to verify condition (b) in Section \ref{section:approximation_operators} below.
\end{remark}

\begin{remark}
We remark that condition (c) is satisfied in a range of well known examples. Consider for example the setting that $(E,\tau_E) = (C_b(\bR^d),\beta)$ and where the operator $A$ is a diffusion operator having a smooth drift function and smooth diffusion matrix. If the space $\cD$ contains all smooth bounded functions then it follows from general theory that $R(\lambda,A) \cD \subseteq \cD$.
\end{remark}

Before proceeding, we introduce one example that can be kept in mind and which illustrates difficulties that appear when dealing with the domains of $A,B,C$ and $\Phi$.

\begin{example} \label{example:one_d_convex_order}
Consider on $\cX = [0,1]$ the operator
\begin{equation*}
\tilde{A}f(x) = \frac{1}{2}f''(x) + b(x) f'(x),
\end{equation*}
where $b$ is a function that is smooth on a neighbourhood of $[0,1]$ in $\bR$ and $b(l) \geq 0$ and $b(r) \leq 0$. As both boundaries of $[0,1]$ are regular, we can impose various kinds of boundary conditions on $\tilde{A}$ to obtain an operator $A$ that generates a diffusion process.

Suppose we consider the convex order induced by $\Phi(f) = f''$. Calculating $\Phi \tilde{A} f$ for a four times continuously differentiable function $f$ yields
\begin{equation*}
\Phi \tilde{A} f(x) = \frac{1}{2}f^{(4)}(x) + b(x) f^{(3)}(x) + 2 b'(x) f''(x) + b''(x) f'(x).
\end{equation*}
In context of Theorem~\ref{theorem:main_theorem} this suggests to impose the condition $b'' = 0$ and for $B$ to use a subset of  
\begin{equation*}
\tilde{B}f(x)  = \frac{1}{2}f''(x) + b(x) f'(x) +  2b'(x) f(x), 
\end{equation*}
i.e.~a diffusion operator with killing, and $C = 0$.

\smallskip

The major issue for the application of Theorem \ref{theorem:main_theorem} here, in contrast to the diffusion setting on $\bR^d$, as in e.g.~\cite{HePi91}, is that it is not obvious that there exist $A$, $B$, $C$ such that there is a sufficiently rich set $\cD \subseteq \cD(\Phi A) \cap \cD(B \Phi) \cap \cD(C \Phi)$. In other words, we need to find $A$, $B$, $C$ such that $\Phi$ maps the domain of an operator $A$ into that of operators B,C.

\smallskip

Feller boundary conditions for $\tilde{A}$ include conditions on $f(0),f'(0),f''(0)$ and $f(1),f'(1),f''(1)$. As $\Phi f = f''$, we restrict ourselves to the setting where
\begin{equation*}
\cD(A) := \left\{f \in C^2[0,1] \, \middle| \, f''(0) = f''(1) = 0 \right\},
\end{equation*}
which, in more familiar terms can be expressed as
\begin{equation*}
\cD(A) := \left\{f \in C^2[0,1] \, \middle| \, Af(0) = b(0)f'(0), Af(1) = b(1)f'(1) \right\},
\end{equation*}
which is the generator of Brownian motion with sticky or absorbing boundary conditions because $b(l) \geq 0$ and $b(r) \leq 0$, see Remark \ref{remark:sign_of_drift_for_second_order} below.

We are then in the setting of our theorem with $B$ a subset of $\tilde{B}$ with Dirichlet boundary conditions:
\begin{equation*}
\cD(B) := \left\{f \in C^2[0,1] \, \middle| \, f(0) = f(1) = 0 \right\}.
\end{equation*}

It then follows that $\cD := \cD(A) \, \cap \, C^4[0,1] \subseteq \cD(\Phi A) \, \cap \, \cD(B \Phi) \, \cap \, \cD(C \Phi)$  on which $\Phi A = B \Phi + C \Phi$. In addition, this domain is sufficiently large to also prove (b) and (c) of Theorem \ref{theorem:main_theorem}. We consider variants of this example in Section \ref{section:one_d_diffusion}.
\end{example}

To obtain results also in the setting where the above method does not immediately apply, e.g.\ in the case when the drift function is not smooth, we also consider an approximation result, comparable with Proposition 5.1 in \cite{HePi91}. Because $\cF_{+}$ is closed, the property that the set $\cF_+$ gets mapped into itself is stable under the convergence of semigroups. 

\begin{lemma} \label{lemma:approximation}
Let Assumption \ref{assumption:main_assumption} be satisfied. Suppose that we have a sequence of SCLE semigroups $\{S_n(t)\}_{t \geq 0}$, $n \geq 1$ and an SCLE semigroup $\{S(t)\}_{t \geq 0}$ on $E$ such that
\begin{enumerate}[(a)]
\item $S_n(t)f \rightarrow S(t)f$ for $\tau_E$ for all $t \geq 0$ and $f \in E$,
\item $S_n(t) \cF_+ \subseteq \cF_+$.
\end{enumerate}
Then we have that $S(t) \cF_+ \subseteq \cF_+$ for all $t \geq 0$.
\end{lemma}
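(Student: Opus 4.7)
The approach is essentially a direct closure argument, since the assumption has been set up so that $\mathcal{F}_+$ is by construction $\tau_E$-closed: recall from Definition \ref{definition:positive_cone_in_terms_of_Delta} that $\mathcal{F}_+ = \overline{\mathcal{F}_{+,0}}$ where the closure is with respect to $\tau_E$. So the plan is to fix $t \geq 0$ and $f \in \mathcal{F}_+$, observe from hypothesis (b) that the net (indexed by $n$) $S_n(t) f$ lies in $\mathcal{F}_+$, use hypothesis (a) to get $S_n(t) f \to S(t) f$ in $\tau_E$, and then appeal to the $\tau_E$-closedness of $\mathcal{F}_+$ to conclude $S(t) f \in \mathcal{F}_+$.

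There is essentially no obstacle here: the statement is robust precisely because $\mathcal{F}_+$ was defined as a closure. One only has to verify that the implicit bookkeeping is consistent, namely that $S_n(t) f$ really is defined (which follows because $\{S_n(t)\}_{t \geq 0}$ is an SCLE semigroup on all of $E$, so $S_n(t)\colon E \to E$), and that the limit operation can be exchanged with membership in $\mathcal{F}_+$ (which is exactly topological closedness). I would write the proof in two or three sentences, not invoking anything beyond the definition of $\mathcal{F}_+$ and the two hypotheses of the lemma.

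It is worth noting what is \emph{not} needed: no properties of the generators $A_n, A$ associated to $\{S_n(t)\}, \{S(t)\}$, no convergence of resolvents, no appeal to Assumption \ref{assumption:main_assumption}(c) or (d) in a nontrivial way, and no role for the operators $\Phi$, $B$, $C$. This is in line with the remark preceding the lemma: preservation of $\mathcal{F}_+$ is a closed property, so it passes automatically to limits of semigroups. The content of the lemma is really that Theorem \ref{theorem:main_theorem} can be combined with an approximation scheme, and this lemma provides the trivial but essential justification for that combination.
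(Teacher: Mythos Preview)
Your proposal is correct and matches the paper's own proof essentially line for line: fix $f \in \cF_+$ and $t \geq 0$, use (b) to get $S_n(t)f \in \cF_+$ for all $n$, and then use (a) together with the $\tau_E$-closedness of $\cF_+$ to conclude $S(t)f \in \cF_+$. Your observation that no further structure (generators, resolvents, $\Phi$, $B$, $C$) is needed is also in line with the paper's presentation.
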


\begin{proof}
Pick $f \in \cF_+$ and $t \geq 0$. For all  $n \geq 1$, we have by (b) that $S_n(t)f \in \cF_+$. Because $\cF_+$ is closed for $\tau_E$ topology, it follows by (a) that $S(t)f \in \cF_+$.
\end{proof}

\begin{remark}
	In the setting $(E,\tau_E) = (C_b(\cX),\beta)$ for a Polish space $\cX$ convergence of semigroups in the strict topology can be established via the martingale problem method, see e.g.~\cite{SV79}, and its connection to the strict topology, c.f.~Appendix \ref{section:martingale_problem}. Analogously, one can proceed via Trotter-Kato approximation type theorems, see for example Theorem 1.6.9 in \cite{EK86}.
\end{remark}

The lemma and our main theorem give the following immediate corollary. It is particularly useful in a setting where the verification of (c) of Theorem \ref{theorem:main_theorem} poses difficulties. 

\begin{corollary} \label{corollary:main}
Suppose we have semigroups $\{S_n(t)\}_{t \geq 0}$ with generators $A_n \subseteq E \times E$ for $n \geq 1$ such that there are operators $B_n \subseteq F \times F, \, C_n \subseteq F \times F$ that satisfy the conditions of Theorem \ref{theorem:main_theorem}. Suppose additionally that $S_n(t)f \rightarrow S(t)f$ for $\tau_E$ for all $t \geq 0$ and $f \in E$.

\smallskip

Then for $g \in \cF_+$ also $S(t)g \in \cF_+$ for all $t \geq 0$.
\end{corollary}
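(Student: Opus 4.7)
The statement is essentially a direct concatenation of Theorem \ref{theorem:main_theorem} with Lemma \ref{lemma:approximation}, so my plan is simply to chain the two together. First, I would fix an arbitrary $n \geq 1$ and invoke Theorem \ref{theorem:main_theorem} with the triple $(A_n, B_n, C_n)$, which is permissible by the assumption that this triple satisfies conditions (a)--(c) of that theorem. This yields $S_n(t) \cF_+ \subseteq \cF_+$ for every $t \geq 0$. Since $n$ was arbitrary, hypothesis (b) of Lemma \ref{lemma:approximation} is in force.

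Second, the assumed $\tau_E$-convergence $S_n(t)f \to S(t)f$ for each $f \in E$ and each $t \geq 0$ is exactly hypothesis (a) of Lemma \ref{lemma:approximation}. A direct application of that lemma then delivers $S(t) \cF_+ \subseteq \cF_+$, which is the claim. There is no real obstacle to overcome: the only subtlety is checking that the hypotheses align verbatim, and in particular that Assumption \ref{assumption:main_assumption} on the space structure $(E,F,\Phi,\leq)$ is common to both the theorem and the lemma, so no compatibility issues arise between the cones $\cF_+$ used at the two stages. Accordingly, the proof will be a three- or four-line argument with no calculation, simply naming the two references and observing that their hypotheses are met.
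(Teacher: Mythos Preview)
Your proposal is correct and matches the paper's approach exactly: the paper states the corollary as an immediate consequence of Theorem~\ref{theorem:main_theorem} and Lemma~\ref{lemma:approximation} without giving a separate proof, and your argument spells out precisely this two-step combination.
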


\begin{remark}
We do not focus on necessary conditions in this paper, but note that an argument used by \cite{HePi91} to obtain necessary conditions for monotonicity for diffusion semigroups generalizes to some extend.
Assume that $F$ is of the form 
 $F = C_b(\cY, \bR^n)$.
Their argument applies if 
$\Phi$ is a spatial derivative operator which commutes with taking the time derivative in $\frac{\partial }{\partial t} S(t) f = A f$. If for $y \in \cY$, $f \in \cD$
\[
(\Phi A f)(y) = \lim_{t \to 0} \frac{(\Phi S_t f - \Phi f)(y)}{t}
\]
holds, then monotonicity, i.e.\ $\{S(t)\}_{t\ge0}$ preserves $\cF_{+}$, implies
\[
(\Phi f)(y) = 0 \quad \Longrightarrow \quad (\Phi A f)(y) \ge 0
\]
for $f \in \cD \cap \cF_{+,0}$. At least in the context of \cite{HePi91} this equation applied to skilfully chosen test functions together with our main theorem completely characterises monotone diffusions with smooth coefficients.
\end{remark}

\subsection{A lower bound for monotone semigroups} \label{section:lower_bound}

Next, we turn our attention back to the point raised in the introduction, namely when $\Phi A - B \Phi \geq 0$ implies the stronger claim $\Phi S(t)f \geq \e^{Bt} \Phi f$.
With minimal additional assumptions, we can derive such a strengthening of Theorem \ref{theorem:main_theorem} almost as a corollary.

Let $\Upsilon\colon E \times F \rightarrow F^2$ with domain $\cD(\Upsilon) := \cD(\Phi) \times F$ be given in matrix form by
\begin{equation} \label{eqn:def_Delta_lower_bound}
\Upsilon = \begin{pmatrix} \Phi &-\bONE\\ 0 & \bONE \end{pmatrix}.
\end{equation}
The cone $\cG_{+,0}$ generated by $\Upsilon$ is given by pairs of functions $(f,h) \in \cD(\Upsilon)$ that satisfy $\Phi f \geq h \geq 0$ in $F$.

In addition to the assumption that we have a SCLE semigroup  $\{S(t)\}_{t \geq 0}$ on the space $E$ with generator $A$, we now also assume that there is a positive SCLE semigroup $\{T(t)\}_{t \geq 0}$ on $F$ with generator $B$. It follows that the product semigroup $\cS(t) = S(t) \times T(t)$ is SCLE on the cartesian product $E \times F$. 

If we assume that $\Phi A f = (B+C)\Phi f$, then we obtain
\begin{equation*}
\Upsilon
\underbrace{
\begin{pmatrix} A &  0 \\ 0 & B  \end{pmatrix}
}_{\displaystyle  =:\; \cA} \begin{pmatrix} f\\h
\end{pmatrix}
= 
\underbrace{\begin{pmatrix}
B & 0 \\ 0 & B
\end{pmatrix}}_{\displaystyle =:\;\cB} \Upsilon \begin{pmatrix} f \\h
\end{pmatrix} 
+ 
\underbrace{
\begin{pmatrix}
C & C \\ 0 & 0 
\end{pmatrix}}_{\displaystyle =:\; \cC} \Upsilon \begin{pmatrix} f \\ h
\end{pmatrix},
\end{equation*}
which puts us effectively in the situation of Theorem \ref{theorem:main_theorem} as the operator $\cB$ is resolvent positive and $\cC$ is positive.

\begin{remark}
Note that if $F$ is locally solid and satisfies Condition C of Definition \ref{def:condB_condC_intro} in Appendix \ref{section:semigroups_on_lcs} then the statements that $B$ is resolvent positive and that $\{T(t)\}_{t \geq 0}$ is positive are equivalent.
\end{remark}

	Imposing analogues of Assumptions (b) and (c) of Theorem \ref{theorem:main_theorem} on the domain of the operator $B$ suffices to establish a commutation result for the semigroups, effectively bootstrapping Theorem \ref{theorem:main_theorem} to strengthen itself.
	
\begin{theorem}\label{theorem:lower_bound}
	Consider a positive SCLE semigroup $\{S(t)\}_{t \geq 0}$ on the space $(E,\tau_E,\vn{\cdot}_E)$ with generator $A$ and let Assumption \ref{assumption:main_assumption} be satisfied.
	
	Additionally, suppose that $B$ is the generator of a positive SCLE semigroup $\{T(t)\}_{t \geq 0}$ on $F$. Let  $\cD^* \subseteq F$ be such that $\Phi \cD \subseteq \cD^* \subseteq \cD(B) \cap \cD(C)$.
	Suppose that
	\begin{enumerate}[(a)]
		\item the conditions of Theorem \ref{theorem:main_theorem} are satisfied,		\item $(\cD \times \cD^*) \cap \cG_{+,0}$ is $\tau_E \times \tau_F$ dense in $\cG_{+,0}$,
		\item for sufficiently small $\lambda > 0$, we have $R(\lambda,B)\cD^* \subseteq \cD^*$.
	\end{enumerate}
	Then we have for all $t \geq 0$ that $(S(t)\times T(t)) \cG_+ \subseteq \cG_+$. In particular, if $f \in \cF_{+,0}$ and $S(t)f \in \cF_{+,0}$, then we have $\Phi S(t)f \geq T(t) \Phi f \geq 0$.
\end{theorem}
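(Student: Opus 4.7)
The plan is to apply Theorem~\ref{theorem:main_theorem} to the lifted data on the product space: the product semigroup $\cS(t) = S(t)\times T(t)$ on $E\times F$ with generator $\cA$, the operator $\Upsilon$ in place of $\Phi$, the diagonal operator $\cB$ in place of $B$, and the operator $\cC$ in place of $C$. The commutation $\Upsilon\cA = \cB\Upsilon + \cC\Upsilon$ on $\cD\times\cD^*$ has already been verified in the matrix computation preceding the theorem statement, using $\Phi A f = (B+C)\Phi f$ on $\cD$ together with $\Phi\cD \subseteq \cD^* \subseteq \cD(B)\cap\cD(C)$.

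First I will verify that the product framework fits Assumption~\ref{assumption:main_assumption}: equip $E\times F$ with the product topology and the norm $\max(\vn{\cdot}_E,\vn{\cdot}_F)$, and $F\times F$ with the coordinatewise order and the induced product locally convex-solid structure; both Condition~C and the locally convex-solid property pass to finite products. The product semigroup $\cS(t)$ is SCLE (as already noted in the paragraph defining $\Upsilon$), and its generator is $\cA$ with domain $\cD(A)\times\cD(B)$ and resolvent $R(\lambda,\cA) = R(\lambda,A)\times R(\lambda,B)$. Similarly $R(\lambda,\cB) = R(\lambda,B)\times R(\lambda,B)$ is a positive continuous operator because $B$ generates the positive SCLE semigroup $\{T(t)\}_{t\ge 0}$ on the locally solid space $F$ (using the equivalence of positivity and resolvent positivity recorded in the remark). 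Finally $\cC$ is positive and $\tau_F\times\tau_F$ continuous because $C$ is.

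The three hypotheses of Theorem~\ref{theorem:main_theorem} then translate directly. Hypothesis~(a) is the commutation above. Hypothesis~(b) is precisely the density assumption~(b) of the present theorem. Hypothesis~(c) decouples: $R(\lambda,\cA)(\cD\times\cD^*) = R(\lambda,A)\cD \times R(\lambda,B)\cD^* \subseteq \cD\times\cD^*$ by Theorem~\ref{theorem:main_theorem}(c) applied to the original data together with hypothesis~(c) of the present theorem. Theorem~\ref{theorem:main_theorem} therefore yields $\cS(t)\cG_+ \subseteq \cG_+$ for all $t\ge 0$. For the in-particular statement, given $f\in\cF_{+,0}$ with $S(t)f\in\cF_{+,0}$, take $h=\Phi f$; then $\Upsilon(f,h)=(0,\Phi f)\ge 0$, so $(f,h)\in\cG_{+,0}\subseteq\cG_+$, hence $\cS(t)(f,h)=(S(t)f,T(t)\Phi f)\in\cG_+$. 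Since $S(t)f\in\cF_{+,0}\subseteq\cD(\Phi)$, this pair lies in $\cG_{+,0}$, and unpacking $\Upsilon$ gives $\Phi S(t)f \ge T(t)\Phi f \ge 0$.

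The main obstacle is the bookkeeping at the very start, namely checking that Condition~C and the locally convex-solid property are stable under finite products, and that the product semigroup's generator, domains and resolvents really do factor as $\cA = \mathrm{diag}(A,B)$, $R(\lambda,\cA) = R(\lambda,A)\times R(\lambda,B)$. Once the lifted data is set up cleanly, the argument is a mechanical rewriting of Theorem~\ref{theorem:main_theorem} in the $2\times 2$ matrix setting of $\Upsilon$, $\cA$, $\cB$, $\cC$.
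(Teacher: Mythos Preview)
Your proposal is correct and follows essentially the same route as the paper: apply Theorem~\ref{theorem:main_theorem} to the product semigroup $\cS(t)=S(t)\times T(t)$ with $\Upsilon,\cB,\cC$ and $\cD\times\cD^*$ playing the roles of $\Phi,B,C,\cD$, noting that (a) is the matrix computation preceding the statement, (b) is hypothesis~(b), and (c) decouples via the product resolvent. Your write-up is in fact more explicit than the paper's (which dispatches the ``in particular'' clause as ``immediate''); the only step to watch is the passage from $(S(t)f,T(t)\Phi f)\in\cG_+$ to the pointwise inequality $\Phi S(t)f\ge T(t)\Phi f$, which uses that $S(t)f\in\cD(\Phi)$ but also tacitly that membership in $\cG_+$ together with domain membership recovers $\cG_{+,0}$.
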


\subsection{Preservation of order and the comparison of two semigroups} \label{section:preservation_of_order}

We proceed with our results on the preservation of orderedness. Consider two processes $\eta^{(1)}(t),\eta^{(2)}(t)$ on some space $\cX$ with generators $A^{(1)},A^{(2)}$ on $(E,\tau_E) := (C_b(\cX),\beta)$ and an integral stochastic order $\preccurlyeq$ generated by some operator $\Phi$ with cone $\cF_+$. 

Our goal is to obtain conditions under which $\eta^{(1)}(0) \leq \eta^{(2)}(0)$ implies $\eta^{(1)}(t) \preccurlyeq \eta^{(2)}(t)$ for all $t \geq 0$. To motivate the main theorem of this section, we first start by a well-known method, using a third auxiliary process $\eta(t)$ with generator $A$, of which the semigroup $\{S(t)\}_{t \geq 0}$ preserves monotonicity. Suppose that 
\begin{enumerate}[(a)]
	\item $S(t) \cF_{+} \subseteq \cF_+$ for all $t \geq 0$ ,
	\item and $(A^{(1)} - A)f \leq 0$ and $(A^{(2)} - A)f \geq 0$ for all $f \in \cF_+ \cap \cD(A^{(1)}) \cap \cD(A^{(2)}) \cap \cD(A)$.
\end{enumerate}
We then obtain for  $f \in \cF_{+} \cap \cD(A) \cap \cD(A^{(1)}) \cap \cD(A^{(2)})$ , assuming perhaps too optimistically that $S(r)f \in  \cF_{+} \cap \cD(A) \cap \cD(A^{(1)}) \cap \cD(A^{(2)})$  for $r \geq 0$, that 
\begin{align*}
S^{(2)}(t)f - S^{(1)}(t)f & = \left(S^{(2)}(t)f - S(t)f\right) + \left(S(t)f - S^{(1)}(t)f\right) \\
& = \int_0^t \left(S^{(2)}(t-r)\left(A^{(2)} - A \right) S(r)f \right.\\&\qquad\quad \left.- S^{(1)}(t-r)\left(A^{(1)} - A \right) S(r)f \right) \dd r \\
& \geq 0.
\end{align*}

This establishes the preservation of order for $\eta^{(1)}$ and $\eta^{(2)}$ started jointly at a fixed point $x = \eta^{(1)}(0) = \eta^{(2)}$. Some more work would allow to also vary the starting point, cf.~Corollary \ref{corollary:preservation_of_order} below.

	Note that if our optimistic assumption is satisfied, this argument can be made rigorous by interpreting this integral as a Riemann integral for the strict topology. We give an alternative method to obtain the positivity of $S^{(2)}(t)f-S^{(1)}(t)f$  by exploiting a deeper connection to Theorem \ref{theorem:main_theorem}, which additionally also removes the issues regarding the assumptions on $S(t) f$.
	
	\begin{remark}
		As the result below obtains its strength from the resolvent formalism of Section \ref{section:propagation_of_monotonicity}, it is restricted to time-homogeneous Markov processes, whereas the sketched integral methods can be extended to time-inhomogeneous Markov processes, see \cite{RuScWo2015} for the context of $C_0$-evolution systems in Banach spaces.
	\end{remark}

Similar to the integral method above, our last theorem will consider a semigroup in the middle. The direct comparison, however, is replaced by the use of Theorem \ref{theorem:main_theorem}.

Let $i \in \{1,2\}$. Let $\{S^{(i)}(t)\}_{t \geq 0}$ and $\{S(t)\}_{t \geq 0}$ be SCLE semigroups on $(C_b(\cX),\beta)$ with generators $A^{(i)}$ and $A$. We will assume that 
\begin{enumerate}[(a)]
	\item the operator $A$ satisfies $\Phi A =B\Phi + C\Phi$ as in Theorem \ref{theorem:main_theorem},
	\item there is a positive continuous operator $C^{(i)} \subseteq F \times E$ and a constant $i \in \{1,2\}$ such that $(A^{(i)} - A)f = (-1)^i C^{(i)} \Phi f$. 
\end{enumerate}

Arguing in the spirit of Section \ref{section:lower_bound}, we define $\Upsilon \subseteq E^2 \times \left(F \times E\right)$, $\cD(\Upsilon) = \cD(\Phi)^2$ by $\Upsilon(f,h) = (\Phi f, (-1)^i(h-f))$. We see that
\begin{equation*}
\Upsilon
\underbrace{
	\begin{pmatrix} A & 0 \\ 0 & A^{(i)}
	\end{pmatrix}
}_{\displaystyle  =:\;\cA} \begin{pmatrix} f \\ h
\end{pmatrix}
= 
\underbrace{\begin{pmatrix}
	B & 0 \\ 0 & A^{(i)}
	\end{pmatrix}}_{\displaystyle  =:\;\cB} \Upsilon \begin{pmatrix} f \\ h
\end{pmatrix} 
+ 
\underbrace{
	\begin{pmatrix}
	C & 0 \\  C^{(i)} &0
	\end{pmatrix}}_{\displaystyle =:\; \cC} \Upsilon \begin{pmatrix} f \\ h
\end{pmatrix}
\end{equation*}
where $\cB$ is resolvent positive and $\cC$ is positive, which puts us back in the setting of $\Phi A = B \Phi + C \Phi$ but in a higher dimensional space. We can similarly extend our semigroup and generating cones. In particular, we see that $\Upsilon$ generates the cone 
\begin{equation*}
\cH_{+,0}^{(i)} := \left\{(f,h) \in E^2 \, \middle| \, f \in \cF_{+,0}, \, (-1)^i f \leq (-1)^i h \right\}.
\end{equation*}
Let $\cH_+^{(i)}$ be the closure of $\cH_{+,0}^{(i)}$ and set $\{\cS(t)\}_{t \geq 0} = \{(S(t), S^{(i)}(t))\}_{t \geq 0}$. Note that $(f,f) \in \cH_+^{(i)}$ if $f \in \cF_+$. If the conditions for Theorem \ref{theorem:main_theorem} are satisfied for the semigroup $\{S(t)\}_{t \geq 0}$ and additionally, the semigroup $\{S^{(i)}(t)\}_{t \geq 0}$ is positive for $F_+$, then the only difficulty in the application of Theorem \ref{theorem:main_theorem} for the coupled semigroup is the verification of Condition (b). This is the result of the following theorem.

\begin{theorem}\label{theorem:comparison}
	Fix $i \in \{1,2\}$. Consider two SCLE positive semigroups $\{S(t)\}_{t \geq 0}$ and $\{S^{(i)}(t)\}_{t \geq 0}$ on the space $(E,\tau_E) = (C_b(\cX),\beta)$ with generators $A$ and $A^{(i)}$. Let Assumption \ref{assumption:main_assumption}  be satisfied for $A$. In addition to $A,  A^{(i)}$ and $B$, suppose there are operators $C \subseteq F \times F$, $C^{(i)} \subseteq F \times E$ that are positive and continuous.  
	
	Let $\cD \subseteq \cD(B \Phi) \cap \cD(C \Phi) \cap \cD(\Phi A)$ and  $\cD^* \subseteq E$  a linear space $\cD  \subseteq  \cD^* \subseteq  \cD(A^{(i)}) \cap \cD(\Phi)$. Suppose that for all $f \in \cD$ we have 
	\begin{enumerate}
	\item[(a)]
	\begin{equation} \label{eqn:theorem_comparison_assumptions}
	\Phi A f = (B+C) \Phi f, \quad \, (A^{(i)} - A)f = (-1)^i C^{(i)} \Phi f.
	\end{equation}
	\end{enumerate}
	Suppose finally that
	\begin{enumerate}
	\item[(b)]
	 $(\cD \times \cD^*) \cap \cH_{+,0}^{(i)}$ is strictly dense in $\cH_{+,0}^{(i)}$,
	 \item[(c)]  $R(\lambda,\cA)\, \left( \cD\times \cD^*\right) \subseteq \cD\times \cD^*$. 
	\end{enumerate}
	\smallskip
	
	Then, for all $t \geq 0$ and $(f,h) \in \cH_+^{(i)}$, we have $(S(t)f,S^{(i)}(t)h) \in \cH_+^{(i)}$. In particular, if $f \in \cF_+$ and $t \geq 0$, then $(-1)^{(i)} S(t) f \leq (-1)^{(i)} S^{(i)}(t)f$. 
\end{theorem}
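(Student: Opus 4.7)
The strategy is to reduce Theorem \ref{theorem:comparison} to an application of Theorem \ref{theorem:main_theorem} for the coupled semigroup $\cS(t) := S(t) \times S^{(i)}(t)$ on the product space $E \times E$, with $\Upsilon$ in the role of $\Phi$. The block-matrix diagram displayed immediately before the theorem has already identified the relevant operators $\cA = A \oplus A^{(i)}$, the block-diagonal $\cB$ with entries $B$ and $A^{(i)}$, and the positive perturbation $\cC$ with entries $C$ and $C^{(i)}$; what remains is to verify that each hypothesis of Theorem \ref{theorem:main_theorem} transfers correctly to this product setting.

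First I would equip $E \times E$ and $F \times E$ with product topology, componentwise order and maximum norm. Since $F$ is locally convex-solid by Assumption \ref{assumption:main_assumption}(a) and $E = C_b(\cX)$ with the strict topology is locally convex-solid (Appendix \ref{appendix:locally_solid_space}), the product $F \times E$ inherits a locally convex-solid structure, and Condition C is preserved under finite products. The coupled semigroup $\cS(t)$ is SCLE on $E \times E$ with generator $\cA$; its resolvent factorizes as $R(\lambda, \cA) = R(\lambda, A) \oplus R(\lambda, A^{(i)})$, and $\cB$ is resolvent positive, using Assumption \ref{assumption:main_assumption}(d) for $B$ and positivity of $\{S^{(i)}(t)\}_{t\geq 0}$, which on a locally convex-solid space gives resolvent positivity of $A^{(i)}$.

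The key computation is the commutation $\Upsilon \cA(f,h) = \cB \Upsilon(f,h) + \cC \Upsilon(f,h)$ on $\cD \times \cD^*$. The first component reduces to $\Phi Af = (B + C)\Phi f$, the first half of \eqref{eqn:theorem_comparison_assumptions}. The second component expands to $(-1)^i(A^{(i)}h - Af) = (-1)^i A^{(i)}(h-f) + C^{(i)}\Phi f$, which after cancellation becomes $(-1)^i(A^{(i)} - A)f = C^{(i)}\Phi f$, equivalent to the second half of \eqref{eqn:theorem_comparison_assumptions}. The inclusion $\cD \subseteq \cD^* \subseteq \cD(A^{(i)})$ is precisely what is needed so that $A^{(i)}(h-f)$ is defined on $\cD \times \cD^*$, which is the reason $\cD^*$ was introduced in the hypotheses.

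Hypotheses (b) and (c) of Theorem \ref{theorem:comparison} are by design the direct translations to the product setting of hypotheses (b) and (c) of Theorem \ref{theorem:main_theorem}. Invoking the main theorem yields $\cS(t)\cH_+^{(i)} \subseteq \cH_+^{(i)}$ for all $t \geq 0$. For the last claim, if $f \in \cF_+$ then $(f,f) \in \cH_+^{(i)}$: choose a net $f_\alpha \to f$ in $\tau_E$ with $f_\alpha \in \cF_{+,0}$, so $(f_\alpha, f_\alpha) \in \cH_{+,0}^{(i)}$ converges to $(f,f)$ in the product topology and the diagonal element lies in the closure $\cH_+^{(i)}$. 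Applying the inclusion gives $(S(t)f, S^{(i)}(t)f) \in \cH_+^{(i)}$, which unpacks to $(-1)^i S(t)f \leq (-1)^i S^{(i)}(t)f$. The main obstacle is the second-component bookkeeping in the commutation identity and the matching of the domains $\cD \subseteq \cD^*$; once these are in place everything descends from Theorem \ref{theorem:main_theorem}.
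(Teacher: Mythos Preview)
Your proposal is correct and follows exactly the approach of the paper: reduce to Theorem \ref{theorem:main_theorem} applied to the coupled semigroup $\cS(t)=S(t)\times S^{(i)}(t)$ with $\Upsilon$ in place of $\Phi$, $\cH_{+,0}^{(i)}$ in place of $\cF_{+,0}$, and $\cD\times\cD^*$ in place of $\cD$. The paper's own proof is only two sentences to this effect; your version simply spells out the verifications (product structure, resolvent positivity of $\cB$, the second-component commutation identity, and the diagonal inclusion $(f,f)\in\cH_+^{(i)}$) that the paper leaves implicit.
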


\begin{remark}
	Note that one of the conditions of Theorem \ref{theorem:comparison} is that there is a sufficiently large intersection of the domains of $\cD(A)$ and $\cD(A^{(i)})$. This implies that we need to assume the same boundary conditions for both operators. We are unsure if this condition is essential.
\end{remark}

The main application of this result is the following important corollary that establishes the preservation of stochastic order under the evolution of two Markov processes.

\begin{corollary} \label{corollary:preservation_of_order}
	Let $\{\eta^{(1)}(t)\}_{t \geq 0}$ and $\{\eta^{(2)}(t)\}_{t \geq 0}$ be Feller processes on $E$ with semigroups $\{S^{(1)}(t)\}_{t \geq 0}$ and $\{S^{(2)}(t)\}_{t \geq 0}$. Suppose we can find a third semigroup $\{S(t)\}_{t \geq 0}$ so that all conditions for Theorem \ref{theorem:comparison} are satisfied for the two couples of semigroups
	\begin{enumerate}[(a)]
		\item $S^{(1)}$ and $S$ with $i = 1$, i.e.~$(A^{(1)} - A)f = - C^{(1)} \Phi f$ for a positive continuous operator $C^{(1)} \subseteq E \times F$,
		\item $S^{(2)}$ and $S$ with $i = 2$, i.e.~$(A^{(2)} - A)f =  C^{(2)} \Phi f$ for a positive continuous operator $C^{(2)} \subseteq E \times F$.
	\end{enumerate}
	If $\eta^{(1)}(0) \preccurlyeq \eta^{(2)}(0)$, then $\eta^{(1)}(t) \preccurlyeq \eta^{(2)}(t)$ for all $t \geq 0$.
\end{corollary}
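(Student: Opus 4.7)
The strategy is to chain the conclusions of Theorem \ref{theorem:comparison} twice by sandwiching $S(t)f$ between $S^{(1)}(t)f$ and $S^{(2)}(t)f$, and then exploit that $S(t)$ itself preserves $\cF_+$. More precisely, fix $t \geq 0$ and $f \in \cF_+$; I will aim for the pointwise inequality
\begin{equation*}
S^{(1)}(t)f \;\leq\; S(t)f \;\leq\; S^{(2)}(t)f \qquad \text{in } C_b(\cX).
\end{equation*}
The right inequality is exactly the content of Theorem \ref{theorem:comparison} applied with $i=2$ to the pair $(S, S^{(2)})$, using hypothesis (b) of the corollary. The left inequality is Theorem \ref{theorem:comparison} applied with $i=1$ to the pair $(S, S^{(1)})$, using hypothesis (a); note that the factor $(-1)^i$ in the conclusion of Theorem \ref{theorem:comparison} automatically flips the inequality in the right direction.

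Next, I invoke Theorem \ref{theorem:main_theorem} for the auxiliary semigroup $\{S(t)\}_{t \geq 0}$: its hypotheses $\Phi A = B\Phi + C\Phi$ on a suitable domain $\cD$, together with conditions (b), (c) of Theorem \ref{theorem:main_theorem}, are contained in the hypotheses of Theorem \ref{theorem:comparison}, so $S(t)\cF_+ \subseteq \cF_+$. In particular, $S(t)f \in \cF_+$.

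With the sandwich and $S(t)f \in \cF_+$ in hand, I combine them with the defining property of the integral order $\preccurlyeq$. Taking expectations with respect to the initial laws gives
\begin{equation*}
\bE f(\eta^{(1)}(t)) = \bE\bigl[S^{(1)}(t)f(\eta^{(1)}(0))\bigr] \leq \bE\bigl[S(t)f(\eta^{(1)}(0))\bigr],
\end{equation*}
by the pointwise bound $S^{(1)}(t)f \leq S(t)f$. Since $S(t)f \in \cF_+$ and $\eta^{(1)}(0) \preccurlyeq \eta^{(2)}(0)$, the expectation on the right is bounded by $\bE[S(t)f(\eta^{(2)}(0))]$, and a final application of $S(t)f \leq S^{(2)}(t)f$ produces $\bE[S^{(2)}(t)f(\eta^{(2)}(0))] = \bE f(\eta^{(2)}(t))$. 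As $f \in \cF_+$ was arbitrary, this yields $\eta^{(1)}(t) \preccurlyeq \eta^{(2)}(t)$.

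There is no real obstacle in the argument beyond verifying that the hypotheses of Theorem \ref{theorem:comparison} for the two couplings indeed feed into Theorem \ref{theorem:main_theorem} for the middle semigroup $S$, so that $S(t)\cF_+ \subseteq \cF_+$ is available: this is the key ingredient that allows the integral order on the initial conditions to be transported through the auxiliary process. Everything else is a straightforward chaining of pointwise inequalities with monotonicity of expectations.
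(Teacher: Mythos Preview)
Your proposal is correct and follows essentially the same route as the paper: establish the sandwich $S^{(1)}(t)f \leq S(t)f \leq S^{(2)}(t)f$ via the two applications of Theorem~\ref{theorem:comparison}, use $S(t)f \in \cF_+$ to pass the initial ordering through the middle semigroup, and chain the resulting inequalities on expectations. The only cosmetic difference is that the paper reads off $S(t)f \in \cF_+$ directly from the conclusion of Theorem~\ref{theorem:comparison} (the first component of any pair in $\cH_+^{(i)}$ lies in $\cF_+$), whereas you obtain it by observing that the hypotheses of Theorem~\ref{theorem:comparison} contain those of Theorem~\ref{theorem:main_theorem}; both are valid.
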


\begin{remark} \label{remark:restriction_of_Upsilon}
Note that the domain of $\Upsilon$ was taken to be $\cD(\Phi)^2$, whereas $D(\Phi) \times E$ would have sufficed. We restrict ourselves to this setting as it implies the result that we would like to obtain (i.e.~for $f \in \cF_+$, we have $(f,f) \in \cH_+^{(i)}$), while making the verification of condition (b) of Theorem \ref{theorem:comparison} via Proposition \ref{proposition:core_double_approximation_operators} below slightly easier. 

By straightforward approximation arguments the result of the theorem can be extended for an extended domain of $\Upsilon$.
\end{remark}

\subsection{Approximation operators} \label{section:approximation_operators}

For condition (b) of Theorems \ref{theorem:main_theorem}, Theorem \ref{theorem:lower_bound} and Theorem \ref{theorem:comparison}, we need to find sequences or nets of functions in the domain $\cD$ intersected with $\cF_{+,0}$ that approximate any function in $\cF_{+,0}$. 
Consider the following setting.
\begin{assumption} \label{assumption:approx_operators}
Let $(E,\tau_E)$ and $(F,\tau_F)$ be locally convex spaces, which are additionally equipped with norms $\vn{\cdot}_E, \vn{\cdot}_F$ so that $(E,\tau_E,\vn{\cdot}_E),(F,\tau_F,\vn{\cdot}_F),$ satisfy Condition C. Additionally, suppose that $F$ is equipped with a partial order $\leq$.

Let $\Phi \subseteq E \times F$ and set $\cF_{+,0} := \{f \in E\cap\cD(\Phi) \, | \, \Phi f \geq 0\}$. Finally, let $\cD$ be a subspace of $E$.
\end{assumption}

\begin{definition} \label{definition:approximation_operators}
Let Assumption \ref{assumption:approx_operators} be satisfied. A family of linear operators $T_n \colon E \rightarrow E$, $n \geq 1$ is a family of \textit{approximation operators} for $(\Phi, \cF_{+,0},\cD)$ if
\begin{enumerate}[(a)]
\item $T_n  \cF_{+,0} \subseteq \cF_{+,0}$ for all $n$,
\item $T_n  \cD(\Phi) \subseteq \cD$ for all $n$,
\item the family $\{T_n\}_{n \geq 1}$ is (strictly) strongly continuous: for all $f \in E$, we have that $T_n f \rightarrow f$ for the strict topology.
\end{enumerate}
We say that the family $\{T_n\}_{n \geq 1}$ is continuous if
\begin{enumerate}[(a)]
\item[(d)] for each $n$ the operator $T_n$ is strictly continuous.
\end{enumerate}

Finally, we say that the family $T_n$ is positive if 
\begin{enumerate}[(a)]
\item[(e)] $f \leq g$ implies that $T_n f \leq T_n g$ for all $n$.
\end{enumerate}
\end{definition}

The next two results can be used to establish condition (b) of the three Theorems \ref{theorem:main_theorem}, \ref{theorem:lower_bound} and \ref{theorem:comparison}.

\begin{proposition} \label{proposition:core_approximation_operators}
Let Assumption \ref{assumption:approx_operators} be satisfied. Let $\{T_n\}$ be approximation operators for $(\Phi,\cF_{+,0},\cD)$, then $\cD \cap \cF_{+,0}$ is strictly dense in $\cF_{+,0}$.
\end{proposition}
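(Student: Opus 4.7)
The proof should be essentially immediate from the definition of approximation operators, so my plan is to simply unpack the definitions and apply them in sequence. Let me fix an arbitrary $f \in \cF_{+,0}$ and aim to produce a net (in fact a sequence) in $\cD \cap \cF_{+,0}$ that converges to $f$ for the strict topology.

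The natural candidate is the sequence $\{T_n f\}_{n \geq 1}$ itself. To show this sequence lies in $\cD \cap \cF_{+,0}$, I would argue as follows. Since $\cF_{+,0} = \{g \in E \cap \cD(\Phi) \, | \, \Phi g \geq 0\}$, the element $f$ belongs in particular to $\cD(\Phi)$, so by property (b) of Definition \ref{definition:approximation_operators}, we have $T_n f \in \cD$. On the other hand, since $f \in \cF_{+,0}$, property (a) yields $T_n f \in \cF_{+,0}$. Together this gives $T_n f \in \cD \cap \cF_{+,0}$ for every $n$.

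To conclude, property (c) tells us that $T_n f \to f$ in the strict topology, so $f$ lies in the strict closure of $\cD \cap \cF_{+,0}$. Since $f \in \cF_{+,0}$ was arbitrary, this proves that $\cD \cap \cF_{+,0}$ is strictly dense in $\cF_{+,0}$. I do not anticipate any obstacle here; the content of the proposition lies entirely in the definition of approximation operators, which was crafted precisely so that this density would follow directly. The real work will happen in the companion statement (Proposition \ref{proposition:core_double_approximation_operators}), and in actually constructing families of approximation operators in concrete examples.
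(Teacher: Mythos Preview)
Your proof is correct and follows exactly the same approach as the paper: pick $f \in \cF_{+,0} \subseteq \cD(\Phi)$, use properties (a) and (b) to get $T_n f \in \cD \cap \cF_{+,0}$, and use property (c) for strict convergence. The paper's proof is essentially a condensed version of what you wrote.
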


\begin{corollary}\label{corollary:core_approximation_operators}
Let  $\cD^* \subseteq F$ be such that $\Phi \cD \subseteq \cD^*$ and $\Upsilon$ and $\cG_{+,0}$ as in Theorem~\ref{theorem:lower_bound}.
Let  $\{T_n\}$ be approximation operators for $(\Phi,\cF_{+,0},\cD)$ and 
 $\{T'_n\}$ be approximation operators for $(\bONE,F_+,\cD^\star)$ such that for $(f,h) \in \cD(\Upsilon)$
 \begin{equation}\label{Phi_positive}
 \text{if $\Phi f \ge h \ge 0$ then $\Phi T_n f \ge T'_n h \ge 0$.}
 \end{equation}
 Then $\{\tilde T_n\}$ with $\tilde T_n = (T_n, T'_n)$ are approximation operators for $(\Upsilon,\cG_{+,0},\cD\times\cD^*)$.
\end{corollary}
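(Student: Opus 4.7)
The plan is to verify, one by one, the three defining conditions (a), (b), (c) of Definition \ref{definition:approximation_operators} for the pair $(\tilde T_n)_n$, where $\tilde T_n(f,h) = (T_n f, T'_n h)$, with $(\Upsilon, \cG_{+,0}, \cD \times \cD^*)$ in place of $(\Phi, \cF_{+,0}, \cD)$. Since $\tilde T_n$ acts diagonally on $E \times F$ and the two ingredients $T_n$ and $T'_n$ already satisfy these properties in their respective settings, each verification should reduce to a short bookkeeping argument; the only substantive content sits in property (a), where the hypothesis \eqref{Phi_positive} is specifically designed to handle the coupling between the two factors.

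For property (a), I would take $(f,h) \in \cG_{+,0}$, i.e.\ $(f,h) \in \cD(\Phi) \times F$ with $\Phi f \geq h \geq 0$. By assumption \eqref{Phi_positive} this forces $\Phi T_n f \geq T'_n h \geq 0$, which is precisely the statement that $\tilde T_n(f,h) \in \cG_{+,0}$. For property (b), recall that $\cD(\Upsilon) = \cD(\Phi) \times F$. If $(f,h)$ lies in this set, then $T_n f \in \cD$ by Definition \ref{definition:approximation_operators}(b) applied to $\{T_n\}$, while $T'_n h \in \cD^*$ by Definition \ref{definition:approximation_operators}(b) applied to $\{T'_n\}$ (noting that $\cD(\bONE) = F$). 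Hence $\tilde T_n(f,h) \in \cD \times \cD^*$.

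For property (c), let $(f,h) \in E \times F$. The (strict) strong continuity of $\{T_n\}$ gives $T_n f \to f$ in $\tau_E$, and the (strict) strong continuity of $\{T'_n\}$ gives $T'_n h \to h$ in $\tau_F$. Since the locally convex product topology on $E \times F$ is generated by pairs of $\tau_E$- and $\tau_F$-continuous seminorms, these two convergences combine to give $\tilde T_n(f,h) \to (f,h)$ in the product topology, which is the strict topology used throughout for $E\times F$.

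I do not expect a genuine obstacle here: the statement is essentially structural, as it simply transports approximation properties through a diagonal product. The only moment where something non-trivial enters is the compatibility hypothesis \eqref{Phi_positive}, which is precisely what one needs to align $T_n$ and $T'_n$ on the cone $\cG_{+,0}$ cut out by the off-diagonal operator $\Upsilon$; without it, preservation of $\cF_{+,0}$ by $T_n$ and of $F_+$ by $T'_n$ would not imply preservation of the stronger condition $\Phi f \geq h \geq 0$.
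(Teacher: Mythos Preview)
Your proof is correct and follows essentially the same approach as the paper: both verify properties (a), (b), (c) of Definition \ref{definition:approximation_operators} directly, with property (a) relying on \eqref{Phi_positive} to get $\Upsilon \tilde T_n(f,h) = (\Phi T_n f - T'_n h, T'_n h) \geq 0$, and properties (b) and (c) reducing componentwise to the corresponding properties of $\{T_n\}$ and $\{T'_n\}$.
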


\begin{proposition} \label{proposition:core_double_approximation_operators}
Let Assumption \ref{assumption:approx_operators} be satisfied. 


Let $\{T_n\}$ be a positive continuous family of approximation operators for $(\Phi,\cF_{+,0},\cD)$. Let $i \in \{1,2\}$. Consider the cone
\begin{equation*}
\cH_{+,0}^{(i)} := \left\{(f,h) \in E^2 \, \middle| \, f, h \in \cD(\Phi), \, \Phi f \ge 0, \,  (-1)^i f \leq (-1)^i h \right\}.
\end{equation*}
Then $\cD^2 \cap \cH_{+,0}^{(i)}$ is strictly dense in $\cH_{+,0}^{(i)}$.
\end{proposition}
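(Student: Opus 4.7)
The plan is to take any pair $(f,h) \in \cH_{+,0}^{(i)}$ and apply the approximation operators componentwise, forming the sequence $(T_n f, T_n h)$ in $E^2$. I need to verify three things: (i) this sequence lies in $\cD^2$, (ii) it lies in $\cH_{+,0}^{(i)}$, and (iii) it converges to $(f,h)$ in the product strict topology.

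For (i), by the defining property of $\cH_{+,0}^{(i)}$ both $f$ and $h$ are in $\cD(\Phi)$, so property (b) of Definition~\ref{definition:approximation_operators} immediately gives $T_n f, T_n h \in \cD$. For (iii), property (c) gives $T_n f \to f$ and $T_n h \to h$ in the strict topology, which is convergence in the product topology on $E^2$.

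The substance of the proof lies in (ii), which splits into two parts. First, since $f \in \cF_{+,0}$, property (a) gives $T_n f \in \cF_{+,0}$, i.e.~$\Phi T_n f \geq 0$. Second, I need to show $(-1)^i T_n f \leq (-1)^i T_n h$ in $E$. The inequality $(-1)^i f \leq (-1)^i h$ holds by hypothesis; applying the positivity property (e) of the approximation operators to the pair $(-1)^i f, (-1)^i h \in E$, together with linearity of $T_n$, yields $(-1)^i T_n f = T_n((-1)^i f) \leq T_n((-1)^i h) = (-1)^i T_n h$, as required.

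The only subtle point — and the main place where one must be careful — is that membership in $\cH_{+,0}^{(i)}$ also requires $T_n h \in \cD(\Phi)$. This is not stated explicitly as an axiom of approximation operators, but the standing convention in the paper (visible already in Proposition~\ref{proposition:core_approximation_operators}, whose conclusion $\cD \cap \cF_{+,0}$ presupposes $\cD \subseteq \cD(\Phi)$ to be meaningful in tandem with property (b)) is that $\cD \subseteq \cD(\Phi)$, so $T_n h \in \cD \subseteq \cD(\Phi)$. With this understood, $(T_n f, T_n h) \in \cD^2 \cap \cH_{+,0}^{(i)}$, and the proof concludes.
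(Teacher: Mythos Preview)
Your proof is correct and follows essentially the same approach as the paper's: apply $T_n$ componentwise, use property (b) to land in $\cD^2$, property (a) for $\Phi T_n f \geq 0$, positivity (e) for the order inequality, and property (c) for convergence. Your careful remark that $T_n h \in \cD \subseteq \cD(\Phi)$ relies on $h \in \cD(\Phi)$ is exactly the point the paper flags in its Remark following the proof.
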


The proofs of the results in this section can be found in Section \ref{section:proofs_main_results}.

\section{One dimensional diffusion processes with Feller boundary conditions} \label{section:one_d_diffusion}

A main new application of our results extend on Example \ref{example:one_d_convex_order}. We will consider diffusion processes on subintervals of $\bR$ that have at least one boundary point. Before studying stochastic order properties of these diffusion processes, we start with a short discussion on boundary conditions in Section \ref{section:1d_diffusion_boundary} and convenient regularity conditions in Section \ref{section:1d_diffusion_regularity}.

Afterwards, in Sections \ref{section:1d_diffusion_increasing_order}, \ref{section:1d_diffusion_convex_order} and \ref{section:1d_diffusion_increasing_convex_order_interval}, we will study the increasing, convex, and the increasing convex order, induced by the operators $f \mapsto f'$, $f \mapsto f''$ and $f \mapsto (f',f'')$. In the proofs of the main results in these sections we use several classes of approximation operators. We post-pone their analysis to Section \ref{section:1d_diffusion_approximation_operators}.

\subsection{Boundary conditions} \label{section:1d_diffusion_boundary}

For one-dimensional diffusion processes we consider operators that are appropriate subsets of 
\begin{equation} \label{eqn:1d_diffusion_operator}
\tilde{L}f(x) = \frac{1}{2} a(x) f''(x) + b(x)f'(x) - c(x) f(x),
\end{equation}
where $a,b,c$ are continuous on an open interval $I \subseteq \bR$ and $c \geq 0$. In particular, if considering intervals with a boundary, we need to specify what happens if a process is at the boundary. These boundary conditions need to be consistent with whether or not the process can enter or exit at the boundary point. These issues are best addressed in terms of the speed and scale and killing measure.

\begin{definition}
	Fix some element $z \in I$, which serves as a base point for integration but otherwise serves no significant role. First define 
	\begin{equation*}
	\Lambda(x) = \int_z^x \frac{2b(y)}{a(y)} \dd y,
	\end{equation*}
	and define the scale measure $s$, speed measure $m$ and killing measure $k$ on $I$ by
	\begin{equation*}
	s(x) = \int_z^x \e^{-\Lambda(y)} \dd y, \quad m(x) = \int_z^x \frac{2 \e^{\Lambda(y)}}{a(y)} \dd y, \quad k(x) = \int_z^x \frac{2 c(y) \e^{\Lambda(y)}}{a(y)} \dd y.
	\end{equation*}
\end{definition}

Note that $s$, $m$, $k$ defined here are functions. The associated measures are given on intervals by
\begin{equation*}
s((a,b)) = s(b) - s(a), \quad m((a,b)) = m(b) - m(a), \quad k((a,b)) = k(b) - k(a).
\end{equation*}
Note that in our setting   the measures $s,m,k$ are absolutely continuous on $I$ with respect to the Lebesgue measure  with densities
\begin{equation*}
\frac{\dd s}{\dd x}(x) = \e^{-\Lambda(x)}, \qquad \frac{\dd m}{\dd x}(x) = \frac{2 \e^{\Lambda(x)}}{a(x)}, \qquad  \frac{\dd k}{\dd x}(x) = \frac{2 c(x) \e^{\Lambda(x)}}{a(x)}.
\end{equation*} 
The behaviour of a diffusion process can now be studied in terms of $s$ and $m$. In particular, if $X(t)$ is the diffusion process we are considering, $t \mapsto s(X(t))$ is a local martingale. The density $\frac{\dd}{\dd x} m(x)$ determines how much time the process spends at a location $x$, whereas $\frac{\dd}{\dd x}k(x)$ determines the killing rate.

If $c = 0$ the diffusion is conservative on $I$ and we find that
\begin{equation*}
\tilde{L}f(x) = \frac{1}{2} a(x) f''(x) + b(x)f'(x) = \frac{\dd}{\dd m(x)} \frac{\dd}{\dd s(x)} f(x).
\end{equation*}

 In addition, $s$, $m$ and $k$ determine whether the process can reach, or leave, a boundary. To be specific, define the auxiliary functions
\begin{align*}
u(x) = \int\limits_{(z,x)} \left(m((z,y)) + k((z,y)) \right) s(\dd y), \quad v(x) = \int\limits_{(z,x)} s((z,y)) (m(\dd y) + k(\dd y)),
\end{align*}
where the convention $(x,y) = \{z \colon \min(x,y) < z < \max(x,y)\}$ allows a compact treatment of right and left endpoints.
\begin{definition} 
	Let $I$  be an open interval with boundary points $\partial I = \{l,r\}$, $l < r$ in $\bR \cup \{- \infty\} \cup \{\infty\}$. A boundary point $e \in  \{l, r\}$ is called
	\begin{enumerate}[(a)]
		\item \textit{exit} if $u(e) < \infty$,
		\item \textit{entrance} if $v(e) < \infty$,
		\item \textit{regular} or \textit{non-singular} if it is both exit and entrance,
		\item \textit{natural} if it is neither exit nor entrance.
	\end{enumerate}
\end{definition}

	The space on which we define our diffusion process depends on the boundary behaviour of the diffusion process. 
	
	\begin{assumption} \label{assumption:one_d_diffusion_boundaries}
		Consider $\tilde{L}$ as in \eqref{eqn:1d_diffusion_operator} defined on the open interval $I$ with boundary $\partial I \subseteq \bR \cup \{-\infty\} \cup \{\infty\}$. Then our space $\cX$ satisfies $I \subseteq \cX \subseteq \overline{I}$ and for any boundary point $e \in \partial I$ we have 
		\begin{enumerate}[(a)]
			\item either $e$ is regular, and then $e \in \cX$,
			\item or $e$ is natural, and then $e \notin \cX$.
		\end{enumerate}
	\end{assumption}

Note that the measures $m$ and $k$ are only specified on $I$. We extend them by allowing mass at the boundary points $\partial I$.

We recall the following partial result from \cite[II.1.7.]{BoSa2002} combining several results taken from a number of sources, using the notation 
\begin{equation*}
\frac{\dd f}{\dd s}(l +) = \lim_{x \downarrow l} \frac{\dd f}{\dd s}(x), \qquad \frac{\dd f}{\dd s}(r-) = \lim_{x \uparrow r} \frac{\dd f}{\dd s}(x).
\end{equation*}

\begin{theorem}[] \label{theorem:domain_diffusion_operator}
	1. Consider the operator $\widetilde{L}$ on an interval $I$ with extension $\cX$ satisfying Assumption \ref{assumption:one_d_diffusion_boundaries}.
	Let $L \subseteq \widetilde{L}$ be the graph of functions $(f,g) \in C_{b}(\cX) \times C_{b}(\cX)$ such that $\frac{\dd f}{\dd s}$ exists and
	\begin{equation}\label{domain_diffusion_operator}
		\int_a^b g(x) m(\dd x) = \frac{\dd f}{\dd s}(b) - \frac{\dd f}{\dd s}(a) - \int_a^b f(x) k(\dd x)
	\end{equation}
	for all $a < b$ in $I$.
	
	2. In addition suppose for the  boundaries $e \in \{l, r\}$	
	\begin{enumerate}[(a)]
		\item If $e$ is regular, then we have one of the four options  (a1), (a2) or (a3) or (a4) with $\gamma_e \in (0,\infty)$:
			\begin{flalign*}
				\text{(a1)} && g(l) m(\{l\}) & = \frac{\dd f}{\dd s}(l+) - f(l)k(\{l\}), & \text{if } m(\{l\}) < \infty, k(\{l\}) < \infty,  \\
				&& \text{ respective } \\
				&& g(r) m(\{r\}) & = - \frac{\dd f}{\dd s}(r-) - f(r)k(\{r\}), & \text{if } m(\{r\}) < \infty, k(\{r\}) < \infty,  \\
				\text{(a2)} && g(e) & = 0, & \text{if } m(\{e\}) = \infty, k(\{e\}) < \infty,  \\
				\text{(a3)} && f(e)  & = 0, & \text{if } m(\{e\}) < \infty, k(\{e\}) = \infty, \\
				\text{(a4)} && g(e) & =  - \gamma_e f(e), & \text{if } m(\{e\}) = \infty, k(\{e\}) = \infty.
			\end{flalign*}
		\item If $e$ is natural: no additional conditions.
	\end{enumerate}
	Then $L$ is the strict generator of a Feller process with the stated boundary conditions.	
	\end{theorem}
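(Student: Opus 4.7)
The plan is to combine the classical one-dimensional diffusion theory with the SCLE semigroup framework developed in the appendices. The strategy splits into three parts: (i) invoke classical Feller theory to obtain a Markov process whose strong generator on a suitable Banach space is described by $L$; (ii) transfer this to an SCLE semigroup on $(C_b(\cX),\beta)$ via the martingale problem characterization of Appendix \ref{section:martingale_problem}; (iii) verify that the strict generator thus obtained has exactly the domain stated.

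For step (i), I would first check that $L$ is dissipative on its domain by a standard maximum-principle argument: at an interior extremum $x_0$ of $f$, $\frac{\dd f}{\dd s}$ vanishes and the integral equation \eqref{domain_diffusion_operator} forces the correct sign on $g(x_0) = Lf(x_0)$; at a regular boundary, each of the boundary conditions (a1)--(a4) is precisely engineered to preserve the maximum principle (this is the content of Feller's boundary classification). Next, one needs the range condition: for small $\lambda > 0$ and any $g \in C_b(\cX)$, the ODE $(\bONE - \lambda \widetilde L)f = g$ is solved using the two linearly independent fundamental solutions of the homogeneous equation (constructed via speed, scale and killing measures, see Borodin-Salminen), and the prescribed boundary conditions uniquely select the desired $f \in \cD(L)$ with $\|f\|\le \|g\|$. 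This is the content of the classical theory gathered in Borodin-Salminen II.1.7, from which $L$ is the strong generator of a positive contraction $C_0$-semigroup on the relevant space of continuous functions (e.g.\ $C_0(I)$ extended by boundary values at regular endpoints).

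For step (ii), the Markov process $X$ associated with this semigroup solves the martingale problem for $L$. Well-posedness of this martingale problem follows from uniqueness of the resolvent constructed in step (i). Appendix \ref{section:martingale_problem} then yields that the transition semigroup $\{S(t)\}_{t\ge 0}$ of $X$ is SCLE for the strict topology $\beta$ on $C_b(\cX)$, so $X$ is a Feller process in the sense of our definition. For step (iii), let $\widehat L$ denote the strict generator of $\{S(t)\}_{t \ge 0}$. Clearly $L \subseteq \widehat L$ because the SCLE semigroup restricted to the $C_0$-setting agrees with the classical one. Conversely, given $f \in \cD(\widehat L)$, Dynkin's formula and the Markov property show that $(f,\widehat L f)$ satisfies \eqref{domain_diffusion_operator} pointwise and satisfies the boundary conditions; conversely, given $(f,g)$ satisfying \eqref{domain_diffusion_operator} and the stated boundary conditions on $\partial I \cap \cX$, the range condition of step (i) supplies $h \in \cD(L)$ with $(\bONE - \lambda L)h = (\bONE - \lambda g^*) f$ for the appropriate right hand side, and by dissipativity $h = f$, so $f \in \cD(L)$.

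The main obstacle is the boundary analysis: one must verify dissipativity and the range condition case by case for the four types of regular-boundary conditions (a1)--(a4), and separately check that the natural-boundary case (where $e \notin \cX$) imposes no extra condition because the process cannot reach $e$ in finite time (the Feller test via $u,v$) and $f \in C_b(\cX)$ needs no extension. A secondary delicate point is justifying that the domain described by \eqref{domain_diffusion_operator} is already closed in the graph topology arising from $\beta$, so that no further enlargement occurs when passing to the strict generator; this again reduces to ODE-regularity of $f$ from $f,g \in C_b(\cX)$ plus the integral relation.
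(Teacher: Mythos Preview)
The paper does not prove this theorem: it is introduced with the words ``We recall the following partial result from \cite[II.1.7.]{BoSa2002} combining several results taken from a number of sources,'' and no proof is given. So there is no argument in the paper to compare against; the authors treat the classical $C_0$-theory as a black box from Borodin--Salminen and implicitly rely on the transfer machinery of Appendix~\ref{section:martingale_problem} to interpret ``strict generator of a Feller process'' in their sense.

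Your sketch is a reasonable reconstruction of what such a proof would look like, and your step~(i) is exactly the citation the paper makes. Two remarks. First, for step~(ii) the more direct route in this setting is Theorem~\ref{theorem:transition_semigroup_is_strongly_continuous} rather than the martingale-problem Theorem~\ref{theorem:verification_conditions_semigroup}: since $\cX$ is locally compact and the classical semigroup on $C_0(\cX)$ is Markov, that theorem immediately upgrades it to an SCLE semigroup on $(C_b(\cX),\beta)$ and identifies the strict generator as the $\beta$-closure of the $C_0$-generator, bypassing the compact-containment condition \eqref{eqn:theorem_compact_containment_set} you would otherwise have to verify. Second, your step~(iii) contains a garbled formula (``$(\bONE-\lambda L)h = (\bONE-\lambda g^*)f$'') and the argument is more circuitous than necessary: once you know $L$ is dissipative with full range $(\bONE-\lambda L)\cD(L)=C_b(\cX)$ and $L\subseteq \widehat L$, equality $L=\widehat L$ follows directly from the fact that a dissipative operator satisfying the range condition has no proper dissipative extension.
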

	Condition (a1) is referred to as sticky if $m(\{e\}) > 0$ and $k\{e\} = 0$, reflecting if $m(\{e\}) = 0$ and $k(\{e\}) = 0$ and elastic if $m(\{e\}) = 0$ and $k(\{e\}) > 0$. Condition (a2) is referred to as absorbing, condition (a3) is referred to as killing and condition (a4) is called a trap.

\subsection{A regular class of operators}\label{section:1d_diffusion_regularity}

We will treat one set of examples exhaustively.

\begin{condition} \label{condition:proper_diffusion_operator}
	Consider the operator
	\begin{equation*}
	\widetilde{L}f(x) = \frac{1}{2}a(x) f''(x) + b(x) f'(x) - c(x) f(x), \quad x \in \cX.
	\end{equation*}
	with $\cD(\tilde L) = \{f \in C^2_b(\cX), \tilde L f \in C_b(\cX)\}$.
	Suppose that $a,b,c$ satisfy the following three conditions:
	\begin{enumerate}[(a)]
	\item Suppose that $a,b,c$ are twice continuously differentiable.
	\item There is a constant $\overline{a} > 0$ such that for all $x \in \cX$: $0 < \overline{a}^{-1} \leq a(x) \leq \overline{a} < \infty$.
	\item There are constants $\overline{b},\overline{c}$ such that for all $x \in \cX$: $|b(x)| \leq \overline{b}(1 + |x|)$ (bounded growth), $|c(x)| \leq \overline{c}$.
	\end{enumerate}
\end{condition}

	\begin{remark}
		In Section \ref{section:1d_diffusion_boundary} above, we discussed operators where $c \geq 0$. This choice naturally corresponds to diffusion processes with killing. If $c \leq 0$ and bounded, we can naturally construct the corresponding semigroup via the one with drift $c + \vn{c}$ and then removing the $\vn{c}$ part via perturbation argument or a Feynman-Kac construction. See for example Theorem III.1.3 in \cite{EN00} for the perturbation argument.
	\end{remark}

\begin{lemma} \label{lemma:boundary_behaviour_for_standard_ab}
	Let $\widetilde{L}$ satisfy Condition \ref{condition:proper_diffusion_operator} with $c \ge 0$. Then 
	\begin{enumerate}[(a)]
		\item the function $\frac{\dd s}{\dd x} (x)= \e^{-\Lambda(x)}$ is bounded and bounded away from $0$ near finite boundaries;
		\item any finite boundary is a regular boundary;
		\item any infinite boundary is natural (and one can set $\cX \subseteq \bR$).
	\end{enumerate}
\end{lemma}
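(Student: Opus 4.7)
The plan is to dispatch parts (a) and (b) by direct calculation, and to spend the bulk of the effort on part (c).

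For part (a), I would observe that on any bounded neighbourhood $K \subseteq \overline{I}$ of a finite boundary point $e \in \partial I$, Condition \ref{condition:proper_diffusion_operator} gives $|b(y)/a(y)| \leq \bar{a}\bar{b}(1+\max_{y \in K}|y|)$, so the primitive $\Lambda$ is bounded on $K$, and hence $\e^{-\Lambda}$ is bounded and bounded away from $0$ near $e$. Part (b) then follows quickly: by (a) the densities $\frac{\dd s}{\dd x} = \e^{-\Lambda}$, $\frac{\dd m}{\dd x} = 2\e^{\Lambda}/a$ and $\frac{\dd k}{\dd x} = 2c\e^{\Lambda}/a$ are all bounded on a neighbourhood of $e$, so integrating over the bounded interval $(z,e)$ gives that $s((z,e))$, $m((z,e))$, $k((z,e))$ are all finite. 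Therefore $u(e) \leq (m((z,e))+k((z,e)))\cdot s((z,e))$ and $v(e) \leq s((z,e))\cdot (m((z,e))+k((z,e)))$ are both finite, so $e$ is both exit and entrance, i.e.\ regular.

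For part (c) I would treat $+\infty$; the case $-\infty$ is symmetric. Since $c \geq 0$, dropping the killing contributions from $u$ and $v$ only decreases them, so for lower bounds I may argue as if $c=0$. My plan is to combine Khasminskii's Lyapunov non-explosion criterion with a drift-reversal symmetry. Taking $V(x) = \log(1+x^2)$, a routine computation using $\bar a^{-1} \leq a \leq \bar a$ and $|b(x)| \leq \bar b(1+|x|)$ shows $|\tilde L V(x)| \leq K + \bar c\, V(x)$ for a constant $K = K(\bar a, \bar b)$. Khasminskii's theorem then gives non-explosion of the associated diffusion, which by Feller's classification of boundaries (see e.g.\ \cite{BoSa2002}) is equivalent to $v(+\infty) = \infty$.

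To obtain $u(+\infty) = \infty$ I would introduce the auxiliary diffusion with drift $-b$ and the same coefficient $a$. Writing $u(+\infty)$ and its analogue $\tilde v(+\infty)$ for the auxiliary diffusion as double integrals $\iint_{z<u<y}\frac{2\e^{\Lambda(u)-\Lambda(y)}}{a(u)}\,\dd u\,\dd y$ and $\iint_{z<u<y}\frac{2\e^{\Lambda(u)-\Lambda(y)}}{a(y)}\,\dd u\,\dd y$ respectively, the two-sided bound $\bar a^{-1} \leq a \leq \bar a$ yields $u(+\infty) \asymp \tilde v(+\infty)$ with constants depending only on $\bar a$. Since $-b$ also satisfies the linear-growth bound, applying the first step to the auxiliary diffusion gives $\tilde v(+\infty) = \infty$, and hence $u(+\infty) = \infty$.

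The main obstacle will be the separation of the two divergences: a direct combination of the Cauchy--Schwarz estimate $s((z,N))m((z,N))\geq 2(N-z)^2/\bar a$ with the integration-by-parts identity $s((z,N))m((z,N)) = u(N)+v(N)$ (valid for $c=0$) yields only $u(+\infty)+v(+\infty)=\infty$, not each quantity separately. Overcoming this via the combined Lyapunov-plus-drift-reversal argument above is the crucial step.
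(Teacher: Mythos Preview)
Your treatment of (a) and (b) is correct and essentially identical to the paper's.

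For (c) you take a genuinely different route from the paper, which argues by direct elementary estimates on the double integrals: using $|\Lambda(w)-\Lambda(y)| \leq M|y^2-w^2|$ from the linear-growth bound on $b$ and the lower bound on $a$, the paper bounds both $u(x)$ and $v(x)$ from below by expressions of the form $2\bar a^{-1}\int_{x/2}^{x} e^{Mw^2}\,\dd w \int_z^{x/2} e^{-My^2}\,\dd y$, which diverge as $x\to\infty$. Your Lyapunov-plus-drift-reversal strategy is more conceptual and avoids these explicit integral manipulations, at the cost of invoking Khasminskii's criterion and the Feller classification as black boxes; the paper's argument is fully self-contained.

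However, your argument as written contains a genuine error: in the paper's convention, non-explosion at $+\infty$ means the boundary is \emph{not exit}, i.e.\ $u(+\infty)=\infty$, not $v(+\infty)=\infty$. (Exit is the condition $u(e)<\infty$, and it is precisely the ability to reach the boundary in finite time that Khasminskii's criterion rules out.) Consequently, applying Khasminskii to the auxiliary diffusion yields $\tilde u(+\infty)=\infty$, not $\tilde v(+\infty)=\infty$, so your comparison $u(+\infty)\asymp\tilde v(+\infty)$ does not close the loop. The fix is to swap the roles of $u$ and $v$ throughout: Khasminskii for the original process already gives $u(+\infty)=\infty$; then compute
\[
v(+\infty)=\iint_{z<w<y}\frac{2e^{\Lambda(y)-\Lambda(w)}}{a(y)}\,\dd w\,\dd y,\qquad \tilde u(+\infty)=\iint_{z<w<y}\frac{2e^{\Lambda(y)-\Lambda(w)}}{a(w)}\,\dd w\,\dd y,
\]
observe $v(+\infty)\asymp\tilde u(+\infty)$ via the two-sided bound on $a$, and apply Khasminskii to the auxiliary diffusion to obtain $\tilde u(+\infty)=\infty$. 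With this correction your approach is valid.
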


\begin{proof}
	Let $e$ be a finite boundary. Then on a neighbourhood of $e$ the functions $b,c, a, a^{-1}$ are bounded. It follows that the functions $\Lambda, \frac{\dd s}{\dd x},(\frac{\dd s}{\dd x})^{-1}\frac{\dd m}{\dd x},\frac{\dd k}{\dd x}$ are bounded as well. The integrals $u,v$ remain bounded on finite intervals too. This establishes the first two claims.
	
	\smallskip

	Let $e$ be an infinite boundary. Assume without loss of generality that $e = r$ and $z > 0$. 
	Using the upper bound on $b$ and the lower bound on $a$, we find some constant $M > 0$ such that $
	- |\Lambda(w) - \Lambda(y)| \geq - M |y^2 - w^2|.$
	
	As $c \geq 0$, we have $k \geq 0$. Thus, using the upper bound on $a$, we find for $x/2 > z$,
	\begin{align*}
	u(x) 
		& \ge  2 \overline{a}^{-1}   \int_z^x  \int_z^y e^{-M|y^2 - w^2|} \dd w\, \dd y \\
		& \geq 2 \overline{a}^{-1}   \int_{x/2}^x  \int_{z}^{x/2} e^{M(w^2 - y^2)} \dd w \dd y \\
		& \geq 2 \overline{a}^{-1}   \int_{x/2}^x e^{M w^2} \dd w \int_{z}^{x/2}  e^{-M y^2}  \dd y,
	\end{align*}
	which diverges to $+\infty$ for $x \to \infty$.
	In the same fashion,
		\begin{align*}
		v(x) & \geq 2\overline{a}^{-1} \int_z^x \int_z^y \e^{-\Lambda(w) + \Lambda(y)} \dd w \, \dd y\\
		& \ge  2 \overline{a}^{-1}   \int_z^x  \int_z^y e^{-M|y^2 - w^2|} \dd w\, \dd y,
		\end{align*}
	at which point the same argument applies, together showing that $r$ is a natural boundary.
\end{proof}

Clearly, if $f'(e) = 0$ (respective $f(e) = 0$) for $f \in \cD(L)$ for a finite boundary $e$ and $L \subseteq \widetilde{L}$ for an operator $\widetilde{L}$ that satisfies Condition \ref{condition:proper_diffusion_operator}, then a diffusion generated by $L$ has a reflecting boundary (respective killing boundary) at $e$ irrespective of the (finite) value of $b(e)$. For other boundary types, the values of $c(e)$ and $b(e)$ influence which boundary conditions correspond to certain constrains on $f(e), f'(e), f''(e)$. The next lemma sheds some light on the situation when there is no killing at the boundary. We argue for an interval $(-\infty,r]$, $r < \infty$, but other cases can be treated similarly.

	\begin{lemma} \label{lemma:allowed_boundary_conditions}
		Let $\cX = (-\infty, r] \subseteq \bR$. Suppose that $\widetilde{L}$ with $c \ge 0$, $c(r) = 0$  satisfies Condition \ref{condition:proper_diffusion_operator}. The operator $L \subseteq \widetilde{L}$ with 
			\[\cD(L) =  \{ f \in \cD(\widetilde L)\colon  \text{\small $f$ fulfils right boundary condition (a1) or (a2) with $k(\{r\}) = 0$} \}
			\]
			is the strict generator of a diffusion with boundary condition
			\begin{equation*}
			\cD(L)  = \left\{f \in \cD(\widetilde L) \, \middle| \, \gamma_r f'(r) =  f''(r) \right\}, 
			\end{equation*}
			where $\gamma_r \in \left[-\infty, - \tfrac{2b(r)}{a(r)}\right]$. Here $- \infty f'(r) = f''(r)$ should be interpreted as $f'(r) = 0$. 			
			We have that the diffusion has 
			\begin{itemize}
				\item reflecting boundary if $\gamma_r = - \infty$,
				\item sticky boundary if $\gamma_r \in \left(-\infty, - \tfrac{2b(r)}{a(r)}\right)$,
				\item absorbing boundary if $\gamma_r = - \tfrac{2b(r)}{a(r)}$.
			\end{itemize}
		Analogous statements hold for left boundaries with the condition $\gamma_r \in \left[-\infty, - \tfrac{2b(r)}{a(r)}\right]$ replaced by $\gamma_l \in \left[ - \tfrac{2b(l)}{a(l)},\infty\right]$.
	\end{lemma}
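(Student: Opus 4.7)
The plan is to invoke Theorem \ref{theorem:domain_diffusion_operator} and translate the boundary conditions (a1) and (a2) appearing there (with $k(\{r\}) = 0$ imposed) into the single linear relation $\gamma_r f'(r) = f''(r)$ by direct computation involving $\frac{\dd f}{\dd s}$. First, I would record the two key identities that hold for $f \in C^2_b(\cX)$: by the chain rule and the definition of the scale measure,
\begin{equation*}
\frac{\dd f}{\dd s}(x) = f'(x) \, \e^{\Lambda(x)},
\end{equation*}
and since $c(r) = 0$,
\begin{equation*}
\widetilde{L}f(r) = \tfrac{1}{2} a(r) f''(r) + b(r) f'(r).
\end{equation*}
By Lemma \ref{lemma:boundary_behaviour_for_standard_ab} the finite boundary $r$ is regular, so Theorem \ref{theorem:domain_diffusion_operator} applies and generation of a Feller diffusion follows once we verify that the stated boundary conditions fit into its framework.

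Next, for condition (a1) written at $r$ with $k(\{r\}) = 0$, the defining equation becomes $\widetilde{L}f(r) \, m(\{r\}) = -\frac{\dd f}{\dd s}(r-)$, which after substituting the two identities above reads
\begin{equation*}
\tfrac{1}{2} a(r) m(\{r\}) f''(r) + b(r) m(\{r\}) f'(r) = - \e^{\Lambda(r)} f'(r).
\end{equation*}
If $m(\{r\}) > 0$ this can be solved for $f''(r)$ giving $f''(r) = \gamma_r f'(r)$ with
\begin{equation*}
\gamma_r = - \frac{2 \e^{\Lambda(r)}}{a(r) m(\{r\})} - \frac{2 b(r)}{a(r)} \in \left(-\infty, -\tfrac{2b(r)}{a(r)}\right),
\end{equation*}
corresponding to the sticky ($m(\{r\}) \in (0,\infty)$) and elastic cases; here $k(\{r\}) = 0$ rules out the elastic contribution so only sticky remains. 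If $m(\{r\}) = 0$, the relation forces $\e^{\Lambda(r)} f'(r) = 0$ i.e.\ $f'(r) = 0$, which is the reflecting case encoded by $\gamma_r = -\infty$. For condition (a2), $\widetilde{L}f(r) = 0$ reads directly $\tfrac{1}{2} a(r) f''(r) + b(r) f'(r) = 0$, i.e.\ $f''(r) = -\tfrac{2b(r)}{a(r)} f'(r)$, which is exactly the boundary endpoint $\gamma_r = -\tfrac{2b(r)}{a(r)}$.

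Combining these cases gives the stated parameter range $\gamma_r \in [-\infty, -\tfrac{2b(r)}{a(r)}]$ together with the classification (reflecting / sticky / absorbing) claimed in the lemma. The converse inclusion — showing that every $\gamma_r$ in this range corresponds to exactly one allowed choice of $m(\{r\}) \in [0, \infty]$ (and hence to an allowed boundary condition in Theorem \ref{theorem:domain_diffusion_operator}) — follows because the map $m(\{r\}) \mapsto \gamma_r$ above is a continuous bijection from $[0, \infty]$ onto $[-\infty, -\tfrac{2b(r)}{a(r)}]$, with the endpoints $0$ and $\infty$ corresponding to reflecting and absorbing respectively. The analogous calculation for a left boundary uses $\frac{\dd f}{\dd s}(l+) = f'(l)\e^{\Lambda(l)}$ with the opposite sign in condition (a1), which flips the inequality and yields $\gamma_l \in \left[-\tfrac{2b(l)}{a(l)}, \infty\right]$. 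I do not anticipate a main obstacle beyond keeping the signs and sign of $b$ straight in the sticky/absorbing boundary; the translation is essentially algebraic once the correct expression for $\frac{\dd f}{\dd s}$ in terms of $f'$ is used.
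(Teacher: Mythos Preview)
Your proposal is correct and follows essentially the same approach as the paper: both translate conditions (a1) and (a2) of Theorem \ref{theorem:domain_diffusion_operator} into the single relation $\gamma_r f'(r) = f''(r)$ by substituting $\frac{\dd f}{\dd s}(r-) = e^{\Lambda(r)} f'(r)$ and solving. The paper streamlines slightly by assuming $\Lambda(r) = 0$ without loss of generality and by absorbing the absorbing case (a2) into the formula for $\gamma_r$ as the limit $m(\{r\}) \to \infty$, whereas you treat (a2) separately; your version is a bit more explicit about the bijection $m(\{r\}) \leftrightarrow \gamma_r$ and the left-boundary sign flip, but the argument is the same.
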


\begin{remark} \label{remark:sign_of_drift_for_second_order}
	In the context of Lemma \ref{lemma:allowed_boundary_conditions}, note that the right-side boundary condition
	\begin{equation*}
	\cD(L)  = \left\{f \in \cD(\widetilde L) \, \middle| \,   f''(r) = 0 \right\}, 
	\end{equation*}
	is possible only if $b(r) \leq 0$, with analogous statement for a left-side boundary if $b(l) \geq 0$.
\end{remark}

\begin{proof}[Proof of Lemma \ref{lemma:allowed_boundary_conditions}]
	The reflecting case is immediate, with $\gamma_r = -\infty$, so we can assume $m(\{r\}) > 0$.
	
	 Pick $f \in \cD(L)$. Without loss of generality, we may assume $\Lambda(r) = 0$ and write the boundary condition as 
	\begin{equation}\label{eqn:allowed_boundary}
	\left(\frac12 a(r)f''(r) + b(r)f'(r)\right)  m(\{r\})  = -f'(r) .
	\end{equation}
	Reordering yields
	\begin{equation*}
	f''(r) = - \frac{\tfrac{1}{m(\{r\})} + b(r)}{\tfrac{1}{2} a(r)} f'(r),
	\end{equation*}
	which identifies the constant $\gamma_r \in \left.\left(-\infty, - \tfrac{2b(r)}{a(r)}\right.\right]$. The absorbing case $m(\{r\}) = \infty$ corresponds to $\gamma_r = - \tfrac{2b(r)}{a(r)}$.
\end{proof}

\begin{lemma} \label{lemma:gaining_regularity}
	Consider an operator $L \subseteq \widetilde{L}$, where 
	\begin{equation*}
	\widetilde{L}f(x) = \frac{1}{2}a(x) f''(x) + b(x) f'(x) - c(x) f(x)
	\end{equation*} 
	and where $\tilde{L}$ satisfies Condition \ref{condition:proper_diffusion_operator} and where $L$ satisfies an appropriate set of boundary conditions as in Theorem \ref{theorem:domain_diffusion_operator}.
	
	Let $a,b, c \in C^z(\cX)$ for some $z \geq 2$. Let $n \in \bN$ and $h \in C^n(\cX)$. Let $f$ be a solution to $f - \lambda L f = h$. Then $f \in C^{(n \wedge z)+2}$. 
\end{lemma}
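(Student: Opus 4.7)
The plan is to use the uniform ellipticity of Condition \ref{condition:proper_diffusion_operator}(b) to solve algebraically for $f''$ and then bootstrap on regularity. Since $a(x) \geq \overline{a}^{\,-1} > 0$ on $\cX$ and $a \in C^z(\cX)$ with $z \geq 2$, the reciprocal $1/a$ lies in $C^z(\cX)$, and hence so do the quotients $b/a$ and $c/a$. Rearranging the resolvent equation $f - \lambda L f = h$ pointwise yields
\begin{equation*}
f''(x) \;=\; \frac{2}{\lambda\, a(x)}\bigl(f(x) - h(x)\bigr) \;-\; \frac{2 b(x)}{a(x)}\, f'(x) \;+\; \frac{2 c(x)}{a(x)}\, f(x),
\end{equation*}
which is valid on all of $\cX$ since $f \in \cD(L) \subseteq \cD(\widetilde{L}) \subseteq C^2_b(\cX)$ by Condition \ref{condition:proper_diffusion_operator}.

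I would then argue by induction on $k$ that $f \in C^k(\cX)$ for $k = 2, 3, \ldots, (n \wedge z) + 2$. The base case $k = 2$ is just the domain condition. For the inductive step, suppose $f \in C^k(\cX)$ for some $k$ with $2 \leq k \leq (n \wedge z) + 1$. Then $f' \in C^{k-1}(\cX)$, and combining $h \in C^n(\cX)$ with $1/a,\, b/a,\, c/a \in C^z(\cX)$ via the usual product rule shows that the right-hand side of the displayed equation lies in $C^{\min(k-1,\, n,\, z)}(\cX)$. Since $k - 1 \leq n \wedge z$, this minimum equals $k-1$, giving $f'' \in C^{k-1}(\cX)$, i.e.\ $f \in C^{k+1}(\cX)$. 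Iterating from $k = 2$ until $k = (n \wedge z) + 2$ yields the claim.

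The only point that requires attention is the behaviour at boundary points of $\cX$, but this is not a serious obstacle. Natural boundaries are excluded from $\cX$ by Assumption \ref{assumption:one_d_diffusion_boundaries} and infinite boundaries are natural by Lemma \ref{lemma:boundary_behaviour_for_standard_ab}, so every boundary point of $\cX$ is finite. On a neighbourhood of such a point, $a,\,a^{-1},\,b,\,c$ are $C^z$ with bounded derivatives, so the pointwise formula for $f''$ and the inductive argument remain valid up to and including the boundary. Consequently no Schauder-type estimate or interior-regularity machinery is needed: the one-dimensional setting together with uniform ellipticity reduces the proof to elementary calculus bookkeeping.
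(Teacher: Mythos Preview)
Your argument is correct and more direct than the paper's. The paper first proves an auxiliary statement in terms of the speed and scale measures, showing that if $\frac{\dd}{\dd m}\frac{\dd}{\dd s} f = g$ with $g \in C^n$ then $f \in C^{(n\wedge z)+2}$, by repeatedly translating between the derivatives $\frac{\dd}{\dd s}$, $\frac{\dd}{\dd m}$ and $\frac{\dd}{\dd x}$ via the densities $\frac{\dd s}{\dd x}$, $\frac{\dd m}{\dd x}$; only afterwards does it plug in $g = \lambda^{-1}(f-h) + cf$ and iterate. Your approach bypasses the speed/scale formalism entirely: since Condition~\ref{condition:proper_diffusion_operator} already places $f$ in $C^2_b(\cX)$ and guarantees $a$ is bounded away from zero, you can solve the resolvent equation algebraically for $f''$ and bootstrap by elementary product-rule bookkeeping. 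The paper's route has the mild advantage that it starts from the integral characterisation of $\cD(L)$ in Theorem~\ref{theorem:domain_diffusion_operator} and derives $f \in C^2$ rather than assuming it, but under the stated hypothesis $L \subseteq \widetilde{L}$ with $\cD(\widetilde{L}) \subseteq C^2_b(\cX)$ your shortcut is legitimate and the resulting proof is cleaner.
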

\begin{proof}
	We start by establishing an auxiliary result: if $g \in C^n(\cX)$ and $\frac{\dd}{\dd m} \frac{\dd}{\dd s} f = g$, then $f \in C^{(n \wedge z)+2}$.
	 
	\smallskip

	As $\frac{\dd}{\dd x} = \frac{\dd s}{\dd x} \frac{\dd}{\dd s}$ and $\frac{\dd}{\dd x} = \frac{\dd m}{\dd x} \frac{\dd}{\dd m}$  continuous differentiability of some function $f$ in $x$ follows from  continuous differentiability in $s$ or $m$ (note that $\frac{\dd s}{\dd x}, \frac{\dd m}{\dd x}$ are bounded and continuous by Lemma \ref{lemma:boundary_behaviour_for_standard_ab}.) 
	For $f \in \cD(L)$, we know that $\frac{\dd f}{\dd s}$ exists and, as $k = 0$, is given as the $m$ integral over $g$. We conclude $\frac{\dd f}{\dd s}$ is continuous. It follows that
	\begin{equation*}
	\frac{\dd f}{\dd x} = \frac{\dd s}{\dd x} \frac{\dd f}{\dd s}.
	\end{equation*}
	By the fundamental theorem of calculus we have $\frac{\dd}{\dd m} \frac{\dd}{\dd s} f = g$. In particular, we find by the argument above that $\frac{\dd f}{\dd s}$ is continuously differentiable in $x$. By definition $\frac{\dd s}{\dd x}$ is also continuously differentiable in $x$. Thus by the product rule
	\begin{equation*}
	\frac{\dd^2 f}{\dd x^2} = \frac{\dd^2s}{\dd x^2} \cdot\frac{\dd f}{\dd s}  + \frac{\dd s}{\dd x} \cdot g.
	\end{equation*}
	In addition, note that $\frac{\dd}{\dd x} \frac{\dd f}{\dd s} = \frac{\dd m}{\dd x} g$. As $\frac{\dd s}{\dd x}$ is $z+1$ times continuously differentiable, we thus see that the right hand side is $n \wedge z$ times continuously differentiable. In other words, $f$ is $(n \wedge z) + 2$ times continuously differentiable. 
	
	\smallskip
	
	Thus, we have established our auxiliary result: if $g \in C^n(\cX)$ and $\frac{\dd}{\dd m} \frac{\dd}{\dd s} f = g$, then $f \in C^{(n \wedge z)+2}$. Next, let $h \in C^n(\cX)$ and let $f$ be a solution to $f - \lambda L f = h$, or equivalently $\frac{\dd}{\dd m} \frac{\dd}{\dd s} f = \lambda^{-1}(f-h) + c f$. Now the result follows by induction from the first part of our proof.

\end{proof}

\subsection{The increasing order} \label{section:1d_diffusion_increasing_order}

We consider appropriate subsets of the operator in \eqref{eqn:1d_diffusion_operator}, the latter now denoted by $\tilde{A}$, and study the propagation of the increasing order which is generated by $\Phi \subseteq C_b(\cX) \times C_b(\cX)$ defined by $\Phi f = f'$.

First, note that for $f \in C^3(\cX)$, we have
\begin{equation} \label{eqn:Phi_increasing_on_1d_diffusion}
\Phi \widetilde{A} f(x) = \frac{1}{2} a(x) f^{(3)} (x) +\left(\frac{1}{2} a'(x) + b(x)\right) f''(x) + b'(x) f'(x),
\end{equation}
suggesting to use a subset $B$ of 
\begin{equation} \label{eqn:B_increasing_on_1d_diffusion}
\tilde{B} f(x) = \frac{1}{2} a(x) f''(x) + \left(\frac{1}{2} a'(x) + b(x)\right) f'(x) +  b'(x) f(x)
\end{equation}
The following result is immediate.

\begin{lemma}\label{lemma:tildeb_proper}
	Suppose that $\widetilde{A}$ satisfies Condition \ref{condition:proper_diffusion_operator} with $a \in C^3(\cX)$ with $a'$ of bounded growth and $b' \in C^2_b(\cX)$. Then $\widetilde{B}$ satisfies Condition \ref{condition:proper_diffusion_operator}.
\end{lemma}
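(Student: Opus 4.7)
\textbf{Proof plan for Lemma \ref{lemma:tildeb_proper}.}

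The plan is to verify conditions (a), (b), (c) of Condition \ref{condition:proper_diffusion_operator} for $\widetilde{B}$ directly, reading off its coefficients from \eqref{eqn:B_increasing_on_1d_diffusion}. In the notation of Condition \ref{condition:proper_diffusion_operator}, the diffusion coefficient of $\widetilde{B}$ is $a$, its drift coefficient is $\tilde b := \tfrac12 a' + b$, and its zeroth-order coefficient is $\tilde c := -b'$ (so the relevant quantity is $|\tilde c|$).

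For (a) one needs to check that $a,\tilde b,\tilde c$ are twice continuously differentiable. Since by hypothesis $a \in C^3(\cX)$ we immediately have $a \in C^2(\cX)$ and $a' \in C^2(\cX)$; combined with $b \in C^2(\cX)$ (which follows from the fact that $\widetilde A$ already satisfies Condition \ref{condition:proper_diffusion_operator}) this yields $\tilde b \in C^2(\cX)$. The assumption $b' \in C^2_b(\cX)$ gives $\tilde c \in C^2(\cX)$.

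For (b), the ellipticity bound $0 < \overline a^{-1} \leq a(x) \leq \overline a$ is exactly the one inherited from $\widetilde A$, so nothing new is required. For (c), the bounded growth of $\tilde b$ follows from the triangle inequality $|\tilde b(x)| \leq \tfrac12 |a'(x)| + |b(x)|$ together with the assumed bounded growth of both $a'$ (a hypothesis of this lemma) and $b$ (from Condition \ref{condition:proper_diffusion_operator} for $\widetilde A$); one picks the new constant $\overline{\tilde b}$ to be, say, the sum of the two old constants. Finally $|\tilde c(x)| = |b'(x)|$ is bounded because $b' \in C^2_b(\cX)$ by hypothesis.

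There is no real obstacle here: the lemma is purely a bookkeeping check that $\widetilde{B}$ inherits the regularity and growth hypotheses of $\widetilde A$, provided one is willing to strengthen those hypotheses by one derivative on $a$ and $b$ — which is precisely what the lemma's assumptions ($a \in C^3$ with $a'$ of bounded growth, $b' \in C^2_b$) encode. The only thing worth flagging is that the lemma makes no statement about the sign of $\tilde c = -b'$; this is consistent because Condition \ref{condition:proper_diffusion_operator} itself does not require a sign on the zeroth-order coefficient, and the remark following Condition \ref{condition:proper_diffusion_operator} explains how to remove a negative part via a bounded perturbation argument when needed downstream.
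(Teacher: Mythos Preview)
Your proposal is correct and is precisely the direct verification the paper has in mind; the paper itself gives no proof beyond declaring the result ``immediate,'' and your write-up simply spells out the bookkeeping that makes it so.
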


In the setting of $\widetilde{A}$ satisfying Condition \ref{condition:proper_diffusion_operator}, we give conditions for the preservation of $\cF_+$ under the semigroup corresponding to $A \subseteq \widetilde{A}$ satisfying boundary conditions as in Theorem \ref{theorem:domain_diffusion_operator}. 

\begin{theorem} \label{theorem:1d_diffusion_increasing_functions}
	Let $\widetilde{A}$ satisfy Condition \ref{condition:proper_diffusion_operator} and $a \in C^3(\cX)$ with derivative of bounded growth and $b' \in C^2_b(\cX)$. Let $A \subseteq \widetilde{A}, B \subseteq \widetilde{B}$ be two operators satisfying for each $e \in \{l,r\} \cap \bR$ boundary conditions as in Theorem \ref{theorem:domain_diffusion_operator}  such that with the same conventions for $\gamma_e$ as in Lemma~\ref{lemma:allowed_boundary_conditions}
	\begin{flalign*}
	\text{(A)} && \cD(A) & = \left\{f \in \cD(\tilde A) \, \middle| \, \gamma_e f'(e) = f''(e), e \in \{l,r\} \cap\bR\right\}, & \\
	\text{(B)} && \cD(B) & = \left\{f \in \cD(\tilde B) \, \middle| \, \gamma_e f(e) = f'(e), e \in \{l, r\} \cap\bR \right\}. &
	\end{flalign*}

	Let $\{S(t)\}_{t \geq 0}$ be the semigroup generated by $A$ and let $\{T(t)\}_{t \geq 0}$ be the semigroup generated by $B$.
	
	Then the semigroup $\{S(t)\}_{t \geq 0}$ generated by $A$ maps increasing functions to increasing functions. In addition,  $(S(t) f) ' \geq T(t)(f')$ for $f \in C^1_b(\cX)$.
\end{theorem}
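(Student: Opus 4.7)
The plan is to fit the setting into Theorem~\ref{theorem:main_theorem} and Theorem~\ref{theorem:lower_bound} with $E = F = (C_b(\cX),\beta)$, $\Phi f = f'$, the operators $A$ and $B$ as in the statement, and $C = 0$. Since there is no zeroth-order term, the computation~\eqref{eqn:Phi_increasing_on_1d_diffusion} gives $\Phi \widetilde{A} f = \widetilde{B} \Phi f$ on $C^3(\cX)$. The boundary conditions are consistent under $\Phi$: if $f$ satisfies $\gamma_e f'(e) = f''(e)$, then $g = f'$ satisfies $\gamma_e g(e) = g'(e)$, so $\Phi(\cD(A) \cap C^3(\cX)) \subseteq \cD(B)$. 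I take $\cD = \cD(A) \cap C^3(\cX)$, on which $\Phi A f = B \Phi f$ holds.

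To verify the hypotheses of Theorem~\ref{theorem:main_theorem}: condition~(a) is the commutation above. Condition~(c), $R(\lambda,A)\cD \subseteq \cD$ for sufficiently small $\lambda$, follows from Lemma~\ref{lemma:gaining_regularity}: given $h \in C^3(\cX)$, the solution $f$ of $f - \lambda A f = h$ lies in $C^5(\cX) \subseteq \cD$ under the smoothness of $a, b$. Condition~(b), the strict density of $\cD \cap \cF_{+,0}$ in $\cF_{+,0}$, will be established via Proposition~\ref{proposition:core_approximation_operators} from a family of approximation operators $\{T_n\}$ constructed in Section~\ref{section:1d_diffusion_approximation_operators}. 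These operators must map $C_b(\cX)$ into $\cD$, preserve non-decreasing functions, and converge strictly to the identity while being compatible with the boundary conditions defining $\cD(A)$.

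For the lower bound $(S(t)f)' \geq T(t)(f')$, I invoke Theorem~\ref{theorem:lower_bound}. By Lemma~\ref{lemma:tildeb_proper}, $\widetilde{B}$ satisfies Condition~\ref{condition:proper_diffusion_operator}, and the boundary conditions $\gamma_e g(e) = g'(e)$ correspond via Lemma~\ref{lemma:allowed_boundary_conditions} (with drift $\tfrac{1}{2}a' + b$ replacing $b$) to reflecting/sticky/absorbing conditions for $\widetilde{B}$; Theorem~\ref{theorem:domain_diffusion_operator} then identifies $B$ as the generator of an SCLE Feller semigroup $\{T(t)\}_{t \geq 0}$, which is positive by the standard probabilistic representation. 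Taking $\cD^\ast = \cD(B) \cap C^3(\cX)$, the invariance $R(\lambda,B)\cD^\ast \subseteq \cD^\ast$ again follows from Lemma~\ref{lemma:gaining_regularity}. The density condition on $\cG_{+,0}$ follows from Corollary~\ref{corollary:core_approximation_operators}, by pairing $\{T_n\}$ with compatible approximation operators $\{T_n'\}$ for $(\bONE, F_+, \cD^\ast)$ satisfying~\eqref{Phi_positive}.

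The main obstacle is the construction of the approximation operators themselves. They must simultaneously map general bounded continuous functions into the highly regular space $\cD$, preserve the cone of non-decreasing functions, and respect the Feller boundary conditions at each finite boundary point. Naive mollification by smooth kernels destroys the boundary conditions and typically breaks monotonicity near the boundary, so the construction requires an order-preserving, boundary-compatible smoothing procedure — for example, the resolvent of an auxiliary monotonicity-preserving semigroup tailored to the boundary type. This is the content of Section~\ref{section:1d_diffusion_approximation_operators} and is the key technical input on which both claims of the theorem rest.
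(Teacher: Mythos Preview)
Your proposal is essentially the paper's own argument: apply Theorem~\ref{theorem:main_theorem} with $\cD = \cD(A)\cap C_b^3(\cX)$ and $C=0$, verify (c) via Lemma~\ref{lemma:gaining_regularity}, verify (b) via Proposition~\ref{proposition:core_approximation_operators}, and then invoke Theorem~\ref{theorem:lower_bound} with $\cD^\ast\subseteq\cD(B)$ and Corollary~\ref{corollary:core_approximation_operators}. Two points of divergence are worth flagging. First, you treat all $\gamma_e$ uniformly with $F=C_b(\cX)$; the paper has to split off the reflecting case $\gamma_e=-\infty$, because then $\Phi$ lands in $\{g\in C_b(\cX):g(e)=0\}$ and the approximation operators for $(\bONE,F_+,\cD^\ast)$ must be strongly continuous on this smaller space rather than on all of $C_b(\cX)$. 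Second, your guess that the approximation operators come from the resolvent of an auxiliary semigroup is not what the paper does: it uses mollification $\cM_n$ composed with an explicit piecewise interpolation $\hat\cB_n'$, $\hat\cB_n''$ near each boundary, chosen so that the quotient of first and second derivatives at $e$ equals the prescribed $\gamma_e$ (Propositions~\ref{proposition:approximation_operators_1d_diffusion} and~\ref{proposition:refined_approximation_operators_1d_diffusion}). The paper also takes $\cD^\ast=\cD(B)\cap C_b^2(\cX)$ rather than $C^3$, which is all that is needed since $\Phi\cD\subseteq C_b^2$.
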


\begin{remark}
	Note that the range of the constants $\gamma_l$ and $\gamma_r$ for (A) is restricted, see Lemma \ref{lemma:allowed_boundary_conditions}. Similar restrictions hold for the results that follow in the sections on one-dimensional diffusion processes below.
	Especially, a condition with $\gamma_e = 0$ is possible only under conditions on the sign of the drift at the boundary, see Remark \ref{remark:sign_of_drift_for_second_order}. 
\end{remark}

\begin{proof}[Proof of Theorem \ref{theorem:1d_diffusion_increasing_functions}]
	We first proof the theorem in the case where $\gamma_e$ are finite, in which case $F=C_b(\cX)$.	
	We verify the conditions for Theorem \ref{theorem:main_theorem}. By the computations above $\Phi (\cD(A) \cap C^3_b(\cX)) \subseteq \cD(B)$ and $\Phi A = (B + C) \Phi$.

	We therefore choose $\cD = \cD(A) \cap C^3_b(\cX)$.
	We are left to verify (b) and (c) of Theorem \ref{theorem:main_theorem}. 
	(b) is verified using the approximation operators $\{T'_n\}$ in Proposition \ref{proposition:approximation_operators_1d_diffusion}. 
	For (c) note that it suffices to show that $R(\lambda,A) \cD(A) \subseteq \cD(A)$ and $R(\lambda,A) C^3(\cX) \subseteq C^3(\cX)$. 
	As $A R(\lambda,A) = \lambda^{-1}(R(\lambda,A) - \bONE)$, we find that $R(\lambda,A) C(\cX) \subseteq \cD(A)$. 
	By Lemma \ref{lemma:gaining_regularity} we have $R(\lambda,A) C^3(\cX) \subseteq C^3(\cX)$.
	
	\smallskip
	
	We proceed with the verification of the conditions for Theorem \ref{theorem:lower_bound}.
	Corresponding with the choice of $\cD$, we work with $\cD^* = \cD(B) \cap C^2_b(\cX)$ so that $\Phi \cD \subseteq \cD^*$. 
	Condition (c) follows as above using Lemma \ref{lemma:gaining_regularity} which can be applied by Lemma \ref{lemma:boundary_behaviour_for_standard_ab}. 
	The last property that needs to be checked is (b). It, however, follows from Corollary \ref{corollary:core_approximation_operators}  using the set of approximation operators $\{(\hat T'_n, \hat T''_n)\}$  defined in Proposition \ref{proposition:refined_approximation_operators_1d_diffusion}. 
	
	We can thus conclude by Theorem \ref{theorem:lower_bound} that if $f \in \cF_{+,0}$ and $S(t) f \in \cF_{+,0}$, then $\Phi S(t) f \geq T(t) \Phi f$.
	
	If one of $\gamma_e$ is infinite, then the space $F$ has to be chosen accordingly and the proof has to be adapted. For example if
	if all finite boundaries have $\gamma_e = \inf$, then $F = \{f\in C_b(\cX)\colon f(e) = 0 \text{ for } e \in \partial \cX \cap \cX\}$. In the second part of the argument we 	
	use the set of approximation operators $\{(T'_n, T''_n)\}$  defined in Proposition \ref{proposition:refined_approximation_operators_1d_diffusion} instead. Note that $\{T''_n\}$ is strongly continuous on $F$ in this case. The case of mixed boundary conditions can be treated similarly.

\end{proof}

Our next result considers that of preservation of order. For this we will use Theorem \ref{theorem:comparison} and Corollary \ref{corollary:preservation_of_order}. For operators $A^{(1)},A^{(2)}$ we find an operator $A$ and positive continuous operators $C^{(1)},C^{(2)}$ such that
\begin{align*}
(A^{i} - A) h = (-1)^i C^{(i)} \Phi h.
\end{align*}
We will write $\mu \lesssim_\st \nu$ if $\mu$ is smaller than $\nu$ with respect to the integral order generated by all increasing functions. 

\begin{theorem}  \label{theorem:1d_diffusion_comparison_increasing}
	Let $\tilde A^{(1)}$ and $\tilde A^{(2)}$ be operators of the form
	\begin{align*}
	\tilde A^{(1)} f(x) = \frac{1}{2} a(x) f''(x) + b^{(1)}(x) f'(x), \quad \tilde A^{(2)} f(x) = \frac{1}{2} a(x) f''(x) + b^{(2)}(x) f'(x),
	\end{align*} satisfying Condition \ref{condition:proper_diffusion_operator} with $a \in C^3(\cX)$ with derivative of bounded growth and $b^{(1)\prime} \in C^2_b(\cX)$,
	where 
	\[
	  b^{(1)}(x) \le b^{(2)}(x), \qquad x \in \cX
	\]
	with difference $b^{(2)}-b^{(1)}$ bounded.
	
	If $\eta^{(1)},\eta^{(2)}$ are diffusion processes generated by 
	operators  $A^{(i)}$ being subsets of $\tilde A^{(i)}$, $i = 1, 2$ with
	\begin{flalign*}
	\text{(A)} && \cD(A^{(i)}) & = \left\{f \in \cD(\tilde A^{(i)}) \, \middle| \,  \gamma_e f'(e) = f''(e), e \in \{l,r\} \cap \bR\right\}, & 
	\end{flalign*}
	 with  choices for $\gamma_e$ that hold for both $A^{(1)}$ and $A^{(2)}$,
	 	
	then $\eta^{(1)}(0) \lesssim_\st \eta^{(2)}(0)$ implies $\eta^{(1)}(t) \lesssim_\st \eta^{(2)}(t)$ for all $t \geq 0$.
\end{theorem}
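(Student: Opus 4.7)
The plan is to apply Corollary~\ref{corollary:preservation_of_order} with the auxiliary semigroup chosen to be the one generated by $A := A^{(1)}$ itself. Since $\tilde A^{(1)}$ and $\tilde A^{(2)}$ share the diffusion coefficient $a$ and impose identical boundary conditions $\gamma_e f'(e) = f''(e)$ (which do not involve the drift), the two domains $\cD(A^{(1)}) = \cD(A^{(2)})$ coincide and the difference of the generators acts purely through $\Phi f = f'$:
\begin{equation*}
(A^{(1)} - A)f = 0, \qquad (A^{(2)} - A)f = (b^{(2)} - b^{(1)})\, \Phi f.
\end{equation*}
For $i = 1$ we can therefore take $C^{(1)} := 0$, while for $i = 2$ we let $C^{(2)}$ be multiplication by the nonnegative bounded continuous function $b^{(2)} - b^{(1)}$; both are positive and strictly continuous operators on $C_b(\cX)$.

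Monotonicity of $\{S(t)\}_{t \geq 0}$ under $\Phi f = f'$ is exactly Theorem~\ref{theorem:1d_diffusion_increasing_functions}, whose hypotheses on $\tilde A^{(1)}$ are part of the standing assumptions. Its proof supplies $\cD := \cD(A) \cap C^3_b(\cX)$, an operator $B \subseteq \tilde B$, and the decomposition $\Phi A f = B \Phi f$ on $\cD$. In the notation of Theorem~\ref{theorem:comparison} we then take this same $B$ with $C := 0$ and set $\cD^* := \cD$; this is permissible because $\cD(A^{(i)}) = \cD(A^{(1)}) \supseteq \cD$.

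What remains is to verify, for each $i \in \{1,2\}$, the three conditions of Theorem~\ref{theorem:comparison}. Condition (a) is arranged above. For (b), the positive continuous approximation operators for $(\Phi, \cF_{+,0}, \cD)$ of Proposition~\ref{proposition:approximation_operators_1d_diffusion} (the same family used in the proof of Theorem~\ref{theorem:1d_diffusion_increasing_functions}) feed into Proposition~\ref{proposition:core_double_approximation_operators} and yield the strict density of $\cD^2 \cap \cH_{+,0}^{(i)}$ in $\cH_{+,0}^{(i)}$. Condition (c) decouples because $\cA$ is block diagonal: the inclusion $R(\lambda,A^{(1)})\cD \subseteq \cD$ is part of the proof of Theorem~\ref{theorem:1d_diffusion_increasing_functions}, and $R(\lambda,A^{(2)})\cD \subseteq \cD$ is completely analogous via Lemma~\ref{lemma:gaining_regularity} applied to $\tilde A^{(2)}$ with $z = 2$. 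Corollary~\ref{corollary:preservation_of_order} then delivers the conclusion.

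The main obstacle is essentially organisational---assembling the right auxiliary operator and the right pair of domains $\cD$, $\cD^*$ so that both comparison directions fit into the formalism of Theorem~\ref{theorem:comparison}. The observation driving everything is the choice $A = A^{(1)}$, which makes one of the two comparisons trivial and reduces the other to the sign of the drift difference together with $\Phi f \geq 0$.
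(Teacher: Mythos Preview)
Your proof is correct and follows essentially the same route as the paper: take $A = A^{(1)}$, write $(A^{(2)}-A)f = (b^{(2)}-b^{(1)})\Phi f$, verify the hypotheses of Theorem~\ref{theorem:comparison} using the domain $\cD = \cD(A)\cap C^3_b(\cX)$ from Theorem~\ref{theorem:1d_diffusion_increasing_functions}, Proposition~\ref{proposition:core_double_approximation_operators} with the approximation operators $\{T'_n\}$ for condition~(b), and Lemma~\ref{lemma:gaining_regularity} for condition~(c). The only cosmetic differences are that the paper takes $\cD^* = C^2_b(\cX)\cap\cD(A)$ rather than your $\cD^* = \cD$, and invokes Theorem~\ref{theorem:comparison} for $i=2$ directly rather than packaging both directions through Corollary~\ref{corollary:preservation_of_order}; since the $i=1$ comparison is trivial with $A=A^{(1)}$, the two presentations are equivalent.
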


\begin{remark}
	
	Note that $A^{(1)}$ and $A^{(2)}$ having the same domain does not imply $b^{(1)}(e) = b^{(2)}(e)$, as  $m^{(1)}(\{e\})$ and $m^{(2)}(\{e\})$, the stickiness of the boundary $e$ for $A^{(1)}$ and $A^{(2)}$,  may differ. See Lemma~\ref{lemma:allowed_boundary_conditions}.
\end{remark}

\begin{proof}[Proof of Theorem \ref{theorem:1d_diffusion_comparison_increasing}]
	As $A^{(1)}$ is stochastically monotone by Theorem \ref{theorem:1d_diffusion_increasing_functions} we can take $A = A^{(1)}$  and use Theorem \ref{theorem:comparison} with  $i=2$. The conditions for $A$ of that theorem are satisfied with $\cD$ as in Theorem \ref{theorem:1d_diffusion_increasing_functions} above.
	
	It follows for $f \in C^2_b(\cX)$ that $(A^{(2)} - A^{(1)})f(x) = \left(b^{(2)}(x) - b^{(1)}(x)\right) f'(x)$, so that we can choose $C^{(1)} f(x) = (b^{(2)}(x) - b^{(1)}(x)) f(x)$. For $\cD^*$ we take $C^2_{b}(\cX) \cap \cD(A)$. Condition (c) of Theorem \ref{theorem:comparison} follows as before, and Condition (b) follows from Proposition \ref{proposition:core_double_approximation_operators} using the approximation operators $\{T'_n\}$ introduced in Proposition \ref{proposition:approximation_operators_1d_diffusion} below.
\end{proof}

\subsection{The convex order} \label{section:1d_diffusion_convex_order}

We proceed with the study of the convex order, generated by $\Phi \subseteq C_b(\cX) \times C_b(\cX)$ defined by $\Phi f = f''$. A first remark is that $C_b(\bR)$ contains only a trivial set of convex functions. For a half-line, the situation is slightly more difficult, but we obtain that the only convex functions in $C_b[0, \infty)$ are decreasing, which puts us in a setting similar to that of next section. We will therefore restrict our analysis to bounded intervals $\cX = [l, r]$.

\smallskip

Considering
\begin{equation*}
\tilde{A} f(x) = \frac{1}{2}a(x)f''(x) + b(x) f'(x),
\end{equation*}
we find for a four times continuously differentiable function that 
\begin{multline} \label{eqn:Phi_convex_on_1d_diffusion}
\Phi \tilde{A} f(x) = \frac{1}{2}a(x)f^{(4)}(x) + \left(a'(x) + b(x)\right) f^{(3)}(x) \\
+ \left(\frac{1}{2}a''(x) + 2 b'(x)\right) f''(x)+ b''(x) f'(x).
\end{multline}
This suggest the restriction to affine drift ($b'' \equiv 0$) and taking an operator $B \subseteq \widetilde{B}$, where
\begin{equation}\label{eqn:B_convex_on_1d_diffusion}
\tilde{B}f(x) = \frac{1}{2}a(x)f''(x) + \left(a'(x) + b(x)\right) f'(x) + \left(\frac{1}{2}a''(x) + 2 b'(x)\right)f(x)
\end{equation}
and $C = 0$. An immediate result is that our regular class of operators is preserved.

\begin{lemma}
	Suppose that $\widetilde{A}$ satisfies Condition \ref{condition:proper_diffusion_operator} with $a,b \in C^4(\cX)$. Then $\widetilde{B}$ satisfies Condition \ref{condition:proper_diffusion_operator}.
\end{lemma}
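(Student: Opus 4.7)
The plan is a direct verification of the three items of Condition \ref{condition:proper_diffusion_operator} for the operator $\widetilde{B}$, reading the coefficients off from \eqref{eqn:B_convex_on_1d_diffusion}: the diffusion coefficient of $\widetilde{B}$ coincides with that of $\widetilde{A}$, the drift coefficient is $\tilde{b} := a' + b$, and the killing coefficient (in the sign convention of \eqref{eqn:1d_diffusion_operator}) is $\tilde{c} := -\bigl(\tfrac12 a'' + 2 b'\bigr)$.

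Item (a) — twice continuous differentiability of $a$, $\tilde b$, $\tilde c$ — follows immediately from the hypothesis $a, b \in C^4(\cX)$, which in fact gives $\tilde b \in C^3(\cX)$ and $\tilde c \in C^2(\cX)$. Item (b) — the ellipticity bound $\overline{a}^{-1} \leq a(x) \leq \overline{a}$ — is inherited from $\widetilde{A}$ unchanged, since the diffusion coefficient of $\widetilde{B}$ is still $a$.

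Item (c) — bounded growth of $\tilde b$ and boundedness of $\tilde c$ — is the only place where a non-compact state space could a priori cause trouble; indeed, this is precisely what forced the hypotheses on $a'$ and $b'$ in the analogous Lemma \ref{lemma:tildeb_proper}. Here, however, the ambient space has already been restricted to the bounded interval $\cX = [l,r]$ (see the discussion preceding the statement), so every function in $C^k(\cX)$ with $k \geq 0$ is automatically bounded. In particular $a', a'', b', b''$ all lie in $C_b(\cX)$, yielding linear-growth bounds such as
\begin{equation*}
|\tilde b(x)| \leq \|a'\|_\infty + \overline{b}\bigl(1 + (|l|\vee|r|)\bigr), \qquad |\tilde c(x)| \leq \tfrac12 \|a''\|_\infty + 2\|b'\|_\infty,
\end{equation*}
which complete item (c).

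Thus the proof is pure bookkeeping and there is essentially no obstacle: the reduction to a bounded interval made earlier in the subsection absorbs what would otherwise be the only delicate issue (growth of the new drift and killing coefficients), so the lemma is an immediate consequence of the regularity upgrade from $C^2$ to $C^4$ on $a$ and $b$.
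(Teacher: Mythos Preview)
Your verification is correct and matches the paper's treatment: the paper states this lemma without proof, calling it ``an immediate result'', and your direct check of the three items of Condition~\ref{condition:proper_diffusion_operator} is exactly what is implicitly intended. Your observation that the restriction to the compact interval $\cX = [l,r]$ trivializes the growth requirements in item~(c) is the right explanation for why no extra hypotheses on $a'$ or $b'$ are needed here, in contrast to Lemma~\ref{lemma:tildeb_proper}.
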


\begin{theorem} \label{theorem:1d_diffusion_convex_functions}
Let $\cX = [l,r] \subseteq \bR$. Let $\widetilde{A}$ satisfy Condition \ref{condition:proper_diffusion_operator}. In addition, $b(l) \geq 0  \geq b(r)$ and $b'' = 0$ and that $a$ is $C^4$. Let $A \subseteq \widetilde{A}$, $B \subseteq \widetilde{B}$ have domains
\begin{align*}
\cD(A) & = \left\{f \in \cD(\tilde A) \, \middle| \, f''(l) = 0 = f''(r) \right\},\\
\cD(B) & = \left\{f \in \cD(\tilde B) \, \middle| \, f(l) = 0 = f(r) \right\}.
\end{align*}
Let $\{S(t)\}_{t \geq 0}$ be the semigroup generated by $A$ and let  $\{T(t)\}_{t \geq 0}$ be the semigroup generated by $B$.

Then the semigroup $\{S(t)\}_{t \geq 0}$ generated by $A$ maps convex functions to convex functions. In addition, if $f \in C^2(\cX)$, then $(S(t) f)'' \geq T(t)(f'')$.
\end{theorem}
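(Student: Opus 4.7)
The plan is to verify the hypotheses of Theorem~\ref{theorem:main_theorem} and Theorem~\ref{theorem:lower_bound} with $\Phi f = f''$ and $C = 0$, following the template laid down in the proof of Theorem~\ref{theorem:1d_diffusion_increasing_functions}. Because the boundary condition $f''(e) = 0$ in $\cD(A)$ forces $\Phi f$ to vanish at $e \in \{l,r\}$, I would take $F = \{g \in C_b(\cX) : g(l) = g(r) = 0\}$ with the restriction of the strict (on the compact $\cX = [l,r]$, supremum) topology and the pointwise order; then $(F,\tau_F,\leq)$ is a locally convex-solid space satisfying Condition~C. Since $b'' \equiv 0$ and $a \in C^4$, the operator $\tilde B$ of \eqref{eqn:B_convex_on_1d_diffusion} satisfies Condition~\ref{condition:proper_diffusion_operator} (its signed zeroth-order coefficient $\tfrac12 a'' + 2 b'$ is bounded and is absorbed via the bounded-perturbation remark following Condition~\ref{condition:proper_diffusion_operator}), and $B$ with Dirichlet domain generates a positive SCLE semigroup $\{T(t)\}_{t \geq 0}$ on $F$ by Theorem~\ref{theorem:domain_diffusion_operator}.

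Choose $\cD := \cD(A) \cap C^4_b(\cX)$ and $\cD^* := \cD(B) \cap C^2_b(\cX)$. For $f \in \cD$ the computation \eqref{eqn:Phi_convex_on_1d_diffusion} together with $b'' \equiv 0$ yields $\Phi A f = \tilde B(f'')$, and the boundary conditions $f''(l) = f''(r) = 0$ place $\Phi f$ in $F \cap \cD(\tilde B) \subseteq \cD(B)$. Hence $\Phi A f = B \Phi f$ and condition (a) of Theorem~\ref{theorem:main_theorem} holds with $C = 0$. Evidently $\Phi \cD \subseteq \cD^*$. The resolvent-invariance conditions (c) of both theorems follow, exactly as in the proof of Theorem~\ref{theorem:1d_diffusion_increasing_functions}, from $AR(\lambda,A) = \lambda^{-1}(R(\lambda,A) - \bONE)$ combined with Lemma~\ref{lemma:gaining_regularity} applied to $A$ with $n = z = 4$, and analogously to $B$ with $n = z = 2$.

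What remains is the density hypothesis (b) of each theorem. For Theorem~\ref{theorem:main_theorem} I would invoke Proposition~\ref{proposition:core_approximation_operators} with a family of positive continuous smoothing operators on $C_b(\cX)$ built analogously to the $\{T'_n\}$ of Proposition~\ref{proposition:approximation_operators_1d_diffusion} used in the proof of Theorem~\ref{theorem:1d_diffusion_increasing_functions}, but adapted to map into $\cD$ while preserving $f'' \geq 0$. For Theorem~\ref{theorem:lower_bound} I would pair these with approximation operators on $F$ and apply Corollary~\ref{corollary:core_approximation_operators}, verifying the intertwining property \eqref{Phi_positive} for $\Phi = d^2/dx^2$; this is the analogue of the pair $\{(\hat T'_n, \hat T''_n)\}$ of Proposition~\ref{proposition:refined_approximation_operators_1d_diffusion}.

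The main obstacle is precisely the construction of these approximation operators: one needs smoothing maps $T_n : C_b(\cX) \to C^4_b(\cX)$ that simultaneously preserve convexity and produce images whose second derivative vanishes at both endpoints (so that the range lies in $\cD(A)$, not merely in $C^4_b$), together with compatible paired operators on $F$. The requirement that $f''$ (rather than $f$ or $f'$) vanish at the boundary is what makes the convex-order case genuinely more delicate than the increasing-order case, and it ties the construction to the hypothesis $b(l) \geq 0 \geq b(r)$ of the theorem, which, per Remark~\ref{remark:sign_of_drift_for_second_order}, is exactly the admissibility condition for the Feller boundary $f''(e) = 0$. Once such families are in hand, Theorem~\ref{theorem:main_theorem} delivers $S(t)\cF_+ \subseteq \cF_+$ and Theorem~\ref{theorem:lower_bound} delivers $\Phi S(t) f \geq T(t) \Phi f$ for $f \in \cF_{+,0}$; the claim $(S(t)f)'' \geq T(t)(f'')$ for arbitrary $f \in C^2(\cX)$ then follows by a strict-continuity approximation argument.
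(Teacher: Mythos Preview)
Your approach is essentially the paper's: same choice of $F$, $C=0$, $\cD=\cD(A)\cap C^4(\cX)$, $\cD^*=\cD(B)\cap C^2(\cX)$, same verification of $\Phi A=B\Phi$, and same use of Lemma~\ref{lemma:gaining_regularity} for resolvent invariance. The difference is that you treat the approximation operators as an open ``main obstacle'' to be built from scratch, whereas the paper simply uses the operators already constructed in Propositions~\ref{proposition:approximation_operators_1d_diffusion} and~\ref{proposition:approximation_operators_1d_diffusion_comparison}---and you point to the wrong ones.

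For condition~(b) of Theorem~\ref{theorem:main_theorem} the paper uses $\{T_n\}$ (not an analogue of $\{T'_n\}$): by Proposition~\ref{proposition:approximation_operators_1d_diffusion}(a) the operators $T_n$ already preserve the set of convex functions, and by~(e) they satisfy $(T_nf)''(e)=0$, so $T_n$ maps $\cF_{+,0}$ into $\cD\cap\cF_{+,0}$ directly. For Theorem~\ref{theorem:lower_bound} the paper pairs these with $\{T''_n\}$ (not the $\hat T'_n,\hat T''_n$ of Proposition~\ref{proposition:refined_approximation_operators_1d_diffusion}, which are tailored to finite~$\gamma_e$ rather than Dirichlet data): Proposition~\ref{proposition:approximation_operators_1d_diffusion_comparison}(b) gives precisely the intertwining $f''\geq h\geq 0 \Rightarrow (T_nf)''\geq T''_n h\geq 0$ needed for Corollary~\ref{corollary:core_approximation_operators}, and $\{T''_n\}$ is strongly continuous on $F=\{g:g(l)=g(r)=0\}$ by Proposition~\ref{proposition:approximation_operators_1d_diffusion}(d). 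So nothing new has to be constructed; the convex-order boundary requirement $f''(e)=0$ is exactly what the linear-interpolation step in $\cB_n$ was designed to produce.
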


\begin{proof}
	 Taking into account the boundary conditions of $B$, we work with $F = \{f\in C_b(\cX)\colon f(l) = f(r) = 0\}$.
	 First note that  $\{S(t)\}_{t \geq 0}$ is well defined by Lemma~\ref{lemma:allowed_boundary_conditions} as the semigroup corresponding to a diffusion process in Theorem \ref{theorem:domain_diffusion_operator}.
	The proof is similar to that of Theorem \ref{theorem:1d_diffusion_increasing_functions}, using $C = 0$, $\cD = \cD(A) \cap C^4(\cX)$,  $\cD^* = \cD(B) \cap C^2(\cX)$ 
	using Theorem~\ref{theorem:main_theorem} with approximation operators $T_n$ respective 	
		Theorem~\ref{theorem:lower_bound} where Condition (c) follows from Corollary \ref{corollary:core_approximation_operators} by using the set of approximation operators 
	$\{ (T_n,  {T}''_n)\}$ using the approximation operators introduced in Propositions \ref{proposition:approximation_operators_1d_diffusion} and \ref{proposition:approximation_operators_1d_diffusion_comparison} below.
	Again $\{T''_n\}$ is strongly continuous on $F$ in this case.
\end{proof}

We will write $\mu \lesssim_{cx} \nu$ if $\mu$ is smaller than $\nu$ with respect to the integral order generated by all convex functions.

\begin{theorem} \label{theorem:1d_diffusion_comparison_convex}
	Let $\tilde A^{(1)}$ and $\tilde A^{(2)}$ be operators of the form
	\begin{align*}
	\tilde A^{(1)} f(x) = \frac{1}{2} a^{(1)}(x) f''(x) + b(x) f'(x), \qquad \tilde A^{(2)} f(x) = \frac{1}{2} a^{(2)}(x) f''(x) + b(x)f'(x)
	\end{align*}
	with affine $b$, satisfying Condition \ref{condition:proper_diffusion_operator} and  $a \in C^4(\cX)$,
	with \[
	a^{(1)} \leq a \leq a^{(2)}
	\] 
	
	If $\eta^{(1)},\eta^{(2)}$ are diffusion processes generated by 
	operators  $A^{(i)}$ being subsets of $\tilde A^{(i)}$, $i = 1, 2$ with
	\begin{equation*}
	\cD(A^{(i)}) := \left\{f \in \cD(\tilde A^{(i)}) \, \middle| \, f''(0) = f''(1) = 0 \right\},
	\end{equation*}

	then $\eta^{(1)}(0) \lesssim_{cx} \eta^{(2)}(0)$ implies $\eta^{(1)}(t) \lesssim_{cx} \eta^{(2)}(t)$ for all $t \geq 0$
\end{theorem}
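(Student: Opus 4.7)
The plan is to derive this from Corollary \ref{corollary:preservation_of_order} by using the middle diffusion operator with diffusion coefficient $a$ as the monotone auxiliary process, mimicking the structure of the proof of Theorem \ref{theorem:1d_diffusion_comparison_increasing} (with the roles of $a$ and $b$ swapped). Concretely, set $\tilde A f = \tfrac12 a f'' + b f'$ with domain $\cD(A) = \{f \in \cD(\tilde A)\colon f''(l) = f''(r) = 0\}$. Since $b$ is affine and $a \in C^4(\cX)$, Theorem \ref{theorem:1d_diffusion_convex_functions} applies, so the semigroup $\{S(t)\}_{t \ge 0}$ generated by $A$ preserves convex functions in $C_b(\cX)$, and in particular the assumptions on $A$ in Theorem \ref{theorem:comparison} are met.

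I would then read off the decomposition of $A^{(i)} - A$. For $f \in C^4(\cX) \cap \cD(A)$,
\begin{equation*}
(A^{(i)} - A)f(x) = \tfrac12 \bigl(a^{(i)}(x) - a(x)\bigr) f''(x) = (-1)^i C^{(i)} \Phi f(x),
\end{equation*}
where $C^{(1)} g(x) := \tfrac12 (a(x) - a^{(1)}(x)) g(x)$ and $C^{(2)} g(x) := \tfrac12 (a^{(2)}(x) - a(x)) g(x)$. Both are multiplication by bounded nonnegative functions and are therefore positive and strictly continuous on $C_b(\cX)$. This gives condition (a) of Theorem \ref{theorem:comparison} on the core $\cD := \cD(A) \cap C^4(\cX)$; note that since $A$, $A^{(1)}$, $A^{(2)}$ all impose the same boundary condition $f''(l) = f''(r) = 0$, we indeed have $\cD \subseteq \cD(A^{(1)}) \cap \cD(A^{(2)}) \cap \cD(\Phi A)$ with $\Phi A f = B \Phi f$ (where $B$ is taken as in Theorem \ref{theorem:1d_diffusion_convex_functions}, $C = 0$).

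For the regularity condition (c), I would take $\cD^* := \cD(A^{(i)}) \cap C^2(\cX)$ so that $\Phi \cD \subseteq \cD^*$, and apply Lemma \ref{lemma:gaining_regularity} to each block of the diagonal resolvent $R(\lambda,\cA)$: since each coefficient function is of class $C^4$ with $c \equiv 0$, the lemma yields $R(\lambda,A) C^4 \subseteq C^6 \subseteq C^4$ and $R(\lambda,A^{(i)}) C^2 \subseteq C^4 \subseteq C^2$, while the inclusion into $\cD(A)$ and $\cD(A^{(i)})$ is automatic from $AR(\lambda,A) = \lambda^{-1}(R(\lambda,A) - \bONE)$. For the density condition (b), I would invoke Proposition \ref{proposition:core_double_approximation_operators} with the positive continuous approximation operators $\{T_n\}$ for $(\Phi, \cF_{+,0}, \cD)$ constructed in Proposition \ref{proposition:approximation_operators_1d_diffusion}; this is the same ingredient that was already used in the proof of Theorem \ref{theorem:1d_diffusion_convex_functions}.

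The main obstacle I anticipate is (b), the strict density of $(\cD\times\cD^*)\cap\cH_{+,0}^{(i)}$ in $\cH_{+,0}^{(i)}$, because one must simultaneously approximate a pair $(f,h)$ of the right regularity while preserving convexity of $f$ and the pointwise comparison $(-1)^i f \le (-1)^i h$. Since $\{T_n\}$ is positive (order-preserving) and continuous, applying the same $T_n$ to both components of the pair yields approximations $(T_n f, T_n h)$ for which convexity of $T_n f$ follows from $T_n \cF_{+,0} \subseteq \cF_{+,0}$ and the inequality $(-1)^i T_n f \le (-1)^i T_n h$ follows from positivity; this is exactly the content of Proposition \ref{proposition:core_double_approximation_operators} and so no fundamentally new construction is required beyond what was already needed in the monotonicity result. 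Once (a), (b), (c) are verified, Corollary \ref{corollary:preservation_of_order} immediately yields $\eta^{(1)}(0) \lesssim_{cx} \eta^{(2)}(0) \Rightarrow \eta^{(1)}(t) \lesssim_{cx} \eta^{(2)}(t)$ for every $t \ge 0$.
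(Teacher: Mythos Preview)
Your approach is essentially the same as the paper's: apply Corollary \ref{corollary:preservation_of_order} with the middle operator $A f = \tfrac12 a f'' + b f'$, read off $C^{(i)}$ as multiplication by $\tfrac12|a^{(i)}-a|$, and verify conditions (b) and (c) of Theorem \ref{theorem:comparison} via Proposition \ref{proposition:core_double_approximation_operators} with the operators $\{T_n\}$ from Proposition \ref{proposition:approximation_operators_1d_diffusion} and Lemma \ref{lemma:gaining_regularity}, respectively. One small slip: the requirement on $\cD^*$ in Theorem \ref{theorem:comparison} is $\cD \subseteq \cD^* \subseteq \cD(A^{(i)}) \cap \cD(\Phi)$, not $\Phi\cD \subseteq \cD^*$ (that is the condition from Theorem \ref{theorem:lower_bound}); your stated inclusion $\Phi\cD \subseteq \cD^*$ is in fact false since $f \in \cD$ does not force $f^{(4)}(l)=f^{(4)}(r)=0$, but your choice $\cD^* = \cD(A^{(i)}) \cap C^2(\cX)$ does satisfy the correct condition because $\cD(A) = \cD(A^{(i)})$ and $C^4 \subseteq C^2$.
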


\begin{proof}
	The result follows by an application of Corollary \ref{corollary:preservation_of_order}. The conditions for the operator $A$ with $A f = \frac12 a f''  + b f' $  of Theorem \ref{theorem:comparison} are satisfied with $\cD$ as in Theorem \ref{theorem:1d_diffusion_convex_functions} above.
	
	By definition, it follows for $f \in C^2[l,r]$ that $(A^{(i)} - A)f(x) = \frac12 \left(a^{(i)}(x) - a(x)\right) f''(x)$, so that we can choose $C^{(1)} f(x) = \frac12(a(x) - a^{(1)}(x)) f(x)$ and $C^{(2)} f(x) = \frac12(a^{(2)}(x) -a(x)) f(x)$.  Condition (c) of Theorem \ref{theorem:comparison} follows as before, and Condition (b) follows from Proposition \ref{proposition:core_double_approximation_operators} using the approximation operators $\{ T_n\}$ introduced in Proposition \ref{proposition:approximation_operators_1d_diffusion} below.
\end{proof}

\subsection{The increasing convex order on an interval} \label{section:1d_diffusion_increasing_convex_order_interval}

Finally, we consider the increasing convex order, generated by $\Phi \subseteq C_b(\cX) \times (C_b(\cX) \times C_b(\cX))$ defined by $\Phi f = (f',f'')$ on an interval $\cX = [l,r] \subseteq \bR$. If as usual
\begin{equation*}
\tilde{A} f(x) = \frac{1}{2}a(x)f''(x) + b(x) f'(x),
\end{equation*}
we find for a four times continuously differentiable function, that $\Phi \widetilde{A}$ can be expressed as a vector composed of the expressions in \eqref{eqn:Phi_increasing_on_1d_diffusion} and \eqref{eqn:Phi_convex_on_1d_diffusion}.

We will then an appropriate subset $B \subseteq \widetilde{B}$, where
\begin{equation}\label{eqn:increasing_convex_order_B12}
\widetilde{B}\begin{pmatrix}
f_1 \\ f_2
\end{pmatrix} = \begin{pmatrix}
\widetilde{B}_1 f_1 \\ \widetilde{B}_2 f_2
\end{pmatrix},
\end{equation}
with
\begin{align*}
\tilde B_1 f(x) & = \frac{1}{2} a(x) f'' (x) +\left(\frac{1}{2} a'(x) + b(x)\right) f'(x) + b'(x) f(x) \\
\tilde B_2 f(x) & = \frac{1}{2}a(x)f''(x) + \left(a'(x) + b(x)\right) f'(x) + \left(\frac{1}{2}a''(x) + 2 b'(x)\right)f(x)
\end{align*}
and a positive bounded operator
\begin{equation} \label{eqn:increasing_convex_order_C}
C\begin{pmatrix}
f_1 \\ f_2
\end{pmatrix}  = \begin{pmatrix}
0 & 0 \\ b'' & 0
\end{pmatrix}\begin{pmatrix}
f_1 \\ f_2
\end{pmatrix}.
\end{equation}

\begin{theorem} \label{theorem:1d_diffusion_convex_increasing_functions}
	Let $\widetilde{A}$ satisfy Condition \ref{condition:proper_diffusion_operator} with $a, b \in C^4(\cX)$. In addition, suppose that $b(l) \geq 0$, $b(r) \leq 0$ and $b'' \geq 0$. Let $A \subseteq \widetilde{A}$ and $B_1 \subseteq \widetilde{B}_1$, $B_2 \subseteq \widetilde{B}_2$  have domains
	\begin{align*}
	\cD(A) & = \left\{f \in \cD(\tilde A) \, \middle| \, f''(l) = 0 = f''(r) \right\}, \\
	\cD(B_1) & = \left\{f \in \cD(\tilde B_1) \middle| f'(l) = 0 = f'(r) \right\}, \\
	\cD(B_2) & = \left\{f \in \cD(\tilde B_2) \middle| f(l) = 0 = f(r) \right\}.
	\end{align*}
	Let $\{S(t)\}_{t \geq 0}$ be the semigroup generated by $A$ and let $\{T_1(t)\}_{t \geq 0}$, $\{T_2(t)\}_{t \geq 0}$ be the semigroups generated by $B_1,B_2$.
	
	Then the semigroup $\{S(t)\}_{t \geq 0}$ generated by $A$ maps increasing convex functions to increasing convex functions. In addition, if $f \in C^2(\cX)$, then $(S(t) f)' \geq T_1(t)(f')$ and $(S(t)f)'' \geq T_2(t) (f'')$. 
\end{theorem}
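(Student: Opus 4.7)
[Proof proposal for Theorem \ref{theorem:1d_diffusion_convex_increasing_functions}]
The plan is to apply Theorem \ref{theorem:main_theorem} and subsequently Theorem \ref{theorem:lower_bound} with the ordering induced by $\Phi f = (f',f'')$, taking values in the product space $F = F_1 \times F_2$ where $F_1 = C_b(\cX)$ (matching the Neumann-type boundary of $B_1$) and $F_2 = \{g \in C_b(\cX) : g(l) = g(r) = 0\}$ (matching the Dirichlet-type boundary of $B_2$). Both are locally convex-solid under the componentwise pointwise order. The cone $\cF_{+,0}$ consists of functions that are simultaneously increasing and convex, and this is exactly the intersection of the cones considered in Sections \ref{section:1d_diffusion_increasing_order} and \ref{section:1d_diffusion_convex_order}.

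The first step is to verify the algebraic identity (a) of Theorem \ref{theorem:main_theorem} on $\cD := \cD(A) \cap C^4(\cX)$. Comparing \eqref{eqn:Phi_increasing_on_1d_diffusion} with $\widetilde{B}_1(f')$ shows that the first component of $\Phi \widetilde{A} f$ equals $\widetilde{B}_1(f')$, while \eqref{eqn:Phi_convex_on_1d_diffusion} combined with the definition of $\widetilde{B}_2$ yields that the second component equals $\widetilde{B}_2(f'') + b'' f'$. Hence $\Phi \widetilde{A} = (\widetilde{B}_1,\widetilde{B}_2) \circ \Phi + C \Phi$ with $C$ given by \eqref{eqn:increasing_convex_order_C}; the positivity of $C$ is precisely the assumption $b'' \geq 0$. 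The boundary conditions are also compatible: for $f \in \cD(A)$ we have $f''(l) = f''(r) = 0$, so that $(f')'(e) = 0$ places $f' \in \cD(B_1)$, and $f''$ itself lies in $\cD(B_2)$ since $f''$ vanishes at the endpoints. This ensures $\Phi \cD \subseteq \cD(B_1) \times \cD(B_2) = \cD(B)$, and in particular $\cD \subseteq \cD(B\Phi) \cap \cD(C\Phi) \cap \cD(\Phi A)$.

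The remaining conditions of Theorem \ref{theorem:main_theorem} are handled just as in the proofs of Theorems \ref{theorem:1d_diffusion_increasing_functions} and \ref{theorem:1d_diffusion_convex_functions}. For (c), the inclusion $R(\lambda,A) \cD \subseteq \cD$ reduces to the two statements $R(\lambda,A)C_b(\cX) \subseteq \cD(A)$ (from $AR(\lambda,A) = \lambda^{-1}(R(\lambda,A) - \bONE)$) and $R(\lambda,A) C^4(\cX) \subseteq C^4(\cX)$, the latter coming from Lemma \ref{lemma:gaining_regularity} applied with $n = 4$ and $z \geq 4$ (since $a,b \in C^4$). For (b) one invokes Proposition \ref{proposition:core_approximation_operators} with an appropriate family of approximation operators that preserves the increasing convex cone while mapping into $\cD$; a natural candidate is the sequence $\{T_n\}$ of Proposition \ref{proposition:approximation_operators_1d_diffusion}, possibly composed with the convex-order approximator of Proposition \ref{proposition:approximation_operators_1d_diffusion_comparison}, chosen so that the boundary condition $f''(l) = f''(r) = 0$ is satisfied after smoothing while monotonicity and convexity are preserved.

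For the lower bound statement $(S(t)f)' \geq T_1(t)(f')$ and $(S(t)f)'' \geq T_2(t)(f'')$, apply Theorem \ref{theorem:lower_bound} with $\cD^* := (\cD(B_1) \cap C^2(\cX)) \times (\cD(B_2) \cap C^2(\cX))$, so $\Phi \cD \subseteq \cD^*$. Positivity of $\{T_1(t)\}_{t \geq 0}$ and $\{T_2(t)\}_{t \geq 0}$ is inherited from their construction as diffusion semigroups (their generators satisfy Condition \ref{condition:proper_diffusion_operator} after one checks that the coefficients in \eqref{eqn:increasing_convex_order_B12} satisfy the bounded growth and regularity hypotheses, using $a,b \in C^4$ and $b'$ bounded on $\cX = [l,r]$), so that $B$ generates a positive SCLE semigroup on $F$. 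Condition (c) of Theorem \ref{theorem:lower_bound} again follows from Lemma \ref{lemma:gaining_regularity}, and Condition (b) is obtained from Corollary \ref{corollary:core_approximation_operators} by pairing the approximation operators for $(\Phi,\cF_{+,0},\cD)$ with suitable approximation operators $\{T_n'\}$ on $\cD^*$ that preserve componentwise positivity and are compatible in the sense of \eqref{Phi_positive}. The main obstacle is precisely the joint construction of these approximation operators: they must simultaneously preserve monotonicity, convexity, and the boundary condition $f''(e) = 0$, while producing enough smoothness to lie in $\cD$; this is where the assumption $b'' \geq 0$ and the sign conditions $b(l) \geq 0, b(r) \leq 0$ enter essentially, the latter guaranteeing (via Remark \ref{remark:sign_of_drift_for_second_order}) that the boundary condition on $A$ corresponds to a well-defined Feller process.
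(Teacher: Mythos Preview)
Your proof is correct and follows the same route as the paper: apply Theorem \ref{theorem:main_theorem} and then Theorem \ref{theorem:lower_bound} with $F = C_b(\cX)\times\{g\in C_b(\cX):g(l)=g(r)=0\}$, $\cD = \cD(A)\cap C^4(\cX)$, and the operators $B,C$ of \eqref{eqn:increasing_convex_order_B12}--\eqref{eqn:increasing_convex_order_C}. Two small sharpenings relative to the paper's version: first, the paper takes $\cD^* = (\cD(B_1)\cap C^3(\cX))\times(\cD(B_2)\cap C^2(\cX))$ (your $C^2$ in the first slot also works, but $C^3$ is what $\Phi\cD$ actually lands in); second, no composition of approximators is needed --- the single family $\{T_n\}$ of Proposition \ref{proposition:approximation_operators_1d_diffusion} already preserves \emph{both} monotonicity and convexity and forces $(T_nf)''(e)=0$, and for the lower bound one uses the triple $(T_n,T_n',T_n'')$ via Corollary \ref{corollary:core_approximation_operators} and Proposition \ref{proposition:approximation_operators_1d_diffusion_comparison}. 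Finally, your last paragraph slightly misplaces the hypotheses: $b''\ge 0$ is used only for the positivity of $C$ (as you noted earlier), and $b(l)\ge 0$, $b(r)\le 0$ only to make the boundary condition on $A$ admissible; neither is needed for the approximation operators themselves.
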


\begin{proof}
	A proof of this result can be carried out following that of Theorem \ref{theorem:1d_diffusion_increasing_functions} with $C$ as above.
		 Taking into account the boundary conditions of $B = B_1\otimes B_2$, we work with $F = C_b(\cX)\times\{f\in C_b(\cX)\colon f(l) = f(r) = 0\}$.
	We set
	 $\cD = \cD(A) \cap C^4(\cX)$,  $\cD^* = (\cD(B_1) \cap C^3(\cX) ) \times (\cD(B_2) \cap C^2(\cX))$ using Theorem~\ref{theorem:main_theorem} with approximation operators $\{T_n\}$ respective 
	Theorem~\ref{theorem:lower_bound} where Condition (c) follows from Corollary \ref{corollary:core_approximation_operators} by using the set of approximation operators 
	$\{ (T_n, {T}_n', {T}''_n)\}$ using the approximation operators introduced in Propositions \ref{proposition:approximation_operators_1d_diffusion} and \ref{proposition:approximation_operators_1d_diffusion_comparison} below.
\end{proof}

We will write $\mu \lesssim_{icx} \nu$ if $\mu$ is smaller than $\nu$ with respect to the integral order generated by all increasing convex functions.

\begin{theorem}\label{theorem:1d_diffusion_comparison_increasingconvex}
	Let $\tilde A^{(1)}$, $\tilde A^{(2)}$ be operators of the form
	\begin{align*}
	\tilde A^{(i)} f(x) = \frac{1}{2} a^{(i)}(x) f''(x) + b^{(i)}(x) f'(x)
	\end{align*}
	satisfying Condition \ref{condition:proper_diffusion_operator}.
	
		Assume there are four times continuously differentiable functions $a,b$ such that $b(l) \geq 0$, $b(r) \leq 0$ and $b'' \geq 0$ and
	\begin{equation*}
	(-1)^i a^{(i)} \geq (-1)^i a, \qquad  (-1)^i b^{(i)} \geq (-1)^i b.
	\end{equation*}	
	
	If $\eta^{(1)},\eta^{(2)}$ are diffusion processes generated by 
	operators  $A^{(i)}$ being subsets of $\tilde A^{(i)}$, $i = 1, 2$ with
	\begin{equation*}
	\cD(A^{(i)}) := \left\{f \in \cD(\tilde A^{(i)}) \, \middle| \, f''(0) = f''(1) = 0 \right\},
	\end{equation*}

	then $  \eta^{(1)}(0) \lesssim_{icx}  \eta^{(2)}(0)$ implies $\eta^{(1)}(t) \lesssim_{icx} \eta^{(2)}(t)$ for all $t \geq 0$.
\end{theorem}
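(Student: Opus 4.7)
The plan is to apply Corollary~\ref{corollary:preservation_of_order} using the auxiliary Feller generator $A \subseteq \tilde{A}$ with $\tilde{A} f = \tfrac12 a f'' + b f'$ and domain $\cD(A) = \{f \in \cD(\tilde{A}) : f''(l) = f''(r) = 0\}$. The assumptions on $a, b$ are precisely those required by Theorem~\ref{theorem:1d_diffusion_convex_increasing_functions}, hence the semigroup $\{S(t)\}_{t \geq 0}$ preserves the increasing convex cone $\cF_+$ (associated with $\Phi f = (f', f'')$), and on $\cD := \cD(A) \cap C^4(\cX)$ we have the decomposition $\Phi A = (B + C) \Phi$ with $B, C$ as in \eqref{eqn:increasing_convex_order_B12} and \eqref{eqn:increasing_convex_order_C}.

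For the differences $A^{(i)} - A$, a direct calculation gives, for $f \in \cD$ and $i \in \{1, 2\}$,
\begin{equation*}
(A^{(i)} - A) f(x) = (b^{(i)}(x) - b(x)) f'(x) + \tfrac{1}{2}(a^{(i)}(x) - a(x)) f''(x) = (-1)^i C^{(i)} \Phi f(x),
\end{equation*}
where
\begin{equation*}
C^{(i)}(g_1, g_2)(x) := |b^{(i)}(x) - b(x)|\, g_1(x) + \tfrac{1}{2} |a^{(i)}(x) - a(x)|\, g_2(x).
\end{equation*}
The sign hypotheses $(-1)^i(b^{(i)} - b) \geq 0$ and $(-1)^i(a^{(i)} - a) \geq 0$ ensure the sign matching, and since the multiplying coefficients are bounded continuous, $C^{(i)}\colon F \to E$ is a positive, strictly continuous operator.

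With the choice $\cD^* := \cD(A^{(i)}) \cap C^2(\cX)$, condition (a) of Theorem~\ref{theorem:comparison} is established, and $\cD \subseteq \cD^*$ holds because $A, A^{(1)}, A^{(2)}$ share the boundary condition $f''(l) = f''(r) = 0$ and $\cD(\tilde A^{(i)}) \supseteq C^2_b(\cX)$ on the bounded interval $\cX$. Condition (c) follows from $\cA R(\lambda, \cA) = \lambda^{-1}(R(\lambda, \cA) - \bONE)$, which reduces the invariance $R(\lambda, \cA)(\cD \times \cD^*) \subseteq \cD \times \cD^*$ to the preservation of $C^4(\cX)$ by $R(\lambda, A)$ and of $C^2(\cX)$ by $R(\lambda, A^{(i)})$; both are consequences of Lemma~\ref{lemma:gaining_regularity} applied with the smoothness of $a, b, a^{(i)}, b^{(i)}$ provided by Condition~\ref{condition:proper_diffusion_operator}.

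The main remaining obstacle is condition (b), the strict density of $(\cD \times \cD^*) \cap \cH_{+,0}^{(i)}$ in $\cH_{+,0}^{(i)}$. I would address this via Proposition~\ref{proposition:core_double_approximation_operators}, applied to the positive, strictly continuous family of approximation operators $\{T_n\}$ for $(\Phi, \cF_{+,0}, \cD)$ from Proposition~\ref{proposition:approximation_operators_1d_diffusion}; these are the same operators used in the proof of Theorem~\ref{theorem:1d_diffusion_convex_increasing_functions}, and they crucially preserve non-negativity of both $f'$ and $f''$ while mapping into $\cD$. Having verified all hypotheses of Theorem~\ref{theorem:comparison} for both $i = 1$ and $i = 2$, Corollary~\ref{corollary:preservation_of_order} delivers the claimed preservation of the increasing convex order.
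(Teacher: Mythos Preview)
Your argument is correct and follows essentially the same route as the paper: apply Corollary~\ref{corollary:preservation_of_order} with the auxiliary generator $A$ from Theorem~\ref{theorem:1d_diffusion_convex_increasing_functions}, identify $(A^{(i)}-A)f = (-1)^i C^{(i)}\Phi f$ with the positive multiplication operator $C^{(i)}$, and verify conditions (b) and (c) of Theorem~\ref{theorem:comparison} via Proposition~\ref{proposition:core_double_approximation_operators} with the approximation operators $\{T_n\}$ and Lemma~\ref{lemma:gaining_regularity}. The paper's proof records the same $C^{(i)}$ in matrix form but is otherwise identical.
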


\begin{proof} 
	The proof is similar to that of Theorem \ref{theorem:1d_diffusion_comparison_convex}. The main observation is that
	\begin{equation*}
	A^{(i)} f - A f = \underbrace{\begin{bmatrix}b^{(i)}-b & 0 \\ 0 & a^{(i)}-a \end{bmatrix}}_{\displaystyle =: (-1)^i \cC^{(i)}} \Phi f
	\end{equation*}
	where $\cC^{(i)}$ are positive continuous operators. 
\end{proof}

\subsection{Approximation operators} \label{section:1d_diffusion_approximation_operators}

In the three sections above, we proved our results up to the construction of approximation operators. In this section we will construct these approximation operators. Their  definitions depend on the boundary behaviour that they need to achieve.

The first ingredient for the construction of the approximation operators is a collection of mollifiers -- functions $k_\delta \ge 0$ which are smooth and symmetric around $0$ with
$\int k_\delta(x) \, \dd x = 1$ and $\text{support } k_\delta \subseteq B_\delta(0)$. 
			
Denote by $\cM_n$ the operator $\cM_n f := k_{1/n} \ast f$, 
where ${k_\delta \ast f(x) = \int  f(\xi) k_{\delta}(x- \xi) \dd \xi}$. 
The collection of operators $\{\cM_n\}$ inherits the following properties from the mollifiers, 
cf.~Theorem C7 (iii) in \cite{Ev10}:
\begin{enumerate}[(a)]
	\item $\cM_n$ is positive as in Definition~\ref{definition:approximation_operators} and preserves the sets of non-negative, increasing, convex and increasing convex functions,
	\item $\cM_n M_b(\bR) \subseteq C^\infty_b(\bR)$,
	\item for $f \in \cM_b(\bR)$, we have $\sup_n \vn{\cM_n f} \leq \vn{f}$ and $\cM_n f \rightarrow f $ uniformly on compact sets, implying that we also have strict convergence by Theorem \ref{theorem:propertiesCbstrict} (d). \label{enum_item:strong_continuity_Mn} 
\end{enumerate}
			
Mollification alone, however, does not achieve the correct behaviour at the boundary. 
The remedy is to interpolate the function at the boundary.
The interpolation is carefully chosen to preserve positive, increasing or convex functions and related order properties, as detailed in the following three propositions.

\begin{proposition}\label{proposition:approximation_operators_1d_diffusion}
Let $\cX = [l,r] \cap \bR$, $l, r \in \bR \cup \{-\infty\} \cup \{\infty\}$. There are sets of positive approximation operators $\{T_n\}, \{T'_n\}$ and $\{T''_n\}$, with graph in $C_b(\cX) \times C_b(\cX)$ such that 
	\begin{enumerate}[(a)]
		\item $T_n$ preserves the set of increasing functions and the set of convex functions,
		\item $T'_n$  preserves the sets of increasing and decreasing functions, \label{increasing-convex-approximation}

		\item With $\tilde T_n$ denoting $T_n$, $T'_n$ or $T''_n$
		\[\tilde T_n C_b(\cX) \subseteq C^\infty_b(\cX),\]
		\item For each $n$, $T_n$, $T'_n$ and $T''_n$ are strictly continuous, and the sequences $\{T_n\}$ and $\{T'_n\}$ are strongly continuous on $C_b(\cX)$, whereas the sequence $\{T''_n\}$ is strongly continuous on the space $\{f \in C_b(\cX) \, | \, \forall e \in \partial X \cap X\colon \, f(e) = 0 \}$.
		\item If $e \in \partial \cX \cap \cX$, we have for each $n$ and $f \in C_b(\cX)$ that
			\begin{align*}
				(T_n f)''(e)  = 0,\\
				(T'_n f)''(e) = (T'_n f)'(e) = 0,\\
				(T''_n f)''(e) = (T''_n f)'(e) = (T''_n f)(e) = 0.\\
			\end{align*}
	\end{enumerate}
\end{proposition}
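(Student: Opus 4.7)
The plan is to construct each family as a composition $\tilde T_n = \cM_n \circ P_n$, where $\tilde T_n \in \{T_n, T'_n, T''_n\}$, and $P_n$ is a linear boundary-surgery map acting near each $e \in \partial \cX \cap \cX$ on a scale of $2/n$, while the mollifier $\cM_n$ uses kernel $k_{1/n}$ with support in $[-1/n, 1/n]$. The scale gap is the central device: if $P_n f$ is a polynomial of degree $d$ on a one-sided $2/n$-neighborhood of $e$, then $\cM_n P_n f$ is the same polynomial on the $1/n$-neighborhood of $e$, which delivers the vanishing derivatives in (e). For definiteness we describe the construction at a finite left boundary $e = l$; the right boundary is handled symmetrically, and natural boundaries require no modification.

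Define $\hat f \colon \bR \to \bR$ in three cases. In the $T_n$ case, let $\hat f$ equal, on $(-\infty, l + 2/n]$, the affine function matching $f(l)$ at $l$ and $f(l + 2/n)$ at $l + 2/n$, and equal $f$ on $[l + 2/n, r]$ (with the analogous chord past $r$ if $r$ is a boundary point too). In the $T'_n$ case, let $\hat f$ equal the constant $f(l + 2/n)$ on $(-\infty, l + 2/n]$ and equal $f$ (constantly extended past $r$) elsewhere. In the $T''_n$ case, pick a smooth $\psi_n \colon \bR \to [0, 1]$ with $\psi_n \equiv 0$ on $(-\infty, l + 1/n]$ and $\psi_n \equiv 1$ on $[l + 2/n, \infty)$, and set $\hat f = \psi_n \cdot f$, with $f$ extended by $0$ outside $\cX$. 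In all cases put $\tilde T_n f := (\cM_n \hat f)|_{\cX}$. Property (e) is then immediate from the scale-gap argument; (c) follows from $\cM_n$ mapping $M_b(\bR)$ into $C^\infty_b(\bR)$; and strict continuity of each fixed $\tilde T_n$ in (d) follows from the $\beta$-continuity of pointwise evaluation, of multiplication by the fixed smooth $\psi_n$, and of $\cM_n$ itself.

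The main technical step is the order and positivity bookkeeping for $T_n$ and $T'_n$. For $T_n$, kernel symmetry $\int y \, k_{1/n}(y) \, dy = 0$ gives the identity $T_n f(x) = \hat f(x)$ for $x \in [l, l + 1/n]$, because $\hat f$ is affine on $[x - 1/n, x + 1/n]$ for such $x$. This yields $T_n f \geq 0$ on $\cX$ when $f \geq 0$: the chord on $[l, l + 2/n]$ lies between the non-negative endpoints $f(l), f(l + 2/n)$, and the rest of $\cX$ is handled by $\cM_n$ applied to a non-negative $\hat f$ on $[l + 2/n, r]$. Convexity of $\hat f$ on $\bR$ follows from the inequality $(\text{chord slope}) \leq f'_-(l + 2/n)$ for convex $f$, which makes the slope of $\hat f$ non-decreasing across the junction $l + 2/n$ (and symmetrically at the right boundary); $\cM_n$ preserves convexity by convolution with a non-negative kernel. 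The same argument preserves monotonicity. For $T'_n$, $\hat f$ is a constant continuously joined to $f$ with matching value at $l + 2/n$, hence is non-decreasing (respectively non-increasing) when $f$ is; $\cM_n$ preserves this. Positivity of $T''_n$ is immediate from $\psi_n \geq 0$.

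Finally, strong continuity of $\{T_n\}$ and $\{T'_n\}$ on all of $C_b(\cX)$ follows from the sequential characterization of $\beta$ recalled in Section \ref{section:main_results}: one has $\vn{\hat f} \leq \vn{f}$, and $\hat f \to f$ uniformly on compact subsets of $\cX$ because $f(l + 2/n) \to f(l)$ by continuity of $f$, while $\hat f$ differs from $f$ only on the shrinking interval $[l, l + 2/n]$. For $\{T''_n\}$ strong continuity holds only on the subspace $\{f \in C_b(\cX) : f(e) = 0 \text{ for every } e \in \partial \cX \cap \cX\}$, since $T''_n f(e) = 0$ by (e); on this subspace the argument is analogous. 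The principal obstacle is the construction of $T_n$: a naive constant extension fails to deliver $(T_n f)''(l) = 0$, while a naive affine extension of $f$ past $l$ can become negative. The chord construction together with the kernel-symmetry identity resolves both difficulties simultaneously, and is also exactly what powers the convexity bookkeeping at the junction $l + 2/n$.
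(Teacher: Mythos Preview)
Your proof is correct and follows the same overall strategy as the paper: each $\tilde T_n$ is a mollification $\cM_n$ applied after a boundary-surgery operator that replaces $f$ near each finite endpoint by an affine, constant, or zero piece, so that the smoothed function inherits the required vanishing derivatives at $e$ while the relevant order cones are preserved.

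The implementation differs in a few places, and the differences are worth noting. You use a surgery window of width $2/n$ against a mollifier of radius $1/n$; the resulting scale gap gives the exact identity $T_n f(x)=\hat f(x)$ on $[l,l+1/n]$, which makes positivity of $T_n$ transparent (the chord is between non-negative endpoints, and for $x\ge l+1/n$ the convolution never leaves $[l,\infty)$). The paper uses width $1/n$ for both, so the convolution at $x$ near $l$ reaches the extended chord on the negative half-line, where it can be negative; positivity of $T_n$ is then not argued explicitly in the paper. For $T'_n$ you take the constant to be the endpoint value $f(l+2/n)$, whereas the paper uses the average $n\int_0^{1/n} f$; both choices yield all properties (a)--(e), but the paper's average is precisely what makes $\cB'_n(f')$ the weak derivative of $\cB_n f$, which is the mechanism behind the companion Proposition~\ref{proposition:approximation_operators_1d_diffusion_comparison}. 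Your $T'_n$ would have to be altered if that result is also needed. For $T''_n$ your smooth cutoff $\psi_n$ and the paper's hard cutoff are interchangeable once $\cM_n$ is applied.
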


\begin{proposition}\label{proposition:approximation_operators_1d_diffusion_comparison}
 In the context of Proposition \ref{proposition:approximation_operators_1d_diffusion}, we have in addition that
	\begin{enumerate}[(a)]
		\item if $f \in C_b^1(\cX)$ and $f' \ge h\ge0$, then $( T_n f)'  \ge  {T}'_n h\ge0$ and  $(T'_n f)'  \ge  T''_n h\ge0$,
		\item if $f \in C_b^2(\cX)$ and $f''\ge h\ge0$, then $({T}_n f)'' \ge {T}_n'' h\ge0$.
	\end{enumerate}

\end{proposition}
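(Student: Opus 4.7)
The plan is to exploit a commutation-type relation between differentiation and the three approximation operators. I would first unpack the explicit construction of $T_n$, $T'_n$, $T''_n$ from the proof of Proposition \ref{proposition:approximation_operators_1d_diffusion}. Such operators are typically built by combining the mollifier $\cM_n$ with a boundary correction that forces progressively more derivatives to vanish at each finite boundary point $e \in \partial \cX \cap \bR$. On the interior of $\cX$ the mollifier commutes with differentiation, so the only real work is near the finite boundaries; when $\cX = \bR$ the statement reduces to the elementary identity $\cM_n f' = (\cM_n f)'$ together with positivity.

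The central intermediate claim I would aim to establish is the chain of commutation identities
\begin{equation*}
(T_n f)' = T'_n(f'), \qquad (T'_n f)' = T''_n(f'), \qquad (T_n f)'' = T''_n(f''),
\end{equation*}
for $f$ of the appropriate regularity. These are plausible on structural grounds: differentiating a smoothed function whose second derivative vanishes at $e$ (the property enforced by $T_n$) produces a smoothed function whose first and second derivatives vanish at $e$ (the property enforced by $T'_n$), and one further differentiation yields a function whose value, first and second derivatives all vanish at $e$ (the property of $T''_n$). If the three boundary corrections are drawn from the same family, parametrised only by the order of vanishing (for example, by extending $f$ across $e$ so that its Taylor polynomial of the appropriate order is preserved), then differentiation intertwines the three operators exactly as displayed.

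Given the commutation identities, both parts of the proposition follow immediately from the positivity of $T'_n$ and $T''_n$ asserted in Proposition \ref{proposition:approximation_operators_1d_diffusion}: if $f \in C^1_b(\cX)$ with $f' \ge h \ge 0$, then
\begin{equation*}
(T_n f)' \;=\; T'_n(f') \;\ge\; T'_n(h) \;\ge\; 0,
\end{equation*}
and analogously $(T'_n f)' = T''_n(f') \ge T''_n(h) \ge 0$. The inequality in (b) is obtained the same way from $(T_n f)'' = T''_n(f'')$ together with $f'' \ge h \ge 0$.

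The main obstacle is the rigorous verification of the commutation identities. In practice the boundary corrections for the three operators need not intertwine exactly under differentiation, so I would expect to obtain only an equation of the form $(T_n f)' = T'_n(f') + R_n(f)$ for some boundary remainder $R_n(f)$, and similarly for the other two identities. The proof would then reduce to showing $R_n(f) \ge 0$ whenever $f' \ge 0$ (respectively $f'' \ge 0$). I expect this to be settled by direct inspection of the specific boundary correction — writing it as an affine or polynomial interpolation built from boundary values of $f$, $f'$ and $f''$, differentiating once or twice, and reading off the sign of the resulting terms using the hypothesis that the appropriate derivative of $f$ is non-negative and the monotonicity of $f$ implied by $f' \ge 0$.
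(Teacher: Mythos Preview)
Your proposal is correct and essentially the same as the paper's proof. The paper confirms your central intermediate claim: with the explicit constructions $T_n = \cM_n \cB_n$, $T'_n = \cM_n \cB'_n$, $T''_n = \cM_n \cB''_n$, one has the \emph{exact} weak-derivative identities $(\cB_n f)' = \cB'_n(f')$ and $(\cB'_n f)' = \cB''_n(f')$, so after mollification the commutation relations $(T_n f)' = T'_n(f')$, $(T'_n f)' = T''_n(f')$, $(T_n f)'' = T''_n(f'')$ hold without any remainder term, and the inequalities follow from positivity exactly as you wrote.
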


\begin{proof}[Proof of both results.]
	We prove the proposition for $\cX = [0,\infty)$, the other cases being similar. We start by constructing the operators $T_n$.

	Introduce operators $\cB_n \colon C_b[0,\infty) \rightarrow C_b(-1,\infty)$ satisfying
	\[
		\cB_n f(x) = \begin{cases}
			(1-x n)f(0) + x n f(\tfrac1n) & \text{if } x \le \tfrac1n\\
			f(x) & \text{if } x \ge \frac1n
		\end{cases},
	\]
	such that $\cB_n f$ is linear on $[-\frac1n, \frac1n]$ and coincides with $f$ at $0$ and $1/n$.
	
	It is straightforward to verify that the map $ \cB_n$ is strictly continuous. It preserves the sets of increasing functions and convex functions, noting that piecewise linear interpolations of increasing functions are increasing and piecewise linear interpolations of convex functions are convex. In addition $\cB_n f \rightarrow f$ for the strict topology. 
			
	\smallskip
	
	Define  $ T_n  = \cM_n \cB_n$.
	As  we act with $\cM_n$ last, we find $ T_n C_b(\cX) \subseteq C_b^\infty(\cX)$.
	By construction, $T_n$ is strictly continuous and preserves the sets of increasing or convex functions. This establishes (a).
	
	\smallskip

	Now define
	\[
		\cB'_n f(x) = \begin{cases} 
			n\int_0^{\frac1n} f (\xi) \dd \xi &  \text{if }  x < 1/n\\
			f(x) &  \text{if }  x \ge 1/n
		\end{cases}.
	\]
	Again, it is straightforward to show that $\cB_n'$ is positive and preserves the set of increasing functions. In contrast to $\cB_n$, $\cB_n'$ is not mapping continuous functions to continuous functions. It is, however, continuous for the strict topology on the space of bounded measurable functions.
		
		Define $T'_n\colon C_b[0,\infty) \rightarrow C^\infty_b[0, \infty)$ where  $T'_n  = \cM_n \cB'_n$. By construction, $T'_n$ is  also positive, strictly continuous and preserves the sets of increasing functions. This establishes (b).

	\smallskip

	Finally define
	\[
		\cB''_n f(x) = \begin{cases} 
			0 &  \text{if }  x < 1/n\\
			f(x) &  \text{if }  x \ge 1/n
		\end{cases} 
	\]
and set
$T''_n\colon C_b[0,\infty) \rightarrow C^\infty_b[0, \infty)$ where  $T''_n  = \cM_n \cB''_n$.  As above, we find that $T''_n$ is a positive, strictly continuous operator.

\bigskip

	By Proposition \ref{proposition:equi_continuity_via_strong_continuity}, the operators $\{\cB_n\}$ are equi-continuous.  This in combination with the strong continuity of the operators $M_n$, see property \eqref{enum_item:strong_continuity_Mn} on page \pageref{enum_item:strong_continuity_Mn}, implies that the family $\{T_n\}$ is strongly continuous. 
	
	As the operators $\{\cB_n'\}$ and $\{\cB_n''\}$ are not mapping into the set of continuous functions, we give a direct proof. Fix $f \in C_b(\cX)$. By definition, it follows that $\vn{\cB_n' f - f} \rightarrow 0$. Using property (c) of the family $\{\cM_n\}$ and the decomposition 
	\begin{equation*}
	T_n' f -f = \cM_n(\cB_n' f - f) + \cM_n f - f,
	\end{equation*}
	we find that the family $\{T_n'\}$ is strongly continuous for the strict topology. On the subspace 
	\begin{equation*}
	\left\{f \in C_b(\cX) \, \middle| \, f(0) = 0 \right\},
	\end{equation*}
	the same proof can be carried out for $T_n''$.

	The final property to verify is the correct boundary behaviour, i.e.~in this setting, property (d). This is immediate after noting that
	$\cB_n f$ is linear on $(-n^{-1},n^{-1})$,
	 $\cB'_n f$ is constant on $(-n^{-1},n^{-1})$ and $\cB''_n f$  vanishes on $(-n^{-1},n^{-1})$.

	\bigskip

	In order to prove the second proposition, note that for $f' = h$, 
	$\cB'_n h$ is the weak derivative of $\cB_n f$ and
	$\cB''_n h$ is the weak derivative of $\cB'_n f$, so if $f' \ge h$
	\[
		( T_n f)' \ge T_n' h \quad\text{and}\quad ( T_n' f)' \ge T_n'' h .
	\]
	Furthermore, 
	if $f'' \ge h$
	\[
		(  T_n f)'' \ge T_n'' h.
	\]
\end{proof}

\begin{proposition}\label{proposition:refined_approximation_operators_1d_diffusion}
Consider the setting of Proposition \ref{proposition:approximation_operators_1d_diffusion} and suppose for each element $e \in \partial \cX \cap \cX$ we have a constant $\gamma_e \in \bR$.

There are operators $\hat T'_n$ with properties (b), (c) and (d) of $T'_n$ in Proposition \ref{proposition:approximation_operators_1d_diffusion} and a strongly continuous family (on $C_b(\cX)$) of operators $\{\hat T''_n\}$ with $\hat T_n'' C_b(\cX) \subseteq C_b^\infty(\cX)$ such that for each $e \in \partial \cX \cap \cX$:
\begin{equation*}
\gamma_e(\hat T'_n f)'(e) = (\hat T'_n f)''(e) \text{ and } \gamma_e(\hat T''_n f)(e) = (\hat T''_n f)'(e).
\end{equation*} 
In addition, we have that if $f \in C_b^1(\cX)$ and $f'  \ge h \ge 0$, then $(\hat{T}'_n f)'  \ge  \hat T''_n h \ge 0$.

\end{proposition}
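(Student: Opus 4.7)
My plan is to build $\hat T'_n$ and $\hat T''_n$ following the recipe of Proposition~\ref{proposition:approximation_operators_1d_diffusion}, namely as compositions $\cM_n \hat\cB$ of a boundary-adjustment operator with the mollification, but with the constant/linear interpolation near each finite boundary replaced by an exponential interpolation tuned to $\gamma_e$. I will describe the construction for $\cX = [0,\infty)$ with boundary $e = 0$ and a single constant $\gamma_0$; the general case follows by superimposing analogous modifications at each finite boundary in $\partial\cX\cap\cX$ for $n$ large enough that the local modifications do not overlap, and by extending all formulas to $\bR$ so that convolution with $k_{1/n}$ is meaningful.

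Concretely, I would set $\hat\cB''_n h(x) := \bar h_n\, e^{\gamma_0(x-1/n)}$ for $x \le 1/n$ and $\hat\cB''_n h(x) := h(x)$ for $x \ge 1/n$, where $\bar h_n := n\int_{1/n}^{2/n} h(y)\,\dd y$, and analogously $\hat\cB'_n f(x) := f(1/n) + D_n[f]\cdot\Psi_n(x)$ for $x \le 1/n$, $\hat\cB'_n f(x) := f(x)$ for $x \ge 1/n$, with $D_n[f] := n(f(2/n) - f(1/n))$ and $\Psi_n(x) := \int_{1/n}^{x} e^{\gamma_0(y-1/n)}\,\dd y$. Then I define $\hat T'_n = \cM_n \hat\cB'_n$ and $\hat T''_n = \cM_n \hat\cB''_n$. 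On the support $(-1/n, 1/n)$ of $k_{1/n}$, the weak derivative of $\hat\cB''_n h$ equals $\gamma_0 \hat\cB''_n h$, and the weak second derivative of $\hat\cB'_n f$ equals $\gamma_0$ times its weak first derivative; convolution with $k_{1/n}$ therefore yields the prescribed boundary identities $(\hat T''_n h)'(0) = \gamma_0 (\hat T''_n h)(0)$ and $(\hat T'_n f)''(0) = \gamma_0 (\hat T'_n f)'(0)$ by essentially the same support argument used in the earlier proposition.

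The remaining properties listed in Proposition~\ref{proposition:approximation_operators_1d_diffusion}(b),(c),(d) are verified as there: mapping into $C_b^\infty$ and strict continuity come from $\cM_n$; monotonicity preservation by $\hat T'_n$ holds because $D_n[f]$ inherits the sign of $f$'s monotonicity and $\cM_n$ preserves monotonicity; strong continuity of $\{\hat T'_n\}$ relies on the estimate $|D_n[f]\Psi_n(x)| \lesssim |f(2/n) - f(1/n)|$ uniformly on $(-1/n,1/n)$ (since $|\Psi_n(x)| = O(1/n)$), which tends to zero by continuity of $f$. Crucially, the exponential interpolation of $\hat\cB''_n h$ is uniformly close to $h(0)$ on $[0,1/n]$ without requiring $h(0) = 0$, so $\{\hat T''_n\}$ is strongly continuous on all of $C_b(\cX)$, which is the key improvement over $\{T''_n\}$. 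For the compatibility assertion, if $f \in C_b^1(\cX)$ with $f' \ge h \ge 0$, the fundamental theorem of calculus gives $D_n[f] = n\int_{1/n}^{2/n} f'(y)\,\dd y \ge n\int_{1/n}^{2/n} h(y)\,\dd y = \bar h_n$, so $(\hat\cB'_n f)' \ge \hat\cB''_n h$ pointwise on $(0,1/n)$, while on $(1/n,\infty)$ both sides reduce to $f' \ge h$. Applying the positive operator $\cM_n$ and commuting it with differentiation yields $(\hat T'_n f)' \ge \hat T''_n h$, and $\hat T''_n h \ge 0$ since $\bar h_n \ge 0$ whenever $h \ge 0$.

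The main obstacle is selecting the coefficient $D_n[f]$ so that it is \emph{simultaneously} linear in $f$, inherits the sign of $f$'s monotonicity, dominates $\bar h_n$ whenever $f' \ge h \ge 0$, and makes $\hat\cB'_n f$ converge strictly to $f$. The naive point value $f'(1/n)$ satisfies the last three but is not defined on $C_b$, whereas averaged quantities involving $h$ point-values like $h(1/n)$ would break the inequality $D_n[f] \ge h(1/n)$. The pairing of the forward finite difference $D_n[f]$ with the forward average $\bar h_n$—forced by the fundamental theorem of calculus on $[1/n, 2/n]$—is exactly what makes all four conditions hold and dictates the asymmetric shape of the construction compared to Proposition~\ref{proposition:approximation_operators_1d_diffusion}.
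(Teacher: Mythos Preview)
Your construction is correct and achieves all the required properties, but it follows a genuinely different route from the paper. The paper interpolates near the boundary with a \emph{polynomial} profile: a linear function $x \mapsto \gamma_e x + 1$ (rescaled) for $\hat\cB''_n$ and its primitive $x \mapsto \tfrac12\gamma_e x^2 + x$ for $\hat\cB'_n$, with coefficients $n\int_0^{1/n} h$ and $n(f(1/n)-f(0))$ taken on the interval $[0,1/n]$. The boundary relation $\gamma_e(\hat T'_n f)'(0)=(\hat T'_n f)''(0)$ then comes out only after convolution, because the symmetry of $k_{1/n}$ annihilates the linear part of $(\hat\cB'_n f)'$ when evaluated at $0$. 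Your exponential profile instead satisfies the relevant ODE $g''=\gamma_0 g'$ (respectively $g'=\gamma_0 g$) pointwise on all of $(-1/n,1/n)$, so the boundary identities follow at $x=0$ directly from the commutation of $\cM_n$ with differentiation, without invoking symmetry of the mollifier. Your shift of the averaging window from $[0,1/n]$ to $[1/n,2/n]$ is cosmetic: in both cases the comparison $(\hat\cB'_n f)' \ge \hat\cB''_n h$ on the interpolation region reduces via the fundamental theorem of calculus to $\int f' \ge \int h$ over the chosen window.

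What each approach buys: the paper's polynomial version is algebraically simpler and makes the link to the unrefined operators $T'_n$, $T''_n$ of Proposition~\ref{proposition:approximation_operators_1d_diffusion} more transparent (one literally inserts an extra $\gamma_e$-term). Your exponential version is more robust in that it does not depend on the mollifier being symmetric, and the positivity of the profile $e^{\gamma_0(x-1/n)}$ is automatic for all $n$, whereas in the paper one must take $n$ large enough to ensure $\gamma_e x + 1 > 0$ and $\tfrac12\gamma_e/n + 1 > 0$ on the relevant range.
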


\begin{proof}
	Again we only consider the setting of $\cX = [0,\infty)$. Define
	\[
		\hat \cB'_n f(x) = \begin{cases} 
			f(0) + \frac{\frac12\gamma_e x^2 + x} {\frac12\gamma_e/n  + 1}
			n (f(\tfrac1n) - f(0))  &  \text{if }  x \le 1/n\\
			f(x) &  \text{if }  x > 1/n
		\end{cases}
	\]
	and set $ \hat T'_n = \cM_n \hat\cB'_n$, and also define
	\[
		\hat \cB''_n f(x) = \begin{cases} 
			 \frac{\gamma_e x + 1} {\frac12\gamma_e/n  + 1}
			n \int_0^{\frac1n} f (\xi) \dd \xi   &  \text{if }  x \le 1/n\\
			f(x) &  \text{if }  x > 1/n
		\end{cases}
	\]
	and set $\hat T''_n = \cM_n \hat\cB''_n$. Again $\hat\cB''_n f'$ is the weak derivative of $\hat\cB_n f$, which establishes the final claim of the proposition.	
	
	We verify the properties of $\hat T_n'$. As we act with $\cM_n$ last, $\hat T'_n C_b(\cX) \subseteq C_b^\infty(\cX)$. Next, note that $\hat \cB'_n f(1/n) = f(1/n)$, implying that $\hat \cB'_n f$ is continuous. A elementary computation shows that $\hat{\cB}_n'$ is strictly continuous on $C_b(\cX)$. Using Proposition~ \ref{proposition:equi_continuity_via_strong_continuity}, we conclude that $\{\hat{T}_n'\}$ is a strongly continuous family on $C_b(\cX)$. 
	
	Finally, we verify the that the first and second derivative have the appropriate quotient at $0$. For $x \leq \tfrac{1}{n}$:
	\begin{align*}
	\left(\hat \cB'_n f\right)'(x) & = \frac{\gamma_e x + 1} {\frac12\gamma_e/n  + 1}	n (f(\tfrac1n) - f(0)), \\
	\left(\hat \cB'_n f\right)''(x) & = \frac{\gamma_e} {\frac12\gamma_e/n  + 1}	n (f(\tfrac1n) - f(0)),
	\end{align*}
	so that a convolution with the symmetric function $k_{n^{-1}}$ yields
	\begin{align*}
	\left(\hat T'_n f\right)'(0) & = \frac{1} {\frac12\gamma_e/n  + 1}	n (f(\tfrac1n) - f(0)), \\
	\left(\hat T'_n f\right)''(0) & = \frac{\gamma_e} {\frac12\gamma_e/n  + 1}	n (f(\tfrac1n) - f(0)),
	\end{align*}
	implying $\gamma_e(\hat T'_n f)'(0) = (\hat T'_n f)''(0)$.
	
	\smallskip
	
	The verification of the properties of $\hat T_n''$ follows by using similar steps.

\end{proof}

\subsection{Potential applications for degenerate operators: The Wright-Fisher diffusion}

	To conclude our section on one-dimensional diffusion operators, we give some hint at further applications of our main results, by sketching a possible application for the Wright-Fisher diffusion. The discussion in the present section will be non-rigorous as we are lacking the possibility of allowing absorbing boundaries for exit-non entrance boundaries in Theorem \ref{theorem:domain_diffusion_operator} and are missing a regularity result of the type of Lemma \ref{lemma:gaining_regularity}. %
	
	The Wright-Fisher diffusion on $[0,1]$ has a generator $A$ that is a subset of
	\begin{equation*}
	\tilde{A}f(x) := \frac{1}{2} x(1-x) f''(x).
	\end{equation*}
	We aim to study the the preservation of the set of increasing or convex functions, i.e. $\Phi_1 f = f'$ or $\Phi_2 f = f''$. A computation of $\Phi_1 \tilde{A}$ and $\Phi_2 \tilde{A}$ leads to the definition of $(\tilde{B}_1,C_1)$ and $(\tilde{B}_2,C_2)$:
	\begin{align*}
	\tilde{B}_1 f(x) & = \frac{1}{2} x(1-x) f''(x) + \left(\frac{1}{2} - x\right) f'(x), & C_1 f(x) & = 0, \\
	\tilde{B}_2 f(x) &= \frac{1}{2} x(1-x) f''(x) + \left(1 - 2x\right) f'(x) - f(x), & C_2 f(x) & = 0.
	\end{align*} 
	Straightforward computation, taking base-point $z = \tfrac{1}{2}$, yields that the speed measure $s$ for $\tilde{A}$ and the speed measures $s_1,s_2$ for $\tilde{B}_1, \tilde{B}_2$ satisfy
	\begin{equation*}
	s(x) = 1, \qquad \left(\frac{\dd}{\dd x} s_1(x)\right)^{-1} = 4x(1-x),  \qquad \left(\frac{\dd}{\dd x} s_2(x)\right)^{-1} = 16x^2(1-x)^2.
	\end{equation*}
	Furthermore, one can show that the boundaries for $\tilde{A}$ are exit-non-entrance, whereas for $\tilde{B}_1$ and $\tilde{B}_2$ they are entrance-non-exit.

		 Theorem \ref{theorem:domain_diffusion_operator} allows for only one set of possible boundary conditions for $\tilde{A},\tilde{B}_1$ and $\tilde{B}_2$: that is, a killing boundaries for $\tilde{A}$ and reflecting boundaries for $\tilde{B}_1$ and $\tilde{B}_2$. We believe, however, that the operator $\tilde{A}$ can be equipped with absorbing boundaries as well:
		 \begin{equation*}
		 \cD(A) := \left\{ f \in C_b[0,1] \cap C^2(0,1) \, \middle| \, \lim_{x \downarrow 0} x f^{(2)}(x) = 0, \lim_{x \uparrow 1} (1-x) f^{(2)}(x) = 0 \right\},
		 \end{equation*}
		 so that $Af(x) = x(1-x)f''(x)$ for $x \in (0,1)$ and $Af(x) = 0$ for $x \in \{0,1\}$. This choice would correspond to the natural situation that a process that hits the boundary stays at the boundary instead of moving to a graveyard state.
		
		\smallskip

		The map $\Phi_1 \subseteq C_b[0,1] \times C_b[0,1]$ maps the domain of $A$ into the domain of $\tilde{B}_1$ equipped with its only choice of boundaries. Therefore, this leads to
		\begin{multline*}
		\cD_1 = \left\{ f \in C_b[0,1] \cap C^3(0,1) \, \middle| \,  \lim_{x \downarrow 0} x f^{(2)}(x) = 0, \lim_{x \uparrow 1} (1-x) f^{(2)}(x) = 0, \right. \\
		\left. \tilde{B}_1 \Phi_{1} f \in C_b[0,1] \right\},
		\end{multline*}
		It is not clear to us that the resolvents of $A$ map $\cD_1$ into itself, this is in particular due to the condition $\tilde{B}_1 \Phi_{1} f \in C_b[0,1]$. 
		
		\smallskip
		
		The map $\Phi_2$ maps the domain of $A$ to a set of functions for which 
		\begin{equation*}
		\lim_{x \downarrow 0} xh(x) =  0, \qquad \lim_{x \uparrow 1} (1-x) h(x) = 0,
		\end{equation*}
		which for functions $h \in C_b[0,1]$ is always satisfied. Thus, the map $\Phi_2 \subseteq C_b[0,1] \times C_b[0,1]$ does not map into the natural domain of $\cD(B_2)$ as an operator on $C_b[0,1]$. Convex functions, however, are also characterized by having non-negative second derivative on the interior of their domain. Thus, we can consider $\Phi_2$ as a subset of $C_b[0,1] \times C_b(0,1)$ and consider $B_2$ as an operator on $C_b(0,1)$ on which it needs no boundary conditions. Thus, we can work with 
		\begin{multline*}
		\cD_1 = \left\{ f \in C_b[0,1] \cap C^4(0,1) \, \middle| \,  \lim_{x \downarrow 0} x f^{(2)}(x) = 0, \lim_{x \uparrow 1} (1-x) f^{(2)}(x) = 0, \right. \\
		\left. \tilde{B}_2 \Phi_{2} f \in C_b(0,1) \right\}.
		\end{multline*}
		Again, it is not clear that the resolvents of $A$ map $\cD_2$ into itself.
		
		In both cases, however, we do have so that $\cD_i \subseteq \cD(\Phi_i A) \cap \cD(B_i \Phi_i) \cap \cD(C_i \Phi_i)$ and $\Phi_i A = B_i \Phi_i + C_i \Phi_i$.

\section{ Diffusion processes on \texorpdfstring{$\bR^d$}{Rd}} \label{section:Diffusion_processes}

\subsection{Integral orders for diffusion processes}

In this section, we characterise order preservation properties for diffusion processes on $\cX = \bR^d$. The orders that we consider are induced by operators $\Phi \subseteq C_b(\bR^d) \times (C_b(\bR^d))^{ |\cI|}$ that are given by $\cD(\Phi) = C_b^2(\bR^d)$ and $\Phi f = (\partial_\alpha f, \alpha \in \cI)$, for a set of multi-indices ${\mathcal I} \subseteq \{0,1\}^d$ that satisfy the following assumption.
 \begin{assumption}\label{assumption:index_set_multi_d_diffusion}
	The index set $\cI \subseteq \{0,1\}^d$ is such that every element $\alpha \in \cI$ has size $1$ or $2$, i.e.~$\|\alpha\|_1 \in \{1,2\}$.
\end{assumption}

Elements of $\alpha \in \cI$ are suggestively denoted $i$ for $\vn{\alpha}_1 = 1$ and $\{ij\}$ for $\vn{\alpha}_1= 2$.

The two main examples that we will consider below are the component-wise increasing functions and supermodular functions that are generated by the sets $\cI := \{1,\dots,d\}$ and $\cI :=\left\{\{kl\} \, \middle| \, 1 \leq k < l \leq d \right\}$.

\smallskip

The processes that we consider are non-degenerate diffusion processes. This means that we consider processes with generators $(A,\cD(A))$, see Proposition \ref{proposition:diffusion_semigroup_is_SCLE} below, with $C_b^2(\bR^d) \subseteq \cD(A)$ so that if $f \in C_b^2(\bR^d)$:
\begin{equation}\label{eqn:elliptic_operator}
\begin{aligned}
Af(x) & =  \frac{1}{2} \sum_{i,j=1}^d a_{ij}(x) \partial_i \partial_j f(x) + \sum_{i = 1}^d b_i(x) \partial_i f(x), \\
& = \sum_{\vn{\gamma}_1 \le 2} g_\gamma \partial_\gamma f,
\end{aligned}
\end{equation}
where for each $x \in \bR^d$ the matrix $\{a_{ij}(x)\}_{1 \le i,j \le d}$ is symmetric non-negative definite and where the second line is written using multi-indices, summing over indices $\gamma \in \{0,1,2\}^d$ with $g_0\equiv 0$. Even though existence theory for Markov processes and semigroups allows for more general conditions, we assume the following:
 	
 	\begin{assumption}\label{assumption:diffusion_uniform_ellipticity}
 		Consider an operator $(A,\cD(A))$ as in \eqref{eqn:elliptic_operator}. There is $\delta > 0$ such that $a_{ij}, b_i \in C^{2,\delta}_b(\bR^d)$ for all $i,j \in \{1,\dots,d\}$. In addition suppose that $a$ is uniformly elliptic: there is a $\kappa > 0$ such that for all $\xi \in \bR^d$: 
 		\begin{equation*}
 		\inf_{x \in \bR^d}  \sum_{i,j = 1}^d a_{ij}(x) \xi_i \xi_j \geq \kappa|\xi|^2.
 		\end{equation*}
 	\end{assumption}

 	From the uniform ellipticity condition on $A$ a direct analogue of Lemma \ref{lemma:gaining_regularity} can be obtained with \cite{Kr96}. 
	 		
 	\smallskip
 	
 	Next we show that operators of this type generate Feller processes.
 	
 	\begin{proposition} \label{proposition:diffusion_semigroup_is_SCLE}
 		Let $(A,\cD(A))$ satisfy Assumption \ref{assumption:diffusion_uniform_ellipticity}. Then $A$ generates a SCLE semigroup on $(C_b(\bR^d),\beta)$. The semigroup corresponds to the transition semigroup of a Feller process.
 	\end{proposition}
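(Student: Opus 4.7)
The strategy is to combine classical existence and uniqueness for the martingale problem for second-order uniformly elliptic operators with Hölder continuous coefficients, with the abstract correspondence between well-posed martingale problems and SCLE semigroups on $(C_b(\bR^d),\beta)$ developed in Appendix~\ref{section:martingale_problem}. The proposition then essentially follows by assembling pieces already at our disposal.

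First I would verify well-posedness of the $D_{\bR^d}(\bR^+)$-martingale problem for the restriction of $A$ to $C_c^\infty(\bR^d)$. Since $a$ is uniformly elliptic and $a_{ij}, b_i$ are bounded and Hölder continuous by Assumption~\ref{assumption:diffusion_uniform_ellipticity}, existence of a weak solution to the associated SDE, and hence of a solution to the martingale problem, follows from the classical theory in \cite{SV79} (Chapter~6). Uniqueness follows from Theorem~7.2.1 of \cite{SV79}, or equivalently from unique solvability of the resolvent equation $(\lambda - A)u = h$ in $C_b^{2,\delta}(\bR^d)$ for $h \in C_b^\delta(\bR^d)$ via Krylov's Schauder theory \cite{Kr96}. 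The well-posed martingale problem yields a time-homogeneous strong Markov family $\{P_x\}_{x \in \bR^d}$ of probability measures on $D_{\bR^d}(\bR^+)$, and by Theorem~4.4.6 of \cite{EK86} the map $x \mapsto P_x$ is weakly continuous.

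Second I would invoke the appendix result (Section~\ref{section:martingale_problem_SCLE}) which translates a well-posed martingale problem with weakly continuous $x \mapsto P_x$ into an SCLE transition semigroup for the strict topology. Concretely, set $S(t)f(x) := \bE_x[f(X_t)]$. Weak continuity of $P_x$ together with boundedness of $f$ gives $S(t) f \in C_b(\bR^d)$; the semigroup property is the Markov property; the strict-strong continuity and local equi-continuity at $t = 0$ follow from weak continuity of $t \mapsto P_x(X_t \in \cdot)$ together with the Arzelà–Ascoli characterisation of $\beta$-equi-continuous families recalled in Appendix~\ref{section:strict_topology}.

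Finally I would identify $A$ with a restriction of the $\beta$-generator. By Dynkin's formula, for every $f \in C_b^2(\bR^d)$,
\begin{equation*}
S(t)f - f = \int_0^t S(s)(Af)\, \dd s,
\end{equation*}
where the integral may be taken as a Riemann integral for $\beta$ because $s \mapsto S(s)(Af)$ is $\beta$-continuous and uniformly bounded. Dividing by $t$ and letting $t \downarrow 0$ using $\beta$-strong continuity yields $t^{-1}(S(t)f - f) \to Af$ in $\beta$, so $f$ belongs to the domain of the $\beta$-generator, which coincides with $A$ on $C_b^2(\bR^d)$; the extension to the full $\cD(A)$ when this is strictly larger is analogous. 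The main obstacle is not the existence of the process, which is classical, but rather to ensure that the three ingredients—Feller continuity of the transitions, strong continuity and local equi-continuity for $\beta$ at $t = 0$, and identification of the generator with $A$ on $C_b^2$—are all handled outside the $C_0(\bR^d)$-framework, where the transition semigroup would fail to be strongly continuous; this is exactly what Appendix~\ref{appendix:locally_solid_space} and Appendix~\ref{section:martingale_problem} were set up to achieve.
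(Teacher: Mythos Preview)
Your proposal is correct, but it takes a different route from the paper. The paper works through the $C_0(\bR^d)$ side: it invokes Theorem~8.1.6 in \cite{EK86} to obtain a strongly continuous Feller semigroup on $(C_0(\bR^d),\vn{\cdot})$, and then applies the extension result Theorem~\ref{theorem:transition_semigroup_is_strongly_continuous} (second part) from Appendix~\ref{section:semigroup_on_locally_compact_space} to lift this to a SCLE semigroup on $(C_b(\bR^d),\beta)$; the domain claim then comes for free since the $\beta$-generator is the $\beta$-closure of the $C_0$-generator and $C_b^2$ lies in the strict closure of $C_c^2$. You instead go through the martingale problem route of Section~\ref{section:martingale_problem_SCLE}, i.e.\ Theorem~\ref{theorem:verification_conditions_semigroup}, and identify the generator by hand via Dynkin's formula. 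Both arguments are sound and rely on appendix machinery; the paper's is shorter because it delegates more to the classical $C_0$-Feller theory, while yours makes the compact-containment and generator identification steps explicit. One small gap in your sketch: if you really want to invoke Theorem~\ref{theorem:verification_conditions_semigroup} you should check the compact containment condition~\eqref{eqn:theorem_compact_containment_set}, which here follows from boundedness of the coefficients (e.g.\ via moment estimates for the SDE), not directly from Arzel\`{a}--Ascoli.
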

 
 \begin{proof}
 	By Theorem 8.1.6 in \cite{EK86} the restriction of $A$ to $C_0(\bR^d)$ generates a strongly continuous semigroup $\{\tilde{S}(t)\}_{t \geq 0}$ on $(C_0(\bR^d),\vn{\cdot})$ that corresponds to a Markov process on $D_{\bR^d}(\bR^+)$. In the terminology of \cite{EK86}, this semigroup is Feller, meaning that it satisfies the conditions of the second part of Theorem \ref{theorem:transition_semigroup_is_strongly_continuous}. It follows that this semigroup $\{\tilde{S}(t)\}_{t \geq 0}$ has an extension to a SCLE semigroup $\{S(t)\}_{t \geq 0}$ on $C_b(\bR^d)$. In addition, we find that the domain of the generator contains the strict closure of $C_c^2(\bR^d)$, which contains the set $C_b^2(\bR^d)$.
 \end{proof}

\subsection{Stochastic monotonicity}

We proceed with our main result regarding the stochastic monotonicity of diffusion semigroups.

\begin{theorem}\label{theorem:diffusion_monotonicity} 
Let $\cI$ be an index set satisfying Assumption \ref{assumption:index_set_multi_d_diffusion} and let $\cF_{+,0}$ and $\cF_+$ be generated by the partial derivatives with indices in $\cI$. Let $A$ satisfy Assumption \ref{assumption:diffusion_uniform_ellipticity}. If for all $\alpha \in  {\mathcal I}$ and for all  ${\beta< \alpha}$ and $ \gamma \in  \{0,1, 2\}^d$ with $\vn{\gamma}_1 \in \{1, 2\}$ the conditions
\begin{equation}\label{gammabed}
\not\exists \zeta \geq 0 \colon  \beta+\gamma = \zeta + \alpha
\quad\Longrightarrow\quad\begin{cases}
\partial_{\alpha-\beta} g_\gamma \ge 0 &\text{if }\beta+\gamma \in  \cI,\\
\partial_{\alpha-\beta} g_\gamma = 0 & \text{else}
\end{cases}
\end{equation}
hold, then $\{S(t)\}_{t\ge0}$ propagates $\cF_+$.

\smallskip

For each $\alpha \in \cI$ define $\widetilde{B}^{(\alpha)}$ as the second order differential operator
\begin{equation*}
	\tilde B^{(\alpha)} f = \sum_{\|\zeta\|_1 \le 2}\tilde{g}^{(\alpha)}_\zeta \partial_\zeta f,
	\end{equation*}
	where the functions $\tilde{g}^{(\alpha)}_\zeta$
are expressed in terms of the coefficients in the multi-index notation for $A$ as in \eqref{eqn:elliptic_operator}:
\begin{equation*}
\tilde g_\zeta^{(\alpha)} = \sum_{\|\gamma\|_1 \le 2}   \sum_{\beta\le \alpha\atop \beta + \gamma = \zeta + \alpha}\binom{\alpha}{\beta} \partial_{\alpha-\beta} g_\gamma.
\end{equation*}
Let $\widetilde{T}^{(\alpha)}(t)$ be the semigroup generated by $B^{(\alpha)}$. Then we have that for each $t \geq 0$ and $f \in \cF_{+,0}$ that $\partial_\alpha S(t) f \geq \widetilde{T}^{(\alpha)}(t) (\partial_{\alpha} f)$.
\end{theorem}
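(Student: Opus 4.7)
The plan is to verify the hypotheses of Theorem \ref{theorem:main_theorem} with $E = (C_b(\bR^d),\beta)$, $F = C_b(\bR^d)^{|\cI|}$ equipped with the product strict topology and the componentwise pointwise order, and with $\Phi f = (\partial_\alpha f)_{\alpha \in \cI}$ defined on $\cD(\Phi) = C_b^2(\bR^d)$. Everything begins with a direct Leibniz computation: for $f$ smooth enough and $\alpha \in \cI$,
\[
\partial_\alpha A f \;=\; \sum_{\|\gamma\|_1 \leq 2}\sum_{\beta \leq \alpha} \binom{\alpha}{\beta}\,(\partial_{\alpha-\beta} g_\gamma)\,(\partial_{\beta+\gamma} f).
\]
I split the double sum according to whether $\beta+\gamma \geq \alpha$ or not. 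When $\beta+\gamma = \zeta+\alpha$ for some $\zeta \geq 0$ one has $\partial_{\beta+\gamma}f = \partial_\zeta(\partial_\alpha f)$, and collecting these terms (automatically constrained to $\|\zeta\|_1 \leq 2$, since $\|\zeta\|_1 \leq \|\gamma\|_1$) yields exactly $\tilde B^{(\alpha)}(\partial_\alpha f)$ with the coefficients as stated. When $\beta+\gamma \not\geq \alpha$, hypothesis \eqref{gammabed} forces the coefficient $\partial_{\alpha-\beta} g_\gamma$ either to vanish or to be non-negative with $\beta+\gamma \in \cI$; the residual sum therefore has the form $\sum_{\alpha' \in \cI} c_{\alpha,\alpha'}(x)\,\partial_{\alpha'} f(x)$ with $c_{\alpha,\alpha'} \geq 0$ pointwise.

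Assembling componentwise, I set $B := \bigoplus_{\alpha \in \cI} \tilde B^{(\alpha)}$ acting diagonally on $F$ and $C$ the matrix-valued bounded positive multiplication operator on $F$ with entries $c_{\alpha,\alpha'}$. A bookkeeping check shows that the top-order coefficients of each $\tilde B^{(\alpha)}$ coincide with the $a_{ij}$ of $A$ (only $\beta = \alpha$ contributes when $\|\zeta\|_1 = 2$), so each $\tilde B^{(\alpha)}$ is uniformly elliptic and, upon strengthening the Hölder regularity of $a,b$ if necessary so that differentiation leaves the coefficients in $C_b^{2,\delta}$, satisfies Assumption \ref{assumption:diffusion_uniform_ellipticity}. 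By Proposition \ref{proposition:diffusion_semigroup_is_SCLE} it generates a positive SCLE semigroup $\widetilde T^{(\alpha)}(t)$, so $B$ is resolvent positive. Taking $\cD := C_b^{4,\delta}(\bR^d)$ gives $\cD \subseteq \cD(\Phi A) \cap \cD(B\Phi) \cap \cD(C\Phi)$ together with the commutation identity required by Theorem \ref{theorem:main_theorem}(a). Condition (c) is elliptic regularity: the $\bR^d$ analogue of Lemma \ref{lemma:gaining_regularity}, via Schauder estimates for resolvents in Hölder spaces (see \cite{Kr96}), gives $R(\lambda,A)C_b^{k,\delta}(\bR^d) \subseteq C_b^{k+2,\delta}(\bR^d)$ and in particular $R(\lambda,A)\cD \subseteq \cD$. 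Condition (b) — strict density of $\cD \cap \cF_{+,0}$ in $\cF_{+,0}$ — follows from Proposition \ref{proposition:core_approximation_operators} applied to the family of standard mollifiers $T_n f := k_{1/n} \ast f$, which commute with every $\partial_\alpha$ (so preserve $\cF_{+,0}$), map $C_b(\bR^d)$ into $C_b^\infty(\bR^d) \subseteq \cD$, and converge strictly to the identity. This establishes $S(t)\cF_+ \subseteq \cF_+$.

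For the lower bound $\partial_\alpha S(t)f \geq \widetilde T^{(\alpha)}(t)(\partial_\alpha f)$ I apply Theorem \ref{theorem:lower_bound} separately for each fixed $\alpha \in \cI$, working with the scalar order generated by $\Phi_\alpha f := \partial_\alpha f$, generators $A$ and $\tilde B^{(\alpha)}$, and the bounded positive cross-term operator $C_\alpha f := \sum_{\alpha' \in \cI} c_{\alpha,\alpha'}\,\partial_{\alpha'} f$ acting $\cD(\Phi) \to C_b(\bR^d)$. Hypothesis (b) of that theorem is produced via Corollary \ref{corollary:core_approximation_operators} with the same mollifier family on both $E$ and $F$, and hypothesis (c) is again elliptic regularity for the resolvent of $\tilde B^{(\alpha)}$. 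The main obstacle in this program is purely combinatorial: recognising that \eqref{gammabed} is precisely the condition that splits the Leibniz expansion of $\partial_\alpha Af$ into a diagonal piece which is $\tilde B^{(\alpha)}$ applied to $\partial_\alpha f$ and a remainder landing inside the cone spanned by the derivatives indexed by $\cI$ with non-negative multipliers. Once this decomposition is visible, the analytic pieces — mollification, Hölder-space elliptic regularity, and the semigroup theory of Proposition \ref{proposition:diffusion_semigroup_is_SCLE} — are all standard and local in character.
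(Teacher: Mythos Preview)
Your argument for the first assertion---that $\{S(t)\}_{t\ge 0}$ propagates $\cF_+$---is correct and matches the paper's proof essentially step for step: the Leibniz split, the diagonal operator $B=\bigoplus_\alpha \tilde B^{(\alpha)}$ with the same second-order part as $A$, the positive bounded matrix operator $C$, mollifiers for condition~(b), and Schauder estimates from \cite{Kr96} for condition~(c).

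There is a real gap in your treatment of the lower bound. You propose to apply Theorem~\ref{theorem:lower_bound} for each $\alpha$ separately with the \emph{scalar} operator $\Phi_\alpha f=\partial_\alpha f$, generator $\tilde B^{(\alpha)}$, and a ``cross-term'' $C_\alpha f=\sum_{\alpha'}c_{\alpha,\alpha'}\partial_{\alpha'}f$. But the hypothesis of both Theorem~\ref{theorem:main_theorem} and Theorem~\ref{theorem:lower_bound} is a relation of the form $\Phi A = (B+C)\Phi$ with $C\colon F\to F$. Your $C_\alpha$ is not a function of $\Phi_\alpha f=\partial_\alpha f$; it depends on $\partial_{\alpha'}f$ for $\alpha'\neq\alpha$, which cannot be recovered from $\partial_\alpha f$ alone. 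So the scalar decoupling fails precisely because of the off-diagonal coupling that condition~\eqref{gammabed} allows.

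The fix is simple and is what the paper does: apply Theorem~\ref{theorem:lower_bound} once, with the full vector-valued $\Phi=(\partial_\alpha)_{\alpha\in\cI}$, the diagonal $B=\bigoplus_\alpha\tilde B^{(\alpha)}$, and the positive matrix $C$ you already built. The semigroup generated by $B$ is the product $T(t)=\bigoplus_\alpha \tilde T^{(\alpha)}(t)$, so the conclusion $\Phi S(t)f\ge T(t)\Phi f$ reads componentwise as $\partial_\alpha S(t)f\ge \tilde T^{(\alpha)}(t)(\partial_\alpha f)$. The paper also notes one extra point you should include: to invoke the last sentence of Theorem~\ref{theorem:lower_bound} one needs $S(t)f\in\cF_{+,0}$, i.e.\ $S(t)f\in C_b^2(\bR^d)$, which follows from parabolic regularity (Theorem~8.2.1 in \cite{Kr96}).
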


Before giving the proof of this result, we give two main examples to make clear how to verify \eqref{gammabed}.
	
\begin{example}
	If $\cI = \{1, \dots, d\}$ or $\Phi f = \operatorname{grad}f$, then $\Phi f \ge 0$ characterizes component-wise increasing functions. If $a$ and $b$ fulfill Assumption \ref{assumption:diffusion_uniform_ellipticity} and
	\begin{subequations}\label{diffmon}
		\begin{gather}
		\text{$a_{ij}$ depends only on $x_i$ and $x_j$},\\
		\text{$b_{i}$ is increasing in $x_j$ for all $j \ne i$},
		\end{gather}
	\end{subequations}
	then the conclusions of Theorem \ref{theorem:diffusion_monotonicity} hold. In particular, we obtain that $\{S(t)\}_{t\geq 0}$ is stochastically monotone.
\end{example}

\begin{proof}
	For all $\alpha \in \cI$, we have $\vn{\alpha}_1 = 1$, so $\beta < \alpha$ implies $\beta = 0$. First consider all $\gamma$ with $\vn{\gamma}_1 = 2$, i.e.~$\gamma = \{i j\}$ (where possibly $i = j$). If $\alpha \notin \{i,j\}$, then there is no $\zeta \geq 0$ such that $\gamma = \zeta + \alpha$, which implies we must assume $\partial_\alpha g_{\{i j\}} = \partial_\alpha a_{ij} = 0$ as $\beta + \gamma = \{ij\} \notin \cI$.
	
	Next consider all $\gamma$ with $\vn{\gamma}_1 = 1$, i.e.~$\gamma = i$. If $\alpha \neq i$, then there is no $\zeta \geq 0$ such that $\gamma = \zeta + \alpha$, which implies we must assume $\partial_\alpha g_{i} = \partial_\alpha b_i \geq 0$ as $\beta + \gamma = \{i\} \in \cI$.
\end{proof}

\begin{example}
	$\cI = \left\{\{kl\} \, \middle| \, 1 \le k < l \le d\right\}$ or $\Phi f = (\partial_k \partial_l f)_{1 \le k < l \le d}$, then $\Phi f \ge 0$  characterizes smooth supermodular functions.
	If $a$ and $b$ fulfill Assumption \ref{assumption:diffusion_uniform_ellipticity} and additionally
	\begin{subequations}
		\begin{gather}
		\text{$a_{ij}$ depends only on $x_i$ and $x_j$},\\
		\text{$b_{i}$ depends only on $x_i$},
		\end{gather}
	\end{subequations}
	then the conclusions of Theorem \ref{theorem:diffusion_monotonicity} hold. In particular, $\{S(t)\}_{t\ge0}$ propagates the supermodular functions.
\end{example}

\begin{proof}
	 Write $\alpha = \{k l \}$ with $k \neq l$. We first consider $\|\beta\|_1 = 1$ and set without loss of generality $\beta = k$. The following two choices of $\gamma$ give our conditions:
	\begin{description}
		\item[$\gamma = \{i j\}:$] If $l \in \{i,j\}$, then we can find $\zeta \geq 0$ such that $\beta + \gamma = \zeta + \alpha$. If $l \notin \{i,j\}$, this is not possible. As $\vn{\gamma + \beta}_1 = 3$, we find that $\partial_{{\alpha - \beta}} g_\gamma = \partial_l a_{ij} = 0$.
		\item[$\gamma = k:$] There is no $\zeta$ such that $\beta + \gamma = \zeta + \alpha$. As $\{k k\} \notin \cI$, we find that $\partial_{\alpha - \beta} g_\gamma = \partial_l b_k = 0$.
	\end{description}


Any other combination of $\beta$ and $\gamma$ gives conditions that are implied by the two above.

\end{proof}

\begin{proof}[Proof of Theorem \ref{theorem:diffusion_monotonicity}]
To establish that $S(t)$ propagates $\cF_+$, we apply Theorem \ref{theorem:main_theorem}. We choose $\cD := C_b^4(\bR^d)$. We start by a calculation of $\Phi A$ for $f \in \cD$ to identify appropriate operators $B$ and $C$. Using the multi-index notation of the Leibniz rule
$
 \partial_\alpha (g f) = \sum_{\beta\le \alpha} \binom{\alpha}{\beta} \partial_{\alpha-\beta} g\; \partial_\beta f,
$
\begin{align*}
\partial_\alpha A f 
= \sum_{{\|\gamma\|_1 \le 2} }
\sum_{\beta\le \alpha} \binom{\alpha}{\beta} \partial_{\alpha-\beta} g_\gamma\; \partial_{\beta +\gamma } f.
\end{align*}
We try to write as many terms in terms of  $\{\partial_{\alpha} f\}_{\alpha \in \cI}$. This can be achieved by finding all $\beta \leq \alpha$ such that $\beta + \gamma \geq \alpha$. This motivates the definition of $\widetilde{B}^{(\alpha)}$. All terms in $\partial_\alpha Af$ that are left are written in terms of $C^{(\alpha)} \Phi$, where $C^{(\alpha)}$ is given on any multi-indexed vector $u = (u_{i})_{i \in \cI}$ by
\begin{equation*}
C^{(\alpha)} u := \sum_{{\|\gamma\|_1 \le 2} } \sum_{{\scriptstyle\beta\le \alpha\atop {\scriptstyle
 			\beta + \gamma \ne \zeta + \alpha  \atop \scriptstyle (\text{for any }\zeta \geq 0)}}}\binom{\alpha}{\beta} \partial_{\alpha-\beta} g_\gamma\;  u_{\gamma + \beta}.
\end{equation*}
This defines two operators $B, C$ by 
\begin{equation*}
B u  = \left(\tilde{B}^{(\alpha)} u_{\alpha}\right)_{\alpha \in \cI}, \qquad C u  = \left(C^{(\alpha)} u\right)_{\alpha \in \cI}.
\end{equation*} 
Note that $B$ operates on the diagonal. In addition, if $\beta + \gamma = \zeta +  \alpha $ for $\vn{\zeta}_1 = 2$ then it holds $\tilde g_\zeta^{{(\alpha)}} = g_\zeta$   and the operators $\tilde B^{(\alpha)}$  differ from $A$ only in the drift and the killing terms. These terms are bounded by assumption. Thus each component generates a positive SCLE diffusion semigroup, so that also $B$ generates a positive SCLE semigroup. This implies that $B$ is resolvent positive.

The operator $C$ has a more complicated matrix structure, but each component is continuous. The main condition in the theorem together with $\gamma_0 = 0$ implies that $C$ is positive.

We conclude that on the set $\cD$ we have
\begin{equation*}
\Phi A f = B \Phi f + C \Phi f
\end{equation*} 
for a resolvent positive operator $B$ and a positive continuous operator $C$.

It remains to verify conditions (b) and (c) of Theorem \ref{theorem:main_theorem}. Condition (b) follows from Proposition \ref{proposition:core_approximation_operators} and Lemma \ref{lemma:multi_d_diffusion_operators} below. Condition (c)  follows from Assumption \ref{assumption:diffusion_uniform_ellipticity}, i.e.~$a,b \in C^{2+\delta}_b(\bR^d)$, and Theorem 4.3.2 in \cite{Kr96}. 

\smallskip

The second part of the result follows by Theorem \ref{theorem:lower_bound} whose conditions can be checked as in the proof of Theorem \ref{theorem:1d_diffusion_increasing_functions}, making the appropriate changes, e.g.~changing the use of Lemma \ref{lemma:gaining_regularity} to Theorem 4.3.2 in \cite{Kr96}.
Thus, Condition (c) of Theorem \ref{theorem:lower_bound} follows from Corollary \ref{corollary:core_approximation_operators} by using the set of approximation operators $\{(T_n,T_n')\}$ defined below.

 A final property that needs to be checked is that $f \in \cF_{+,0}$ implies $S(t)f \in \cF_{+,0}$. This, however, follows from Theorem 8.2.1 (ii) in \cite{Kr96} as $\cF_{+,0}\subseteq C^2_b(\bR^d)$.

\end{proof}

Using convolutions as in the proof of Proposition \ref{proposition:approximation_operators_1d_diffusion} the following result is immediate.

\begin{lemma}\label{lemma:multi_d_diffusion_operators}
Let $\cI$ be an index set satisfying Assumption \ref{assumption:index_set_multi_d_diffusion} and let $\cF_{+,0}$ and $\cF_+$ be generated by the partial derivatives in $\cI$.

There exits positive continuous approximation operators $\{T_n\}_{n \geq 1}$ for $(\Phi, \cF_{+,0},\cD)$. In addition, there are operators ${T'}_{n,\alpha}$, $\alpha \in \cI$ such that for $i \in \{0,1\}$ we have if $(-1)^i \partial f \geq (-1)^i h$, then $(-1)^i \partial_\alpha (T_n f) \geq (-1)^i {T'}_{n,\alpha} h$.
\end{lemma}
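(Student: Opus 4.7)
The plan is to reduce the lemma to standard mollification on $\bR^d$, which is much cleaner than in the one-dimensional case because there are no boundaries to interpolate across. Let $\{k_\delta\}_{\delta > 0}$ be the standard family of smooth symmetric non-negative mollifiers on $\bR^d$ with $\int k_\delta = 1$ and $\mathrm{supp}\, k_\delta \subseteq B_\delta(0)$, and define
\begin{equation*}
T_n f := k_{1/n} * f,\qquad f \in C_b(\bR^d).
\end{equation*}
I would then verify the four properties of an approximation operator from Definition~\ref{definition:approximation_operators}. Property (b), that $T_n C_b(\bR^d) \subseteq \cD = C_b^4(\bR^d)$, is immediate since convolution with a smooth mollifier yields a $C^\infty_b$ function. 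Strict strong continuity (property (c)) and strict continuity of each $T_n$ (property (d)) follow exactly as in the proof of Proposition~\ref{proposition:approximation_operators_1d_diffusion}, using $\sup_n \vn{T_n f} \leq \vn{f}$ together with uniform convergence on compact sets for $T_n f \to f$, and Theorem~\ref{theorem:propertiesCbstrict}(d) to convert this to strict convergence.

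The key observation for property (a) (preservation of $\cF_{+,0}$) and the positivity condition (e) is the commutation $\partial_\alpha (k_{1/n} * f) = k_{1/n} * \partial_\alpha f$ for $f \in C_b(\bR^d) \cap \cD(\Phi)$. Since $k_{1/n} \geq 0$, convolution preserves pointwise inequalities between continuous bounded functions. Therefore if $f \in \cF_{+,0}$, i.e.~$\partial_\alpha f \geq 0$ for every $\alpha \in \cI$, then $\partial_\alpha T_n f = k_{1/n} * \partial_\alpha f \geq 0$, so $T_n f \in \cF_{+,0}$; and $f \leq g$ gives $T_n f \leq T_n g$ directly. Continuity in the strict topology (property (d)) gives that $\{T_n\}$ is a continuous family.

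For the second assertion, I would simply set $T'_{n,\alpha} := T_n$ for every $\alpha \in \cI$. Suppose $f \in C_b(\bR^d) \cap \cD(\Phi)$ and $h \in C_b(\bR^d)$ satisfy $(-1)^i \partial_\alpha f \geq (-1)^i h$. By positivity and linearity of $T_n$ this yields
\begin{equation*}
(-1)^i\, T_n \partial_\alpha f \;\geq\; (-1)^i\, T_n h,
\end{equation*}
and by the commutation identity the left-hand side equals $(-1)^i \partial_\alpha T_n f$, giving the desired inequality with $T'_{n,\alpha}=T_n$. The same $\{T_n\}$ serves as an approximation family for $(\mathbf{1}, F_+, \cD^*)$ componentwise, so this choice is consistent with the use of the lemma in Corollary~\ref{corollary:core_approximation_operators} in the second part of the proof of Theorem~\ref{theorem:diffusion_monotonicity}.

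There is essentially no main obstacle: the absence of boundaries removes the need for the boundary-interpolation operators $\cB_n$, $\cB'_n$, $\cB''_n$ used in Proposition~\ref{proposition:approximation_operators_1d_diffusion}, and the multi-index case is handled uniformly by the single fact that convolution commutes with all partial derivatives. The only bookkeeping care is to observe that the lemma is stated with $\cD = C_b^4(\bR^d)$ (which contains $T_n C_b(\bR^d) \subseteq C_b^\infty(\bR^d)$) and that $\cF_{+,0} \subseteq C_b^2(\bR^d) \cap \cD(\Phi)$ so that the commutation $\partial_\alpha T_n f = T_n \partial_\alpha f$ is applied to genuinely continuous bounded $\partial_\alpha f$.
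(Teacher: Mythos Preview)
Your proposal is correct and matches the paper's approach: the paper itself simply states that the result is immediate from convolutions as in the proof of Proposition~\ref{proposition:approximation_operators_1d_diffusion}, and you have filled in exactly those details (mollification by $k_{1/n}$, commutation of $\partial_\alpha$ with convolution, and taking $T'_{n,\alpha}=T_n$). There is nothing to add.
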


\subsection{A comparison result for diffusion processes}

	We proceed with preservation of order. The theorem complements the results  \cite{BeRu2007} obtained using martingale methods. 
	
	\begin{theorem}
		\label{theorem:diffusion_comparison}  
		
		Let $\cI$ be an index set satisfying Assumption \ref{assumption:index_set_multi_d_diffusion} and let $\cF_{+,0}$ and $\cF_+$ be generated by the partial derivatives with indices in $\cI$. Denote by $\lesssim$ the stochastic order induced by $\cF_+$.
		
		Let $\eta^{(1)},\eta^{(2)}$ be two diffusion processes  with generators $A^{(z)}$, $z \in \{1,2\}$ and let $A$ be a third generator, all satisfying Assumption \ref{assumption:diffusion_uniform_ellipticity} having coefficients $a_{ij}^{(z)}, a_{ij}, b_i^{(z)},b_i$.
		
		Suppose that the coefficients $a_{ij},b_i$ satisfy \eqref{gammabed} in Theorem \ref{theorem:diffusion_monotonicity} and that
		\begin{equation} \label{eqn:thm_multi_d_diffusion_comparison}
		\begin{aligned}
		b_i^{(1)} \leq b_i \leq b_i^{(2)} \text{ for all } i \in \cI & \text{ and }  b_i^{(1)} = b_i = b_i^{(2)} \text{ otherwise,}\\
		a_{ij}^{(1)} \leq a_{ij} \leq a_{ij}^{(2)} \text{ for all } \{ij\} \in \cI & \text{ and }  a_{ij}^{(1)} = a_{ij} = a_{ij}^{(2)} \text{ otherwise,}
		\end{aligned}
		\end{equation}
		then  $S^{(1)}(t) f \leq S(t) f \leq S^{(2)}(t) f$ for $f \in \cF_+$. In addition we have $\eta^{(1)}(0) \lesssim \eta^{(2)}(0)$ then $\eta^{(1)}(t) \lesssim \eta^{(2)}(t)$ for all $t \geq 0$.
	\end{theorem}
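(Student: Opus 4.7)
The plan is to apply Corollary \ref{corollary:preservation_of_order} with the three semigroups $\{S^{(1)}(t)\}_{t\geq 0}$, $\{S^{(2)}(t)\}_{t\geq 0}$ and the auxiliary semigroup $\{S(t)\}_{t\geq 0}$ generated by $A$. This requires verifying the hypotheses of Theorem \ref{theorem:comparison} for both pairs $(S^{(1)},S)$ with $i=1$ and $(S^{(2)},S)$ with $i=2$. The existence of an operator $B$ which is resolvent-positive and an operator $C$ which is positive continuous such that $\Phi A f = B\Phi f + C\Phi f$ on $\cD := C_b^4(\bR^d)$ is already provided by Theorem \ref{theorem:diffusion_monotonicity}, as $A$ satisfies Assumption \ref{assumption:diffusion_uniform_ellipticity} together with condition \eqref{gammabed}. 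This takes care of condition (a) of Theorem \ref{theorem:comparison} regarding $A$.

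For the comparison operators $C^{(z)}$, a direct computation gives for $f \in C_b^2(\bR^d)$
\begin{equation*}
(A^{(z)} - A)f = \sum_{i \in \cI,\, \|i\|_1 = 1}(b_i^{(z)} - b_i)\,\partial_i f + \frac{1}{2}\sum_{\{ij\} \in \cI}(a_{ij}^{(z)} - a_{ij})\,\partial_i\partial_j f,
\end{equation*}
where all other contributions vanish by the second part of \eqref{eqn:thm_multi_d_diffusion_comparison}. Writing this sum as a linear combination of the components of $\Phi f$ with coefficients that by the first part of \eqref{eqn:thm_multi_d_diffusion_comparison} have the sign $(-1)^z$, we obtain operators $C^{(z)} \subseteq F \times E$ which are positive and (strictly) continuous, with $(A^{(z)} - A)f = (-1)^z C^{(z)}\Phi f$. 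This verifies the second equation of \eqref{eqn:theorem_comparison_assumptions}.

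For the remaining conditions (b) and (c) of Theorem \ref{theorem:comparison}, I would set $\cD = C_b^4(\bR^d)$ as above and $\cD^* = C_b^2(\bR^d)$, noting $\cD \subseteq \cD^* \subseteq \cD(A^{(i)})\cap\cD(\Phi)$ by Assumption \ref{assumption:diffusion_uniform_ellipticity}. Condition (b), strict density of $(\cD\times\cD^*)\cap\cH^{(i)}_{+,0}$ in $\cH^{(i)}_{+,0}$, follows from Proposition \ref{proposition:core_double_approximation_operators} using the positive continuous approximation operators $\{T_n\}$ provided by Lemma \ref{lemma:multi_d_diffusion_operators}; note that these operators send $C_b(\bR^d)$ into $\cD$ and $\cF_{+,0}$ into itself. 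Condition (c), invariance of $\cD\times\cD^*$ under $R(\lambda,\cA)$ for $\cA = \operatorname{diag}(A, A^{(i)})$, follows componentwise from the regularity result Theorem 4.3.2 in \cite{Kr96} applied to each of $A$ and $A^{(i)}$ (all of which satisfy Assumption \ref{assumption:diffusion_uniform_ellipticity}), exactly as was done in the proof of Theorem \ref{theorem:diffusion_monotonicity}.

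Once both pairs are handled, Theorem \ref{theorem:comparison} yields $(-1)^i S(t)f \leq (-1)^i S^{(i)}(t)f$ for all $f \in \cF_+$ and $t \geq 0$, which for $i=1,2$ gives the chain $S^{(1)}(t)f \leq S(t)f \leq S^{(2)}(t)f$. The preservation-of-order statement $\eta^{(1)}(0)\lesssim\eta^{(2)}(0) \Rightarrow \eta^{(1)}(t)\lesssim\eta^{(2)}(t)$ is then precisely the conclusion of Corollary \ref{corollary:preservation_of_order}. The main technical point is the joint control of domains: we need a single set $\cD$ that lies inside $\cD(\Phi A)\cap\cD(B\Phi)\cap\cD(C\Phi)$ and simultaneously inside $\cD(A^{(i)})$, and on which the approximation and resolvent-invariance arguments from the monotonicity proof can be rerun; this is what constrains the choice $\cD = C_b^4(\bR^d)$ and motivates the uniform H\"older regularity required of all three sets of coefficients.
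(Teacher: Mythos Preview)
Your proposal is correct and follows essentially the same route as the paper: apply Theorem \ref{theorem:comparison} and Corollary \ref{corollary:preservation_of_order} with $\cD = C_b^4(\bR^d)$ and $\cD^* = C_b^2(\bR^d)$, invoke the $B,C$ from Theorem \ref{theorem:diffusion_monotonicity} for condition (a), use Proposition \ref{proposition:core_double_approximation_operators} with the operators of Lemma \ref{lemma:multi_d_diffusion_operators} for (b), and Theorem 4.3.2 in \cite{Kr96} for (c). Your explicit identification of $C^{(z)}$ and your componentwise justification of (c) for the diagonal resolvent $R(\lambda,\cA)$ are in fact more transparent than the paper's own (rather terse) argument.
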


\begin{proof}
	We apply Theorem \ref{theorem:comparison} and Corollary \ref{corollary:preservation_of_order}. Note that the operator $A$ is as in Theorem \ref{theorem:diffusion_monotonicity} and there are $B$ and $C$ such that the conditions for Theorem \ref{theorem:main_theorem} are satisfied. 
	Recall that $\cD = C_b^4(\bR^d)$. 
	
	Here we choose in addition $\cD^* := C_b^2(\bR^{|\cI|})$. Note that condition (a) of Theorem \ref{theorem:comparison} is satisfied by our choice of $A$. Condition (b) follows by Proposition \ref{proposition:core_double_approximation_operators} and the approximation operators $T_n$ of Lemma \ref{lemma:multi_d_diffusion_operators}. Condition (c) follows by Theorem 4.3.2 in \cite{Kr96} as the coefficients of the operator $B$ are $C_b^{0,\delta}$ by Assumption \ref{assumption:diffusion_uniform_ellipticity} on $A$. 
\end{proof}

\section{Interacting particle systems} \label{section:Interacting_particle_systems}

A   third class of examples that we will consider are discrete interacting particle systems on $\cX := \{0,1\}^S$  or $\cX := \{-1,1\}$, where $S$ is some countable set or graph. Elements in $\cX$ are denoted with Greek letters $\sigma,\zeta$, whereas we use $i,j,x,y \in S$ to denote a location in $S$. We write $\sigma_i$ for the value of $\sigma\in \cX$ at site $i \in S$.

On $\cX$, we consider Feller processes $\left\{\sigma(t) \right\}_{t \geq 0}$. Three important examples of such systems are as follows.
\begin{itemize}
\item \textit{The contact process}: each site $i \in S$ represents an individual that can either be healthy, $\sigma_i = 0$, or infected, $\sigma_i = 1$. The dynamics are as follows: infected sites infect healthy neighbouring sites and infected sites heal at some rate independent of the states of the neighbours.
\item \textit{Voter model:} each site $i \in S$ represents an individual that has an opinion $\sigma_i \in \{0,1\}$, that changes at an exponential rate proportional to the number of neighbours with a different opinion.
\item \textit{Glauber dynamics for the Ising model:}  Usually this process is defined on $\{-1,1\}^S$. Each site represents a magnetic spin that changes under the influence of Glauber dynamics.
\end{itemize} 

We will reprove and slightly improve a classical result  on stochastic monotonicity by Liggett, Theorem III.1.5 in \cite{Li85}, there proven via coupling methods.

We start with a small introduction of the order generated by increasing functions and that of generators.

\subsection{Preliminaries: orders and generators}
	
	We equip $\cX$ with the partial order $\le$ defined for $\sigma, \xi \in \cX$ by $\sigma\leq \xi$ if $\sigma_i \leq \xi_i$ for all $i \in S$.
	We say that $f \in C_b(\cX)$ is monotone if $\sigma \leq \xi$ implies $f(\sigma) \leq f(\xi)$ and denote the set of monotone functions by $\cM$. Our goal is to prove the preservation of monotonicity. For this we need to  characterize the monotone functions $\cM$ in terms of an operator $\Phi$. Without loss of generality, we assume that $\cX = \{0,1\}^S$. For $x \in S$ and $r \in \{0,1\}$, we write
	\begin{equation*}
	\sigma[x,r]_j = \begin{cases}
	\sigma_j & \text{if } j \neq x, \\
	r & \text{if } j = x,
	\end{cases}
	\end{equation*}
	for the configuration obtained from $\sigma$ by changing the spin at site $x \in S$ to $r$ and define
	\begin{equation*}
	\Phi \colon C(\cX) \rightarrow C_0(\cX \times S), \qquad \Phi f(\sigma,x) := f(\sigma[x,1]) - f(\sigma[x,0]).
	\end{equation*}
	As always, we write $\cF_{+,0} := \left\{f \in C_b(\cX) \, | \, \Phi f \geq 0 \right\}$ and $\cF_+ := \overline{\cF_{+,0}}$. Our first result is that indeed $\Phi$ characterizes the monotone functions.
	
	\begin{proposition} \label{proposition:IPS_monotone_functions}
		We have $\cM =\cF_+$.
	\end{proposition}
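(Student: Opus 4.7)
My plan has three stages: prove $\cF_{+,0}\subseteq\cM$, show $\cM$ is strictly closed (giving $\cF_+\subseteq\cM$), and then produce a cylinder approximation yielding $\cM\subseteq\cF_+$.

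For the first inclusion, I would take $f\in\cF_{+,0}$ and $\sigma\leq\xi$ in $\cX$, set $D:=\{i\in S\colon\sigma_i<\xi_i\}$, and enumerate any finite $F=\{x_1,\dots,x_n\}\subseteq D$. Defining $\sigma^0=\sigma$ and $\sigma^k=\sigma^{k-1}[x_k,1]$, each increment satisfies $f(\sigma^k)-f(\sigma^{k-1})=\Phi f(\sigma^{k-1},x_k)\geq 0$, so $f(\sigma)\leq f(\sigma^n)$. Letting $F_n\uparrow D$, the corresponding $\sigma^n$ converges to $\xi$ in the product topology, and continuity of $f$ gives $f(\sigma)\leq f(\xi)$. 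Hence $\cF_{+,0}\subseteq\cM$.

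For the second stage, $\cM$ is the intersection, over pairs $\sigma\leq\xi$, of the half-spaces $\{f\in C_b(\cX)\colon f(\sigma)\leq f(\xi)\}$. Point evaluations are strictly continuous (they are $\beta$-continuous linear functionals by Theorem \ref{theorem:propertiesCbstrict}), so each half-space is strictly closed, hence $\cM$ is strictly closed. Taking the $\beta$-closure of $\cF_{+,0}\subseteq\cM$ yields $\cF_+\subseteq\cM$.

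For the reverse inclusion, I would approximate $f\in\cM$ by monotone cylinder functions. Fix any reference configuration $\tau\in\cX$; for each finite $F\subseteq S$ define $\pi_F\colon\cX\to\cX$ by $\pi_F(\sigma)_i=\sigma_i$ for $i\in F$ and $\pi_F(\sigma)_i=\tau_i$ otherwise, and set $f_F:=f\circ\pi_F$. Because $\pi_F$ is order-preserving, $f_F$ is monotone; because $f_F$ depends only on coordinates in $F$, one has $\Phi f_F(\sigma,x)=0$ for $x\notin F$, while for $x\in F$ the quantity $f(\pi_F(\sigma)[x,1])-f(\pi_F(\sigma)[x,0])\geq 0$ by monotonicity. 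Therefore $f_F\in\cF_{+,0}$.

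The main (and only nontrivial) step is the convergence $f_{F_n}\to f$ as $F_n\uparrow S$. Since $S$ is countable, $\cX$ is compact metrizable, so every continuous $f$ is uniformly continuous and $\sup_{\sigma\in\cX}d(\pi_{F_n}(\sigma),\sigma)\to 0$ (the supremum is controlled by a tail of any summable metric on the product). Hence $f_{F_n}\to f$ uniformly, which implies strict convergence by Theorem \ref{theorem:propertiesCbstrict}(d), and thus $f\in\overline{\cF_{+,0}}=\cF_+$. Combining the two inclusions gives $\cM=\cF_+$.
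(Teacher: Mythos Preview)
Your proof is correct and follows essentially the same telescoping idea as the paper, though with two differences worth noting. First, the paper detours through the auxiliary space $D$ of bounded-variation functions and uses an infinite telescoping sum $f(\sigma)-f(\xi)=\sum_{j\geq 0}\bigl(f((\sigma\xi)^j)-f((\sigma\xi)^{j+1})\bigr)$, which converges because $f\in D$; you instead flip finitely many coordinates and pass to the limit via continuity of $f$, which is cleaner and avoids $D$ altogether. Second, your stage~3 is superfluous: once stage~1 gives $\cF_{+,0}\subseteq\cM$, the reverse inclusion $\cM\subseteq\cF_{+,0}$ is immediate (if $f$ is monotone then $\Phi f(\sigma,x)=f(\sigma[x,1])-f(\sigma[x,0])\geq 0$ since $\sigma[x,1]\geq\sigma[x,0]$), so $\cM=\cF_{+,0}$, and stage~2 then yields $\cF_+=\overline{\cF_{+,0}}=\overline{\cM}=\cM$ directly. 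Your cylinder approximation is correct but unnecessary.
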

	
	\begin{proof}
		If $f \in D\cap\cM$, it is clear that $f \in \cF_{+,0}$. Suppose now that $f \in D \cap \cF_{+,0}$, we prove that it is in $\cM$. Pick $\sigma,\xi$ such that $\sigma \geq \xi$. Let $\phi \colon \{1,2,\dots\} \rightarrow S$ be a bijection and denote by $(\sigma\xi)^{j}$ the configuration 
		\begin{equation*}
		(\sigma\xi)^{j}_{\phi(i)} := \begin{cases}
		\sigma_{\phi(i)} & \text{for } i > j, \\
		\xi_{\phi(i)} & \text{for } i \leq j.
		\end{cases}
		\end{equation*}
		Then by the fact that $f \in D$, we have
		\begin{equation*}
		f(\sigma) - f(\xi) = \sum_{j = 0}^\infty f((\sigma\xi)^{j}) - f((\sigma\xi)^{j+1}). 
		\end{equation*}
		Because $f \in \cF_{+,0}$, we conclude that $f(\sigma) - f(\xi) \geq 0$ and $f \in \cM$.
		
		\smallskip
		
		The proof that $\overline{D \cap \cM} = \cM$ is straightforward, which implies that $\cM = \cF_+$.
	\end{proof}

Next, we introduce a class of operators on $\cX$ that generate interacting particle systems. These systems will make jumps on the set $\cX$ by changing the state at one site $i \in S$ at a time. The processes, however, will in general have a total jump-rate that equals infinity. If we assume that changes in the configuration `far away in $S$' do not influence the jump-rates of sites for `small' elements in $S$ the Markov process is well-defined, see Theorem \ref{theorem:LiggettI.3.9} proven by Liggett below.

\smallskip

We give the appropriate notation and definitions. For $i \in S$, let $\sigma^i \in \cX$ be the configuration obtained from $\sigma \in \cX$ by changing the $i$-th coordinate:
	\begin{equation*}
	\sigma^i_j = \begin{cases}
	\sigma_{j} & \text{if } j \neq i, \\
	1 - \sigma_{j} & \text{if } j = i.
	\end{cases}
	\end{equation*}
	The rate at which the system makes a transformation from $\sigma$ to $\sigma^i$ is given by a continuous function $\sigma \mapsto c(i,\sigma)$.
	
	For a function $f \in C_b(\cX)$ that depends on a finite number of coordinates in $S$ only, the generator of our interacting particle system is given by
	\begin{equation} \label{eqn:IPS_def_A}
	Af(\sigma) = \sum_i c(i,\sigma)\left[f(\sigma^i) - f(\sigma)\right].
	\end{equation}
	Note that in general $\sum_i \vn{c(i,\cdot)} = \infty$. To make sure that we can associate a Feller process to an extension of $A$ in \eqref{eqn:IPS_def_A} we introduce some additional definitions. 
	
	For $f \in C(\cX)$, define
	\begin{equation*}
	\delta_f(x) = \sup \left\{|f(\sigma) - f(\zeta)| \, \middle| \, \sigma_y = \zeta_y  \text{ for } y \neq x \right\},
	\end{equation*}
	the variation of $f$ at $x \in S$, and the space of test functions (of bounded variation) by
	\begin{equation}
	D = \left\{f \in C_b(\cX) \, \middle| \, \tn{f} := \sum_{x \in S} \delta_f(x) < \infty \right\}. \label{eqn:defD}
	\end{equation}

	Our first condition on $A$, i.e. \eqref{eqn:IPS_summability} below, will make sure that $D$ is contained in the domain of the closure of $A$.  This condition is expressed in terms of the total rate at which the value of the configuration at $i \in S$ changes. To this end, denote $c(i) := \sup\{c(i,\sigma) \, | \, \sigma \in \cX\}$, the maximal rate of $\sigma \mapsto c(i,\sigma)$. 
	
	\smallskip
	
	Our second condition on $A$, \eqref{eqn:IPS_spread_of_correlation} below, makes sure that correlations between individual coordinates in $S$ do not grow to fast. For $i, u \in S$, we define the dependence of $\sigma \mapsto c(i,\sigma)$ on the $u$ variable:
	\begin{equation*}
	c_u(i) = \sup \left\{ \left|c(i,\sigma) - c(i,\hat{\sigma})\right| \, \middle| \, \sigma_y = \hat{\sigma}_y \text{ for } y \neq u \right\}.
	\end{equation*}

	We state the main generation theorem for interacting particle systems.
	
	\begin{theorem}[\cite{Li85}, Theorem I.3.9] \label{theorem:LiggettI.3.9}
		Assume that
		\begin{gather}
		\sup_i c(i) < \infty  \label{eqn:IPS_summability}, \\
		M :=  \sup_{x \in S} \sum_{u \neq x} c_u(x) < \infty.  \label{eqn:IPS_spread_of_correlation}
		\end{gather}
		Then the closure of $A$, which we will also denote by $A$, includes $D$ in the domain and generates a strongly continuous positive contraction semigroup $S(t)$ on $(C(\cX),\vn{\cdot})$ that corresponds to a Feller process $\{\eta(t)\}_{t \geq 0}$ and the semigroup satisfies $\tn{S(t)f} \leq \e^{tM}\tn{f}$ for all $t \geq 0$. 
	\end{theorem}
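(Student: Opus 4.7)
Since $S$ is countable and $\cX = \{0,1\}^S$ is compact in the product topology, $(C(\cX),\|\cdot\|_\infty)$ is a separable Banach space and the strict topology $\beta$ coincides with the uniform-norm topology. It therefore suffices to build a strongly continuous positive contraction semigroup on $C(\cX)$ whose generator extends $A$ (initially defined on cylinder functions, which are norm-dense in $C(\cX)$ by Stone--Weierstrass), and then to invoke the classical correspondence on a compact metric space to obtain an associated Feller process on $D_\cX(\bR^+)$. The plan is to verify the Lumer--Phillips hypotheses and then extend the domain of the closure to include $D$. Dissipativity is immediate from the positive maximum principle: for $f \in D$ attaining its maximum at $\sigma^\ast$, the relation $f(\sigma^{\ast i}) \le f(\sigma^\ast)$ gives $Af(\sigma^\ast) \le 0$. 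That $Af$ is a well-defined element of $C(\cX)$ for every $f \in D$ follows because the $i$-th term of \eqref{eqn:IPS_def_A} has absolute value at most $c(i)\delta_f(i)$, so \eqref{eqn:IPS_summability} together with $\tn{f} < \infty$ yields uniform convergence of continuous partial sums and the bound $\|Af\|_\infty \le (\sup_i c(i))\tn{f}$.

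The main obstacle is the range condition, which I would establish by finite-range approximation. For each finite $F \subseteq S$ set $A_F f(\sigma) := \sum_{i \in F} c(i,\sigma)[f(\sigma^i) - f(\sigma)]$; this bounded Markov generator exponentiates to a strongly continuous positive contraction semigroup $S_F(t) := \e^{tA_F}$. The crucial input is a variation estimate, obtained by writing $Af(\sigma) - Af(\hat\sigma)$ for configurations $\sigma,\hat\sigma$ differing only at a single site $x$, isolating the $i = x$ contribution (bounded by $2c(x)\delta_f(x)$ since $\sigma^x = \hat\sigma$), and decomposing each $i \neq x$ summand as $[c(i,\sigma) - c(i,\hat\sigma)][f(\sigma^i) - f(\sigma)] + c(i,\hat\sigma)\bigl[(f(\sigma^i) - f(\hat\sigma^i)) - (f(\sigma) - f(\hat\sigma))\bigr]$; the first piece is bounded by $c_x(i)\delta_f(i)$ and, after summation in $x$, is controlled by hypothesis \eqref{eqn:IPS_spread_of_correlation}. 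Iterating the resulting pointwise bound along the orbit of $\{S_F(t)\}_{t \ge 0}$ and applying Gronwall produces the uniform growth estimate $\tn{S_F(t) f} \le \e^{tM}\tn{f}$, independent of $F$. This is precisely the step where both hypotheses of the theorem are used in tandem.

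Armed with the uniform variation bound and the norm estimate $\|(A_{F'} - A_F)g\|_\infty \le (\sup_i c(i))\sum_{i \in F'\triangle F}\delta_g(i)$, a Duhamel argument shows, for any cylinder function $f$, that $\{S_F(t)f\}_F$ is Cauchy in $C(\cX)$ uniformly for $t$ in compact intervals; this produces a limit semigroup $S(t)$ on cylinder functions which extends by density and uniform contractivity to a strongly continuous positive contraction semigroup on all of $C(\cX)$. Passing to the limit in $(\lambda - A_F)^{-1}f$ for cylinder $f$ yields the range condition for $A$, so by Lumer--Phillips the closure $\overline{A}$ is exactly the generator of $\{S(t)\}_{t \ge 0}$; the domain of $\overline{A}$ contains $D$ because $A$ was shown above to map $D$ boundedly into $C(\cX)$. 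Lower semi-continuity of $\tn{\cdot}$ transfers the variation estimate to the limit, giving $\tn{S(t)f} \le \e^{tM}\tn{f}$, and the Feller process $\{\eta(t)\}_{t \ge 0}$ is then constructed from the transition semigroup $\{S(t)\}_{t \ge 0}$ by the standard procedure on a compact metric space.
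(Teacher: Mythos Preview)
The paper does not give its own proof of this statement: it is quoted verbatim as Theorem~I.3.9 of \cite{Li85} and used as a black box. Your sketch is therefore not being compared against anything in the paper, but against Liggett's original argument, and it is essentially that argument: bounded finite-range truncations $A_F$, a variation inequality $\delta_x(A_F f)$ obtained from the decomposition you wrote down, a Gronwall/matrix-exponential step giving $\tn{S_F(t)f}\le \e^{tM}\tn{f}$ uniformly in $F$, a Duhamel estimate for Cauchy convergence of $S_F(t)f$, and finally Lumer--Phillips on the limit. The overall strategy is correct.

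One point deserves tightening. Your sentence ``the domain of $\overline{A}$ contains $D$ because $A$ was shown above to map $D$ boundedly into $C(\cX)$'' is not a valid inference as stated: knowing that $Af$ is a well-defined continuous function for $f\in D$ does not by itself place $f$ in the domain of the \emph{closure} of $A$ restricted to cylinder functions. What is needed is that for each $f\in D$ one can find cylinder functions $f_n$ with $f_n\to f$ and $Af_n\to Af$ in norm. This is easy (truncate to finitely many coordinates and use your own bound $\|Af\|_\infty\le (\sup_i c(i))\tn{f}$ together with $\tn{f_n-f}\to 0$), but it should be said explicitly rather than asserted as a consequence of boundedness alone.
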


\subsection{Preservation of order for spin-systems}

	To obtain conditions for the propagation of monotonicity, we aim to apply our main result, Theorem \ref{theorem:main_theorem}. To do so, we need an expression for $\Phi A f$ in terms of two operators $B$ and $C$ on the space $C_b(\cX \times S)$. We introduce these operators first. 
	
	\smallskip
	
	Denote 
	\begin{equation*}
	s(i,\sigma) = \begin{cases}
	1 & \text{if } \sigma_i = 0, \\
	- 1 & \text{if } \sigma_i = 1.
	\end{cases}
	\end{equation*}
	Let $A$ be as in \eqref{eqn:IPS_def_A}. We will show that $\Phi A = B \Phi + C \Phi$ for appropriate versions of
	\begin{align*}
	B g(\sigma,x) & := \sum_i \frac{c(i,\sigma[x,1]) + c(i,\sigma[x,0])}{2}\left[g(\sigma^i,x) - g(\sigma,x)\right], \\
	C g(\sigma,x) & :=\sum_{i \neq x} \frac{c(i,\sigma[x,1])-c(i,\sigma[x,0])}{2}s(i,\sigma)\left[g(\sigma[x,1],i) + g(\sigma[x,0],i)\right]. 
	\end{align*}
	To give appropriate domains for $B$ and $C$, we introduce a space of test functions, analogously to $D$ on $\cX$, on the space $\cX \times S$. 
	
		Denote by 
		\begin{equation*}
		D^{\otimes S} := \left\{f \in C_0(\cX \times S) \, \middle| \, \forall \, x \in S: f(\cdot,x) \in D, \text{ and } \tn{f}_{\infty} := \sup_{x \in S} \tn{f(\cdot,x)} < \infty  \right\}.
		\end{equation*}


	\begin{remark}\label{remark:D_into_D}
		A straightforward calculation yields
		\begin{equation} \label{eqn:D_into_D}
		\delta_i(\Phi f)(\cdot,x) \leq 2\delta_i f, \qquad \forall \, x,i \in S.
		\end{equation}
		This implies that $\Phi$ maps $D$ into $D^{\otimes S}$.
 	\end{remark}

In Lemma \ref{lemma:IPS_trace_generator} below, we show that $B$, is well defined on $D^{\otimes S}$ and generates a positive strongly continuous contraction semigroup $\{T(t)\}_{t \geq 0}$  on $C_0(\cX\times S)$. We have the following result that slightly improves Theorem III.1.5 in \cite{Li85} by giving a lower bound in addition to the preservation of monotonicity.

\begin{theorem} \label{theorem:IPS_stochastic_preservation}
	Let $A$ be as in \eqref{eqn:IPS_def_A} satisfying the conditions of Theorem \ref{theorem:LiggettI.3.9}. 
	
	Suppose in addition that for all $x,i \in S$, $x \neq i$, we have
	\begin{equation*}
	c(i,\sigma[x,1]) \geq c(i,\sigma[x,0]) 
	\end{equation*}
	if $\sigma_i = 0$ and
	\begin{equation*}
	c(i,\sigma[x,1]) \leq c(i,\sigma[x,0]) 
	\end{equation*}
	if $\sigma_i = 1$. 
	
	Then we have that $S(t)\cF_+ \subseteq \cF_+$ for all $t \geq 0$. Additionally, we have for all $f \in \cF_{+,0}$, $h \in C_0(\cX \times S)$ with the property that $\Phi f \geq h \geq 0$ and $t \geq 0$ that $\Phi S(t)f \geq T(t)h$.
\end{theorem}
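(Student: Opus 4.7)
The plan is to apply Theorem~\ref{theorem:main_theorem} for the preservation $S(t)\cF_+ \subseteq \cF_+$ and then Theorem~\ref{theorem:lower_bound} for the lower bound, with $E = C_b(\cX)$, $F = C_0(\cX\times S)$, $\cD := D$ from \eqref{eqn:defD}, and $\cD^* := D^{\otimes S}$. The inclusion $\Phi\cD \subseteq \cD^*$ is Remark~\ref{remark:D_into_D}, and Lemma~\ref{lemma:IPS_trace_generator} supplies the positive SCLE semigroup $\{T(t)\}_{t\geq 0}$ generated by $B$, which yields both the resolvent positivity of $B$ and the second semigroup required by Theorem~\ref{theorem:lower_bound}.

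The heart of the matter is the commutation identity $\Phi A f = B\Phi f + C\Phi f$ on $\cD$. I would expand $\Phi Af(\sigma,x) = Af(\sigma[x,1]) - Af(\sigma[x,0])$ and split the inner sum over $i$ into $i = x$ and $i \neq x$. For $i \neq x$, the commutation $\sigma^i[x,r] = \sigma[x,r]^i$ combined with the identity
\begin{equation*}
f(\sigma^i[x,r]) - f(\sigma[x,r]) = s(i,\sigma)\,\Phi f(\sigma[x,r], i), \qquad r \in \{0,1\},
\end{equation*}
and the symmetric-antisymmetric decomposition $c(i,\sigma[x,1]) = \bar a(i,\sigma,x) + \bar d(i,\sigma,x)$, $c(i,\sigma[x,0]) = \bar a - \bar d$, splits the $i$-th contribution into $\bar a\,[\Phi f(\sigma^i,x) - \Phi f(\sigma,x)]$ and $\bar d\,s(i,\sigma)\,[\Phi f(\sigma[x,1],i) + \Phi f(\sigma[x,0],i)]$, which reassemble exactly as $B\Phi f$ and $C\Phi f$ respectively (the $i = x$ piece is a diagonal multiple of $\Phi f(\sigma,x)$ to be absorbed into $B$). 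Positivity of $C$ is then precisely the sign hypothesis on the rates: $s(i,\sigma)\,\bar d(i,\sigma,x) \geq 0$ in both cases $\sigma_i \in \{0,1\}$. Continuity of $C$ on $C_0(\cX\times S)$ follows from $|\bar d(i,\sigma,x)| \leq c_x(i)$ together with the summability ambient in Theorem~\ref{theorem:LiggettI.3.9}.

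For the remaining hypotheses, Liggett's estimate $\tn{S(t)f} \leq e^{tM}\tn{f}$ (and its analogue for $\{T(t)\}$) yields $R(\lambda,A)\cD \subseteq \cD$ and $R(\lambda,B)\cD^* \subseteq \cD^*$ for small $\lambda > 0$, which is condition (c) of both theorems. For the density requirement, I would introduce cylinder-truncation operators $T_n f(\sigma) := f(\sigma \wedge \mathbf{1}_{S_n})$ for an increasing sequence of finite subsets $S_n \uparrow S$: each $T_n$ is a positive, monotonicity-preserving contraction whose image consists of cylinder functions and therefore lies in $\cD$, and the uniform continuity of any $f \in C(\cX)$ on the compact product space $\cX = \{0,1\}^S$ gives $T_n f \to f$ uniformly, hence strictly. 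Proposition~\ref{proposition:core_approximation_operators} then provides density of $\cD \cap \cF_{+,0}$ in $\cF_{+,0}$, and Corollary~\ref{corollary:core_approximation_operators} applied to a matched pair of cylinder operators on $\cX$ and on $\cX \times S$ (the latter built analogously and respecting \eqref{Phi_positive}) addresses the coupled density required by Theorem~\ref{theorem:lower_bound}.

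Assembling everything, Theorem~\ref{theorem:main_theorem} delivers $S(t)\cF_+ \subseteq \cF_+$, and applying Theorem~\ref{theorem:lower_bound} to $(f,h) \in \cG_+$ yields $(S(t)f, T(t)h) \in \cG_+$, i.e.\ $\Phi S(t)f \geq T(t)h$ whenever $\Phi f \geq h \geq 0$. The principal obstacle is the bookkeeping in the commutation step: one must carefully track the $i = x$ contribution (a diagonal killing-type term to be absorbed into $B$) and verify that $s(i,\sigma)$ flips sign consistently with $\bar d$ under the rate hypothesis in both cases $\sigma_i = 0$ and $\sigma_i = 1$. Once that identity is secured, the remainder is a routine assembly of the abstract framework, with Liggett's estimates handling the invariance conditions and compactness of $\cX$ handling the strict approximation by cylinder functions.
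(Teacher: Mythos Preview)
Your proposal is correct and follows essentially the same route as the paper: both take $E=C(\cX)$, $F=C_0(\cX\times S)$, $\cD=D$, $\cD^*=D^{\otimes S}$, establish $\Phi A=(B+C)\Phi$ on $D$ via the symmetric--antisymmetric splitting of the rates, invoke Liggett's $\tn{\cdot}$-estimate for the resolvent invariance $R(\lambda,A)\cD\subseteq\cD$ and $R(\lambda,B)\cD^*\subseteq\cD^*$, and build cylinder-truncation approximation operators (your $T_nf(\sigma)=f(\sigma\wedge\mathbf{1}_{S_n})$ is exactly the paper's $T_nf(\sigma)=f(\sigma[n])$, and your paired operator on $\cX\times S$ matches the paper's $T_n'$). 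Your handling of the $i=x$ summand as a diagonal killing-type multiple of $\Phi f(\sigma,x)$ is in fact more careful than the paper's displayed computation, which treats that contribution as vanishing.
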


Before we prove the theorem, we show that for each fixed $x$, the restriction of the operator $B$ to the $x \in S$ coordinate is the generator of an interacting particle system.

\begin{lemma} \label{lemma:IPS_trace_generator}
For each $x \in S$, the operator $B_x: D \rightarrow C(\cX)$ defined by
\begin{equation*}
B_x f(\sigma) := \sum_i \frac{c(i,\sigma[x,1]) + c(i,\sigma[x,0])}{2}\left[f(\sigma^i) - f(\sigma)\right]
\end{equation*}
satisfies the conditions in Theorem \ref{theorem:LiggettI.3.9} and generates a semigroup $\{T_x(t)\}_{t \geq 0}$ on $C_b(\cX)$. It follows that $B$ is well defined on $D^{\otimes S}$ and generates a Markov semigroup $\{T(t)\}_{t \geq 0}$ with the property that $T(t)g(\sigma,x) = (T_x(t) g(\cdot,x))(\sigma)$. 
Finally, we have that for $h \in D^{\otimes,S}$ that $\tn{T(t)h}_\infty \leq e^{Mt} \tn{h}_\infty$.

\end{lemma}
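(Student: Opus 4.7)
The plan is to reduce everything to slice-wise statements about $B_x$ and to pass them through Theorem \ref{theorem:LiggettI.3.9}. The key observation is that the rates
\[
\tilde c^{(x)}(i,\sigma) := \tfrac{1}{2}\bigl(c(i,\sigma[x,1]) + c(i,\sigma[x,0])\bigr)
\]
governing $B_x$ depend on $\sigma$ only through $\{\sigma_u\}_{u \neq x}$, which will make all relevant constants uniform in $x$.

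First I would verify the two assumptions of Theorem \ref{theorem:LiggettI.3.9} for $B_x$. The summability bound is immediate: $\tilde c^{(x)}(i) \leq c(i)$, so $\sup_i \tilde c^{(x)}(i) < \infty$. For the spread of correlation, if $u \neq x$, then swapping $\sigma_u$ to $\hat\sigma_u$ still agrees with $\sigma$ at $x$, so by the triangle inequality $\tilde c^{(x)}_u(j) \leq \frac{1}{2}c_u(j) + \frac{1}{2}c_u(j) = c_u(j)$. For $u = x$, both $\sigma[x,1]$ and $\sigma[x,0]$ are invariant under a change at $x$, so $\tilde c^{(x)}_x(j) = 0$. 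Therefore $M_x \leq M$, uniformly in $x \in S$. Theorem \ref{theorem:LiggettI.3.9} then produces a positive strongly continuous contraction semigroup $\{T_x(t)\}_{t \geq 0}$ on $C(\cX)$ with $D \subseteq \cD(B_x)$ and $\tn{T_x(t)f} \leq \e^{Mt}\tn{f}$.

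Next I would define the candidate semigroup on $C_0(\cX \times S)$ by
\[
T(t)g(\sigma,x) := \bigl(T_x(t)\, g(\cdot,x)\bigr)(\sigma),
\]
and check that it maps $C_0(\cX \times S)$ into itself: contractivity of each $T_x(t)$ ensures $\sup_\sigma |T(t)g(\sigma,x)| \leq \sup_\sigma |g(\sigma,x)|$, so if $x \mapsto \sup_\sigma|g(\sigma,x)|$ vanishes at infinity in the discrete space $S$, so does $x \mapsto \sup_\sigma |T(t)g(\sigma,x)|$; continuity in $x$ is automatic. The semigroup property is immediate slice by slice. For strong continuity I would use that for $g \in D^{\otimes S}$ the Dynkin formula applied to each $B_x$ yields
\[
\vn{T(t)g - g} \leq t \sup_{x \in S} \vn{B_x g(\cdot,x)} \leq t \bigl(\sup_i c(i)\bigr) \tn{g}_\infty,
\]
which tends to $0$ with $t$. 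Since cylinder functions depending on finitely many coordinates of $\sigma$ and supported on finitely many $x \in S$ lie in $D^{\otimes S}$ and are dense in $C_0(\cX \times S)$, strong continuity extends. That $B$ acts as the generator on $D^{\otimes S}$ follows by reading off the slice-wise derivative at $t = 0$, again with the uniform-in-$x$ bound above legitimizing the interchange.

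Finally, the norm bound is a one-line consequence of the slice-wise estimate: for any $x \in S$,
\[
\tn{T(t)h(\cdot,x)} = \tn{T_x(t)\,h(\cdot,x)} \leq \e^{Mt}\tn{h(\cdot,x)} \leq \e^{Mt}\tn{h}_\infty,
\]
so taking the supremum over $x$ yields $\tn{T(t)h}_\infty \leq \e^{Mt}\tn{h}_\infty$. The main subtlety in the whole argument is making certain that the Liggett constant $M$ and the rate bounds transfer from $A$ to $B_x$ \emph{uniformly} in $x$; this is precisely what the averaging structure of $\tilde c^{(x)}$ and the vanishing of $\tilde c^{(x)}_x$ provide, and it is what allows the slice-wise semigroups $\{T_x(t)\}$ to be glued into a single strongly continuous semigroup on $C_0(\cX \times S)$.
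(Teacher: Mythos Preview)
Your proposal is correct and follows essentially the same route as the paper: verify the Liggett conditions for the averaged rates $\tilde c^{(x)}$ (bounding $\tilde c^{(x)}(i) \leq c(i)$ and $\tilde c^{(x)}_u(j) \leq c_u(j)$ to get $M_x \leq M$ uniformly in $x$), define $T(t)$ slice-wise, and read off the $\tn{\cdot}_\infty$ bound from the slice-wise estimate. You are in fact more explicit than the paper, which dispatches strong continuity and the identification of $B$ on $D^{\otimes S}$ with the phrase ``a straightforward calculation''; your explicit observation that $\tilde c^{(x)}_x(j) = 0$ and your density argument for strong continuity fill in exactly the details the paper omits.
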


\begin{proof}
For notational convenience, denote 
\begin{equation*}
c^x(i,\sigma) = \frac{c(i,\sigma[x,1]) + c(i,\sigma[x,0])}{2}.
\end{equation*}
It is immediate that $\sigma \mapsto c^x(i,\sigma)$ is continuous for all $i \in S$. Additionally, \eqref{eqn:IPS_summability} is satisfied as $c^x(i) \leq c(i)$. Finally, \eqref{eqn:IPS_spread_of_correlation} is implied by 
\begin{equation} \label{eqn:IPS_bound_on_correlation_coefficients}
M_x := \sup_{y \in S} \sum_{u \neq y} c_u^x(y) \leq \sup_{y \in S} \sum_{u \neq y} c_u(y) \leq M,
\end{equation}
because $c_u^x(y) \leq c_u(y)$ for all $u, y \in S$. 

A straightforward calculation shows that the semigroup $\{T(t)\}_{t \geq 0}$ is strongly continuous for the norm and that $D^{\otimes S}$ is in the domain of the generator and that on this set, the generator equals $B$. 
The final claim follows immediately from the uniform bound in \eqref{eqn:IPS_bound_on_correlation_coefficients}.
\end{proof}

\begin{proof}[Proof of Theorem \ref{theorem:IPS_stochastic_preservation}]
We first check the conditions for Theorem \ref{theorem:main_theorem}. We pick $(E,\tau_E) = (C(\cX),\vn{\cdot})$, $F = (C_0(\cX \times S),\vn{\cdot})$ and $\cD = D$. This space indeed satisfies $D \subseteq \cD(\Phi A) \cap \cD(B \Phi) \cap \cD(C \Phi)$ by the observation made in \eqref{eqn:D_into_D}. Note that $B$ generates a positive semigroup by Lemma \ref{lemma:IPS_trace_generator}, which implies that $B$ is resolvent positive. The positivity of $C$ is clear by the assumption of the theorem. 

\smallskip

We proceed by proving that $\Phi Af = B \Phi f + C \Phi f$ for $f \in D$. First note that for all $x, i \in S$ and $f \in D$
\begin{equation*}
\Phi f(\sigma^i,x) - \Phi f(\sigma,x) = f(\sigma[x,1]^i) - f(\sigma[x,1]) - \left[f(\sigma[x,0]^i) - f(\sigma[x,0])\right].
\end{equation*}
$\Phi Af(\sigma,x)$ can be expressed as a sum over $i \in S$. For each $i$, a straightforward calculation yields that the summand equals
\begin{align*}
& c(i, \sigma[x,1]) \left[f(\sigma[x,1]^i) - f(\sigma[x,1])\right] - c(i,\sigma[x,0]) \left[f(\sigma[x,0]^i) - f(\sigma[x,0])\right] \\
& \qquad = \frac{c(i,\sigma[x,1]) + c(i,\sigma[x,0])}{2}\left[\Phi f(\sigma^i,x) - \Phi f(\sigma,x)\right] \\
& \qquad \qquad + \frac{c(i,\sigma[x,1])-c(i,\sigma[x,0])}{2}\left[f(\sigma[x,1]^i) - f(\sigma[x,1]) \right] \\
& \qquad \qquad + \frac{c(i,\sigma[x,1])-c(i,\sigma[x,0])}{2}\left[f(\sigma[x,0]^i) - f(\sigma[x,0]) \right].
\end{align*}
Note that if $i = x$, then the second and third term on the right hand side cancel each other. If $i \neq x$, we can express the two differences in $f$ in terms of $\Phi f$ and $s(i,\sigma)$:
\begin{align*}
& c(i, \sigma[x,1]) \left[f(\sigma[x,1]^i) - f(\sigma[x,1])\right] - c(i, \sigma[x,0]) \left[f(\sigma[x,0]^i) - f(\sigma[x,0])\right] \\
& \qquad = \frac{c(i,\sigma[x,1]) + c(i,\sigma[x,0])}{2}\left[\Phi f(\sigma^i,x) - \Phi f(\sigma,x)\right] \\
& \qquad \qquad + \delta_{\{i \neq x\}}\frac{c(i,\sigma[x,1])-c(i,\sigma[x,0])}{2}s(i,\sigma)\left[\Phi f(\sigma[x,1],i) + \Phi f(\sigma[x,0],i)\right] .
\end{align*}

We conclude that $\Phi Af = (B+C)\Phi f$. 

\smallskip

To apply Theorem \ref{theorem:main_theorem}, we are left to check conditions (b) and (c). For (c) note that because $S(t)$ is continuous for the $\tn{\cdot}$ topology by Theorem \ref{theorem:LiggettI.3.9}, we find that $R(\lambda,A)D \subseteq D$ for all $\lambda > 0$ via the integral representation formula for the resolvent.

For (b), we proceed by checking the conditions for Proposition \ref{proposition:core_approximation_operators}. Define the operators
\begin{equation*}
T_n f(\sigma) := f(\sigma[n])
\end{equation*}
where $\sigma[n]$ is the configuration defined by $\sigma[n]_{\phi(i)} = \sigma_{\phi(i)}$ if $i \leq n$ and $\sigma[n]_{\phi(i)} = 0$ otherwise. Here we use the map $\phi$ introduced in the proof of Proposition \ref{proposition:IPS_monotone_functions}. Note that by construction $T_n \colon \cD(\Phi) \rightarrow D$. Furthermore, note that $f \in \cF_{+,0}$ implies that $\Phi(T_n f) \in \cF_{+,0}$. The final property $\lim_n \vn{T_n f  -f} = 0$ follows from the uniform continuity of $f$. Thus the operators $\{T_n\}_{n \geq 1}$ are approximation operators for $(\Phi,\cF_{+,0},D)$. We conclude that $\cF_{+,0} \cap D$ is dense in $\cF_{+,0}$ by Proposition \ref{proposition:core_approximation_operators}.

\smallskip

The final claim of the theorem is established by an application of Theorem \ref{theorem:lower_bound}. We take $\cD^* = D^{\otimes S}$ and note that in this setting $F_+$ is the cone of non-negative functions in $C_0(\cX \times S)$. As we have control on the growth of the norm $\tn{T(t)h}_\infty$ for functions $h \in D^{\otimes S}$ by Lemma \ref{lemma:IPS_trace_generator}, it follows as above that also the resolvent $R(\lambda, B)$ maps $D^{\otimes S}$ into itself.

To conclude, we verify condition (c) for Theorem \ref{theorem:lower_bound} via Corollary \ref{corollary:core_approximation_operators}. 
	
	Define $T_n'\colon C_0(\cX \times S) \rightarrow C_0(\cX \times S)$ by  
	\begin{equation*}
	T_n' h(\sigma,x) := \begin{cases}
	h(\sigma[n],x) & \text{if } x = \phi(i) \text{ for some } i \leq n, \\
	0 & \text{if } x = \phi(i) \text{ for some } i > n,
	\end{cases}
	\end{equation*}
	It is straightforward to verify that $\{T_n'\}$ are approximation operators for $(\bONE,F_+,D^{\otimes S})$. Finally, we verify that if $\Phi f \geq h \geq 0$, then $\Phi T_n f \geq T_n' h$.

	\textit{Case 1:} If $i > n$, then $\Phi(T_n f)(\sigma,\phi(i)) = 0 = T_n'h(\sigma,\phi(i))$. 
	\textit{Case 2:} If $x \in S$ is such that $x = \phi(i)$ for $i \leq n$, then $\Phi(T_n f)(\sigma,x) = f(\sigma[x,1][n]) - f(\sigma[x,0][n])$, where $\sigma[x,1][n]$ means that we first replace $\sigma$ by $\sigma[x,1]$ and then $\sigma[x,1]$ by replacing all coordinates with index larger than $n$ by $0$. As $x = \phi(i)$ for $i \leq n$, this result is the same as when we interchange these two replacements: that means $\sigma[x,1][n] = \sigma[n][x,1]$. This implies
	\begin{multline*}
	\Phi(T_n f)(\sigma,x) = f(\sigma[x,1][n]) - f(\sigma[x,0][n]) = f(\sigma[n][x,1]) - f(\sigma[n][x,0]) \\
	= \Phi f(\sigma[n],x) \geq h(\sigma[n],x) = T_n' h(\sigma,x).
	\end{multline*}
	Thus, Condition (c) of Theorem \ref{theorem:lower_bound} follows from Corollary \ref{corollary:core_approximation_operators} by using the set of approximation operators $\{(T_n,T_n')\}$.
	
	We conclude that for $f \in \cF_{+,0}$ and $h \in C_0(\cX \times S)$ with $\Phi f \geq h \geq 0$, we have $\Phi S(t)f \geq T(t) h$.
\end{proof}

\section{Proofs of main results} \label{section:proofs_main_results}

We start by stating a generalisation of Theorem \ref{theorem:main_theorem}.

\begin{theorem} \label{theorem:main_theorem2}
 Let Assumption \ref{assumption:main_assumption} be satisfied. In addition to $A$ and $B$, suppose there is a third  linear operator $C \colon \cD(C) \subseteq F \rightarrow F$. Let $\cD$ be a set such that $\cD \subseteq \cD(B \Phi) \cap \cD(C \Phi) \cap \cD(\Phi A)$ and suppose that for all $f \in \cD$ we have
\begin{equation} \label{eqn:commutatation_relation_main_thm2}
\Phi A f = (B+C) \Phi f.
\end{equation}
Fix some $\lambda_0 > 0$ and suppose that for all $\lambda \in (0,\lambda_0)$ 
\begin{enumerate}[(a)]
\item the set $\cF_{\lambda} :=  ((\bONE - \lambda A)\cD) \cap \cF_{+,0}$ is dense in $\cF_{+,0}$ with respect to $\tau_E$,
\item  the operator $\bONE - \lambda C R(\lambda,B)$ is invertible with full domain and has inverse which we denote by $U_\lambda$ which satisfies $U_\lambda \geq 0$. 
\end{enumerate}
Then, for any $\lambda$ such that $0 < \lambda < \lambda_0$ and for all $t \geq 0$, we have: if $g \in \cF_+$, then $R(\lambda,A)g \in \cF_+$ and $S(t)g \in \cF_+$.
\end{theorem}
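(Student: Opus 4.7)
The strategy splits in two. First, I would show that $R(\lambda,A)\cF_+ \subseteq \cF_+$ for every $\lambda \in (0,\lambda_0)$ by a direct manipulation at the level of the resolvent using the commutation relation \eqref{eqn:commutatation_relation_main_thm2}. Second, I would transfer this to the semigroup via the exponential (Hille--Yosida type) formula $S(t)g = \lim_{n\to\infty} R(t/n,A)^n g$ for SCLE semigroups (see Appendix~\ref{section:semigroups_on_lcs}) combined with the $\tau_E$-closedness of $\cF_+$.

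For the resolvent step, I would fix $f \in \cD$, set $g := (\bONE - \lambda A)f$, apply $\Phi$, and use \eqref{eqn:commutatation_relation_main_thm2} to obtain
\begin{equation*}
\Phi g = (\bONE - \lambda B)\Phi f - \lambda C \Phi f.
\end{equation*}
Writing $u := (\bONE - \lambda B)\Phi f$, so that $\Phi f = R(\lambda,B)u$, this rearranges to $\Phi g = (\bONE - \lambda C R(\lambda,B))u$. Hypothesis (b) then gives $u = U_\lambda \Phi g$, and therefore
\begin{equation*}
\Phi f = R(\lambda,B)\, U_\lambda \,\Phi g.
\end{equation*}
If in addition $g \in \cF_{+,0}$, i.e.\ $\Phi g \geq 0$, positivity of $U_\lambda$ (hypothesis (b)) and of $R(\lambda,B)$ (Assumption~\ref{assumption:main_assumption}(d)) force $\Phi f \geq 0$, so $R(\lambda,A)g = f \in \cF_{+,0}$. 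Thus $R(\lambda,A)$ sends $\cF_\lambda = ((\bONE-\lambda A)\cD)\cap\cF_{+,0}$ into $\cF_{+,0}$. Combining with the $\tau_E$-continuity of the resolvent (standard for SCLE semigroups), the density of $\cF_\lambda$ in $\cF_{+,0}$ from hypothesis (a), and the fact that $\cF_+$ is the $\tau_E$-closure of $\cF_{+,0}$, I would extend this to $R(\lambda,A)\cF_+ \subseteq \cF_+$.

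For the semigroup claim, given $g \in \cF_+$ and $t \geq 0$, for $n$ large enough $t/n < \lambda_0$, so iterating the resolvent inclusion $n$ times gives $R(t/n,A)^n g \in \cF_+$. The exponential formula together with the $\tau_E$-closedness of $\cF_+$ then yields $S(t)g \in \cF_+$, completing the argument.

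The main obstacle is essentially careful bookkeeping of domains, so that every object in the identity $\Phi f = R(\lambda,B)U_\lambda \Phi g$ is well-defined. Specifically, I need $\Phi f \in \cD(C)$ (which follows from $f \in \cD \subseteq \cD(C\Phi)$), that $R(\lambda,B)$ acts on all of $F$ (built into Assumption~\ref{assumption:main_assumption}(d)), and that $U_\lambda$ applies to $\Phi g$ (which is exactly the ``full domain'' part of hypothesis (b)). The other delicate input is the exponential formula for SCLE semigroups, which is the only external ingredient needed beyond the appendix material.
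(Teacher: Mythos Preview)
Your argument is essentially the paper's own proof: the resolvent identity $\Phi R(\lambda,A)g = R(\lambda,B)\,U_\lambda\,\Phi g$ for $g\in\cF_\lambda$ is exactly the content of the paper's Lemma~\ref{lemma:fundamental_bound}, and the extension to $\cF_+$ by density and continuity is identical. The only minor difference is in the semigroup step, where the paper uses the Yosida approximation~\eqref{eqn:Yosida_approximation} (which expresses $S(t)g$ as a limit of convex combinations of powers of $R(n^{-1},A)$ and is what Appendix~\ref{section:semigroups_on_lcs} actually provides) rather than the exponential formula $S(t)g=\lim_n R(t/n,A)^n g$; both work by the same principle, but if you want to stay strictly within the appendix material you should switch to Yosida.
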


The main tool for the proof of this result implies that if $\Phi A - B \Phi$ is `positive', then also $\Phi R(\lambda,A) - R(\lambda,B) \Phi$ is `positive'. The result has two versions, one that we will need that uses $U_\lambda \geq 0$ and a result that assumes more, but has a stronger consequence. The conditions for the latter result hold for example in the typical case in which $C$ is continuous and non-negative, cf.~Lemma \ref{lemma:existence_Ulambda}.

	\begin{lemma} \label{lemma:fundamental_bound}
		Let the assumptions of Theorem \ref{theorem:main_theorem2} be satisfied. Then the following two statements hold.
		\begin{enumerate}[(i)]
			\item if  $U_\lambda \geq 0$, then it holds for all $g \in \cF_\lambda$ that $\Phi R(\lambda,A) g \geq 0$,
			\item if  $U_\lambda \geq \bONE$, then it holds for all $g \in \cF_\lambda$ that $\Phi R(\lambda,A) g \geq R(\lambda,B) \Phi g \geq 0$.
		\end{enumerate}
	\end{lemma}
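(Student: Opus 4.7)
The plan is to reduce both claims to a single identity, namely
\begin{equation*}
\Phi\, R(\lambda, A)\, g \;=\; R(\lambda, B)\, U_\lambda\, \Phi g \qquad \text{for all } g \in \cF_\lambda.
\end{equation*}
Once this is established, (i) and (ii) are immediate. For (i), positivity of $R(\lambda, B)$, positivity of $U_\lambda$, and $\Phi g \ge 0$ give $R(\lambda, B) U_\lambda \Phi g \ge 0$. For (ii), the hypothesis $U_\lambda \ge \bONE$ yields $U_\lambda \Phi g \ge \Phi g$, and since $R(\lambda, B)$ is a positive operator it preserves this inequality, giving $\Phi R(\lambda, A) g \ge R(\lambda, B)\Phi g \ge 0$. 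No topological density, closure or passage to the limit is needed at the level of this lemma; the statement is purely algebraic modulo checking that everything lies in the right domain.

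To derive the identity, I would fix $g \in \cF_\lambda$ and choose $f \in \cD$ with $g = (\bONE - \lambda A) f$, so that $R(\lambda, A) g = f$. Applying $\Phi$, which is legitimate because $f \in \cD \subseteq \cD(\Phi A)$, and invoking the commutation relation \eqref{eqn:commutatation_relation_main_thm2} gives
\begin{equation*}
\Phi g \;=\; \Phi f - \lambda\, \Phi A f \;=\; (\bONE - \lambda B)\Phi f - \lambda\, C\, \Phi f,
\end{equation*}
where $\Phi f \in \cD(B)\cap \cD(C)$ since $\cD \subseteq \cD(B\Phi)\cap \cD(C\Phi)$. Applying the continuous operator $R(\lambda, B)$ to both sides and rearranging produces the fixed-point identity
\begin{equation*}
\Phi f \;=\; R(\lambda, B)\,\Phi g + \lambda\, R(\lambda, B)\, C\, \Phi f.
\end{equation*}

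The key manipulation is then to apply $C$ to this identity in order to isolate $C\Phi f$: one obtains
\begin{equation*}
(\bONE - \lambda\, C R(\lambda, B))\, C \Phi f \;=\; C R(\lambda, B)\, \Phi g,
\end{equation*}
which by hypothesis can be inverted to yield $C\Phi f = U_\lambda C R(\lambda, B) \Phi g$. Substituting this back into the fixed-point identity and using the purely algebraic fact $\bONE + \lambda U_\lambda C R(\lambda, B) = U_\lambda$, which is a direct rearrangement of $(\bONE - \lambda C R(\lambda, B))U_\lambda = \bONE$, collapses the right-hand side to $R(\lambda, B) U_\lambda \Phi g$, establishing the identity. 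The main technical issue to watch is domain bookkeeping: every application of $B$, $C$, $R(\lambda, B)$ and $U_\lambda$ must land in the appropriate domain, which is why the assumption $\cD \subseteq \cD(B\Phi)\cap\cD(C\Phi)\cap\cD(\Phi A)$ and the full-domain invertibility of $\bONE - \lambda C R(\lambda, B)$ are built into the hypotheses. Once these bookkeeping checks are performed, the proof is essentially a one-line resolvent computation.
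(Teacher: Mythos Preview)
Your proposal is correct and arrives at exactly the same identity as the paper, namely $\Phi R(\lambda,A)g = R(\lambda,B)\,U_\lambda\,\Phi g$, from which (i) and (ii) follow precisely as you describe. The only difference is in how the identity is reached: the paper factors directly as
\[
\Phi g \;=\; (\bONE - \lambda(B+C))\Phi f \;=\; (\bONE - \lambda C R(\lambda,B))(\bONE - \lambda B)\Phi f,
\]
and then applies $U_\lambda$ followed by $R(\lambda,B)$ in one step, whereas you first apply $R(\lambda,B)$ to obtain a fixed-point equation, then apply $C$, solve for $C\Phi f$, and substitute back using $\bONE + \lambda U_\lambda C R(\lambda,B) = U_\lambda$. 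Your route is a couple of lines longer and requires the observation that $R(\lambda,B)F\subseteq\cD(C)$ (which is implicit in the assumption that $\bONE-\lambda C R(\lambda,B)$ has full domain), but it is a perfectly valid alternative derivation of the same algebraic identity.
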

	
	The proof of the lemma, and also that of Lemma \ref{lemma:existence_Ulambda} below, are inspired on the manipulations that were made in the proof of Theorem 3.1 in \cite{Ar87}.
	
	\begin{proof}
		Fix $\lambda \in (0,\lambda_0)$,  $g \in \cF_\lambda$ and set $f = R(\lambda,A) g$.  Because $\cF_\lambda \subseteq (\bONE - \lambda A) \cD$, we find $f \in \cD$.  Using the linearity of $\Phi$  and \eqref{eqn:commutatation_relation_main_thm2},  we obtain
		\[
		\Phi g = \Phi f - \Phi \lambda A f =  \Phi f - \lambda (B+C) \Phi f
		= (\bONE - \lambda (B+C) ) \Phi f.
		\]
		 As $f \in \cD \subseteq \cD(B \Phi)$, we can rewrite the right-hand side as
		\begin{equation*}
		\Phi g = \left(\bONE  - \lambda C R(\lambda,B) \right)  (\bONE  - \lambda B) \Phi f.
		\end{equation*}	
		Applying first $U_\lambda$ and then $R(\lambda,B)$ on both sides of this equation, using that $f = R(\lambda,A)g$, we find
		\begin{equation*}
		R(\lambda,B)  U_\lambda  \Phi g =\Phi R(\lambda,A) g. 
		\end{equation*}
		Next, we use that $g \in \cF_{\lambda} \subseteq \cF_{+,0}$. (i) follows by using that 
	$R(\lambda,B) \geq 0$ and $U_\lambda \geq 0$, whereas (ii) follows because $R(\lambda,B) (U_\lambda - \bONE)\Phi g \ge 0$ which implies
	\[
	\Phi R(\lambda,A) g \geq  R(\lambda,B) \Phi g.
	\]

	\end{proof}

\begin{proof}[Proof of Theorem \ref{theorem:main_theorem2}]
Fix  $\lambda \in (0,\lambda_0)$. We start by proving that $R(\lambda,A) \cF_\lambda \subseteq \cF_{+,0}$. Thus, choose $g \in \cF_\lambda$. By definition of $\cF_\lambda$, we have $R(\lambda,A)g \in \cD \subseteq \cD(\Phi)$. By Assumption (b) and Lemma \ref{lemma:fundamental_bound}, we find $R(\lambda,A)g \in \cF_{+,0}$.

	
\smallskip
	
Now let $g \in \cF_+$. As $\cF_+$ is the $\tau_E$ closure of $\cF_\lambda$ by Assumption (a), we can find a net $g_\alpha \in \cF_\lambda$ such that $g_\alpha \rightarrow g$. As $R(\lambda,A)$ is a continuous map, and $R(\lambda,A)g_\alpha \in \cF_{+,0}$ for every $\alpha$ by the argument above, we conclude that $R(\lambda,A)g \in \cF_+$.
	
\smallskip
	
We proceed with the second statement: $S(t) \cF_+ \subseteq \cF_+$ for all $t \geq 0$. Fix $t \geq 0$ and $g \in \cF_+$. By Yosida-approximation, cf.~\eqref{eqn:Yosida_approximation}, we have
\begin{equation*}
S(t)g := \lim_n \e^{t A_n} g = \lim_n \sum_{k \geq 0} \frac{(tnR(n^{-1},A))^k}{k!}\e^{-tn},
\end{equation*}
for $\tau_E$. As $\cF_+$ is $\tau_E$ closed and convex, it suffices to prove for sufficiently large $n$ that $R\left(\frac{t}{n},A\right)^k g \in \cF_+$ for all $k \geq 1$. Choosing $n$ large enough such that $t/n < \lambda_0$, this follows from by iterating the result for the resolvent $k$ times.
\end{proof}

The next two lemmas establish that Theorem \ref{theorem:main_theorem} is a consequence of Theorem \ref{theorem:main_theorem2}.

\begin{lemma} \label{lemma:existence_Ulambda}
Suppose we are in the setting of Theorem \ref{theorem:main_theorem2}. Suppose that $C$ is continuous and positive. Then there is a $\lambda_0 > 0$ such that for $\lambda \in (0,\lambda_0)$ the operator
	\begin{equation*}
	\bONE - \lambda C R(\lambda,B) 
	\end{equation*}
	is invertible with inverse $U_\lambda$ that satisfies $U_\lambda \geq \bONE$. 

\end{lemma}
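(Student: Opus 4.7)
The plan is to construct $U_\lambda$ explicitly as a Neumann series. Set $T_\lambda := \lambda C R(\lambda,B)$. Since $C$ is $\tau_F$-continuous and positive, and $R(\lambda,B)$ is $\tau_F$-continuous and positive by Assumption \ref{assumption:main_assumption}(d), $T_\lambda$ is a continuous positive operator on $F$. The formal inverse is
\[
U_\lambda := \sum_{n=0}^{\infty} T_\lambda^n.
\]
If this series converges in an appropriate operator sense, then the bound $U_\lambda \geq \bONE$ is immediate: each term $T_\lambda^n$ is a composition of positive operators, hence positive, and the $n=0$ term is already $\bONE$, so $U_\lambda \geq \bONE + \sum_{n \geq 1} T_\lambda^n \geq \bONE$.

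The technical work is therefore the convergence of the Neumann series for small enough $\lambda$. Condition C on $(F,\tau_F,\vn{\cdot}_F)$ ensures that $\tau_F$-bounded and $\vn{\cdot}_F$-bounded sets coincide, so any $\tau_F$-continuous linear operator is norm-bounded in the usual sense. In particular $\vn{C}_F < \infty$ as an operator norm, and each $R(\lambda,B)$ has a finite operator norm. What remains is a uniform bound $\vn{R(\lambda,B)}_F \leq M$ for all sufficiently small $\lambda > 0$; once this is secured,
\[
\vn{T_\lambda}_F \leq \lambda \vn{C}_F \vn{R(\lambda,B)}_F \leq \lambda M \vn{C}_F,
\]
which is strictly less than $1$ for $\lambda$ below $\lambda_0 := (M \vn{C}_F)^{-1}$. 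On $(0,\lambda_0)$ the Neumann series then converges absolutely in the operator norm, and a standard manipulation verifies that $U_\lambda$ is the two-sided inverse of $\bONE - T_\lambda$ with domain all of $F$.

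The uniform bound on $\vn{R(\lambda,B)}_F$ as $\lambda \to 0$ is the main obstacle. The proof of Lemma \ref{lemma:fundamental_bound} already invokes Arendt's Theorem 3.1 in \cite{Ar87}, and I would exploit the same Hille--Yosida-type characterization of resolvent positive operators: in the Banach-lattice/locally convex-solid setting, resolvent positivity of $B$ together with continuity of every $R(\lambda,B)$ forces $\vn{R(\lambda,B)}_F$ to be bounded near $0$ (translating, via $\lambda \leftrightarrow 1/\mu$, to the classical estimate $\vn{\mu R(\mu,B)} \leq M$ for large $\mu$). The remainder of the argument---the positivity conclusion, the manipulation of the Neumann series, and the verification that $U_\lambda$ inverts $\bONE - T_\lambda$ both one-sidedly---is then routine.
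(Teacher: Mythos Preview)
Your Neumann-series construction of $U_\lambda = \sum_{n\ge 0}(\lambda C R(\lambda,B))^n$, together with the use of Condition~C to pass from $\tau_F$-continuity to $\vn{\cdot}_F$-boundedness, is exactly the paper's approach. Two points of comparison are worth recording.

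First, on the uniform resolvent bound: you are right to isolate this as the crux, and in fact the paper's own proof is sloppy here---it writes $\lambda_0 = \vn{C}_F^{-1}\vn{R(\lambda,B)}_F^{-1}$, which is circular since the right-hand side depends on $\lambda$. Your proposed remedy via Hille--Yosida/Arendt-type estimates for resolvent-positive operators is the natural fix, and in all of the paper's applications $B$ generates a SCLE semigroup so that such a bound is available.

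Second, on the conclusion $U_\lambda \ge \bONE$: here the paper is more careful than you. Your ``immediate'' glosses over the fact that the series converges only in the norm $\vn{\cdot}_F$, so passing positivity from the partial sums $\sum_{n=1}^N T_\lambda^n f$ to the limit requires the positive cone $F_+$ to be closed. The paper secures this explicitly: $(F,\tau_F,\le)$ is locally convex-solid, so $F_+$ is $\tau_F$-closed by Lemma~\ref{lemma:positive_cone_is_closed}, and norm convergence implies $\tau_F$-convergence by Condition~C(a).
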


\begin{proof}
Our goal is to define the operator $U_\lambda$ as the sum
\begin{equation} \label{eqn:def_U_lambda}
U_\lambda f := \sum_{k=0}^\infty \left(\lambda C R(\lambda,B)\right)^k f.
\end{equation}
We have to show this sum is well-defined, and additionally, that the operator that we obtain in this way  dominates $\bONE$.

By assumption $R(\lambda,B)$ and $C$ are $\tau_F$ continuous. This implies that they are $\vn{\cdot}_F$ bounded. In particular, if we put $\lambda_0 = \vn{C}^{-1}_F\vn{R(\lambda,B)}_{F}^{-1}$. Then for $\lambda < \lambda_0$, the sum in \eqref{eqn:def_U_lambda} converges in the Banach space $(F,\vn{\cdot}_F)$. Elementary arguments yield that indeed $U_\lambda = \left(\bONE - \lambda C R(\lambda,B)  \right)^{-1}$.

Next, we prove positivity of $U_\lambda - \bONE \geq 0$. Note that
\begin{equation*}
U_\lambda f - f = \lim_n \sum_{k=1}^n \left(\lambda C R(\lambda,B)\right)^k f
\end{equation*}
for $\vn{\cdot}_F$ and hence, also for $\tau_F$. Thus, if $f \in \cF_+$, then the partial sums on the right-hand side are positive, due to the positivity of $C$ and $R(\lambda,B)$. Because $(F,\tau,\leq)$ is locally solid, Lemma \ref{lemma:positive_cone_is_closed} in Appendix \ref{appendix:locally_solid_space} implies that the limit is also in the positive cone. We conclude that $U_\lambda \geq \bONE$.
\end{proof}

\begin{remark}
We note that most of the argument in the proof above uses the norm $\vn{\cdot}_F$ topology only. It is only in the final step that we use that $\tau_F$ is locally convex-solid, to establish that $U_\lambda \geq \bONE$. If one were be able to prove that for any locally  convex-solid topology $\tau_F$, also the strong topology $\vn{\cdot}_F$ is locally   convex-solid, then all arguments for the space $F$ can be carried out with the norm topology.
\end{remark}

The next lemma will be our main tool to establish the density of $\cF_\lambda$ in $\cF_{+,0}$.

\begin{lemma} \label{lemma:core}
Consider the setting of Theorem \ref{theorem:main_theorem2}  and fix some $\lambda > 0$. If
\begin{enumerate}[(a)]
\item $\cD \cap \cF_{+,0}$ is $\tau_E$ dense in $\cF_{+,0}$,
\item we have $R(\lambda,A)\cD \subseteq \cD$,
\end{enumerate}
then $\cF_\lambda$ is $\tau_E$ dense in $\cF_{+,0}$.
\end{lemma}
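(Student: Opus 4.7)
The plan is to show the containment $\cD \cap \cF_{+,0} \subseteq \cF_\lambda$, from which the density statement follows immediately from hypothesis (a).

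The first step is to observe that $\cD \subseteq (\bONE - \lambda A)\cD$. Indeed, given any $g \in \cD$, hypothesis (b) ensures $R(\lambda,A)g \in \cD$, and since $R(\lambda,A) = (\bONE - \lambda A)^{-1}$ (recall that for SCLE semigroups the resolvent has full domain and is a two-sided inverse of $\bONE - \lambda A$ on $\cD(A)$, see Appendix~\ref{section:semigroups_on_lcs}), we have
\begin{equation*}
g \;=\; (\bONE - \lambda A)\, R(\lambda,A)\, g \;\in\; (\bONE - \lambda A)\,\cD.
\end{equation*}
Intersecting with $\cF_{+,0}$ then yields $\cD \cap \cF_{+,0} \subseteq \bigl((\bONE - \lambda A)\cD\bigr) \cap \cF_{+,0} = \cF_\lambda$.

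The second step is to combine this containment with hypothesis (a). Since $\cD \cap \cF_{+,0}$ is $\tau_E$-dense in $\cF_{+,0}$ and sits inside $\cF_\lambda$, which in turn is a subset of $\cF_{+,0}$, the $\tau_E$-closure of $\cF_\lambda$ equals $\cF_{+,0}$. This completes the proof.

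There is no genuine obstacle here: the statement is essentially a bookkeeping observation about how the invariance $R(\lambda,A)\cD \subseteq \cD$ interacts with the identity $(\bONE - \lambda A)R(\lambda,A) = \bONE$. The only subtlety worth flagging in the written proof is to justify why $R(\lambda,A)$ is a genuine right-inverse of $\bONE - \lambda A$ in the SCLE framework of Assumption~\ref{assumption:main_assumption}, which is supplied by the definition $R(\lambda,A) := (\bONE - \lambda A)^{-1}$ given in the statement of Assumption~\ref{assumption:main_assumption}(b).
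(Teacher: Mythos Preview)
Your proof is correct and follows essentially the same approach as the paper: both establish the inclusion $\cD \subseteq (\bONE - \lambda A)\cD$ from hypothesis (b) via the identity $(\bONE - \lambda A)R(\lambda,A) = \bONE$, and then invoke hypothesis (a) to conclude density. The paper's write-up is slightly more compressed (it phrases the key step as ``apply $\bONE - \lambda A$ to both sides of the inclusion in (b)''), but the argument is the same.
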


\begin{proof}
	Fix $\lambda > 0$. Recall that $\cF_\lambda := (\bONE - \lambda A) \cD \cap \cF_{+,0}$. It follows by (a) that it suffices to prove that $\cD \subseteq (\bONE - \lambda A) \cD$. This statement follows by applying $(\bONE - \lambda A)$ to both sides of the inclusion of (b). Note that this is possible due to the inclusion $\cD \subseteq \cD(A)$. 	
\end{proof}

Theorem \ref{theorem:lower_bound} can be proven as a corollary of Theorem \ref{theorem:main_theorem} by considering product spaces and product semigroups instead of the original objects.

\begin{proof}[Proof of Theorem \ref{theorem:lower_bound}]
The result follows from an application of Theorem \ref{theorem:main_theorem} for the semigroup $\cS(t) = (S(t), T(t))$, the operator $\Upsilon$ and $\cD \times \cD^*$ playing the role of $\cD$. We check (a), (b) and (c) of Theorem \ref{theorem:main_theorem}. (a) was verified in the discussion preceding the theorem. (b) is implied by assumption (b) of this theorem and (c) follows by an application of (c) of this theorem and the product structure of the the resolvent.

We conclude that $(S(t),T(t)) \cG_+ = \cS(t) \cG_+ \subseteq \cG_+$. The final claim is immediate.
\end{proof}

\begin{proof}[Proof of Theorem \ref{theorem:comparison}]
	The result follows from an application of Theorem \ref{theorem:main_theorem} with $\cH_{+,0}^{(i)}$ corresponding to $\cF_{+,0}$ and	$\cD \times \cD^*$  corresponding to $\cD$. By the latter choice condition (c) is immediate consequence of $R(\lambda, A) \cD \subseteq \cD$. 
\end{proof}

\begin{proof}[Proof of Corollary \ref{corollary:preservation_of_order}]
Fix $t \geq 0$ and suppose that $\eta^{(1)}(0) \preccurlyeq \eta^{(2)}(0)$. We prove that $\bE[f(\eta^{(1)}(t))] \leq \bE[f(\eta^{(2)}(t))]$ for all $f \in \cF_+$. 

Denote by $\nu$ be the law of $\eta^{(1)}(0)$ and by $\mu$ the law of $\eta^{(2)}(0)$. By Theorem \ref{theorem:comparison}, we have $S^{(1)}(t)f \leq S(t)f \leq S^{(2)}(t)f$ and $S(t)f \in \cF_+$. This implies
\begin{multline*}
\bE[f(\eta^{(1)}(t))] = \ip{S^{(1)}(t)f}{\nu} \leq \ip{S(t)f}{\nu} \\
\leq \ip{S(t)f}{\mu} \leq \ip{S^{(2)}(t)f}{\mu} = \bE[f(\eta^{(2)}(t))].
\end{multline*}
Note that the third inequality is a consequence of $\eta^{(1)}(0) \preccurlyeq \eta^{(2)}(0)$. We conclude that $\eta^{(1)}(t)  \preccurlyeq \eta^{(2)}(t)$. 
\end{proof}

We proceed with the proofs of Propositions \ref{proposition:core_approximation_operators} and \ref{proposition:core_double_approximation_operators}.

\begin{proof}[Proof of Proposition \ref{proposition:core_approximation_operators}]
Pick $f \in \cF_{+,0}$. Note that $\cF_{+,0} \subseteq \cD(\Phi)$. This implies by properties (a) and (b) of approximation operators that $T_n f \in \cF_{+,0} \cap \cD$. By property (c), we find $T_n f \rightarrow f$ for the strict topology. We conclude that $\cD \cap \cF_{+,0}$ is strictly dense in $\cF_{+,0}$.
\end{proof}

%

\begin{proof}[Proof of Corollary \ref{corollary:core_approximation_operators}]
	By construction, $\tilde T_n$ is a strongly continuous family of strictly continuous operators.
	Let $(f,h) \in \cG_{+,0}$. Then $\Phi f \ge h$ and therefore $\Upsilon \tilde T_n (f,h) = ( \Phi T_n f - T_n' h, T_n' h) \ge 0$ by \eqref{Phi_positive} .
	Therefore $\tilde T_n \cG_{+,0} \subseteq \cG_{+,0}$ and also property (a) of the definition of approximation operators holds. Finally, by property (b) of $T_n$ and $T'_n$, also property (b) for $\tilde T_n$ holds.
\end{proof}

\begin{proof}[Proof of Proposition \ref{proposition:core_double_approximation_operators}]
We prove the result for $i = 2$, the other case being similar. Let $(f,h) \in \cH_{+,0}^{(2)}$ 
and consider the sequence $\{T_n f, T_n h\}$. As $T_n$ is positive and $f \leq h$, we find $T_n f \leq T_n h$ for all $n$. As in the proof of Proposition \ref{proposition:core_approximation_operators}, we find $T_n f, T_n h \in \cD$ and hence $(T_n f, T_n h) \in \cH_{+,0}^{(2)} \cap \cD^2$. 

From the convergence of the sequences
$T_n f \rightarrow f$ and $T_n h \rightarrow h$, we conclude
$(T_n f, T_n h) \rightarrow (f,h)$. Therefore  $\cD^2 \cap \cH_{+,0}^{(2)}$ is dense in $\cH_{+,0}^{(2)}$.
\end{proof}

\begin{remark}
Regarding Remark \ref{remark:restriction_of_Upsilon}, note that we obtain $T_n h \in \cD$ by our assumption that $h \in \cD(\Phi)$.
\end{remark}

\appendix

\section{Locally convex-solid spaces} \label{appendix:locally_solid_space}

The following definitions are taken from \cite{AB78}. Consider an ordered vector space $(F,\leq)$. We say that a set $A \subseteq F$ has a supremum $g$ if $g$ has the properties
\begin{enumerate}[(i)]
\item that $f \leq g$ for all $f \in A$,
\item if $f \leq h$ for all $f \in A$ and some other $h \in E$, then $g \leq h$.
\end{enumerate}
Similarly we define, if it exists, infimum of a set. We say that $(F,\leq)$ is a \textit{Riesz space} if the infimum and supremum of every finite set exists. We denote $f \vee g$ for the supremum of the set $\{f,g\}$ and we write $f \wedge g$ for the infimum of the set $\{f,g\}$. Furthermore, we write $f^+ = f \vee 0, f^- = f \wedge 0$ and $|f| = f^+ - f^-$. We write $F_+ := \left\{ f \in F \, \middle| \, f \geq 0 \right\}$ for the positive cone of $F$.

We say that a subset $A \subseteq F$ is \textit{solid} if $|f| \leq |g|$ and $g \in A$ implies that $f \in A$.

\begin{definition}
	We say that $(F,\tau_F,\leq)$ is a \emph{locally convex-solid space} if $(F,\tau_F)$ is a topological vector space which has a basis of closed, convex and solid(for $\leq$) neighbourhoods of $0$. 
\end{definition}

\begin{lemma}[Theorem 5.6 in \cite{AB78}] \label{lemma:positive_cone_is_closed}
	The positive cone $F_+$ in a locally convex-solid space $(F,\tau_F,\leq)$ is $\tau_F$ closed. 
\end{lemma}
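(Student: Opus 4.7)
The plan is to show that the positive cone is the zero set of the continuous map $f \mapsto f^-$ (or equivalently, to verify closure directly via solid neighborhoods). The key ingredient from Riesz space theory is the elementary lattice inequality
\begin{equation*}
|f^- - g^-| \le |f - g|, \qquad f, g \in F,
\end{equation*}
which is a consequence of the absolute-value contractivity of the lattice operations in any Riesz space. This I would quote rather than reprove; if needed it reduces to the Birkhoff-type inequality $|f \wedge 0 - g \wedge 0| \le |f - g|$.

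With this in hand, the argument is short. First I would pick a net $(f_\alpha) \subseteq F_+$ converging to some $f \in F$, so $f_\alpha^- = 0$ for all $\alpha$, and I want to show $f \in F_+$, i.e.\ $f^- = 0$. Let $U$ be any closed, convex, solid neighbourhood of $0$ drawn from the given base. Since $f_\alpha \to f$, the net $f - f_\alpha$ lies eventually in $U$. Applying the Riesz inequality above with $g = f_\alpha$ gives
\begin{equation*}
|f^-| = |f^- - f_\alpha^-| \le |f - f_\alpha|,
\end{equation*}
and solidity of $U$ then yields $f^- \in U$ (this is where the hypothesis that the base consists of solid sets is used).

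Since $U$ was an arbitrary member of a neighbourhood base of $0$, we conclude that $f^-$ lies in every neighbourhood of $0$, hence $f^- = 0$ by Hausdorffness of the topological vector space $(F,\tau_F)$. Therefore $f \in F_+$, and $F_+$ is closed. The main obstacles here are really bookkeeping: confirming the Riesz inequality in the space's Riesz structure and confirming that $(F,\tau_F)$ is Hausdorff (which follows from the usual TVS convention together with the presence of a Hausdorff-compatible neighbourhood base). No deeper functional-analytic input is needed.
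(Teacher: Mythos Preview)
Your proof is correct and is the standard argument; the paper itself does not supply a proof of this lemma but simply cites Theorem~5.6 in \cite{AB78}, where essentially the same solidity-and-Birkhoff-inequality reasoning appears. One small remark on conventions: in this paper $f^- := f \wedge 0$ (so $f^- \le 0$), which does not affect your argument since $f \in F_+$ is still equivalent to $f^- = 0$ and the Birkhoff inequality $|f \wedge 0 - g \wedge 0| \le |f-g|$ is exactly what you invoke.
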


\section{Semigroups on locally convex spaces} \label{section:semigroups_on_lcs}

The definitions in this appendix follow \cite{Kr16}.

Let $(E,\tau_E)$ be a locally convex space. We call the family of operators $\{T(t)\}_{t \geq 0}$ a \textit{semigroup} if $T(0) = \bONE$ and $T(t)T(s) = T(t+s)$ for $s,t \geq 0$. A family of $(E,\tau_E)$ continuous operators $\{T(t)\}_{t \geq 0}$ is called a \textit{strongly continuous semigroup} if $t \mapsto T(t)f$ is continuous and \textit{weakly continuous} if $t \mapsto \ip{T(t)f}{\mu}$ is continuous for every $f \in E$ and $\mu \in E'$. 
\begin{sloppypar} 
We call $\{T(t)\}_{t \geq 0}$ a \textit{locally equi-continuous} family if for every $t \geq 0$ and continuous semi-norm $p$, there exists a continuous semi-norm $q$ such that ${\sup_{s \leq t} p(T(s)f) \leq q(x)}$ for every $f \in E$. Finally, we abbreviate strongly continuous and locally equi-continuous semigroup to \textit{SCLE} semigroup.
\end{sloppypar}

We say that $A \subseteq E\times E$ is the \textit{generator} of a SCLE semigroup $\{T(t)\}_{t \geq 0}$ if 
\begin{equation*}
g = \lim_{t \downarrow 0} \frac{T(t)f -f}{t} \quad \Leftrightarrow \quad (f,g) \in \cD(A).
\end{equation*}
Throughout the paper, we make heavy use of the resolvent of $A$. First define the resolvent set $\rho(A) \subseteq \bC$
\begin{equation*}
\rho(A) := \{\lambda \in \bC \, | \, (\bONE - \lambda A) \text{ is invertible and the inverse is $\tau_E$ continuous}\}.
\end{equation*}
For $\lambda \in \rho(A)$, we denote $R(\lambda,A) = (\bONE - \lambda A)^{-1}$.

\begin{definition}
Let $(E,\tau_E)$ be locally convex and let $T \subseteq E \times E$.
\begin{enumerate}[(i)]
\item We write $\overline{T}$ for the $\tau_E$ \textit{closure} of $T$ in $E \times E$. We say that $T$ is \textit{closed} if $T = \overline{T}$.
\item We say that $\cT \subseteq E \times E$ is an \textit{extension} of $T$ if $T \subseteq \cT$.
\end{enumerate}
\end{definition}

	The study of strongly continuous semigroups on Banach spaces is greatly aided by the fact that the norm allows for uniform control in various proofs. On locally convex spaces such control is not to be expected. The uniform control, however, can be replaced by `probabilistic control': that is, an equivalent of tightness and uniform control on compact sets. This is possible for locally convex spaces satisfying Condition C, see Definition \ref{def:condB_condC_intro}.

\begin{remark} \label{remark:strict_satisfies_C}
	In Section \ref{section:strict_topology} below, we will consider the strict topology $\beta$ on the space $C_b(\cX)$. This topology, together with the supremum norm, satisfies Condition C, see Corollary 8.2 in \cite{Kr16}.
\end{remark}

For locally convex spaces satisfying Condition C, we have the following Hille-Yosida theorem. The approximation result can be found in the proof of the mentioned result in \cite{Kr16}.

\begin{theorem}[Theorem 6.4 in \cite{Kr16}] \label{theorem:Hille_Yosida}
Let $(E,\tau_E,\vn{\cdot}_E)$ satisfy Condition C. For a linear operator $(A,\cD(A))$ on $(E,\tau_E)$, the following are equivalent.
\begin{enumerate}[(a)]
\item $(A,\cD(A))$ generates a $\tau_E$-SCLE semigroup $\{T(t)\}_{t \geq 0}$ that is $\vn{\cdot}_E$, or equivalently $\tau_E$, bounded.
\item $(A,\cD(A))$ is $\tau_E$-closed, $\tau_E$-densely defined and for every $\lambda > 0$ one has $\lambda \in \rho(A)$ and for every semi-norm $p \in \cN$ and $\lambda_0 > 0$ there exists a semi-norm $q \in \cN$ such that for all $f \in E$ one has 
\begin{equation*}\label{eqn:Hille_Yosida_estimate_not_complex}
\sup_{n \geq 1} \sup_{0 < \lambda \leq \lambda_0} p\left( \left( R(n \lambda)\right)^n f \right) \leq  q(f).
\end{equation*}
\end{enumerate}
In this setting, we additionally have Yosida-approximation. Set $A_n = n \left( R(n^{-1},A) - \bONE \right)$, then we have for every $f \in E$ that
\begin{equation} \label{eqn:Yosida_approximation}
\e^{t A_n} f = \sum_{k \geq 0} \frac{\left(tn R(n^{-1},A)\right)^k}{k!}\e^{-tn} f \rightarrow T(t)f
\end{equation}
$\tau_E$-uniformly for $t$ in compact intervals.
\end{theorem}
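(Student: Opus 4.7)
The plan is a Hille--Yosida-type argument adapted to the locally convex setting, with Condition C playing the roles of the norm topology and uniform boundedness in the classical Banach-space proof. For the implication (a) $\Rightarrow$ (b), I would first establish the Laplace representation
\begin{equation*}
R(\lambda, A) f = \lambda^{-1} \int_0^\infty e^{-t/\lambda} T(t) f \, dt,
\end{equation*}
whose convergence as a $\tau_E$-valued Riemann integral follows from sequential completeness together with the uniform equi-continuity of $\{T(t)\}_{t \geq 0}$ (which upgrades local equi-continuity in the bounded case). Closedness and density of $\cD(A)$ are then checked by standard arguments: the identity $T(t) f - f = \int_0^t T(s) Af \, ds$ for $f \in \cD(A)$ passes to limits, and the averages $t^{-1} \int_0^t T(s) f \, ds$ lie in $\cD(A)$ and converge to $f$ as $t \downarrow 0$. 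Iterating the Laplace formula yields
\begin{equation*}
R(\lambda, A)^n f = \frac{\lambda^{-n}}{(n-1)!} \int_0^\infty t^{n-1} e^{-t/\lambda} T(t) f \, dt;
\end{equation*}
after the substitution $t = n\lambda s$ the weight becomes the gamma$(n,1)$ density in $s$, and the uniform equi-continuity of $\{T(t)\}$ immediately gives $p((R(n\lambda, A))^n f) \leq q(f)$ uniformly in $n \geq 1$ and $\lambda \leq \lambda_0$.

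For the harder direction (b) $\Rightarrow$ (a), the Yosida scheme is the natural route. Set $A_n := n(R(n^{-1}, A) - I)$, a bounded operator satisfying $A_n = A R(n^{-1}, A)$ and commuting with every $R(\mu, A)$ and $A_m$. The exponential $e^{tA_n}$ is defined by the absolutely convergent power series in (\ref{eqn:Yosida_approximation}). Applying the hypothesis in (b) termwise gives, for every $p \in \cN$, a $q \in \cN$ such that
\begin{equation*}
p(e^{tA_n} f) \leq e^{-tn} \sum_{k \geq 0} \frac{(tn)^k}{k!} p((R(n^{-1}, A))^k f) \leq q(f),
\end{equation*}
uniformly in $t \geq 0$ and $n \geq 1$; this is the crucial $\tau_E$-equi-continuity of the family $\{e^{tA_n}\}_{n,t}$. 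For $f \in \cD(A)$ one checks $A_n f \to Af$ in $\tau_E$, and the identity
\begin{equation*}
e^{tA_n} f - e^{tA_m} f = \int_0^t e^{(t-s)A_m} e^{sA_n}(A_n - A_m) f \, ds
\end{equation*}
together with the equi-continuity shows $\{e^{tA_n} f\}$ is $\tau_E$-Cauchy uniformly for $t$ in compact intervals; sequential completeness then defines the limit $T(t) f$.

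The main obstacle will be extending the convergence $e^{tA_n} f \to T(t) f$ from $\cD(A)$ to all of $E$. Given $f \in E$ and a sequence $f_k \in \cD(A)$ with $f_k \to f$, a standard $3\varepsilon$-argument requires controlling $p(e^{tA_n} f - e^{tA_n} f_k)$ by a single $\tau_E$-continuous seminorm that dominates the estimates arising from the whole sequence $(f_k)$ simultaneously. This is exactly the role of Condition C(d): countable convexity of $\cN$ allows such a dominating seminorm to be built from the countably many estimates coming from $(f_k)$, which is precisely the step where the analogous Banach-space proof uses a single norm trivially. Once $T(t)$ is defined on all of $E$, the semigroup property, local equi-continuity and strong continuity are inherited by passage to the limit in the corresponding identities for $e^{tA_n}$. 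Identification of the generator of $\{T(t)\}_{t \geq 0}$ as the operator $A$ we started with is then standard: both are closed operators whose resolvents at $\lambda > 0$ coincide with the given $R(\lambda, A)$, and the formula (\ref{eqn:Yosida_approximation}) becomes a restatement of the convergence that has been proved.
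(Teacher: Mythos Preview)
The paper does not give a proof of this theorem: it is quoted verbatim as Theorem 6.4 of \cite{Kr16}, with the remark that the Yosida approximation statement \eqref{eqn:Yosida_approximation} can be extracted from the proof given there. So there is no in-paper argument to compare your sketch against; the reference is the source.

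Your outline is the standard Hille--Yosida route and is essentially sound. Two remarks. First, to make the termwise estimate
\[
p(\e^{tA_n}f)\le \e^{-tn}\sum_{k\ge 0}\frac{(tn)^k}{k!}\,p\bigl(R(n^{-1},A)^k f\bigr)\le q(f)
\]
work, you should say explicitly how the hypothesis in (b) controls $R(n^{-1},A)^k$: apply it with the pair $(k,\,1/(nk))$ in place of $(n,\lambda)$ and $\lambda_0=1$, which gives a single $q$ valid for all $n,k\ge 1$.

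Second, your diagnosis of where Condition~C(d) enters is not right. In the $3\varepsilon$ extension step you already have a \emph{single} seminorm $q$ from the equi-continuity of $\{\e^{tA_n}\}_{n,t}$, and this $q$ bounds $p(\e^{tA_n}(f-f_k))$ for every $k$ simultaneously; no countable convex combination is required. The density argument goes through exactly as in the Banach case. Countable convexity is used in \cite{Kr16} at other points of the theory---for instance in passing from local to global equi-continuity of a norm-bounded SCLE semigroup, and in controlling the $\tau_E$-continuity of operators defined by improper integrals over $[0,\infty)$---rather than in the extension step you singled out. The rest of your plan (Laplace representation, iterated resolvent formula, commutator identity for the Yosida approximants, identification of the generator via coinciding resolvents) is correct.
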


\section{Strongly continuous semigroups for the strict topology and the martingale problem}\label{section:martingale_problem}

As mentioned in the introduction, the transition semigroup of a Feller process on a non-compact Polish space is usually not strongly continuous for the supremum norm. On the other hand, functional analytic theory can greatly aid the study of such semigroups. In this appendix, we collect some results that unify probabilistic and functional analytic methods that have been developed to deal with this disconnect, and that can be used for the purposes of this paper.

\subsection{The martingale problem}

The main technique for characterizing Markov processes on Polish spaces is the so called martingale problem. 

\begin{definition}[The martingale problem]
Let $A \colon \cD(A) \subseteq C_b(\cX) \rightarrow C_b(\cX)$ be a linear operator. For $(A,\cD(A))$ and a measure $\nu \in \cP(\cX)$, we say that $\PR \in \cP(D_\cX(\bR^+))$ solves the \textit{martingale problem} for $(A,\nu)$ if $\PR \eta(0)^{-1} = \nu$ and if for all $f \in \cD(A)$ 
\begin{equation*}
M_f(t) := f(\eta(t)) - f(\eta(0)) - \int_0^t Af(\eta(s)) \dd s
\end{equation*}
is a mean $0$ martingale with respect to its natural filtration $\cF_t := \sigma\left(\eta(s) \, | \, s \leq t \right)$ under $\PR$. 
	
\smallskip
	
We denote the set of all solutions to the martingale problem, for varying initial measures $\nu$, by $\cM_A$. We say that \textit{uniqueness} holds for the martingale problem if for every $\nu \in \cP(\cX)$ the set $\cM_\nu:= \{\PR \in \cM_A \, | \, \PR \eta(0)^{-1} = \nu\}$ is empty or a singleton. Furthermore, we say that the martingale problem is \textit{well-posed} if this set contains exactly one element for every $\nu$.
\end{definition}

\smallskip

The construction of solutions to the martingale problem can often be done via approximating processes. Classically, uniqueness for the martingale problem is proven via duality. \cite{CoKu15}, however, introduced a method based on viscosity solutions.

\subsection{The strict topology} \label{section:strict_topology}

The effectiveness of functional analytic theory for the study of probability on (locally) compact metric spaces comes from the Riesz-representation theorem. This result identifies the space of Borel measures as the dual space of $(C_0(\cX),\vn{\cdot})$. If we replace $\cX$ by a non-compact Polish space, this duality breaks down.

To reconnect the functional analytic theory to probability theory, in the form of re-establishing this duality a weaker topology on $C_b(\cX)$ is necessary. This weaker topology is the \textit{strict} topology. This topology was first introduced by Buck \cite{Bu58} in the locally compact setting. A comprehensive treatment in the setting of locally convex spaces is given in \cite{Se72}. Finally, \cite{Ca11} treats the strict topology in the context of Markov processes

\smallskip

Before we introduce the strict topology, we introduce the \textit{compact-open} topology $\kappa$ on $C_b(\cX)$  and $M_b(\cX)$. This locally convex topology is generated by the semi-norms $p_K(f) := \sup_{x \in K} |f(x)|$, where $K$ ranges over all compact sets in $\cX$.

The \textit{strict} topology $\beta$ on the space bounded continuous functions $C_b(\cX)$  and $M_b(\cX)$ is generated by the semi-norms 
\[
p_{K_n,a_n}(f) := \sup_n a_n \sup_{x \in K_n} |f(x)|
\]
varying over non-negative sequences $a_n$ converging to $0$ and sequences of compact sets $K_n \subseteq \cX$. We proceed discussing properties of the strict topology on $C_b(\cX)$.

\begin{remark}
We refer the reader to the discussion of the strict and substrict topology in \cite{Se72}, where it is shown that these two topologies coincide for Polish spaces. Because the definition of the substrict topology is more accessible, we use this as a characterisation of the strict topology.
\end{remark}

\begin{remark}
The strict topology can equivalently be given by the collection of semi-norms
\begin{equation*} 
p_g(f) := \vn{fg}
\end{equation*}
where $g$ ranges over the set 
\begin{equation*}
\left\{g \in C_b(\cX) \, | \, \forall \alpha > 0: \, \left\{x , | \,  |g(x)| \geq \alpha\right\} \text{ is compact in } \cX \right\}.
\end{equation*}
See \cite{Bu58} and \cite{Ca11}.
\end{remark}

The strict topology is the `right' generalisation of the norm topology on $C(\cX)$ for compact metric $\cX$ to the more general context of Polish spaces. To avoid further scattering of results, we collect some of the main properties of $\beta$.

\begin{theorem} \label{theorem:propertiesCbstrict} 
Let $\cX$ be Polish. The locally convex space $(C_b(\cX),\beta)$ satisfies the following properties.
\begin{enumerate}[(a)]
\item $(C_b(\cX),\beta)$ is complete, strong Mackey (i.e.~all weakly compact sets in the dual are equi-continuous) and the continuous dual space coincides with the space of Radon measures on $\cX$ of bounded total variation.
\item $(C_b(\cX),\beta)$ is separable.
\item For any locally convex space $(F,\tau_F)$ and $\beta$ to $\tau_F$ sequentially equi-continuous family $\{T_i\}_{i \in I}$ of maps $T_i \colon (C_b(\cX),\beta) \rightarrow (F,\tau_F)$,  the family $I$ is $\beta$ to $\tau_F$ equi-continuous.
\item The norm bounded and $\beta$ bounded sets coincide. Furthermore, on norm bounded sets $\beta$ and $\kappa$ coincide.
\item Stone-Weierstrass: Let $M$ be an algebra of functions in $C_b(\cX)$. If $M$ vanishes nowhere and separates points, then $M$ is $\beta$ dense in $C_b(\cX)$.
\item Arzel\`{a}-Ascoli: A set $M \subseteq C_b(\cX)$ is $\beta$ compact if and only if $M$ is norm bounded and $M$ is an equi-continuous family of functions.
\item $(C_b(\cX),\beta,\leq)$, where $\leq $ is defined as $f \leq g$ if and only if $f(x) \leq g(x)$ for all $x \in \cX$, is locally convex-solid.
\item Dini's theorem: If $\{f_\alpha\}_{\alpha}$ is a net in $C_b(\cX)$ such that $f_\alpha$ increases or decreases point-wise to $f \in C_b(\cX)$, then $f_\alpha \rightarrow f$ for the strict topology.
\end{enumerate}
\end{theorem}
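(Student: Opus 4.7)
The plan is to establish the eight properties (a)--(h) by combining established results on the strict topology with direct verification from the definition. I will take advantage of the fact that the ``substrict'' topology of Sentilles \cite{Se72} coincides with our strict topology $\beta$ for Polish $\cX$ (as noted in the remark preceding the theorem), so many results transfer verbatim.

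For (a), completeness, the strong Mackey property, and the identification of the dual space with the Radon measures of bounded total variation are all proved in \cite{Se72}. For (b), separability is the main result of Summers \cite{Su72}. For (e), the Stone-Weierstrass statement is precisely the content of Haydon \cite{Ha76}.

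The properties (d), (f), (h) form a tight cluster and can be dispatched as follows. For (d), $\beta$-boundedness on a set $M$ applied to the semi-norms $p_{\cX, 1}$ (when $\cX$ is locally compact, or approximated by exhausting compacts in general, using a bounded sequence $a_n \to 0$) forces norm boundedness; conversely, every $p_{K_n, a_n}$ is dominated by $(\sup_n a_n)\|\cdot\|$, giving $\beta$-boundedness of norm-bounded sets. On norm-bounded $M$, the inequality $p_{K_n,a_n}(f_\alpha - f) \leq \sup_{n \leq N} a_n \sup_{K_n}|f_\alpha-f| + \sup_{n > N} a_n \cdot 2\sup_\alpha \|f_\alpha\|$ shows that convergence uniformly on compacta implies $\beta$ convergence, and the reverse implication is trivial. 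From (d), property (h) follows immediately: if $f_\alpha \uparrow f$ monotonically with $f, f_\alpha \in C_b(\cX)$, classical Dini gives $\kappa$ convergence on compacta, while the net is norm-bounded, so $\beta = \kappa$ applies. For (f), the forward direction uses norm-boundedness and, via restriction to any compact $K \subseteq \cX$, equi-continuity on $K$, which by a diagonal/exhaustion argument yields equi-continuity on $\cX$; the converse is classical Arzel\`a-Ascoli combined with (d).

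Property (g) is a direct verification: the generating semi-norms $p_{K_n,a_n}(f) = \sup_n a_n \sup_{x \in K_n}|f(x)|$ are lattice semi-norms in the sense that $|f| \leq |g|$ pointwise implies $p_{K_n,a_n}(f) \leq p_{K_n,a_n}(g)$; hence their unit balls are solid, and intersecting with convex solid hulls gives a base of closed convex solid neighbourhoods of zero. The subtlest point is (c), the strong Mackey-type ``sequential equi-continuity implies equi-continuity.'' The plan is to reduce it to testing on a single absolutely convex $\beta$-bounded set $B$ via the strong Mackey property (a): since weakly compact sets in the dual are $\beta$-equi-continuous, a family $\{T_i\}$ is $\beta$-to-$\tau_F$ equi-continuous iff for each continuous semi-norm $q$ on $F$, $\{q \circ T_i\}$ is equi-bounded on each bounded set $B$. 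On such $B$, by (d), $\beta$ and $\kappa$ agree and $B$ is metrizable, so sequential equi-continuity on $B$ upgrades to equi-continuity on $B$, which is what is needed. This is the step requiring the most care, as one must invoke the strong Mackey property and the metrizability of $(B, \kappa)$; the remaining items are either citations or short computations.
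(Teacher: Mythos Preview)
Your treatment of (a), (b), (e) by citation matches the paper, and your direct arguments for (d), (g), (h) are sound and in fact more explicit than the paper, which simply cites \cite{Se72} and \cite{Wi61} for (d), (g), (h) and derives (f) from (d) and the classical Arzel\`a--Ascoli theorem for $\kappa$ \cite{En89}, just as you do.

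There is, however, a genuine gap in your argument for (c). Your plan---reduce to norm-bounded sets, where by (d) $\beta = \kappa$ is metrizable and hence sequential equi-continuity upgrades to equi-continuity---is the right skeleton. But you justify the reduction by invoking the strong Mackey property, and that is incorrect. Strong Mackey says weakly compact subsets of the \emph{dual} are equi-continuous; it does not yield the statement ``$\{T_i\}$ is $\beta$-equi-continuous iff it is equi-continuous (or equi-bounded) on each bounded set.'' That equivalence would follow if $(C_b(\cX),\beta)$ were bornological, but it is not: $\beta$ and the norm share the same bounded sets, so if $\beta$ were bornological the norm unit ball would be a $\beta$-neighbourhood of $0$, forcing $\beta$ to coincide with the norm topology, which fails whenever $\cX$ is non-compact.

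The correct tool for your reduction is the \emph{mixed topology} characterization of $\beta$ due to Wiweger \cite{Wi61}: $\beta$ is the finest locally convex topology on $C_b(\cX)$ that agrees with $\kappa$ on each norm-ball. Equivalently, an absolutely convex set $U$ is a $\beta$-neighbourhood of $0$ iff $U \cap \{\vn{f}\le r\}$ is a $\kappa$-neighbourhood of $0$ in that ball for every $r>0$. Applying this to $U = \{f : \sup_i q(T_i f) \le 1\}$ for each continuous semi-norm $q$ on $F$ gives exactly the reduction you want, and then your metrizability argument finishes. The paper itself takes a more abstract route, citing Wilansky \cite{Wi81} on Mazur spaces and Webb \cite{We68} on sequential convergence; your approach via the mixed topology is arguably more transparent once the right property is named.
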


Note that $(d)$ implies that a sequence $f_n \stackrel{\beta}{\rightarrow} f$ if and only if $\sup_n \vn{f_n} < \infty$ and $f_n \stackrel{\kappa}{\rightarrow} f$. 

\begin{proof}
(a) and (c) follow from Theorems 9.1 and 8.1 in \cite{Se72}, Theorem 7.4 in \cite{Wi81}, Corollary 3.6 in \cite{We68} and Krein's theorem\cite[24.5.(4)]{Ko69}. (b) follows from Theorem 2.1 in \cite{Su72}. (d) follows by Theorems 4.7, 2.4 in \cite{Se72} and 2.2.1 in \cite{Wi61}. (e) is proven in Theorem 2.1 and Corollary 2.4 in \cite{Ha76}. (f) follows by the Arzel\`{a}-Ascoli theorem for the compact-open topology, Theorem 8.2.10 in \cite{En89}, and (d). To conclude, (g) and (h) follow from Theorems 6.1 and 6.2 in \cite{Se72}.
\end{proof}

The following result is an analogue of the Banach-Steinhaus theorem for the setting of strong Mackey spaces and is usefull for the study of operators for the strict topology. The proof follows from Lemma 3.8 in \cite{Ku09} and the fact that the strict topology is strong Mackey. See also Lemma 3.2 in \cite{Kr16} for the use of a variant of this result in the setting of semigroups. 

\begin{proposition} \label{proposition:equi_continuity_via_strong_continuity}
	Suppose $\{T_n\}_{n \geq 1}$ is a family of strictly continuous linear operators from $C_b(\cX)$ to $C_b(\cX)$. Additionally assume that $T_n f \rightarrow f$ for all $f \in C_b(\cX)$. Then the family $\{T_n\}$ is equi-continuous: for every $\beta$ continuous semi-norm $p$ there is a $\beta$ continuous semi-norm $q$ such that
	\begin{equation*}
	\sup_{n \geq 1} p\left(T_n f \right) \leq q(f)
	\end{equation*}
	for all $f \in C_b(\cX)$.
\end{proposition}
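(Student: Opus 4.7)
The plan is to deduce the equi-continuity of $\{T_n\}$ from an abstract Banach--Steinhaus style principle that holds on strong Mackey spaces. The locally convex space $(C_b(\cX),\beta)$ is in general not barrelled, so the classical Banach--Steinhaus theorem does not apply directly; however, by Theorem \ref{theorem:propertiesCbstrict}(a) the space $(C_b(\cX),\beta)$ is strong Mackey, meaning that every weakly compact subset of its continuous dual is $\beta$-equi-continuous. This is exactly the hypothesis needed for Lemma 3.8 in \cite{Ku09}, which states (in the formulation we need) that on a strong Mackey space every pointwise bounded family of continuous linear operators is equi-continuous. So the goal reduces to verifying pointwise boundedness.

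First I would fix $f \in C_b(\cX)$ and observe that the assumption $T_n f \to f$ in $\beta$ makes $\{T_n f\}_{n\ge 1}$ a $\beta$-convergent sequence, hence $\beta$-bounded. By Theorem \ref{theorem:propertiesCbstrict}(d) the $\beta$-bounded sets coincide with the norm-bounded sets, so $\sup_n \vn{T_n f} < \infty$ and in particular $\{T_n f : n \ge 1\}$ is bounded for every $\beta$-continuous semi-norm. Thus the family $\{T_n\}$ is pointwise bounded on $(C_b(\cX),\beta)$. Applying Lemma 3.8 in \cite{Ku09} to this family then yields the existence, for each $\beta$-continuous semi-norm $p$, of a $\beta$-continuous semi-norm $q$ with $\sup_n p(T_n f) \le q(f)$ for all $f \in C_b(\cX)$.

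The one point that deserves care, and is where the strong Mackey property is essential, is the passage from pointwise boundedness to equi-continuity. Dually, pointwise boundedness of $\{T_n\}$ is equivalent to weak* boundedness of the orbit $\{T_n^\ast \mu : n \ge 1\}$ for every $\mu$ in the dual; on a barrelled space this would be enough, but on $(C_b(\cX),\beta)$ we instead need to know that such orbits (or rather suitable weakly compact enlargements of them, built from the convergence $T_n^\ast \mu \to \mu$ in the weak* sense) are equi-continuous. This is precisely what the strong Mackey property supplies, and it is the mechanism by which Kunze's lemma operates. Once this conceptual step is in place, the remaining argument is purely formal, and no additional structure of the semigroups $T_n$ (strong continuity, positivity, etc.) is needed beyond continuity of each individual $T_n$ and pointwise convergence $T_n f \to f$.
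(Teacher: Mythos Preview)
Your proposal is correct and follows essentially the same route as the paper: the paper's proof is the one-line observation that the result follows from Lemma 3.8 in \cite{Ku09} together with the fact that $(C_b(\cX),\beta)$ is strong Mackey, and you have spelled out exactly these two ingredients, supplying in addition the easy verification of pointwise boundedness from $T_n f \to f$.
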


\subsection[The transition semigroup is SCLE]{The transition semigroup is strongly continuous and locally equi-continuous with respect to the strict topology} \label{section:martingale_problem_SCLE}

In the setting that the martingale problem is well-posed, we obtain a strengthened version of Theorem 4.5.11, \cite{EK86}, showing that the transition semigroup of the solution is strongly continuous for the strict topology. For a overview of results on SCLE semigroups relevant for the results to follow, see Section \ref{section:semigroups_on_lcs}.

	\begin{theorem} \label{theorem:verification_conditions_semigroup}
		Let $A \subseteq C_b(\cX) \times C_b(\cX)$ and let the martingale problem for $A$ be well-posed. Suppose that the closed convex hull of $\cD(A)$ is $\beta$ dense in $C_b(\cX)$. Suppose that for all compact $K \subseteq \cP(\cX)$, $\varepsilon >0$ and $T > 0$, there exists a compact set $K' = K'(K,\varepsilon,T)$ such that for all $\PR \in \cM_A$, we have
		\begin{equation} \label{eqn:theorem_compact_containment_set}
		\PR\left[\eta(t) \in K' \text{ for all } t < T, \eta(0) \in K \right] \geq (1-\varepsilon) \PR\left[\eta(0) \in K \right].
		\end{equation}
		Then the measures $\PR \in \cM_A$ correspond to strong Markov processes with a $\beta$-SCLE semigroup $\{S(t)\}_{t \geq 0}$ on $C_b(\cX)$ defined by $S(t)f(x) = \bE[f(\eta(t))\, | \, \eta(0) = x]$. The generator of $\{S(t)\}_{t \geq 0}$ is an extension of $A$.
	\end{theorem}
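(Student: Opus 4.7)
The plan is to reduce the theorem to (i) standard Ethier--Kurtz type results for the martingale problem and (ii) a delicate equi-continuity argument that leverages the compact containment \eqref{eqn:theorem_compact_containment_set} together with the sequential approximation properties of the strict topology recorded in Theorem \ref{theorem:propertiesCbstrict}. First, I would invoke well-posedness of the martingale problem together with Theorem 4.4.2 of \cite{EK86} to obtain the (strong) Markov family $\{\PR_x\}_{x \in \cX}$ and set $S(t)f(x) := \bE_x[f(\eta(t))]$. To see that $S(t)$ preserves $C_b(\cX)$, I would show that $x \mapsto \PR_x$ is continuous as a map $\cX \to \cP(D_\cX(\bR^+))$: the compact containment condition produces a tight family along any $x_n \to x$, while uniqueness forces every subsequential weak limit to equal $\PR_x$, and then $\bE_{x_n}[f(\eta(t))] \to \bE_x[f(\eta(t))]$ for every $f \in C_b(\cX)$ and all $t$ outside the at-most-countable set of fixed discontinuity times (with the remaining $t$ handled by right-continuity of paths and the semigroup property).

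Next I would prove local equi-continuity of $\{S(t)\}_{t \le T}$ using Theorem \ref{theorem:propertiesCbstrict}(c), which allows me to check only sequential equi-continuity. Let $f_m \to 0$ in $\beta$ so that $M := \sup_m \vn{f_m} < \infty$ and $f_m \to 0$ uniformly on every compact. Given any $\beta$-continuous semi-norm $p_{K_n,a_n}$ and any $\varepsilon > 0$, I apply \eqref{eqn:theorem_compact_containment_set} with the compact set $\{\delta_x : x \in K_n\} \subseteq \cP(\cX)$ to produce a compact $K'_n = K'_n(\varepsilon,T) \subseteq \cX$ such that, for $x \in K_n$ and $t \le T$,
\[
|S(t)f_m(x)| \le \sup_{y \in K'_n} |f_m(y)| + \varepsilon \vn{f_m}.
\]
Multiplying by $a_n$ and taking $\sup_n$ gives
\[
\sup_{t \le T} p_{K_n, a_n}(S(t) f_m) \le p_{K'_n, a_n}(f_m) + \varepsilon \Big(\sup_n a_n\Big) M.
\]
For fixed $\varepsilon$ the first term tends to $0$ as $m \to \infty$ (it is a $\beta$-continuous semi-norm), and since $\varepsilon$ is arbitrary the $\limsup$ vanishes, yielding sequential equi-continuity and hence full equi-continuity.

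With equi-continuity in hand, strong continuity at $t = 0$ is easy: for $f \in \cD(A)$ the martingale property gives $|S(t)f(x) - f(x)| \le t \vn{Af}$, so $S(t)f \to f$ in $\beta$; linearity extends this to the convex hull of $\cD(A)$, and an $\varepsilon/3$-argument based on $\beta$-density of $\mathrm{conv}\,\cD(A)$ together with the local equi-continuity just shown extends it to all of $C_b(\cX)$. Combining with the semigroup property gives $\beta$-strong continuity on $[0,\infty)$. Finally, the martingale representation $S(t)f - f = \int_0^t S(s)Af\,ds$ (a pointwise identity) combined with $\beta$-continuity of $s \mapsto S(s)Af$ yields $t^{-1}(S(t)f - f) \to Af$ in $\beta$ as $t \downarrow 0$, so the generator of $\{S(t)\}$ extends $A$.

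The main obstacle is the equi-continuity step. The naive bound $\vn{S(t)f} \le \vn{f}$ is useless, because $\vn{\cdot}$ is not $\beta$-continuous and cannot be absorbed into a $\beta$-continuous majorant. The trick is to pass through the \emph{sequential} reformulation provided by Theorem \ref{theorem:propertiesCbstrict}(c): on a $\beta$-convergent sequence, norm boundedness is automatic, which is exactly what is needed to convert the $\varepsilon\vn{f}$ term into a quantity that can be made arbitrarily small independently of $m$. This is the point where the compact containment condition and the strong Mackey property of the strict topology must work in concert.
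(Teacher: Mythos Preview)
Your proposal is correct and structurally parallel to the paper's, but diverges in two of the four steps.

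For local equi-continuity you do exactly what the paper does in Lemma~\ref{lemma:semigroup_is_for_every_time_continuous}: pass to sequences via Theorem~\ref{theorem:propertiesCbstrict}(c), split the expectation using the compact containment set, and exploit that $\beta$-convergent sequences are automatically norm-bounded. Your phrasing with the full semi-norm $p_{K_n,a_n}$ and the paper's phrasing with a single compact $K$ are equivalent.

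The differences are in strong continuity and the generator extension. For strong continuity the paper (Proposition~\ref{proposition:semigroup_strongly_continuous}) reduces to \emph{weak} continuity via an external result (Proposition~3.5 of \cite{Kr16}, valid because $(C_b(\cX),\beta)$ is strong Mackey and complete), and then checks weak continuity by Hahn--Jordan decomposition plus dominated convergence along right-continuous paths. Your route through the martingale bound $\vn{S(t)f - f} \le t\vn{Af}$ on $\cD(A)$, followed by density and an $\varepsilon/3$ argument using the equi-continuity already in hand, is more self-contained and makes explicit use of the density hypothesis on $\cD(A)$ that the paper's argument does not actually need. For the generator extension the paper (Proposition~\ref{proposition:generator_extends_martingale_problem_operator}) repeats the compact-containment estimate to control $t^{-1}\int_0^t (Af(\eta(s)) - Af(x))\,\dd s$ directly; your observation that the pointwise identity $S(t)f - f = \int_0^t S(s)Af\,\dd s$ combined with the $\beta$-strong continuity of $s \mapsto S(s)Af$ immediately gives the limit is cleaner and avoids re-doing that work.
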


\begin{proof}[Proof of Theorem \ref{theorem:verification_conditions_semigroup}]
	
The proof that the solutions are strong Markov and correspond to a semigroup 
\begin{equation*}
S(t)f(x) = \bE[f(\eta(t)) \, | \, \eta(0) = x]
\end{equation*}
that maps $C_b(\cX)$ into $C_b(\cX)$ follows as in the proof of (b) and (c) of Theorem 4.5.11 \cite{EK86}. We are left to show that $\{S(t)\}_{t \geq 0}$ is SCLE for $\beta$, which we do in Lemma \ref{lemma:semigroup_is_for_every_time_continuous} and Proposition \ref{proposition:semigroup_strongly_continuous} below. That the generator of the semigroup extends follows from Proposition \ref{proposition:generator_extends_martingale_problem_operator}.
\end{proof}

\begin{lemma} \label{lemma:semigroup_is_for_every_time_continuous}
Let $\{S(t)\}_{t \geq 0}$ be the semigroup introduced in Theorem \ref{theorem:verification_conditions_semigroup}. The family $\{S(t)\}_{t \geq 0}$ is locally equi-continuous for $\beta$.
\end{lemma}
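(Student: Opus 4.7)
The plan is to establish local equi-continuity by first proving sequential equi-continuity and then upgrading via the strong Mackey property of $\beta$, using the compact containment \eqref{eqn:theorem_compact_containment_set} to control the action of $\{S(t)\}_{t \le T}$ on sequences tending to zero in $\beta$.

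First, by Theorem~\ref{theorem:propertiesCbstrict}(c), it suffices to prove that the family $\{S(t)\}_{t \leq T}$ is sequentially equi-continuous at $0$, i.e.~that $S(t) f_n \to 0$ for $\beta$ uniformly in $t \in [0,T]$ whenever $f_n \to 0$ in $\beta$. Fix such a sequence $f_n$. By Theorem~\ref{theorem:propertiesCbstrict}(d), this convergence is equivalent to $C := \sup_n \|f_n\| < \infty$ together with $f_n \to 0$ uniformly on every compact subset of $\cX$. Since each $S(t)$ is an expectation operator, $\|S(t) f_n\| \leq \|f_n\| \leq C$, placing the net $\{S(t) f_n : n \geq 1,\, 0 \leq t \leq T\}$ in a fixed norm-bounded set. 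On such norm-bounded sets, $\beta$ coincides with the compact-open topology by Theorem~\ref{theorem:propertiesCbstrict}(d), so I only need to show that for each compact $K \subseteq \cX$,
\begin{equation*}
\sup_{t \leq T}\,\sup_{x \in K} |S(t) f_n(x)| \longrightarrow 0 \qquad (n \to \infty).
\end{equation*}

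Fix such a compact $K$ and $\varepsilon > 0$. The image $\{\delta_x : x \in K\}$ is a compact subset of $\cP(\cX)$, so the compact containment hypothesis \eqref{eqn:theorem_compact_containment_set}, applied with this compact family of initial laws and time horizon $T + 1$, yields a compact $K' = K'(K,\varepsilon, T) \subseteq \cX$ such that $\PR_x[\eta(t) \in K' \text{ for all } t \leq T] \geq 1 - \varepsilon$ for every $x \in K$. Splitting the expectation $S(t) f_n(x) = \bE_x[f_n(\eta(t))]$ according to the event $\{\eta(t) \in K'\}$ gives, for all $x \in K$ and $t \leq T$,
\begin{equation*}
|S(t) f_n(x)| \leq \bE_x\bigl[|f_n(\eta(t))|\bigr] \leq \sup_{K'} |f_n| + \varepsilon \|f_n\| \leq \sup_{K'} |f_n| + \varepsilon C.
\end{equation*}
Since $K'$ is compact, $\sup_{K'} |f_n| \to 0$, so $\limsup_n \sup_{t\leq T,\,x\in K} |S(t) f_n(x)| \leq \varepsilon C$; letting $\varepsilon \downarrow 0$ completes the verification of sequential equi-continuity, and hence, by Theorem~\ref{theorem:propertiesCbstrict}(c), of equi-continuity.

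The only subtlety is that the remainder $\varepsilon \|f_n\|$ in the bound above involves the supremum norm, which is not itself $\beta$-continuous; this is precisely why the argument succeeds only on norm-bounded sets and why Theorem~\ref{theorem:propertiesCbstrict}(c), the strong Mackey property, is essential to pass from sequential equi-continuity back to equi-continuity. Once this hurdle is acknowledged, the remaining ingredients are standard: contractivity of the transition semigroup, the coincidence of $\beta$ with the compact-open topology on norm-bounded sets, and the compact containment hypothesis \eqref{eqn:theorem_compact_containment_set}.
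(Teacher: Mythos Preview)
Your proof is correct and follows essentially the same approach as the paper's: reduce to sequential equi-continuity via Theorem~\ref{theorem:propertiesCbstrict}(c), use the norm contractivity of $S(t)$ together with Theorem~\ref{theorem:propertiesCbstrict}(d) to reduce to uniform-on-compacts convergence, and then split the expectation according to the compact containment set $K'$ from \eqref{eqn:theorem_compact_containment_set}. The only cosmetic difference is that you test sequences $f_n \to 0$ while the paper tests $f_n \to f$, which are equivalent by linearity.
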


\begin{proof}
Fix $T \geq 0$. We will prove that $\{S(t)\}_{t \leq T}$ is $\beta$ equi-continuous by using Theorem \ref{theorem:propertiesCbstrict} (c) and (d). Pick a sequence $f_n$ converging to $f$ with respect to $\beta$. It follows that $\sup_n \vn{f_n} \leq \infty$, which directly implies that $\sup_n \sup_{t \leq T} \vn{S(t)f_n} < \infty$.

We also know that $f_n \rightarrow f$ uniformly on compact sets. We prove that this implies the same for $S(t)f_n$ and $S(t)f$ uniformly in $t \leq T$. Fix $\varepsilon > 0$ and a compact set $K \subseteq \cX$, and let $\hat{K}$ be the set introduced in Equation \eqref{eqn:theorem_compact_containment_set} for $T$. Then we obtain that
\begin{align*}
& \sup_{t \leq T} \sup_{x \in K} \left|S(t)f(x) - S(t)f_n(x) \right| \\
& \leq \sup_{t \leq T}\sup_{x \in K}  \bE_x\left|f(\eta(t)) - f_n(\eta(t)) \right| \\
& \leq  \sup_{t \leq T} \sup_{x \in K}  \bE_x\left|\left(f(\eta(t)) - f_n(\eta(t)) \right) \bONE_{\{\eta(t) \in \hat{K}\}} \right. \\
& \qquad \qquad \qquad \qquad \qquad \left. + \left(f(\eta(t)) - f_n(\eta(t)) \right) \bONE_{\{\eta(t) \in \hat{K}^c\}} \right| \\
& \leq \sup_{t \leq T} \sup_{y \in \hat{K}} \left|f(y) - f_n(y)\right| +  \sup_n \vn{f_n - f} \varepsilon.
\end{align*}
As $n \rightarrow \infty$ this quantity is bounded by $\sup_n \vn{f_n - f} \varepsilon$ as $f_n$ converges to $f$ uniformly on compacts. As $\varepsilon$ was arbitrary, we are done.
\end{proof}

\begin{proposition} \label{proposition:semigroup_strongly_continuous}
Let $\{S(t)\}_{t \geq 0}$ be the semigroup introduced in Theorem \ref{theorem:verification_conditions_semigroup}. Then $\{S(t)\}_{t \geq 0}$ is $\beta$ strongly continuous.
\end{proposition}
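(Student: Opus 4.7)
The plan is to establish $\beta$-strong continuity in three stages: first on $\cD(A)$ using the martingale property, then on finite convex combinations by linearity of $S(t)$, and finally on all of $C_b(\cX)$ by combining $\beta$-density of the closed convex hull of $\cD(A)$ with the local equi-continuity already proved in Lemma~\ref{lemma:semigroup_is_for_every_time_continuous}. Since local equi-continuity allows one to upgrade right-continuity at $t=0$ to continuity at every $t \geq 0$ via the semigroup property $p(S(t+h)f - S(t)f) = p(S(t)(S(h)f - f)) \leq q(S(h)f - f)$, it is enough to prove $S(h)f \to f$ in $\beta$ as $h \downarrow 0$ for every $f \in C_b(\cX)$.

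For the first stage, fix $f \in \cD(A)$. The defining martingale relation, applied in expectation starting from $x$, gives
\begin{equation*}
S(t)f(x) - f(x) = \bE_x\!\left[\int_0^t Af(\eta(s))\,\dd s\right],
\end{equation*}
and bounding the integrand by $\vn{Af}$ yields $\vn{S(t)f - f} \leq t\,\vn{Af} \to 0$. Since $\beta$ is weaker than the norm topology by Theorem~\ref{theorem:propertiesCbstrict}(d), this gives $\beta$-convergence. By linearity of $A$ (so $\cD(A)$ is a linear subspace) and of $S(t)$, the same convergence holds on the linear span, and hence on every finite convex combination of elements of $\cD(A)$.

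For the extension to $C_b(\cX)$, let $f \in C_b(\cX)$, fix $T > 0$, and let $p$ be any $\beta$-continuous semi-norm. By Lemma~\ref{lemma:semigroup_is_for_every_time_continuous} there exists a $\beta$-continuous semi-norm $q$ with $\sup_{t \leq T} p(S(t)h) \leq q(h)$ for every $h \in C_b(\cX)$. Put $\tilde q := p + q$, which is again $\beta$-continuous. Given $\epsilon > 0$, use the $\beta$-density of the closed convex hull of $\cD(A)$ (an assumption of Theorem~\ref{theorem:verification_conditions_semigroup}) to pick $g$ in that convex hull with $\tilde q(f - g) < \epsilon$. Then for $t \leq T$,
\begin{equation*}
p(S(t)f - f) \leq p(S(t)(f-g)) + p(S(t)g - g) + p(g - f) \leq q(f-g) + p(S(t)g - g) + p(f-g) \leq 2\epsilon + p(S(t)g - g).
\end{equation*}
The middle term tends to $0$ as $t \downarrow 0$ by the previous stage applied to $g$, so $\limsup_{t \downarrow 0} p(S(t)f - f) \leq 2\epsilon$, and since $\epsilon, p$ were arbitrary, $S(t)f \to f$ for $\beta$.

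The main obstacle is really just making sure the density and equi-continuity are used compatibly: one has to choose the approximant $g$ to be close to $f$ not only with respect to $p$ but also with respect to the semi-norm $q$ supplied by equi-continuity, which is why one combines them into $\tilde q$. Everything else — the martingale-based sup-norm estimate, linearity on the convex hull, and the right-to-full continuity upgrade via the semigroup property — is routine once the density/equi-continuity pairing is set up correctly.
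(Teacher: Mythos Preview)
Your proof is correct but follows a different route from the paper's. The paper reduces strong continuity to weak continuity via an abstract result (Proposition~3.5 in \cite{Kr16}: on a complete strong Mackey space, a locally equi-continuous semigroup is strongly continuous as soon as it is weakly continuous), and then establishes weak continuity by decomposing an arbitrary $\mu \in \cM(\cX)$ via Hahn--Jordan, invoking the well-posed martingale problem to get a measure $\PR$ on path space, using that fixed times are almost surely continuity points (Theorem~4.3.12 in \cite{EK86}), and applying dominated convergence to $t \mapsto \bE^{\PR}[f(\eta(t))]$.

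Your argument is more elementary and self-contained: you get a quantitative norm bound $\vn{S(t)f - f} \leq t\,\vn{Af}$ directly from the martingale relation for $f \in \cD(A)$, and then extend to all of $C_b(\cX)$ by the standard three-$\varepsilon$ estimate using $\beta$-density of $\cD(A)$ and the equi-continuity from Lemma~\ref{lemma:semigroup_is_for_every_time_continuous}. This avoids both the path-continuity input from \cite{EK86} and the abstract weak-to-strong continuity principle from \cite{Kr16}. The paper's approach, on the other hand, yields weak continuity for all $f \in C_b(\cX)$ directly (not just through an approximation), which is sometimes useful in its own right. One cosmetic remark: since $A$ is linear and hence $\cD(A)$ is already a linear subspace, your ``convex combinations'' step is vacuous --- the density hypothesis simply says $\cD(A)$ is $\beta$-dense, and your approximant $g$ can be taken in $\cD(A)$ itself.
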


For the proof, we  recall  the notion of a weakly continuous semigroup. A semigroup is \textit{weakly continuous} if for all $f \in C_b(\cX)$ and $\mu \in \cM(\cX)$ the trajectory $t \mapsto \ip{S(t)f}{\mu}$ is continuous in $\bR$.

\begin{proof}[Proof of Proposition \ref{proposition:semigroup_strongly_continuous}]
First, recall that $(C_b(\cX),\beta)$ is strong Mackey and complete. By Lemma \ref{lemma:semigroup_is_for_every_time_continuous} the semigroup $\{S(t)\}_{t \geq 0}$ is locally equi-continuous. Therefore, Proposition 3.5 in \cite{Kr16} implies that we only need to prove weak continuity. So let $f \in C_b(\cX)$ and $\mu \in \cM(\cX)$. Write $\mu$ as the Hahn-Jordan decomposition: $\mu = c^+ \mu^+ - c^-\mu^-$, where $c^+,c^- \geq 0$ such that $\mu^+, \mu^- \in \cP(\cX)$.
	
We show that $t \mapsto \ip{S(t)f}{\mu}$ is continuous, by showing that $t \mapsto \ip{S(t)f}{\mu^+}$ and $t \mapsto \ip{S(t)f}{\mu^-}$ are continuous. Clearly, it suffices to do this for either of the two.
	
\smallskip
	
Let $\PR$ be the unique measure in $\cM_{\mu^+}$. It follows by Theorem 4.3.12 in \cite{EK86} that $\PR[X(t) = X(t-)] = 1$ for all $t > 0$, so $t \mapsto X(t)$ is continuous $\PR$ almost surely. Fix some $t > 0$, we show that our trajectory is continuous at this specific $t$.
\begin{equation*}
\left|\ip{S(t)f}{\mu^+} - \ip{S(t+h)f}{\mu^+} \right|  \leq \bE^\PR\left|f(\eta(t) - f(\eta(t+h)) \right|. 
\end{equation*}
By the almost sure convergence of $X(t+h) \rightarrow X(t)$ as $h \rightarrow 0$, and the boundedness of $f$, we obtain by the dominated convergence theorem that this difference converges to $0$ as $h \rightarrow 0$. As $t > 0$ was arbitrary, the trajectory is continuous at all $t > 0$. Continuity at $0$ follows by the fact that all trajectories in $D_\cX(\bR^+)$ are continuous at $0$.
\end{proof}

\begin{proposition} \label{proposition:generator_extends_martingale_problem_operator}
Let $\{S(t)\}_{t \geq 0}$ be the semigroup introduced in Theorem \ref{theorem:verification_conditions_semigroup} and let $\hat{A}$ be the generator of this semigroup. Then $\hat{A}$ is an extension of $A$.
\end{proposition}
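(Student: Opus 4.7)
The plan is to take an arbitrary $(f,g) \in A$ and deduce that the trajectory $t \mapsto S(t)f$ is differentiable at $0$ for the strict topology with derivative $g$, which by definition of $\hat{A}$ gives $(f,g) \in \hat{A}$. The bridge between the martingale problem and the generator is the integrated identity
\begin{equation*}
S(t)f - f = \int_0^t S(s)g \, \dd s,
\end{equation*}
where the integral is interpreted as a $\beta$-Riemann integral in $C_b(\cX)$.

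First, I would use the martingale problem. Since $\PR_x$ (the unique element of $\cM_A$ starting in $\delta_x$) makes $M_f(t) = f(\eta(t)) - f(\eta(0)) - \int_0^t g(\eta(s))\,\dd s$ a mean-zero martingale, taking expectations and using Fubini (valid since $g$ is bounded) yields the pointwise identity
\begin{equation*}
S(t)f(x) - f(x) = \int_0^t S(s)g(x)\,\dd s, \qquad x \in \cX,\ t \geq 0.
\end{equation*}

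Second, I would lift this identity from a pointwise one to an identity in $(C_b(\cX),\beta)$. The map $s \mapsto S(s)g$ is continuous from $[0,\infty)$ to $(C_b(\cX),\beta)$ by strong continuity of $\{S(s)\}_{s \geq 0}$ established in Proposition \ref{proposition:semigroup_strongly_continuous}. Together with sequential completeness of $(C_b(\cX),\beta)$ (part of Condition C, see Remark \ref{remark:strict_satisfies_C}) and local equi-continuity, the Riemann integral $I_t := \int_0^t S(s)g\,\dd s$ exists in $(C_b(\cX),\beta)$. Since each point evaluation $\delta_x \in C_b(\cX)^\prime$ is $\beta$-continuous, we have $I_t(x) = \int_0^t S(s)g(x)\,\dd s$, so $I_t = S(t)f - f$ as elements of $C_b(\cX)$.

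Finally, I would take the strict-topology derivative at $t = 0$. Because $s \mapsto S(s)g$ is $\beta$-continuous at $0$ with value $g$, the standard fundamental-theorem-of-calculus argument gives
\begin{equation*}
\frac{S(t)f - f}{t} = \frac{1}{t} \int_0^t S(s)g\,\dd s \xrightarrow[t \downarrow 0]{\beta} g,
\end{equation*}
which means $f \in \cD(\hat{A})$ with $\hat{A}f = g$, hence $A \subseteq \hat{A}$.

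The main conceptual obstacle is the transition from the pointwise identity produced by the martingale problem to a genuine identity in $(C_b(\cX),\beta)$, but this is handled cleanly by the fact that the strict topology makes point evaluations continuous and makes $s \mapsto S(s)g$ continuous, so no ad hoc approximation is needed — everything is bundled into the SCLE property already verified for $\{S(t)\}_{t \geq 0}$.
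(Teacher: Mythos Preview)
Your argument is correct and takes a genuinely different route from the paper. The paper proves the $\beta$-convergence $\tfrac{S(t)f-f}{t}\to g$ directly from the characterisation in Theorem~\ref{theorem:propertiesCbstrict}(d): it first bounds $\sup_t\vn{\tfrac{S(t)f-f}{t}}\le\vn{g}$ via the martingale representation, and then establishes uniform convergence on each compact $K$ by invoking the compact containment set $\hat K$ from \eqref{eqn:theorem_compact_containment_set}, the uniform continuity of $g$ on $\hat K$, and the tightness of $\{\PR_x:x\in K\}$ (Lemma~4.5.17 in \cite{EK86}) to control the modulus of continuity of paths uniformly in the starting point.

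By contrast, you bypass all of this by using the strong continuity of $\{S(t)\}$ already established in Proposition~\ref{proposition:semigroup_strongly_continuous}: once $s\mapsto S(s)g$ is $\beta$-continuous, the integrated identity and the fundamental-theorem-of-calculus step are automatic. This is cleaner and avoids re-doing the compact containment work, at the cost of relying on the (logically prior, so legitimate) Proposition~\ref{proposition:semigroup_strongly_continuous}. The paper's approach is more self-contained in that it does not need the Riemann integral in $(C_b(\cX),\beta)$ and makes the role of the compact containment hypothesis explicit at this step.
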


\begin{proof}
We show that if $f \in \cD(A)$, then $f \in \cD(\hat{A})$. We again use the characterisation of $\beta$ convergence as given in Theorem \ref{theorem:propertiesCbstrict} (d). From this point onward, we write $g := Af$ to ease the notation.
	
First, $\sup_{t} \vn{\frac{S(t) f - f}{t}} \leq \vn{g}$ as
\begin{equation*}
\frac{S(t)f(x) - f(x)}{t} = \bE_x\left[\frac{f(\eta(t)) - f(x)}{t}\right] = \bE_x \left[\frac{1}{t} \int_0^t g(\eta(s)) \dd s \right]
\end{equation*}
Second, we show that we have uniform convergence of $\frac{S(t) f - f}{t}$ to $g$ as $t \downarrow 0$ on compacts sets. So pick $K \subseteq \cX$ compact. Now choose $\varepsilon > 0$ arbitrary, and let $\hat{K} = \hat{K}(K,\varepsilon,1)$ as in \eqref{eqn:theorem_compact_containment_set}.
\begin{align}
& \sup_{x \in K} \left|\frac{S(t)f(x) - f(x)}{t} - g(x)\right|\notag \\ 
& \leq \sup_{x \in K}  \bE_x \left|\frac{1}{t} \int_0^t g(X(s)) - g(x) \dd s \right|\notag \\
& \leq \sup_{x \in K}  \bE_x \bONE_{\{\eta(s) \in \hat{K} \text{ for } s \leq 1\}} \left|\frac{1}{t} \int_0^t g(\eta(s)) - g(x) \dd s \right|\notag \\
& \quad + \sup_{x \in K}  \bE_x \bONE_{\{\eta(s) \notin \hat{K} \text{ for } s \leq 1\}} \left|\frac{1}{t} \int_0^t g(\eta(s)) - g(x) \dd s \right| \notag \\
& \leq \sup_{x \in K}  \bE_x \bONE_{\{\eta(s) \in \hat{K} \text{ for } s \leq 1\}} \left|\frac{1}{t} \int_0^t g(\eta(s)) - g(x) \dd s \right|  + 2\varepsilon  \vn{g}\label{eqn:bound_on_convergence_generator_martingale_problem}
\end{align}
Thus, we need to work on the first term on the last line. 
	
\bigskip
	
The function $g$ restricted to the compact set $\hat{K}$ is uniformly continuous. So let $\varepsilon' > 0$, chosen smaller then $\varepsilon$, be such that if $d(x,y) < \varepsilon'$, $x,y \in \hat{K}$, then $|g(x) - g(y)| \leq \varepsilon$.
	
\bigskip
	
By Lemma 4.5.17 in \cite{EK86}, the set $\{\PR_x \, | \, x \in K\}$ is a weakly compact set in $\cP(D_\cX(\bR^+))$. So by Theorem 3.7.2 in \cite{EK86}, we obtain that there exists a $\delta = \delta(\varepsilon') > 0$ such that
\begin{equation*}
\sup_{x \in K} \PR_x \left[\eta \in D_\cX(\bR^+) \, | \, \sup_{s \leq \delta} d(\eta(0),\eta(s)) < \varepsilon'\right] > 1 - \varepsilon' > 1 - \varepsilon.
\end{equation*}
Denote $S_\delta := \{\eta \in D_\cX(\bR^+) \, | \, \sup_{s \leq \delta} d(\eta(0),\eta(s)) < \varepsilon' \} $, so that we can summarise the equation as $\sup_{x \in K} \PR_x[S_\delta] > 1 - \varepsilon$.
	
\bigskip
	
We reconsider the term that remained in equation \eqref{eqn:bound_on_convergence_generator_martingale_problem}. 
\begin{align*}
& \sup_{x \in K}  \bE_x \bONE_{\{\eta(s) \in \hat{K} \text{ for } s \leq 1\}} \left|\frac{1}{t} \int_0^t g(\eta(s)) - g(x) \dd s \right|  + 2 \varepsilon  \vn{g} \\
& \leq \sup_{x \in K}  \bE_x \bONE_{\{\eta(s) \in \hat{K} \text{ for } s \leq 1\} \cap S_\delta} \left|\frac{1}{t} \int_0^t g(\eta(s)) - g(x) \dd s \right|  + 4 \varepsilon \vn{g} 
\end{align*}
On the set $\{\eta(s) \in \hat{K} \text{ for } s \leq 1\} \cap S_\delta$, we know that $d(\eta(s),x) \leq \eta$, so that by the uniform continuity of $g$ on $\hat{K}$, we obtain $|g(\eta(s)) - g(x)| \leq \varepsilon$. Hence:
\begin{equation*}
\sup_{t \leq 1 \wedge \delta(\varepsilon')} \sup_{x \in K} \left|\frac{S(t)f(x) - f(x)}{t} - g(x)\right| \leq \varepsilon + 4 \varepsilon \vn{g}.
\end{equation*}
As $\varepsilon > 0$ was arbitrary, it follows that $f \in \cD(\hat{A})$ and $Af = g = \hat{A}f$.
	
\end{proof}

\subsection{Transition semigroup for a process on a locally compact space} \label{section:semigroup_on_locally_compact_space}

In the case that $\cX$ is a locally compact space, both $(C_b(\cX),\beta)$ and $(C_0(\cX),\vn{\cdot})$ have the space of Radon measures as a dual. As such, the space of Radon measures caries two weak topologies. The first one is the one that probabilist call the \textit{weak} topology, i.e.~$\sigma(\cM(\cX),C_b(\cX))$, and the second is the weaker \textit{vague} topology, i.e.~$\sigma(\cM(\cX),C_0(\cX))$.
Thus, we expect that if a semigroup is strongly continuous on $(C_b(\cX),\beta)$ it is strongly continuous on $(C_0(\cX),\vn{\cdot})$, as long as the semigroup maps $C_0(\cX)$ into itself.

\begin{theorem} \label{theorem:transition_semigroup_is_strongly_continuous}
Let $\{S(t)\}_{t \geq 0}$ be a SCLE semigroup on $(C_b(\cX),\beta)$ such that $S(t) C_0(\cX) \subseteq C_0(\cX)$ for every $t \geq 0$. Then the restriction of the semigroup to $C_0(\cX)$, denoted by $\{\tilde{S}(t)\}_{t \geq 0}$ is $\vn{\cdot}$ strongly continuous.
	
\medskip
	
Conversely, suppose that we have a strongly continuous semigroup $\{\tilde{S}(t)\}_{t \geq 0}$ on $(C_0(\cX),\vn{\cdot})$ such that $\tilde{S}'(t) \cP(\cX) \subseteq \cP(\cX)$. Then the semigroup can be extended uniquely to a SCLE semigroup $\{S(t)\}_{t \geq 0}$ on $(C_b(\cX),\beta)$.

\medskip

In this setting, denote by $(A,\cD(A))$ the generator of $\{S(t)\}_{t \geq 0}$ on $(C_b(\cX),\beta)$ and by $(A,\tilde{A})$ the generator of $\{\tilde{S}(t)\}_{t \geq 0}$ on $(C_0(\cX),\vn{\cdot})$. Then $\tilde{A} \subseteq A$ and $A$ is the $\beta$ closure of $\tilde{A}$.
\end{theorem}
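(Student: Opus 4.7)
The theorem has three parts, each handled separately. For the first direction, the plan is to first verify that $\tilde{S}(t) := S(t)|_{C_0(\cX)}$ is a bounded operator on the Banach space $(C_0(\cX),\vn{\cdot})$: since $S(t)$ is $\beta$-continuous on $C_b(\cX)$ and $\beta$-bounded sets coincide with norm-bounded sets by Theorem~\ref{theorem:propertiesCbstrict}(d), $S(t)$ maps norm-bounded sets to norm-bounded sets, so $\tilde{S}(t)$ is bounded. For each $f\in C_0(\cX)$, the orbit $\{S(t)f : t\in[0,T]\}$ is $\beta$-compact as the continuous image of $[0,T]$, hence $\beta$-bounded, hence norm-bounded. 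Banach-Steinhaus on the Banach space $C_0(\cX)$ then produces the uniform operator bound $M := \sup_{t\le T}\vn{\tilde{S}(t)} < \infty$.

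To establish $\vn{\tilde{S}(t)f - f}\to 0$, the plan is to first prove it on the subspace $R(\lambda,A)C_0(\cX) \subseteq C_0(\cX)\cap\cD(A)$ and extend. For $g\in C_0(\cX)$, the $\beta$-resolvent admits the representation $R(\lambda,A)g = \lambda^{-1}\int_0^\infty e^{-t/\lambda}S(t)g\, dt$ as an integral in the strict topology; evaluating pointwise and using $S(t)g(x)\to 0$ as $x\to\infty$ (for each $t$) together with dominated convergence yields $R(\lambda,A)g \in C_0(\cX)$. For $f = R(\lambda,A)g$ one has $Af = \lambda^{-1}(f-g)\in C_0(\cX)$, and the identity $S(t)f - f = \int_0^t S(s)Af\, ds$, evaluated pointwise, gives $\vn{S(t)f - f}\le tM\vn{Af}$. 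Norm-density of $R(\lambda,A)C_0(\cX)$ in $C_0(\cX)$, combined with the uniform operator bound $M$, extends norm-continuity to all of $C_0(\cX)$.

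For the converse, the plan is to define the extension via the adjoint: $S(t)f(x) := \int_\cX f\, d(\tilde{S}'(t)\delta_x)$ for $f\in C_b(\cX)$. Since $\tilde{S}'(t)\delta_x\in\cP(\cX)$ is a probability measure, $\vn{S(t)f}\le\vn{f}$. Continuity of $x\mapsto S(t)f(x)$ holds for $f\in C_0(\cX)$ directly (it equals $\tilde{S}(t)f\in C_0$); for general $f\in C_b(\cX)$ it follows from weak continuity of $x\mapsto\tilde{S}'(t)\delta_x$, itself a consequence of continuity of $\tilde{S}(t)g$ in $x$ for all $g\in C_0$ combined with the fact that $C_0(\cX)$ determines measures. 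The semigroup property transfers from $\tilde{S}$ via adjointness. For $\beta$-strong continuity, on $C_0(\cX)$ norm-convergence implies $\beta$-convergence; for $f\in C_b(\cX)$ one approximates $f$ by cutoffs $f_n\in C_0(\cX)$ that agree with $f$ on growing compacts, exploiting that $\beta$-convergence on norm-bounded families reduces to uniform-on-compacts convergence. Local $\beta$-equi-continuity then follows from the contractive norm bound together with Proposition~\ref{proposition:equi_continuity_via_strong_continuity}. The inclusion $\tilde{A}\subseteq A$ is immediate because $\vn{\cdot}$-convergence is finer than $\beta$; for $A=\overline{\tilde{A}}$, the plan is to show $\cD(\tilde{A})$ is a $\beta$-core for $A$ via the Yosida-type approximation $f_n = R(n^{-1},A)f$, which satisfies $f_n\to f$ and $Af_n = n(f_n-f)\to Af$ in $\beta$ by Theorem~\ref{theorem:Hille_Yosida}, with $f_n\in\cD(\tilde{A})$ obtained by first approximating $f\in C_b(\cX)$ by $C_0$-functions.

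The main obstacle throughout will be the careful handling of the interaction between the strict topology and the $C_0$-subspace: crucially, $C_0(\cX)$ is not $\beta$-closed in $C_b(\cX)$, so the natural $\beta$-limit arguments for integrals and approximations need not yield $C_0$-valued outputs automatically. The Pettis-style integral representation of $R(\lambda,A)$ must be verified to remain in $C_0$ by exploiting the pointwise characterization on norm-bounded sets, and the norm-density of $R(\lambda,A)C_0(\cX)$ inside $C_0(\cX)$, which in the classical Banach space setting is a near-triviality, must here be argued without already invoking the strong norm-continuity one is trying to establish.
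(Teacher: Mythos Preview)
Your approach diverges from the paper's in all three parts, and while Parts 1 and 3 are mostly salvageable, Part 2 contains a genuine gap.

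\textbf{Part 1.} The paper avoids your resolvent detour entirely: from $\beta$-strong continuity it reads off weak continuity of $t\mapsto \ip{S(t)f}{\mu}$ for every Radon measure $\mu$, then invokes Theorem~I.5.8 of \cite{EN00} (a weakly continuous semigroup of bounded operators on a Banach space is strongly continuous). Your route via $R(\lambda,A)C_0(\cX)$ is more hands-on, but the density of this set in $(C_0(\cX),\vn{\cdot})$, which you flag as the main obstacle, is not actually resolved in your sketch: $R(\lambda,A)g\to g$ in $\beta$ does not give norm convergence, and any attempt to upgrade via $\int_0^\infty e^{-s}(S(\lambda s)g-g)\,\dd s$ seems to require the very norm-continuity you are proving. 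The Engel--Nagel route sidesteps this cleanly.

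\textbf{Part 2.} Here the gap is sharper. Proposition~\ref{proposition:equi_continuity_via_strong_continuity} concerns a \emph{sequence} $\{T_n\}$ with $T_n f\to f$; it says nothing about the family $\{S(t)\}_{t\le T}$. The norm contraction $\vn{S(t)f}\le\vn{f}$ does not by itself yield $\beta$-equicontinuity: you would need that the adjoint family $\{S'(t)\}_{t\le T}$ maps tight sets of measures to tight sets, uniformly in $t$. Nor have you shown that each individual $S(t)$ is $\beta$-continuous (norm boundedness is insufficient). The paper proves local equicontinuity directly on the dual side: for a weakly compact set $K$ of non-negative measures it shows that $\{\tilde{S}'(t)\mu:\,t\le T,\,\mu\in K\}$ is relatively weakly compact, using vague-to-weak convergence (via $\tilde{S}'(t)\cP(\cX)\subseteq\cP(\cX)$ and Proposition~3.4.4 of \cite{EK86}) along convergent subsequences, and then invokes the strong Mackey property. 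This compactness argument is the substantive step your sketch is missing.

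\textbf{Part 3.} The paper uses the standard core criterion (Proposition~II.1.7 of \cite{EN00}, adapted to spaces satisfying Condition~C): $\cD(\tilde{A})$ is $\beta$-dense in $C_b(\cX)$ by Stone--Weierstrass, and $S(t)\cD(\tilde{A})=\tilde{S}(t)\cD(\tilde{A})\subseteq\cD(\tilde{A})$. Your Yosida route is not wrong in spirit, but as written it is incomplete: for general $f\in\cD(A)\subseteq C_b(\cX)$, $R(n^{-1},A)f$ need not lie in $C_0(\cX)$, and ``first approximating $f$ by $C_0$-functions'' does not obviously produce a graph-norm approximation unless those $C_0$-functions are already in $\cD(A)$ with controlled $Af$.
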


\begin{proof}
	\textit{Proof of the first statement.}

	 For a given time $t \geq 0$, the operator $S(t)$ is continuous on $(C_0(\cX),\vn{\cdot})$, because $S(t)$ is $\beta$ continuous and therefore maps $\beta$-bounded sets into $\beta$-bounded sets. Norm continuity of the restriction $\tilde{S}(t)$ on $C_0(\cX)$ then follows by the fact that the bounded sets for the norm and for $\beta$ coincide.
	
\bigskip
	
As $\{S(t)\}_{t \geq 0}$ is $(C_b(\cX), \beta)$ is strongly continuous, it is also weakly continuous, in other words, for every Radon measure $\mu$, we have that 
\begin{equation*}
t \mapsto \ip{S(t)f}{\mu}
\end{equation*}
is continuous for every $f \in C_b(\cX)$ and in particular for $f \in C_0(\cX)$. Theorem I.5.8 in Engel and Nagel \cite{EN00} yields that the semigroup $\{\tilde{S}(t)\}_{t \geq 0}$ is strongly continuous on $(C_0(\cX),\vn{\cdot})$.

\smallskip

	\textit{Proof of the second statement.}

 First note that such a $\beta$-continuous extension must be unique by the Stone-Weierstrass theorem, cf.~Theorem \ref{theorem:propertiesCbstrict} (e), which implies that $C_0(\cX)$ is $\beta$ dense in $C_b(\cX)$. We will show that $\tilde{S}(t)$ is $\beta$ to $\beta$ continuous, because we can then extend the operator by continuity to $C_b(\cX)$. In fact, we will prove a stronger statement, namely that $\{\tilde{S}(t)\}_{t \geq 0}$ is locally $\beta$ equi-continuous. 
\bigskip
	
First of all, by the completeness of $(C_b(\cX),\beta)$, the fact that $C_0(\cX)$ is dense in $(C_b(\cX),\beta)$ and 21.4.(5) in \cite{Ko69}, we have $(C_0(\cX),\beta)' = (C_b(\cX),\beta)' = \cM(\cX)$ and the equi-continuous sets in $\cM(\cX)$ with respect to $(C_0(\cX),\beta)$ and $(C_b(\cX),\beta)$ coincide. It follows by 39.3.(4) in \cite{Ko79} that $\{\tilde{S}(t)\}_{t \geq 0}$ is locally $\beta$ equi-continuous if for every $T \geq 0$ and $\beta$ equi-continuous set $K \subseteq \cM(\cX)$ we have that 
\begin{equation*}
\fS K := \{\tilde{S}'(t)\mu \, | \, t \leq T, \mu \in K \}
\end{equation*}
is $\beta$ equi-continuous. By Theorem 6.1 (c) in Sentilles \cite{Se72}, it is sufficient to prove this result for $\beta$ equi-continuous sets $K$ consisting of non-negative measures in $\cM(\cX)$. Furthermore, we can restrict to weakly closed $K$, as the weak closure of an equi-continuous set is $\beta$ equi-continuous.
	
\bigskip
	
Therefore, let $K$ be an arbitrary weakly closed $\beta$ equi-continuous subset of the non-negative Radon measures. We show that $\fS K$ is relatively weakly compact, as this will imply that $\fS K$ is $\beta$ equi-continuous as $\beta$ is a strong Mackey topology, cf. Theorem \ref{theorem:propertiesCbstrict} (a). This in turn would establish that $\{\tilde{S}(t)\}_{t \geq 0}$ is locally $\beta$ equi-continuous. 
	
\smallskip
	
By Theorem 8.9.4 in \cite{Bo07}, we obtain that the weak topology on the positive cone in $\cM(E)$ is metrisable. So, we only need to show sequential relative weak compactness of $\fS K$. Let $\nu_n$ be a sequence in $\fS K$. Clearly, $\nu_n = \tilde{S}'(t_n) \mu_n$ for some sequence $\mu_n \in K$ and $t_n \leq T$. As $K$ is $\beta$ equi-continuous, it is weakly compact by the Bourbaki-Alaoglu theorem, so without loss of generality, we restrict to a weakly converging subsequence $\mu_n \in K$ with limit $\mu \in K$ and $t_n \rightarrow t$, for some $t \leq T$.
	
\smallskip
	
Now there are two possibilities, either $\mu = 0$, or $\mu \neq 0$. In the first case, we obtain directly that $\nu_n = \tilde{S}'(t_n) \mu_n \rightarrow 0 \ni K \subseteq \fS K$ weakly. In the second case, one can show that
\begin{equation*}
\hat{\mu}_n := \frac{\mu_n}{\ip{\bONE}{\mu_n}} \rightarrow \frac{\mu}{\ip{\bONE}{\mu}} =: \hat{\mu}
\end{equation*}
weakly, and therefore vaguely. As $\{\tilde{S}(t)\}_{t \geq 0}$ is strongly continuous on $(C_0(\cX),\vn{\cdot})$, it follows that $\tilde{S}'(t_n)\hat{\mu}_n \rightarrow \tilde{S}'(t) \hat{\mu}$ vaguely. By assumption, all measures involved are probability measures, so by Proposition 3.4.4 in Ethier and Kurtz \cite{EK86} implies that the convergence is also in the weak topology. By an elementary computation, we infer that the result also holds without the normalising constants: $\nu_n \rightarrow \tilde{S}'(t) \mu$ weakly. 
	
So both cases give us a weakly converging subsequence in $\fS K$.
	
\smallskip
	
We conclude that $\{\tilde{S}(t)\}_{t \leq T}$ is $\beta$ equi-continuous. So we can extend all $\tilde{S}(t)$ by continuity to $\beta$ continuous maps $S(t) \colon C_b(\cX) \rightarrow C_b(\cX)$. Also, we directly obtain that $\{S(t)\}_{t \geq 0}$ is locally equi-continuous. The semigroup property of $\{S(t)\}_{t \geq 0}$ follows from the semigroup property of $\{\tilde{S}(t)\}_{t \geq 0}$. The last thing to show is the $\beta$ strong continuity of $\{S(t)\}_{t \geq 0}$.
	
\bigskip
	
By Proposition 3.5 in \cite{Kr16} it is sufficient to show weak continuity of the semigroup $\{S(t)\}_{t \geq 0}$. Pick $\mu \in \cM(\cX)$, and represent $\mu$ as the Hahn-Jordan decomposition $\mu = \mu^+ - \mu^-$, where $\mu^+,\mu^-$ are non-negative measures. By construction, the adjoints of $S(t)$ and $\tilde{S}(t)$ coincide, so $t \mapsto S'(t)\mu^+$ and $t \mapsto S'(t)\mu^-$ are vaguely continuous. The total mass of the measures in both trajectories remains constant by the assumption of the theorem, so by Proposition III.4.4 in \cite{EK86}, we obtain that $t \mapsto S'(t)\mu^+$ and $t \mapsto S'(t)\mu^-$ are weakly continuous. This directly implies that $\{S(t)\}_{t \geq 0}$ is weakly continuous and thus strongly continuous. 

\smallskip

	\textit{Proof of the third statement.}

 Let $(\tilde{A},\cD(\tilde{A}))$ be the generator of $\{\tilde{S}(t)\}_{t \geq 0}$ and $(A,\cD(A))$ the one of $\{S(t)\}_{t \geq 0}$. As the norm topology is stronger than $\beta$, it is immediate that $\tilde{A} \subseteq A$.

We will show that $\cD(\tilde{A})$ is a \textit{core} for $(A,\cD(A))$, i.e.~$\cD(\tilde{A})$ is dense in $\cD(A)$ for the $\beta$-graph topology on $\cD(A)$. The result of Proposition II.1.7 in \cite{EN00} is proven for Banach spaces but also holds for spaces satisfying Condition C by replacing the norm topology by the locally convex one. 

Thus, it suffices to prove $\beta$ density of $\cD(\tilde{A})$ in $C_b(\cX)$ and that $S(t) \cD(\tilde{A}) \subseteq \cD(\tilde{A})$. 

The first claim follows because $\cD(\tilde{A})$ is norm, hence $\beta$, dense in $C_0(\cX)$ by Theorem II.1.4 in \cite{EN00} and because $C_0(\cX)$ is $\beta$ dense in $C_b(\cX)$ by the Stone-Weierstrass theorem, cf Theorem \ref{theorem:propertiesCbstrict} (e). The second claim follows because $S(t) \cD(\tilde{A}) = \tilde{S}(t) \cD(\tilde{A}) \subseteq \cD(\tilde{A})$ by e.g.~Lemma II.1.3 in \cite{EN00} or Lemma 5.2 in \cite{Kr16}. 

\smallskip

We conclude that $\cD(\tilde{A})$ is a core for $\cD(A)$. As $A$ is $\beta$ closed, it follows that $A$ is the $\beta$ graph-closure of $\tilde{A}$.
\end{proof}

\textbf{Acknowledgement}
RK is supported by the Deutsche Forschungsgemeinschaft (DFG) via RTG 2131 High-dimensional Phenomena in Probability – Fluctuations and Discontinuity. MS is supported by the European Research Council under ERC Grant Agreement 320637.

\sloppy 
\printbibliography

\end{document}